\newtheorem{thm}{Theorem}[section]
\newtheorem{defi}[thm]{Definition}
\newtheorem{lem}[thm]{Lemma}
\newtheorem{prop}[thm]{Proposition}
\newtheorem{cor}[thm]{Corollary}
\newtheorem{rem}[thm]{Remark}
\DeclareMathOperator{\Ker}{Ker}
\title{\textbf{\large Deformations of special Legendrian submanifolds\\
in Sasaki-Einstein manifolds}}
\author{\textsc{Takayuki Moriyama}}
\date{}
\begin{document}
\maketitle
\thispagestyle{empty}

\footnote{\textit{2010 Mathematics Subject Classification.} Primary 53C25; 
Secondary 53C40.}
\footnote{\textit{Key Words and Phrases.} 
Deformations of submanifolds, Legendrian submanifolds, Sasaki-Einstein manifolds.}

\begin{quote}
\textbf{\fontsize{10pt}{15pt}\selectfont Abstract.}
\textrm{\fontsize{10pt}{15pt}\selectfont 
In this paper we study the deformation theory of submanifolds characterized by a system of differential forms 
and provide a criterion for deformations of such submanifolds to be unobstructed. 
We apply this deformation theory to special Legendrian submanifolds in Sasaki-Einstein manifolds. 
In general, special Legendrian deformations have the obstruction. 
However, we show that the deformation space of special Legendrian submanifolds is 
the intersection of two larger smooth deformation spaces of different types. 
We also prove that any special Legendrian submanifold admits smooth deformations, which are not special Legendrian deformations, given by harmonic $1$-forms. 
}
\end{quote}

\section{Introduction}
Calabi-Yau manifolds can be studied in many categories, 
complex geometry, K\"{a}hler geometry, algebraic geometry, etc. 
From the view point of calibrated geometry 
which was initiated by Harvey and Lawson~\cite{HL}, 
any Calabi-Yau structure induces a calibration 
given by the real part of a holomorphic volume form. 
Recall that a calibration is a closed differential form with comass $1$ 
and a calibrated submanifold is defined by a submanifold in which 
the calibration equals to a volume form. 
Special Lagrangian submanifolds in Calabi-Yau manifolds are calibrated submanifolds. 
McLean provided the deformation theory for calibrated submanifolds~\cite{Mc}. 
In particular, he showed that the moduli space of compact special Lagrangian submanifolds 
is smooth and the tangent space is given by the vector space of harmonic $1$-forms. 
An important point of his proof was that any special Lagrangian submanifold is characterized as a submanifold 
where the imaginary part of the holomorphic volume form and the symplectic K\"{a}hler form vanish. 

In this paper, we consider deformations of submanifolds characterized 
by {\it differential forms which are not necessarily closed}. 
We assume that $(M,g)$ is a smooth compact Riemannian manifold.  
We call $X$ a submanifold in $M$ 
if there exists an embedding $\iota:X\hookrightarrow M$. 
Let $X$ be a compact connected submanifold in $M$. 
We consider a normal deformation of a submanifold $X$, 
that is, an embedding $f:X\hookrightarrow M$ with a family $\{f_{t}\}_{t\in [0,1]}$ of embeddings 
$f_{t}:X\hookrightarrow M$ such that $f_{0}=\iota$, $f_1=f$ and $\frac{d}{d t}f_{t}\in \Gamma(NX_t)$ 
where $NX_t$ is the normal bundle of $X_t=f_t(X)$ in $(M, g)$. 
We call the set of normal deformations of $X$ {\it the moduli space of deformations of $X$}. 
Now we choose a system $\Phi=(\varphi_1,\dots,\varphi_m)\in \Gamma (\oplus_{i=1}^{m}\wedge^{k_{i}}T^*M)$ 
of smooth differential forms on $M$ such that the pull-back $\iota^{*}\Phi$ of $\Phi$ by $\iota:X\to M$ vanishes : 
\begin{equation*}
\iota^{*}\Phi=(\iota^{*}\varphi_1,\dots,\iota^{*}\varphi_m)=(0,\dots,0).
\end{equation*} 
We call a deformation $f=\{f_{t}\}_{t\in [0,1]}$ of $X$ a {\it $\Phi$-deformation} 
if it satisfies $f_{t}^{*}\Phi=(0,\dots,0)$ for each $t\in [0,1]$, and we denote by $\mathcal{M}_{X}(\Phi)$ 
the moduli space of $\Phi$-deformations of $X$. 
We can regard $X$ itself as an element of $\mathcal{M}_{X}(\Phi)$ and denote it by $0_X$. 
We take a positive integer $s$ and a real number $\alpha$ with $0<\alpha<1$ 
and consider a deformation of $X$ of $C^{s, \alpha}$-class, that is, 
a $C^{s,\alpha}$-embedding $f:X\hookrightarrow M$ with a family $\{f_{t}\}_{t\in [0,1]}$ of $C^{s,\alpha}$-embeddings 
$f_{t}:X\hookrightarrow M$ such that $f_{0}=\iota$, $f_1=f$, 
$\frac{d}{d t}f_{t}\in \Gamma(NX_t)$ and $f_{t}^{*}\Phi=0$.  
Let $\mathcal{M}^{s,\alpha}_{X}(\Phi)$ denote the moduli space of $\Phi$-deformations of $X$ of $C^{s,\alpha}$-class. 

A Sasaki-Einstein manifold is a $(2n+1)$-dimensional Riemannian manifold $(M,g)$ 
whose metric cone $(C(M),\overline{g})=(\mathbb{R}_{>0}\times M, dr^{2}+r^{2}g)$ 
is a Ricci-flat K\"{a}hler manifold where $r$ is the coordinate of $\mathbb{R}_{>0}$. 
We assume that $M$ is simply connected. 
Then the cone $C(M)$ is a complex $(n+1)$-dimensional Calabi-Yau manifold 
which admits a holomorphic $(n+1)$-form $\Omega$ and 
a K\"{a}hler form $\omega$ on $C(M)$ satisfying the Monge-Amp\`{e}re equation 
$\Omega\wedge \overline{\Omega}=c_{n+1}\omega^{n+1}$ for a constant $c_{n+1}$. 
An $n$-dimensional submanifold $X$ in a Sasaki-Einstein manifold $(M, g)$ is 
a {\it special Legendrian submanifold} if 
the cone $C(X)$ is a special Lagrangian submanifold in $C(M)$. 
We characterize a Sasaki-Einstein manifold as a Riemannian manifold with 
a contact $1$-form $\eta$ and a complex valued $n$-form $\psi$ such that 
$(\psi, \frac{1}{2}d\eta)$ is an almost transverse Calabi-Yau structure with 
$d\psi=(n+1)\sqrt{-1}\,\eta\wedge \psi$ (See Section 3.2 for the definition of almost transverse Calabi-Yau structures). 
Then $X$ is a special Legendrian submanifold 
if and only if $\iota^{*}\psi^{\rm Im}=0$ and $\iota^{*}\eta=0$ where $\iota$ is the inclusion $\iota:X\hookrightarrow M$ 
and $\psi^{\rm Im}$ is the imaginary part of $\psi$. 

Let $X$ be a special Legendrian submanifold and $\mathcal{M}_{X}$ the moduli space of special Legendrian deformations of $X$. 
Then $\mathcal{M}_{X}$ is equal to the moduli space $\mathcal{M}_{X}(\psi^{\rm Im}, \eta)$ of $(\psi^{\rm Im}, \eta)$-deformations of $X$. 
The infinitesimal deformation space of $X$ is 
given by the eigenspace ${\rm Ker}(\Delta_{0}-2(n+1))$ of the Laplace operator $\Delta_{0}$ on $\wedge^0_X$ with the eigenvalue $2(n+1)$. 
If the obstruction for $\mathcal{M}_{X}$ vanishes, then $\mathcal{M}_{X}$ is a smooth manifold and the infinitesimal deformation space is the tangent space of $\mathcal{M}_{X}$ at $0_X$. 
However, the obstruction for $\mathcal{M}_{X}$ does not vanish in general. 
Let $\omega^{T}$ be the $2$-form $\frac{1}{2}d\eta$ on $M$. 
We denote by $\mathcal{N}_{X}$ and $\mathcal{L}_{X}$ 
the moduli spaces $\mathcal{M}_{X}(\psi^{\rm Im}, \omega^{T})$ and $\mathcal{M}_{X}(\eta)$, respectively. 
We will see that the obstructions for $\mathcal{N}_{X}$ and $\mathcal{L}_{X}$ vanish, and $\mathcal{N}_{X}$ and $\mathcal{L}_{X}$ are infinite dimensional Banach manifolds. 
We fix an integer $s\ge 3$ and a real number $\alpha$ with $0<\alpha<1$ and denote by 
$\mathcal{N}_{X}^{s,\alpha}$ and $\mathcal{L}_{X}^{s,\alpha}$ 
the moduli space $\mathcal{M}_{X}^{s,\alpha}(\psi^{\rm Im}, \omega^{T})$ and $\mathcal{M}_{X}^{s,\alpha}(\eta)$, respectively. 
\begin{thm}\label{s1t1}
The moduli space $\mathcal{M}_{X}$ is the intersection $\mathcal{N}_{X}\cap\mathcal{L}_{X}$ 
where $\mathcal{N}_{X}^{s,\alpha}$ and $\mathcal{L}_{X}^{s,\alpha}$ are smooth. 
\end{thm}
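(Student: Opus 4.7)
The equality $\mathcal{M}_{X}=\mathcal{N}_{X}\cap\mathcal{L}_{X}$ is tautological once one observes that $\omega^{T}=\tfrac{1}{2}d\eta$: any deformation $\{f_{t}\}$ with $f_{t}^{*}\eta=0$ automatically satisfies $f_{t}^{*}\omega^{T}=\tfrac{1}{2}d(f_{t}^{*}\eta)=0$, so $\mathcal{M}_{X}(\psi^{\rm Im},\eta)=\mathcal{M}_{X}(\psi^{\rm Im},\omega^{T})\cap\mathcal{M}_{X}(\eta)$.

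For the smoothness claim, my plan is to apply the abstract $\Phi$-deformation framework of the paper separately to $\Phi=(\eta)$ and to $\Phi=(\psi^{\rm Im},\omega^{T})$. In a tubular neighbourhood of $X$, identify nearby $C^{s,\alpha}$-embeddings with normal sections $V\in\Gamma(NX)$ and study the zero locus of $F(V)=(\exp_{V}\circ\iota)^{*}\Phi$ near $V=0$. Because $X$ is Legendrian, the Sasakian data split the normal bundle as $NX=\mathbb{R}\xi\oplus JTX$ (with $\xi$ the Reeb field and $J$ the transverse almost complex structure), and $V_{h}\mapsto\iota^{*}(\iota_{V_{h}}\omega^{T})$ identifies $JTX\cong T^{*}X$. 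I therefore parametrise normal sections by pairs $(f,a)\in C^{s,\alpha}(X)\oplus\Omega^{1}_{s,\alpha}(X)$ via $V=f\xi+V_{h}(a)$.

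A Cartan-calculus computation using $\eta(\xi)=1$, $\iota_{\xi}d\eta=0$ and $\iota^{*}\eta=0$ gives the linearisation
\begin{equation*}
DF_{0}^{\,\eta}(f,a)=df+2a,
\end{equation*}
which is surjective onto the natural target with split kernel $\{(f,-\tfrac{1}{2}df):f\in C^{s,\alpha}(X)\}$. The implicit function theorem of the abstract framework then yields that $\mathcal{L}_{X}^{s,\alpha}$ is a smooth Banach submanifold whose tangent space at $0_{X}$ is $C^{s,\alpha}(X)$. For $\mathcal{N}_{X}^{s,\alpha}$, the essential observation is that $\omega^{T}$ is globally exact on $M$, so the $\omega^{T}$-component of $F$ takes values in exact $2$-forms on $X$; the correct target for $F$ is therefore $\Omega^{n}_{s-1,\alpha}(X)\oplus d\Omega^{1}_{s-1,\alpha}(X)$. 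Using $d\psi^{\rm Im}=(n+1)\eta\wedge\psi^{\rm Re}$, $\iota^{*}\psi^{\rm Re}=d\mathrm{vol}_{X}$, $\iota_{\xi}\psi=0$, and the Legendrian analogue $\iota^{*}(\iota_{V_{h}(a)}\psi^{\rm Im})={*}a$ of McLean's identity, I obtain
\begin{equation*}
DF_{0}^{\,(\psi^{\rm Im},\omega^{T})}(f,a)=\bigl(d{*}a+(n+1)f\,d\mathrm{vol}_{X},\ da\bigr).
\end{equation*}
Given $(\gamma,d\beta)$ in this target, setting $a=\beta$ reduces the first equation to a pointwise-solvable equation for $f$; the kernel is split. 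Another application of the implicit function theorem yields smoothness of $\mathcal{N}_{X}^{s,\alpha}$.

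The main obstacle is the identification of the correct target Banach space for $F$ and the verification of transversality into it. For $\mathcal{N}_{X}^{s,\alpha}$ the exactness of $\omega^{T}$ on $M$ is essential: it is precisely what confines the image of $F$ to the closed subspace of exact $2$-forms on $X$, without which $DF_{0}^{\,(\psi^{\rm Im},\omega^{T})}$ would have infinite-dimensional cokernel. Once this is arranged, the linearisations are elementary first-order systems, and the general framework of the paper carries out the implicit function theorem argument.
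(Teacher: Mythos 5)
Your reduction $\mathcal{M}_{X}=\mathcal{N}_{X}\cap\mathcal{L}_{X}$ is exactly the paper's argument (tautological from $\omega^{T}=\tfrac{1}{2}d\eta$), your normal-bundle identification $NX\simeq\langle\xi\rangle\oplus JTX\simeq\wedge^{0}\oplus\wedge^{1}$ and both linearizations agree with Propositions~\ref{s2.2.4p1} and~\ref{s4.3p1}, and your remark that exactness of $\omega^{T}$ confines the image of the $\omega^{T}$-component to exact forms is the right observation. The gap is in the surjectivity/right-inverse step on which your implicit function theorem rests. The nonlinear map $F^{s,\alpha}$ sends $C^{s,\alpha}$ normal sections to $C^{s-1,\alpha}$ forms, and every right inverse you propose loses a derivative: for $\mathcal{L}_{X}$, solving $df+2a=\beta$ by $a=\beta/2$ puts $a$ only in $C^{s-1,\alpha}$, not $C^{s,\alpha}$, and in fact $(f,a)\mapsto df+2a$ is \emph{not} surjective from $C^{s,\alpha}(\wedge^{0}\oplus\wedge^{1})$ onto $C^{s-1,\alpha}(\wedge^{1})$, since its image lies in $dC^{s,\alpha}(\wedge^{0})+C^{s,\alpha}(\wedge^{1})$, whose coexact Hodge component is $C^{s,\alpha}$ while that of a general $C^{s-1,\alpha}$ one-form is not. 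The same loss of a derivative occurs for $\mathcal{N}_{X}$ when you set $a=\beta$ and solve the zeroth-order equation for $f$ ``pointwise'': the resulting $f$ is only $C^{s-2,\alpha}$. So the transversality claim, as stated, is either false or applies to an unspecified target, and the Banach implicit function theorem cannot be invoked.

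The paper's device for precisely this difficulty is Proposition~\ref{s2.1p1}: embed the linearization in a complex $V_{1}\stackrel{D_{1}}{\to}V_{2}\stackrel{D_{2}}{\to}V_{3}$ with $P_{2}=D_{1}D_{1}^{*}+D_{2}^{*}D_{2}$ elliptic; then ${\rm Im}(D_{1}^{s,\alpha})$ is closed, $D_{1}^{*}\circ G$ (with $G$ the Green's operator of $P_{2}$) is a bounded right inverse $C^{s-1,\alpha}\to C^{s,\alpha}$ because $G$ gains two derivatives, and ${\rm Im}(F)\subset{\rm Im}(D_{1})$ is checked via the Hodge decomposition $V_{2}=\Ker P_{2}\oplus{\rm Im}(D_{1})\oplus{\rm Im}(D_{2}^{*})$. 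For $\mathcal{L}_{X}$ this forces the augmentation $\mathcal{M}_{X}(\eta)=\mathcal{M}_{X}(\eta,\tfrac{1}{2}d\eta)$, so that $D_{1}(f,\alpha)=(df+2\alpha,d\alpha)$, $D_{2}(\alpha,\beta)=(d\alpha-2\beta,d\beta)$ and $P_{2}(\alpha,\beta)=((\Delta_{1}+4)\alpha,(\Delta_{2}+4)\beta)$ has trivial kernel; your $\eta$-only system admits no such complex because $d(df+2a)=2da\neq 0$. For $\mathcal{N}_{X}$ one takes $D_{2}(f,\beta)=d\beta$, gets $P_{2}(f,\beta)=((\Delta_{0}+(n+1)^{2})f,\Delta_{2}\beta)$, and uses ${\rm Im}(G)\subset\wedge^{0}\oplus d\wedge^{1}\perp\Ker P_{2}\oplus{\rm Im}(D_{2}^{*})$. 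Your outline is salvageable, but the elliptic-complex and Green's-operator step is the missing idea, not a routine verification.
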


The Reeb foliation on a Sasaki-Einstein manifold $(M,g)$ induces a line bundle $F$ on $M$. 
Then we regard the quotient bundle $NF=TM/F$ as the subbundle of $TM$ which is orthogonal to $F$. 
A $\Phi$-deformation $f=\{f_{t}\}_{t\in [0,1]}$ of $X$ is called a \textit{transverse $\Phi$-deformation} of $X$ 
if there exists a family $\{h_{t}\}_{t\in [0,1]}$ of diffeomorphisms of $X$ with $h_0={\rm id}_{X}$ and 
$\frac{d}{d t}h_t|_{t=0}=0$ such that $\frac{d}{d t}f_t \circ h_t\in \Gamma(NF|_{X_t})$ for each $t\in [0,1]$. 
We consider transverse $(\psi^{\rm Im}, \omega^{T})$-deformations of a special Legendrian submanifold $X$ 
and denote by $\mathcal{N}_{X}^{T}$ the moduli space of transverse $(\psi^{\rm Im}, \omega^{T})$-deformations of $X$. 
\begin{thm}\label{s1t2}
The moduli space $\mathcal{N}_{X}^{T}$ is smooth at $0_X$ 
and the tangent space $T_{0_X}\mathcal{N}_{X}^{T}$ 
is isomorphic to $H^{1}(X)$. 
\end{thm}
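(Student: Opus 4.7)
The plan is to parametrize transverse normal deformations of $X$ by $1$-forms on $X$, reduce the $(\psi^{\rm Im}, \omega^{T})$-deformation equations to an elliptic PDE system on these $1$-forms, and close the argument with a Banach-space implicit function theorem.

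First I would identify $NX^{T}\cong T^{*}X$ via the map $\Phi:V\mapsto\alpha_{V}:=\iota_{V}\omega^{T}|_{TX}$, which is a bundle isomorphism because $TX$ is a Lagrangian subbundle of the symplectic bundle $(\ker\eta|_{X},\omega^{T})$ and $NX^{T}$ is its symplectic complement. Cartan's formula, combined with $d\omega^{T}=0$ and $d\psi^{\rm Im}=(n+1)\eta\wedge\psi^{\rm Re}$ (obtained from $d\psi=(n+1)\sqrt{-1}\,\eta\wedge\psi$), together with the vanishings $\iota^{*}\eta=0$ and $\eta(V)=0$, gives
\[
\iota^{*}\mathcal{L}_{V}\omega^{T} = d\alpha_{V}, \qquad \iota^{*}\mathcal{L}_{V}\psi^{\rm Im} = d(\iota^{*}\iota_{V}\psi^{\rm Im}).
\]
A pointwise calculation using an orthonormal frame $\{e_{i}\}$ of $TX$ and the associated frame $\{Je_{i}\}$ of $NX^{T}$, where $J$ is the transverse almost complex structure on $\ker\eta$, yields $\iota^{*}\iota_{V}\psi^{\rm Im}=-{*}\alpha_{V}$. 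Hence the linearization of the deformation map at $0$ is the first-order operator
\[
L:\Omega^{1}(X)\to\Omega^{n}(X)\oplus\Omega^{2}(X), \qquad L(\alpha)=(-d{*}\alpha,\,d\alpha),
\]
whose kernel is the space of $1$-forms that are both closed and co-closed; by Hodge theory this is $\mathcal{H}^{1}(X)\cong H^{1}(X)$, giving the claimed tangent space.

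For the smoothness of $\mathcal{N}_{X}^{T}$ I would apply a Banach-space implicit function theorem. Working in the Hölder category, define
\[
F:U\subset C^{s,\alpha}(NX^{T})\to C^{s-1,\alpha}(\wedge^{n}T^{*}X)\oplus C^{s-1,\alpha}(\wedge^{2}T^{*}X), \quad V\mapsto(f_{V}^{*}\psi^{\rm Im},\,f_{V}^{*}\omega^{T}).
\]
Using the Hodge decomposition $\Omega^{1}(X)=\mathcal{H}^{1}(X)\oplus d\Omega^{0}(X)\oplus d^{*}\Omega^{2}(X)$, the restriction of $L$ to the slice $S:=\Phi^{-1}(d\Omega^{0}\oplus d^{*}\Omega^{2})$ is a topological isomorphism onto its range in exact forms, since $d+d^{*}$ on $1$-forms is overdetermined elliptic with kernel $\mathcal{H}^{1}(X)$. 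The IFT then produces, for each small harmonic $\alpha_{h}\in\mathcal{H}^{1}(X)$, a unique correction $V_{s}\in S$ with $F(\Phi^{-1}(\alpha_{h})+V_{s})=0$, locally parametrizing $\mathcal{N}_{X}^{T,s,\alpha}$ by $\mathcal{H}^{1}(X)$; elliptic regularity then upgrades this to smoothness of $\mathcal{N}_{X}^{T}$.

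The main obstacle is to ensure that the image of the nonlinear $F$ lies in the range of $L$, i.e., in the exact forms. The $\omega^{T}$ component is manifestly exact since $f_{V}^{*}\omega^{T}=\tfrac{1}{2}d(f_{V}^{*}\eta)$, and $f_{V}^{*}\psi^{\rm Im}$ is automatically closed by dimension, because $f_{V}^{*}d\psi^{\rm Im}=(n+1)f_{V}^{*}(\eta\wedge\psi^{\rm Re})$ is an $(n+1)$-form on the $n$-manifold $X$. The potential cohomological obstruction therefore lives in $H^{n}(X)\cong\mathbb{R}$, namely the integral $\int_{X} f_{V}^{*}\psi^{\rm Im}$; a direct variational computation shows it vanishes to second order at $V=0$. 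To close the argument I would show it vanishes along the locus $f_{V}^{*}\omega^{T}=0$, by exploiting the $\omega^{T}$-isotropy of $X_{V}$ together with the structure equation $d\psi^{\rm Im}=(n+1)\eta\wedge\psi^{\rm Re}$, or by direct appeal to the general deformation framework for $\Phi$-deformations established earlier in the paper. Handling this cohomological obstruction is the technical heart of the proof.
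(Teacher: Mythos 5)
Your proposal follows essentially the same route as the paper's proof (Theorem~\ref{s4.1.4t1}): the identification $NX^{T}\simeq\wedge^{1}$ by $v\mapsto i_{v}\omega^{T}$, the linearization $\alpha\mapsto(d^{*}\alpha,\,d\alpha)$ with kernel $\mathcal{H}^{1}(X)\simeq H^{1}(X)$ (the paper applies the Hodge star to the $\psi^{\rm Im}$-component so as to land in $\wedge^{0}\oplus\wedge^{2}$, which is only cosmetic), and an implicit function theorem in H\"older spaces run through the complex $0\to\wedge^{1}\to\wedge^{0}\oplus\wedge^{2}\to\wedge^{3}\to0$, i.e.\ Proposition~\ref{s2.1p1} with $D_{2}(f,\beta)=d\beta$, $P_{1}=\Delta_{1}$ and $P_{2}=(\Delta_{0},\Delta_{2})$.

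The step you leave open is, however, the genuine crux, and you should know that the paper does not do more than you do here: it simply asserts ${\rm Im}(G^{T})\subset d^{*}\wedge^{1}\oplus d\wedge^{1}$, i.e.\ that $\exp_{v}^{*}\psi^{\rm Im}$ is exact for \emph{every} small $v\in\Gamma(NX^{T})$, which is precisely your statement $\int_{X}\exp_{v}^{*}\psi^{\rm Im}=0$. Two cautions about your proposed ways of closing this. First, proving the vanishing only on the locus $\{f_{V}^{*}\omega^{T}=0\}$ does not verify the hypothesis ${\rm Im}(F)\subset{\rm Im}(D_{1})$ of Proposition~\ref{s2.1p1}, which is needed on the whole neighbourhood $U^{T}$ and not just on the zero set; to exploit that weaker statement you must first replace $F$ by $\Pi\circ F$, with $\Pi$ the $L^{2}$-projection onto ${\rm Im}(D_{1})$, run the implicit function theorem for $\Pi\circ F$, and then argue a posteriori that its zeros are zeros of $F$. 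Second, the vanishing is not a soft homotopy-invariance statement, because $\psi^{\rm Im}$ is not closed: along $f_{s}=\exp_{sv}$ one gets $\frac{d}{ds}\int_{X}f_{s}^{*}\psi^{\rm Im}=(n+1)\int_{X}f_{s}^{*}\bigl(\eta(w_{s})\psi^{\rm Re}-\eta\wedge i_{w_{s}}\psi^{\rm Re}\bigr)$, which vanishes at $s=0$ (since $\eta(v)=0$ and $\iota^{*}\eta=0$) but not obviously for $s>0$. So ``vanishes to second order'' is what you actually have, and upgrading it to the exact containment (or substituting the projection argument) is the one piece of work your write-up, like the paper's, does not carry out.
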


Let $(M,g)$ be a Sasaki manifold with an almost transverse Calabi-Yau structure $(\psi, \frac{1}{2}d\eta)$ such that 
$d\psi=\kappa \sqrt{-1}\,\eta\wedge \psi$ for a real constant $\kappa$. 
In the case $\kappa=n+1$, $(M,g)$ is a Sasaki-Einstein manifold. 
The metric cone $(C(M), \overline{g})$ is an almost Calabi-Yau manifold which admits a holomorphic $(n+1)$-form $\Omega$ and 
a K\"{a}hler form $\omega$ satisfying 
$\Omega\wedge \overline{\Omega}=r^{2(\kappa-n-1)}c_{n+1}\omega^{n+1}$ on $C(M)$. 
A special Lagrangian submanifold in $C(M)$ is defined by an $(n+1)$-dimensional submanifold where $\Omega^{\rm Im}$ and $\omega$ vanish. 
We define a special Legendrian submanifold in $M$ as an $n$-dimensional submanifold $X$ such that $C(X)$ is a special Lagrangian submanifold in $C(M)$. 
Then $X$ is a special Legendrian submanifold 
if and only if $\iota^{*}\psi^{\rm Im}=0$ and $\iota^{*}\eta=0$ where $\iota$ is the inclusion $\iota:X\hookrightarrow M$. 
Let $X$ be a special Legendrian submanifold and $\mathcal{M}_{X}$ the moduli space of special Legendrian deformations of $X$. 
Then we can extend Theorem~\ref{s1t1} and Theorem~\ref{s1t2} in this case (see Theorem~\ref{s5.3t1} and Theorem~\ref{s5.3t2}). 
We say that a special Legendrian submanifold $X$ is \textit{rigid} if 
any special Legendrian deformation of $X$ is induced by the group ${\rm Aut}(\eta, \psi)$ of diffeomorphisms of $M$ preserving $\eta$ and $\psi$. 
\begin{thm}\label{s1t3}
The infinitesimal deformation space of $X$ is  
isomorphic to the space ${\rm Ker}(\Delta_{0}-2\kappa)$. 
If $\kappa=0$, then $X$ is rigid and $\mathcal{M}_{X}$ is a $1$-dimensional manifold. 
If $\kappa<0$, then $X$ does not have any non-trivial deformation and $\mathcal{M}_{X}=\{0_X\}$. 
\end{thm}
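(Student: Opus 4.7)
The plan is to linearize the pair of equations $\iota^{*}\psi^{\mathrm{Im}} = 0$ and $\iota^{*}\eta = 0$ at the inclusion, reduce the resulting Jacobi system to a single scalar eigenvalue equation of the form $\Delta_0 f = 2\kappa f$, and then handle the cases $\kappa<0$ and $\kappa=0$ by exploiting non-negativity of the Hodge Laplacian and the Reeb flow respectively. I first split $NX = NX^{T} \oplus \mathbb{R}\xi$ via the Reeb vector field $\xi$ and write any normal variation as $V = V^{T} + f\xi$ with $f = \eta(V)$. The transverse symplectic form $\omega^{T} = \tfrac{1}{2}d\eta$ identifies $V^{T}$ with the $1$-form $\alpha = \iota^{*}\iota_{V^{T}}\omega^{T}$ on $X$, so normal deformations are parametrized by pairs $(\alpha, f) \in \Omega^{1}(X) \oplus C^{\infty}(X)$.

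Applying Cartan's formula to $\mathcal{L}_{V}\eta$ and $\mathcal{L}_{V}\psi^{\mathrm{Im}}$, and using $\iota_{\xi}\psi = 0$, $\iota_{\xi}d\eta = 0$, $\iota^{*}\eta = 0$, $\iota^{*}\psi^{\mathrm{Im}} = 0$, $\iota^{*}\psi^{\mathrm{Re}} = \mathrm{vol}_{X}$, the transverse McLean identity $\iota^{*}\iota_{V^{T}}\psi^{\mathrm{Im}} = -{*}\alpha$, and the structure equation $d\psi = \kappa\sqrt{-1}\,\eta\wedge\psi$, I compute
\begin{align*}
\left.\tfrac{d}{dt}\right|_{t=0} f_{t}^{*}\eta &= df + 2\alpha, \\
\left.\tfrac{d}{dt}\right|_{t=0} f_{t}^{*}\psi^{\mathrm{Im}} &= -d{*}\alpha + \kappa f\,\mathrm{vol}_{X}.
\end{align*}
Setting both to zero, the first equation gives $\alpha = -\tfrac{1}{2}df$, and substituting into the second collapses the system to $\Delta_{0} f = 2\kappa f$ (with the convention $\Delta_{0} = -{*}d{*}d$ on functions). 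The assignment $f \mapsto V = V^{T} + f\xi$ with $V^{T}$ determined by $\alpha = -\tfrac{1}{2}df$ thus identifies the infinitesimal deformation space with $\mathrm{Ker}(\Delta_{0} - 2\kappa)$, which is the first assertion.

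For $\kappa < 0$, non-negativity of $\Delta_{0}$ on $C^{\infty}(X)$ forces the kernel to be trivial: pairing $\Delta_{0} f = 2\kappa f$ with $f$ in $L^{2}$ yields $\|df\|^{2} = 2\kappa\|f\|^{2} \leq 0$, so $f$ is constant, and the equation then gives $f = 0$. The Jacobi operator is therefore an elliptic self-adjoint isomorphism on the appropriate $C^{s,\alpha}$-spaces, so the implicit function theorem applied to the nonlinear $(\psi^{\mathrm{Im}}, \eta)$-deformation map isolates $0_{X}$ in $\mathcal{M}_{X}^{s,\alpha}$; combined with the path-connectedness built into the definition of $\mathcal{M}_{X}$, this yields $\mathcal{M}_{X} = \{0_{X}\}$.

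For $\kappa = 0$, the kernel of $\Delta_{0}$ consists of constants, so the infinitesimal deformations are one-dimensional in the Reeb direction. The Reeb flow lies in $\mathrm{Aut}(\eta,\psi)$, since $\mathcal{L}_{\xi}\eta = 0$ holds for any Sasaki manifold and $\mathcal{L}_{\xi}\psi = d(\iota_{\xi}\psi) + \iota_{\xi}d\psi = 0 + 0 = 0$ when $\kappa = 0$; its orbit through $X$ produces a genuine one-parameter family of special Legendrian deformations whose tangent at $0_{X}$ spans the entire one-dimensional infinitesimal deformation space. Combined with the analog of Theorem~\ref{s1t1}, which presents $\mathcal{M}_{X}$ locally as the intersection of smooth manifolds with tangent space exactly this one-dimensional kernel, a dimension count forces $\mathcal{M}_{X}$ to coincide locally with the Reeb orbit, giving both one-dimensionality and rigidity. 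The principal difficulty I anticipate is the sign bookkeeping in the linearization step: the interplay between the transverse McLean identity, the non-closed structure equation $d\psi = \kappa\sqrt{-1}\,\eta\wedge\psi$, and the convention $\Delta_{0} = -{*}d{*}d$ must align so that the resulting scalar equation reads $\Delta_{0} f = +2\kappa f$ with the correct sign, since a flipped sign would interchange the $\kappa<0$ and $\kappa>0$ behaviors and invalidate the entire dichotomy underlying the theorem.
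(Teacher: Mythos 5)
Your linearization is exactly the paper's: under the identification $v\mapsto(i_v\eta,\tfrac12 i_vd\eta)$ the operator becomes $(f,\alpha)\mapsto(d^*\alpha+\kappa f,\,df+2\alpha)$, whose kernel is $\{(f,-\tfrac12 df):\Delta_0f=2\kappa f\}\cong\Ker(\Delta_0-2\kappa)$, and your sign bookkeeping (the term $\kappa f\,\mathrm{vol}_X$ coming from $i_{\tilde v}d\psi^{\rm Im}$ pulled back with $\iota^*\eta=0$) is correct. The first assertion and the kernel computation for $\kappa<0$ match the paper's proof.

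Two of your endgame steps do not hold as stated. For $\kappa<0$ you claim the Jacobi operator is an \emph{elliptic} self-adjoint isomorphism and apply the implicit function theorem; but $(f,\alpha)\mapsto(d^*\alpha+\kappa f,\,df+2\alpha)$ is not elliptic (its principal symbol at a covector $\zeta$ annihilates $(0,\alpha)$ for every $\alpha$ orthogonal to $\zeta$), which is precisely why the paper never claims $\mathcal{M}_X$ itself is smooth and instead presents it as an intersection. So injectivity of the linearization does not upgrade to an isomorphism, and the IFT step fails. The conclusion $\mathcal{M}_X=\{0_X\}$ is nevertheless immediate from the definition of the moduli space as consisting of paths $f_t$ of special Legendrian embeddings: at each time $t$ the velocity lies in the infinitesimal deformation space of $X_t$, which vanishes, so the path is constant. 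This is the argument (left implicit) in the paper, and the "path-connectedness" remark at the end of your paragraph is the part worth keeping.

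For $\kappa=0$ your "dimension count" against the intersection $\mathcal{N}_X\cap\mathcal{L}_X$ is not valid by itself: the intersection of two smooth Banach manifolds need not be smooth, and its tangent cone is only contained in the intersection of the tangent spaces, so one cannot conclude that $\mathcal{M}_X$ is a $1$-dimensional manifold this way. The paper's route is different and shorter: when $\kappa=0$ the form $\psi$ is closed, so $(\psi,\eta)$ is a contact Calabi--Yau structure and the Tomassini--Vezzoni result (Proposition~\ref{s2.4p1}) already gives that $\mathcal{M}_X$ is a smooth manifold of dimension $\dim H^0(X)=1$; rigidity then follows because this one-dimensional space is $\langle\xi\rangle_{\mathbb{R}}=\mathfrak{aut}(\eta,\psi)$. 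Alternatively, your Reeb-orbit observation can be completed without Theorem~\ref{s1t1}: at every special Legendrian $X_t$ the infinitesimal space is spanned by the normal field $\xi$, so every deformation path is a reparametrized Reeb flow, which gives both the $1$-dimensionality and the rigidity directly. Either repair is needed; as written, the $\kappa=0$ paragraph does not establish that $\mathcal{M}_X$ is a manifold.
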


A special Legendrian submanifold $X$ is rigid if and only if the special Lagrangian cone $C(X)$ is rigid, that is, 
any deformation of $C(X)$ is induced by the action of the group ${\rm Aut}(\Omega, \omega, r)$ 
of diffeomorphisms of $C(M)$ preserving $\Omega$, $\omega$ and $r$. 
A typical example of Sasaki-Einstein manifolds is 
the odd-dimensional unit sphere $S^{2n+1}$ with the standard metric, 
then the metric cone is the complex space $\mathbb{C}^{n+1}\backslash \{0\}$. 
Joyce introduced the rigidity of special Lagrangian cones in $\mathbb{C}^{n+1}\backslash \{0\}$~\cite{J3}. 
There exist some rigid special Lagrangian cones in $\mathbb{C}^{n+1}\backslash \{0\}$ 
and the corresponding rigid special Legendrian submanifolds in $S^{2n+1}$~\cite{H, J2, O2}. 
Special Legendrian submanifolds have also the aspect of minimal Legendrian submanifolds. 
We call that a minimal Legendrian submanifold is rigid 
if any minimal Legendrian deformation of $X$ is induced by the group ${\rm Aut}(\eta, g)$ of diffeomorphisms of $M$ preserving $\eta$ and $g$. 
Hence there exist two kinds of rigidity conditions for special Legendrian submanifolds. 
We show that these conditions are equivalent : 
\begin{thm}\label{s1t4}
Let $X$ be a special Legendrian submanifold. 
If $\kappa>0$, then $X$ is rigid as a special Legendrian submanifold if and only if 
it is rigid as a minimal Legendrian submanifold. 
\end{thm}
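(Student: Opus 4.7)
The plan is to use the Reeb flow $\phi_t$ as a bridge between the two rigidity conditions. The key observation is that although $\phi_t$ lies in ${\rm Aut}(\eta, g)$, it acts on $\psi$ by a phase: using $\iota_\xi\psi = 0$ and $d\psi = i\kappa \eta \wedge \psi$ one has $\mathcal{L}_\xi\psi = \iota_\xi d\psi = i\kappa\psi$, so $\phi_t^*\psi = e^{i\kappa t}\psi$. More generally, for any $\phi \in {\rm Aut}(\eta, g)$ the transverse almost complex structure $J$ (determined by $\eta$ and $g$ via $g(\cdot, J\cdot)|_{\ker\eta} = \tfrac{1}{2}d\eta$) is preserved, so $\phi^*\psi$ is a transverse $(n,0)$-form of unit norm; writing $\phi^*\psi = e^{i\theta}\psi$ and pulling back $d\psi = i\kappa\eta\wedge\psi$ forces $d\theta\wedge\psi = 0$, which together with $\iota_\xi\psi = 0$ and the reality of $\theta$ yields $d\theta = 0$. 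Hence $\theta$ is a constant, and for $\kappa > 0$ the Reeb flow realises every element of the resulting circle quotient ${\rm Aut}(\eta, g)/{\rm Aut}(\eta, \psi) \hookrightarrow S^1$.

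For the harder direction, I would assume $X$ is rigid as a special Legendrian and let $\{f_t\}_{t \in [0,1]}$ be a minimal Legendrian family ending at $f = f_1$. Each $f_t(X)$ has phase defined by $\iota_{f_t(X)}^*\psi = e^{i\theta_t}\mathrm{vol}_{f_t(X)}$; by the calibrated characterization of minimality (the Sasaki analogue of the Harvey--Lawson/McLean formula stating that $d\theta_t$ is the mean curvature $1$-form of $f_t(X)$), the phase $\theta_t$ is constant on the connected submanifold $f_t(X)$ and so reduces to a smoothly varying $c(t) \in \mathbb{R}$ with $c(0) = 0$. Define the corrected family $\tilde f_t := \phi_{-c(t)/\kappa} \circ f_t$. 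Using $\phi_s^*\psi = e^{i\kappa s}\psi$ one checks $\iota_{\tilde f_t(X)}^*\psi$ is a real positive volume form, so each $\tilde f_t(X)$ is special Legendrian, and the velocity
\begin{equation*}
\tfrac{d}{dt}\tilde f_t \;=\; -\tfrac{c'(t)}{\kappa}\,\xi \;+\; (d\phi_{-c(t)/\kappa})\bigl(\tfrac{d}{dt}f_t\bigr)
\end{equation*}
lies in $\Gamma(N\tilde f_t(X))$: the first summand because $\xi$ is orthogonal to every Legendrian (as $g(\xi,\cdot) = \eta$ and $T\tilde f_t(X) \subset \ker \eta$), the second because $\phi_{-c(t)/\kappa}$ is an isometry mapping $Nf_t(X)$ isomorphically onto $N\tilde f_t(X)$. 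Thus $\{\tilde f_t\}$ is a bona fide special Legendrian deformation of $X$, so by hypothesis $\tilde f_1(X) = \tilde\phi(X)$ for some $\tilde\phi \in {\rm Aut}(\eta, \psi)$, and therefore $f(X) = \phi_{c(1)/\kappa}\tilde\phi(X)$ with $\phi_{c(1)/\kappa}\tilde\phi \in {\rm Aut}(\eta, g)$, proving minimal rigidity.

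For the converse, any special Legendrian deformation $f$ is in particular a minimal Legendrian deformation, so the hypothesis of minimal rigidity furnishes $\phi \in {\rm Aut}(\eta, g)$ with $f(X) = \phi(X)$. Writing $\phi^*\psi = e^{ic}\psi$, the identity $(\phi|_X)^*\iota_{f(X)}^*\psi = \iota_X^*\phi^*\psi = e^{ic}\iota_X^*\psi$ has real LHS and RHS (both being pullbacks of the canonical real volume forms on the special Legendrian submanifolds $f(X)$ and $X$), and the scaling factor relating them is the Jacobian of the isometry $\phi|_X$, forcing $e^{ic} = \pm 1$; the orientation inherited from the continuous deformation family selects $e^{ic} = 1$, and hence $\phi \in {\rm Aut}(\eta, \psi)$.

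The main technical point is the phase-constancy characterization of minimality, i.e.\ the identification of $d\theta_t$ with the mean curvature $1$-form of $f_t(X)$, which is what reduces $\theta_t$ to a single constant $c(t)$ in the construction above. This is the Sasaki analogue of the classical Harvey--Lawson/McLean formula for Lagrangians in Calabi--Yau manifolds and should follow by passing to the cone $C(X) \subset C(M)$ and exploiting $d\psi = i\kappa\eta\wedge\psi$; all remaining ingredients are formal manipulations with this structure equation and $\iota_\xi\psi = 0$.
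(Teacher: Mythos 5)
Your proof is essentially correct, but it takes a genuinely different route from the paper's. The paper argues at the infinitesimal level: by Theorem~\ref{s5.3p3} and Proposition~\ref{s5.4p2} the infinitesimal minimal Legendrian deformation space ${\rm Ker}(\Delta_0-2\kappa)\oplus\mathbb{R}$ exceeds the infinitesimal special Legendrian deformation space ${\rm Ker}(\Delta_0-2\kappa)$ by exactly one dimension, accounted for by $\xi$, and by Proposition~\ref{s3.7p2} the symmetry algebras satisfy $\mathfrak{aut}(\eta,g)=\mathfrak{aut}(\eta,\psi)\oplus\langle\xi\rangle_{\mathbb{R}}$; since the deformation spaces and the symmetry algebras differ by the same line, the two rigidity conditions are equivalent. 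You instead work globally with the finite Reeb flow, using $L_\xi\psi=\sqrt{-1}\,\kappa\psi$ to rotate away the phase of a minimal Legendrian family (constant on each $f_t(X)$ by the phase formula of Lemma~\ref{s5.4l1}) and produce an honest special Legendrian deformation, and conversely showing that an element of ${\rm Aut}(\eta,g)$ carrying one special Legendrian to another has phase $\pm1$ and hence lies in ${\rm Aut}(\eta,\psi)$. The ingredients are the same in both proofs --- the mean-curvature/phase identity and the constant-phase action of ${\rm Aut}(\eta,g)$ on $\psi$ (Lemma~\ref{s3.7l1}, Proposition~\ref{s3.7p1}) --- but your version has the merit of constructing the required automorphisms explicitly, so the equivalence of the two rigidity notions is proved as literally stated rather than inferred from a dimension count of infinitesimal data. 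Two small points to tighten: fix one sign convention for the phase (the paper's $*\iota^{*}\psi=e^{-\sqrt{-1}\,\theta}$ versus your $\iota^{*}\psi=e^{\sqrt{-1}\,\theta}\mathrm{vol}$) so that $\phi_{-c(t)/\kappa}$ cancels the phase rather than doubling it; and the step selecting $e^{\sqrt{-1}\,c}=+1$ by orientation presumes that the inducing automorphisms form a continuous family starting at the identity, which is the natural reading of ``induced by the group'' but deserves a sentence.
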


This paper is organized as follows. 
In Section 2, we provide the $\Phi$-deformation theory of 
submanifolds in a general Riemannian manifold and a criterion for $\Phi$-deformations to be unobstructed (see Proposition~\ref{s2.1p1}). 
Moreover we see some typical examples of such $\Phi$-deformations. 
In Section 3, we introduce almost transverse Calabi-Yau structures and 
prove that any Sasaki-Einstein manifold is characterized by such a structure. 
In Section 4, we show Theorem~\ref{s1t1} and Theorem~\ref{s1t2} (see Theorem~\ref{s4.3t1} and Theorem~\ref{s4.1.4t1}). 
In the last section, we introduce special Legendrian submanifolds in Sasaki manifolds with almost transverse Calabi-Yau structures 
and prove Theorem~\ref{s1t3} (see Theorem~\ref{s5.3p3}) and a generalization of Theorem~\ref{s1t1} and Theorem~\ref{s1t2} (see Theorem~\ref{s5.3t1} and Theorem~\ref{s5.3t2}). 
We study minimal Legendrian deformations of special Legendrian submanifolds and show Theorem~\ref{s1t4} (see Theorem~\ref{s5.4t1}). 

\section{Deformations of submanifolds}
In this section, we assume that $(M,g)$ is a smooth Riemannian manifold. 
We provide a criterion for the smoothness of moduli spaces of submanifolds and apply it to some examples. 

\subsection{Smoothness of moduli spaces of $\Phi$-deformations}
Let $X$ be a compact submanifold in $M$ with an embedding $\iota:X\hookrightarrow M$. 
We choose a system $\Phi=(\varphi_1,\dots,\varphi_m)\in \Gamma (\oplus_{i=1}^{m}\wedge^{k_{i}}T^*M)$ 
of smooth differential forms on $M$. 
Suppose the pull-back $\iota^{*}\Phi$ of $\Phi$ by $\iota:X\to M$ vanishes : 
\begin{equation*}\label{s2.1e1}
\iota^{*}\Phi=(\iota^{*}\varphi_1,\dots,\iota^{*}\varphi_m)=(0,\dots,0)
\end{equation*} 
We call an embedding $f:X\hookrightarrow M$ is a {\it $\Phi$-deformation} of $X$ 
if there exists a family $\{f_{t}\}_{t\in [0,1]}$ of embeddings $f_{t}:X\hookrightarrow M$ with $f_{0}=\iota$ and $f_1=f$ 
such that $f_{t}^{*}\Phi=(0,\dots,0)$ for each $t\in [0,1]$. 
We assume that such a family $\{f_{t}\}_{t\in [0,1]}$ is normal, that is, $\frac{d}{d t}f_{t}\in \Gamma(NX_t)$ for each $t\in [0,1]$. 
For simplicity, we write a deformation $f$ of $X$ with the family $\{f_{t}\}_{t\in [0,1]}$ by $f=\{f_{t}\}_{t\in [0,1]}$. 
We denote by $\mathcal{M}_{X}(\Phi)$ the moduli space of $\Phi$-deformations of $X$. 
It follows from the tubular neighbourhood theorem that 
there exists a neighbourhood of $X$ in $M$ which is identified with 
a neighbourhood $\mathcal{U}$ of the zero section of $NX$ by 
the exponential map. 
We define $V_{1}$ and $V_{2}$ as the vector spaces
\begin{eqnarray*}
V_{1}&=&\Gamma(NX), \\
V_{2}&=&\Gamma(\oplus_{i=1}^{m}\wedge^{k_{i}}T^*X)
\end{eqnarray*}
of all smooth sections of $NX$ and $\oplus_{i=1}^{m}\wedge^{k_{i}}T^*X$, respectively. 
We denote by $U$ the set $\{ v\in V_1 \mid v_{x}\in \mathcal{U}, x\in X \}$ : 
\begin{equation}\label{s2.1e2}
U=\{ v\in V_1 \mid v_{x}\in \mathcal{U}, x\in X \}.
\end{equation}
The exponential map induces the embedding ${\rm exp}_{v}:X\hookrightarrow M$ for each $v\in U$. 
Then we define the map $F:U\to V_{2}$ by 
\begin{equation*}
F(v)=\exp_{v}^{*}\Phi=(\exp_{v}^{*}\varphi_1,\dots,\exp_{v}^{*}\varphi_m)
\end{equation*}
for any $v\in U$. 
We can consider $X$ as an element of $\mathcal{M}_{X}(\Phi)$ 
since $\iota:X\to M$ can be the trivial deformation of $X$. 
Hence we denote by $0_X$ the element $X$ of $\mathcal{M}_X(\Phi)$. 
If the inverse image $F^{-1}(0)$ of the origin of $V_2$ is smooth at $0_X$, 
then $F^{-1}(0)$ is identified with a neighbourhood of $0_X$ in $\mathcal{M}_{X}(\Phi)$ in the $C^1$ sense. 
Let $D_{1}$ be the linearization of $F$ at $0$ : 
\begin{equation*}
D_{1}=d_{0}F:V_{1}\to V_{2}.
\end{equation*}
Then the infinitesimal deformation space of $X$ is given by $\Ker D_{1}$. 
Let $D_1^*$ be the formal adjoint operator of $D_1$. 
\begin{lem}\label{s2.1l1}
The equation $D_1^*\circ F(v)=0$ is a partial differential equation whose order is at most two, 
and quasi-linear if the order is two. 
\end{lem}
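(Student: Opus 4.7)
The plan is to analyse the orders of $F$ and $D_1^*$ separately and then combine them. First I would observe that $F(v)=\exp_v^*\Phi$ is a first-order nonlinear differential operator in $v$. In the tubular-neighbourhood parametrization $\Psi\colon \mathcal{U}\to M$ one has $\exp_v(x)=\Psi(x,v(x))$, so the pullback $\exp_v^*\varphi_i$ evaluated on tangent vectors to $X$ depends on the differential $d(\exp_v)_x$, which in turn depends pointwise on $v(x)$ and on the first derivatives $\nabla v(x)$ only. Consequently, at each point $x\in X$ the value $F(v)(x)$ is a smooth function of $\bigl(v(x),\nabla v(x)\bigr)$, polynomial of degree at most $\max_i k_i$ in the $\nabla v$ slot.

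Next I would compute $D_1$. Differentiating $t\mapsto \exp_{tv}^*\varphi_i$ at $t=0$ and using the Cartan magic formula yield
\[
D_1 v \;=\; \iota^*\bigl(d\, i_{\widetilde v}\Phi + i_{\widetilde v}\, d\Phi\bigr),
\]
where $\widetilde v$ is any smooth extension of $v$ to a neighbourhood of $X$ in $M$; the right-hand side is independent of the choice of extension and depends only on $v$ and its first covariant derivatives, so $D_1$ is a first-order linear differential operator. Its formal adjoint $D_1^*$ is therefore also first-order linear, and can be written locally as $D_1^* = A^i\nabla_i + B$ for smooth coefficient tensors $A^i,\,B$.

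The composition of a first-order linear operator with a first-order nonlinear operator has order at most two. Explicitly, by the chain rule,
\[
D_1^*F(v) \;=\; A^i\,\nabla_i F(v) + B\,F(v)
\;=\; A^i\bigl(F_v(v,\nabla v)\,\nabla_i v + F_{\nabla v}(v,\nabla v)\,\nabla_i\nabla v\bigr) + B\,F(v),
\]
where $F_v$ and $F_{\nabla v}$ denote the partial derivatives of the algebraic expression defining $F$ with respect to its zeroth- and first-order slots. The highest-order derivatives $\nabla^2 v$ enter only through the term $A^iF_{\nabla v}(v,\nabla v)\,\nabla_i\nabla v$, which is linear in $\nabla^2 v$ with coefficients that depend smoothly on $v$ and $\nabla v$ alone. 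This is exactly quasi-linearity, proving the lemma.

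The main subtlety — essentially the only point to verify — is that no hidden source of second-order dependence on $v$ sneaks into $F$. Working in coordinates via the tubular parametrization $\Psi$ makes this transparent: $\exp_v^*\varphi_i$ is a pointwise polynomial in $v$ and the matrix $dv$, so $F$ is genuinely first-order in $v$, and the whole argument reduces to the chain rule applied above.
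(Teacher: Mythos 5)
Your proposal is correct and follows essentially the same route as the paper: both arguments establish that $F(v)=\exp_v^*\Phi$ depends pointwise only on $v$ and its first derivatives (the paper does this by the explicit coordinate computation $\exp_v^*dx_i=dx_i$, $\exp_v^*dy_j=\sum_i\frac{\partial f_j}{\partial x_i}dx_i$, you by noting that the pullback is governed by $d(\exp_v)_x$), and then observe that composing with the first-order operator $D_1^*$ produces second derivatives only linearly, with coefficients depending on $v$ and $\nabla v$, which is quasi-linearity. Your chain-rule formulation is a slightly more invariant packaging of the paper's remark that the second-order terms arise by differentiating products of the first derivatives $\partial f_j/\partial x_i$, but the substance is identical.
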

\begin{proof}
We assume that the dimension of $X$ is $n$ and the rank of $NX$ is $\ell$. 
Let $U_X$ be an open set in $X$ and $(x_1,\dots,x_n)$ a coordinate of $U_X$. 
Taking a local frame $\{v_1,\dots,v_{\ell}\}$ of $NX$ over $U_X$, 
then we have a trivialization $U_X\times \mathbb{R}^{\ell}$ of $NX|_{U_X}$ by 
the correspondence of $(x,y_1,\dots,y_{\ell})\in U_X\times \mathbb{R}^{\ell}$ to $\sum_{j=1}^{\ell}y_jv_j(x)\in NX|_{U_X}$. 
By the exponential map, the tubular neighbourhood $\mathcal{U}\cap NX|_{U_X}$ of $NX|_{U_X}$ is isomorphic to an open set $U_M$ in $M$, 
and $(U_M, x_1,\dots,x_n, y_1,\dots,y_{\ell})$ is a local coordinate of $M$. 
Let $v$ be an element of the set $U$ of (\ref{s2.1e2}). If $v$ is expressed by $v=\sum_{j=1}^{\ell}f_jv_j$ on $U_X$, 
then $\exp_{v}^*dx_i=dx_i$ and $\exp_{v}^*dy_j=\sum_{i=1}^{n}\frac{\partial f_j}{\partial x_i}dx_i$ on $U_X$ for each $i=1,\dots,n, j=1,\dots,\ell$. 
Since any differential form on $M$ is generated by wedge products of $dx_i$ and $dy_j$ on $U_M$, 
the equation $F(v)=\exp_{v}^{*}\Phi=0$ is a partial differential equation whose order is at most one. 
Hence the order of the equation $D_1^*\circ F(v)=0$ is at most two. 
The first order term of $F(v)=0$ may contain 
\begin{equation}\label{s2.1e3}
(\frac{\partial f_1}{\partial x_1})^{m_{11}}(\frac{\partial f_1}{\partial x_2})^{m_{12}}\cdots (\frac{\partial f_{\ell}}{\partial x_{n}})^{m_{\ell n}}
\end{equation}
for integers $m_{11}, m_{12}, \dots, m_{\ell n}$ with $m_{ij}=0$ or $1$ such that $0\le m_{1i}+\dots+ m_{\ell i}\le 1$ for each $i$. 
If the order of $D_1^*\circ F(v)=0$ is two, then the second order term is generated by the partial derivative of (\ref{s2.1e3}), 
and hence $D_1^*\circ F(v)=0$ is quasi-linear. 
\end{proof}
We fix an integer $s\ge 3$ and a real number $\alpha$ with $0<\alpha<1$. 
Then we set the Banach spaces 
\begin{eqnarray*}
V_{1}^{s,\alpha}&=& C^{s,\alpha}(NX), \\
V_{2}^{s-1,\alpha}&=&C^{s-1,\alpha}(\oplus_{i=1}^{m}\wedge^{k_{i}}T^*X)
\end{eqnarray*}
with respect to the H\"{o}lder norm $\parallel \cdot \parallel_{C^{s,\alpha}}$ 
induced by the Riemannian metric $\iota^{*}g$ on $X$. 
We define $U^{s,\alpha}$ as the set $\{ v\in V_1^{s,\alpha}\mid v_{x}\in \mathcal{U}, x\in X \}$. 
Then we can extend the map $F$ to the smooth map $F^{s,\alpha}:U^{s,\alpha}\to V_{2}^{s-1,\alpha}$. 
We define $D_{1}^{s,\alpha}$ as the linearization $d_{0}F^{s,\alpha}$ of $F^{s,\alpha}$. 
Let $\mathcal{M}^{s,\alpha}_{X}(\Phi)$ denote 
the moduli space of $\Phi$-deformations of $C^{s,\alpha}$-class. 

\begin{prop}\label{s2.1p1} 
Suppose that there exist a vector space $V_{3}$ of smooth sections of a vector bundle $E$ on $X$ 
and a differential operator $D_{2}$ with the differential complex 
\begin{equation*}
0\to V_{1}\stackrel{D_{1}}{\longrightarrow}
V_{2}\stackrel{D_{2}}{\longrightarrow}
V_{3}\to 0. 
\end{equation*}
Let $D_2^*$ be a formal adjoint operator of $D_2$. 
If $P_2=D_{1}\circ D_{1}^{*}+D_{2}^{*}\circ D_{2}$ is elliptic 
and ${\rm Im}(F)\subset {\rm Im}(D_1)$, 
then the moduli space $\mathcal{M}_{X}^{s,\alpha}(\Phi)$ is smooth at $0_X$ 
and the tangent space $T_{0_X}\mathcal{M}_{X}^{s,\alpha}(\Phi)$ is given by $\Ker D_{1}^{s,\alpha}$. 
Moreover, if $P_1=D_{1}^{*}\circ D_{1}$ is also elliptic, 
then the moduli space $\mathcal{M}_{X}(\Phi)$ is smooth at $0_X$ 
and the tangent space $T_{0_X}\mathcal{M}_{X}(\Phi)$ is given by ${\rm Ker}(D_{1})$. 
\end{prop}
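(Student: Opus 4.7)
The plan is to adapt the standard Kuranishi-style argument: realize $F^{-1}(0)$ as the graph of an implicit function over $\Ker D_{1}^{s,\alpha}$ via the implicit function theorem in Banach spaces, with the crucial right inverse of $D_{1}$ furnished by Hodge theory for the elliptic operator $P_{2}$. First I would set up the Hodge decomposition
\[
V_{2}^{s-1,\alpha}=\mathcal{H}_{2}\oplus D_{1}(V_{1}^{s,\alpha})\oplus D_{2}^{*}(V_{3}^{s,\alpha}),
\]
where $\mathcal{H}_{2}=\Ker P_{2}$ is finite-dimensional and the three summands are closed and $L^{2}$-orthogonal, together with the Green operator $G_{2}\colon V_{2}^{s-1,\alpha}\to V_{2}^{s+1,\alpha}$ satisfying $P_{2}G_{2}=\id-\pi_{\mathcal{H}_{2}}$. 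Using density of smooth sections, continuity of $F^{s,\alpha}$, and closedness of $D_{1}(V_{1}^{s,\alpha})$, the hypothesis ${\rm Im}(F)\subset{\rm Im}(D_{1})$ at the smooth level upgrades to $F^{s,\alpha}(U^{s,\alpha})\subset D_{1}(V_{1}^{s,\alpha})$.

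The key step is to construct a bounded right inverse of $D_{1}$ on its image using $P_{2}$ alone, namely $R:=D_{1}^{*}\circ G_{2}$. For any $\xi=D_{1}u$ one has $\pi_{\mathcal{H}_{2}}\xi=0$, so
\[
\xi=P_{2}G_{2}\xi=D_{1}D_{1}^{*}G_{2}\xi+D_{2}^{*}D_{2}G_{2}\xi.
\]
The first term lies in ${\rm Im}(D_{1})$ while the second lies in ${\rm Im}(D_{2}^{*})$; the relation $D_{2}\circ D_{1}=0$ forces $L^{2}$-orthogonality of these two subspaces, so the second term must vanish and $D_{1}R\xi=\xi$. Consequently $R\circ D_{1}$ is a bounded projection on $V_{1}^{s,\alpha}$ whose kernel is $\Ker D_{1}^{s,\alpha}$ and whose image is a closed complement. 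Decomposing $v=v_{0}+v_{1}$ accordingly and writing $F(v)=D_{1}v+N(v)$ with $N(0)=0$, $dN(0)=0$ and $N(v)\in{\rm Im}(D_{1})$, the equation $F(v)=0$ becomes the fixed-point problem $v_{1}=-R\,N(v_{0}+v_{1})$; for small $v_{0}\in\Ker D_{1}^{s,\alpha}$ the right-hand side is a smooth contraction, yielding a unique smooth solution $v_{1}=v_{1}(v_{0})$ and identifying $\mathcal{M}_{X}^{s,\alpha}(\Phi)$ near $0_{X}$ with the graph of $v_{0}\mapsto v_{0}+v_{1}(v_{0})$, a smooth Banach submanifold with tangent space $\Ker D_{1}^{s,\alpha}$.

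For the second assertion I would combine the equivalence $F(v)=0\iff D_{1}^{*}F(v)=0$, which follows from $F(v)\in{\rm Im}(D_{1})$ together with the $L^{2}$-orthogonality $\Ker D_{1}^{*}\perp{\rm Im}(D_{1})$, with Lemma~\ref{s2.1l1}: the latter says $D_{1}^{*}\circ F(v)=0$ is a quasi-linear second-order PDE whose linearization at $v=0$ is $P_{1}=D_{1}^{*}\circ D_{1}$. When $P_{1}$ is elliptic, this quasi-linear equation remains elliptic for sufficiently small $v$ by continuity of the symbol, so a standard Schauder bootstrap promotes any $C^{s,\alpha}$ solution to a smooth one. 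This identifies $\mathcal{M}_{X}(\Phi)$ with $\mathcal{M}_{X}^{s,\alpha}(\Phi)$ locally near $0_{X}$, transferring the smoothness and the tangent-space description $T_{0_{X}}\mathcal{M}_{X}(\Phi)=\Ker D_{1}$.

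The main obstacle I expect lies in the right-inverse construction: one must verify carefully that Hodge theory for $P_{2}$ goes through in the H\"{o}lder category with the claimed mapping properties of $G_{2}$ and, conceptually, that ellipticity of $P_{2}$ alone (with no ellipticity assumption on any Laplacian farther along the complex) is what makes the cancellation $D_{2}^{*}D_{2}G_{2}\xi=0$ for $\xi\in{\rm Im}(D_{1})$ work out. A secondary technicality is to check that the quasi-linear equation in the second part stays elliptic under small $C^{s,\alpha}$ perturbations of $v$, so that Schauder regularity applies; this follows by continuity of the leading symbol in $v$.
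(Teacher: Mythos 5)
Your proposal is correct and follows essentially the same route as the paper: the right inverse $D_{1}^{*}\circ G$ built from Green's operator of $P_{2}$, the implicit-function/contraction argument over $\Ker D_{1}^{s,\alpha}$, and the upgrade to the smooth moduli space via the quasi-linear elliptic equation $D_{1}^{*}\circ F(v)=0$ of Lemma~\ref{s2.1l1} with Schauder/Morrey regularity. The only difference is presentational (you make explicit the orthogonality argument showing $D_{1}D_{1}^{*}G\xi=\xi$ on ${\rm Im}(D_{1})$, which the paper leaves implicit).
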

\begin{proof}
If $P_{2}$ is elliptic, then the map $D_{1}^{s,\alpha}:V_{1}^{s,\alpha}\to V_{2}^{s-1,\alpha}$ 
has a closed image ${\rm Im}(D_{1}^{s,\alpha})$ and 
there exists a right inverse of 
the map $D_{1}^{s,\alpha}:V_{1}^{s,\alpha}\to {\rm Im}(D_{1}^{s,\alpha})$. 
It follows from the assumption ${\rm Im}(F)\subset {\rm Im}(D_1)$ 
that ${\rm Im}(F^{s,\alpha})\subset {\rm Im}(D_1^{s,\alpha})$. 
Then we obtain the smooth map $F^{s,\alpha}:U^{s,\alpha}\to {\rm Im}(D_{1}^{s,\alpha})$ such that 
the derivative $d_{0}F:V_{1}^{s,\alpha}\to {\rm Im}(D_{1}^{s,\alpha})$ has the right inverse $D_1^*\circ G$ where $G$ is Green's operator of $P_2$. 
The implicit function theorem implies that 
there exist a neighbourhood $W$ of $0$ in ${\rm Ker}(D_{1}^{s,\alpha})$ and 
a smooth map $\varphi: W\to W^{\perp}$ where $W^{\perp}$ is the orthogonal complement of $W$ in $V_1^{s,\alpha}$ 
such that $\varphi(0)=0$ and $F(u,\varphi(u))=0$. 
Hence, $(F^{s,\alpha})^{-1}(0)$ has a manifold structure at $0_X$ 
whose tangent space is ${\rm Ker}(D_{1}^{s,\alpha})$. 
Moreover, if $P_{1}$ is elliptic, then ${\rm Ker}(D_{1}^{s,\alpha})$ coincides with 
the finite dimensional vector space ${\rm Ker}(D_{1})$ by the elliptic regularity method. 
We will show that the map $\varphi$ provides a manifold structure of $F^{-1}(0)$ at $0_X$. 
If we take the Taylor expansion 
\begin{equation}\label{s2.1e4}
F(v)=D_1(v)+R(v)
\end{equation}
of $F$ at $0$ for $v\in V_1^{s,\alpha}$ where $R(v)$ is the higher term with respect to $v$, 
then $D_1^*\circ F(v)$ is given by 
\[
D_1^*\circ F(v)=P_1(v)+D_1^*(R(v))
\]
and $\frac{D_1^*(R(v))}{\|v\|}\to 0$ as $v\to 0$ in $V_{1}^{s,\alpha}$. 
Hence $D_1^*\circ F(v)=0$ is a second order elliptic partial differential equation for any sufficiently small $v\in V_1^{s,\alpha}$. 
It follows form Lemma~\ref{s2.1l1} that $D_1^*\circ F(v)=0$ is quasi-linear elliptic, 
and the solution $v$ is $C^{\infty}$-class by Morrey's elliptic regularity results~\cite{Mo}. 
Now an element $(u,\varphi(u))$ of $V_1^{s,\alpha}$ for $u\in W$ is a solution of $D_1^*\circ F(v)=0$. 
Hence there exists a sufficiently small $\varepsilon >0$ such that $\varphi(u)$ is $C^{\infty}$-class 
for any $u\in W$ with $\|u\|_{C^{s,\alpha}}<\varepsilon$ 
since $\frac{D_1^*(R(u,\varphi(u)))}{\|u\|}\to 0$ as $u\to 0$ in $W$. 
Thus the map $\varphi$ provides a manifold structure of $F^{-1}(0)$ at $0_X$ whose tangent space is ${\rm Ker}(D_{1})$, 
and we finish the proof. 
\end{proof}

\begin{rem}
{\rm 
Under the assumption that $P_2$ is elliptic and ${\rm Im}(F)\subset {\rm Ker}(D_2)$, 
we take the Taylor expansion of $F(v)$ as in (\ref{s2.1e4}), and define 
the map $\Theta : V_1 \to V_1$ by 
\[
\Theta(v)=v+D_1^*\circ G(R(v))
\]
 for $v\in V_1$. 
Then, the condition $F(v)=0$ is equivalent to $D_1\circ \Theta (v)=0$ and $H_2(R(v))=0$ where $H_2(R(v))$ means the harmonic part of $R(v)$ with respect to $P_2$. 
The map $\Theta$ is so called {\it Kuranishi map}, and we can consider ${\rm Im}(F)\subset {\rm Im}(D_1)$ as the integrability condition 
since ${\rm Im}(F)\subset {\rm Im}(D_1)$ is equivalent to $H_2(R(v))=0$. 
}
\end {rem}

\subsection{Examples of $\Phi$-deformations}
In this section, we see some examples of $\Phi$-deformations and the moduli spaces. 
Let $X$ be a submanifold in $M$. 
We denote by $\wedge^k$ the vector space of all smooth differential $k$-forms on $X$. 

\subsubsection{Special Lagrangian submanifolds in Calabi-Yau manifolds}
We assume that $(M, \Omega, \omega)$ is a Calabi-Yau manifold 
of dimension $2n$ where $(\Omega, \omega)$ is a Calabi-Yau structure on $M$, 
that is, $\Omega$ is a holomorphic $n$-form and 
$\omega$ is a K\"{a}hler form satisfying the equation 
$\Omega\wedge \overline{\Omega}=c_{n}\omega^{n}$ for 
$c_{n}=\frac{1}{n!}(-1)^{\frac{(n-1)n}{2}}(\frac{2}{\sqrt{-1}})^{n}$. 
We call $X$ a {\it special Lagrangian submanifold} in $M$ 
if $X$ is the calibrated submanifold with respect to 
the real part $\Omega^{\rm Re}$ of $\Omega$. 
It is well known that $X$ is a special Lagrangian submanifold 
if and only if $X$ is an $n$-dimensional submanifold such that 
$\iota^{*}\Omega^{\rm Im}=\iota^{*}\omega=0$ where $\iota$ is the inclusion $\iota:X\hookrightarrow M$. 
Hence special Lagrangian deformations are $(\Omega^{\rm Im}, \omega)$-deformations and 
the moduli space $\mathcal{M}_{X}$ of special Lagrangian deformations is $\mathcal{M}_{X}(\Omega^{\rm Im}, \omega)$. 
The following result is provided by McLean : 
\begin{prop}(Theorem 3.6 \cite{Mc})\label{s2.2p1}
Let $X$ be a compact special Lagrangian submanifold in $M$. 
Then $\mathcal{M}_{X}$ is a smooth manifold of dimension $\dim(H^{1}(X))$. 
\end{prop}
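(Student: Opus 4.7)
The plan is to apply the general framework of Proposition~\ref{s2.1p1} to the system $\Phi = (\Omega^{\mathrm{Im}}, \omega)$. Since $X$ is Lagrangian and $\omega$ is nondegenerate, the map $v \mapsto \iota_v\omega|_X$ gives a vector bundle isomorphism $NX \cong T^*X$, which I would use to identify sections of $NX$ with $1$-forms on $X$. A Cartan-calculus computation using $d\omega = d\Omega = 0$ together with the pointwise identity $\iota_v\Omega^{\mathrm{Im}}|_X = *\alpha$ (coming from the special Lagrangian normalization $\Omega|_X = \mathrm{vol}_X$ and an orthonormal frame adapted to the Lagrangian splitting) yields the linearization
\[ D_1 : \wedge^{1} \longrightarrow \wedge^{n} \oplus \wedge^{2}, \qquad D_1(\alpha) = (d(*\alpha),\, d\alpha), \]
where $*$ is the Hodge star for $\iota^* g$. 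Since $*d*\alpha$ is a sign of $d^*\alpha$, the kernel of $D_1$ is the space of harmonic $1$-forms on $X$, of dimension $\dim H^1(X)$.

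To fit the hypotheses of Proposition~\ref{s2.1p1}, I would take $D_2 : \wedge^{n} \oplus \wedge^{2} \to \wedge^{3}$ by $(\beta,\gamma) \mapsto d\gamma$ (the $d\beta$ component is vacuous because $\wedge^{n+1} = 0$); then $D_2\circ D_1 = 0$ from $d^2 = 0$. A formal-adjoint calculation gives $D_1^*(\beta,\gamma) = \pm d(*\beta) + d^*\gamma$, so $P_1 = D_1^*D_1$ coincides up to sign with the Hodge Laplacian on $1$-forms and is elliptic. A principal symbol computation shows $P_2 = D_1D_1^* + D_2^*D_2$ is block upper triangular with $|\xi|^2 \, \mathrm{Id}$ on the diagonal, so $P_2$ is elliptic as well.

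The key condition to verify is $\mathrm{Im}(F) \subset \mathrm{Im}(D_1)$. Since $\omega$ and $\Omega^{\mathrm{Im}}$ are closed and $\{\exp_{tv}\}_{t\in[0,1]}$ provides a smooth homotopy from $\iota$ to $\exp_v$, each component of $F(v) = (\exp_v^*\Omega^{\mathrm{Im}}, \exp_v^*\omega)$ is closed and cohomologous to $\iota^*\Phi = 0$, hence exact. It remains to show every pair $(\mu,\nu)$ of exact forms lies in $\mathrm{Im}(D_1)$: I would first solve $d\alpha_1 = \nu$ using Hodge theory on $\wedge^{1}$, then solve the scalar Poisson equation $d*df = \mu - d*\alpha_1$ on $X$ (whose right-hand side is exact, so the integrability condition on the compact manifold $X$ is satisfied) to find $f \in C^\infty(X)$, and set $\alpha = \alpha_1 + df$, so that $d\alpha = \nu$ and $d*\alpha = \mu$.

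With all hypotheses verified, Proposition~\ref{s2.1p1} delivers smoothness of $\mathcal{M}_X$ at $0_X$ with tangent space $\ker D_1 \cong \mathcal{H}^1(X)$ of dimension $\dim H^1(X)$; applying the same argument at any other point in $\mathcal{M}_X$ (which is itself special Lagrangian) gives the global smooth manifold structure. The main obstacle I expect is the pointwise calibration identity $\iota_v\Omega^{\mathrm{Im}}|_X = *\alpha$, which pins down the exact form of the linearization, together with the Hodge-theoretic surjectivity of $D_1$ onto exact pairs; both are essentially linear-algebraic but are what tie the general framework of Proposition~\ref{s2.1p1} to the special Lagrangian geometry.
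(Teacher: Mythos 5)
Your proposal is correct and follows essentially the same route as the paper: the paper's map $F(v)=(*\exp_v^*\Omega^{\rm Im},\exp_v^*\omega)$ differs from yours only by applying the Hodge star to the first component (so $D_1(\alpha)=(d^*\alpha,d\alpha)$ lands in $\wedge^0\oplus\wedge^2$ and $P_2=(\Delta_0,\Delta_2)$ is diagonal rather than triangular), and the paper verifies ${\rm Im}(F)\subset{\rm Im}(D_1)$ by orthogonality to $\Ker P_2\oplus{\rm Im}(D_2^*)$ in the Hodge decomposition, which is equivalent to your explicit solution of $d\alpha=\nu$, $d{*}\alpha=\mu$ for exact pairs. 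No gaps.
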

\begin{proof}
We take the set $U$ as in $(\ref{s2.1e2})$ and define the map $F:U\to \wedge^{0}\oplus \wedge^{2}$ by 
\begin{equation*}\label{s2.2e2}
F(v)=(*\exp_{v}^{*}\Omega^{\rm Im},\exp_{v}^{*}\omega)
\end{equation*}
for $v\in U$ where $*$ means the Hodge star operator 
with respect to the metric $\iota^{*}g$ on $X$. 
Then we can regard $F^{-1}(0)$ as a set of special Lagrangian submanifolds 
in $M$ which is near $X$ in $C^1$ topology. 
To see the infinitesimal deformation of $X$, 
we consider the linearization $d_{0}F$ of $F$ at the origin $0\in U$. 
It follows that 
\begin{eqnarray*}\label{s2.2e3}
d_{0}F(v)&=&(*\iota^{*}L_{\tilde{v}}\Omega^{\rm Im}, \iota^{*}L_{\tilde{v}}\omega )\\
&=&(*d\iota^{*}(i_{\tilde{v}}\Omega^{\rm Im}), d\iota^{*}(i_{\tilde{v}}\omega))\\
&=&(d^{*}\iota^{*}(i_{\tilde{v}}\omega), d\iota^{*}(i_{\tilde{v}}\omega))
\end{eqnarray*}
for $v\in U$ where $\tilde{v}$ is an extension of $v$ to $M$.  
In the last equation, we use that 
$\iota^{*}(i_{\tilde{v}}\Omega^{\rm Im})=-*\iota^{*}(i_{\tilde{v}}\omega)$. 
We remark that the differential form $\iota^{*}(i_{\tilde{v}}\omega)=i_{v}\omega$ on $X$ 
is independent of any extension of $v$. 
Under the identification $\Gamma(NX)\simeq \wedge^{1}$ given by $v\mapsto i_{v}\omega$, 
we identify $d_{0}F$ with the map 
$D_{1}:\wedge^{1}\to\wedge^{0}\oplus \wedge^{2}$ defined by  
\begin{equation*}\label{s2.2e4}
D_{1}(\alpha)=(d^{*}\alpha, d\alpha)
\end{equation*}
for $\alpha\in\wedge^1$. Then it turns out that 
\begin{equation*}
{\rm Ker}(D_{1})=\{\alpha\in\wedge^{1} \mid d^{*}\alpha=d\alpha=0\}=\mathcal{H}^{1}(X)
\end{equation*}
where $\mathcal{H}^{k}(X)$ means the set of harmonic $k$-forms on $X$. 
We provide the differential complex 
\begin{equation*}\label{s2.2e5}
0\to \wedge^{1}\stackrel{D_{1}}{\longrightarrow}
\wedge^{0}\oplus \wedge^{2}\stackrel{D_{2}}{\longrightarrow}
\wedge^{3}\to 0 
\end{equation*}
where the operator $D_{2}$ is given by 
\begin{equation*}
D_{2}(f,\beta)=d\beta
\end{equation*}
for $(f,\beta)\in\wedge^{0}\oplus \wedge^{2}$. 
Since the dual operators $D_{1}^{*}$ and $D_{2}^{*}$ are given by 
$D_{1}^{*}(f,\beta)=df+d^{*}\beta$ and $D_{2}^{*}(\gamma)=(0,d^{*}\gamma)$,  
we have 
\begin{eqnarray*}
P_{1}(\alpha)&=&\Delta_{1}\alpha,\\
P_{2}(f,\beta)&=&(\Delta_{0}f, \Delta_{2}\beta)
\end{eqnarray*}
where $\Delta_{i}$ is the ordinary Laplace operator on 
$\wedge^{i}$ for $i=0,1$ and $2$. 
Thus $P_{1}$ and $P_{2}$ are elliptic. 
The image ${\rm Im}(F)$ of the map $F$ is included in $d^{*}\wedge^{1}\oplus d\wedge^{1}$ 
since $[f_{t}^{*}\Omega^{\rm Im}]=[\iota^{*}\Omega^{\rm Im}]=0$ and 
$[f_{t}^{*}\omega]=[\iota^{*}\omega]=0$. 
It is clear that $d^{*}\wedge^{1}\oplus d\wedge^{1}$ is perpendicular 
to $\Ker{P_{2}}=\mathcal{H}^{0}(X)\oplus \mathcal{H}^{2}(X)$ and 
${\rm Im}(D^{*}_{2})=\{0\}\oplus d^{*}\wedge^{3}$. 
Therefore ${\rm Im}(F)$ is also perpendicular to 
$\Ker{P_{2}}\oplus {\rm Im}(D^{*}_{2})$ and we obtain ${\rm Im}(F)\subset {\rm Im}(D_{1})$ by the Hodge decomposition 
$\wedge^{0}\oplus \wedge^{2}=\Ker{P_{2}}\oplus {\rm Im}(D_{1})\oplus {\rm Im}(D_{2}^{*})$. 
It follows from Proposition~\ref{s2.1p1} that $\mathcal{M}_{X}$ is smooth at $0_X$. 
We can show that $\mathcal{M}_{X}$ is smooth at any point by repeating the argument 
for each special Lagrangian submanifold. Hence we finish the proof. 
\end{proof}

\subsubsection{Coassociative submanifolds in $G_{2}$ manifolds}
We assume that $(M, g, \varphi)$ is a $G_{2}$ manifold 
where $\varphi$ is an associative $3$-form on $M$. 
We call $X$ a {\it coassociative submanifold} in $M$ 
if $X$ is calibrated submanifold with respect to 
the Hodge dual $*\varphi$ of $\varphi$ 
where $*$ is the Hodge star operator with respect to the metric $g$ on $M$. 
An $n$-dimensional submanifold $\iota:X\hookrightarrow M$ is a coassociative submanifold 
if and only if $\iota^{*}\varphi=0$. 
Hence coassociative deformations are $\varphi$-deformations and 
the moduli space $\mathcal{M}_{X}$ of coassociative deformations is $\mathcal{M}_{X}(\varphi)$. 
\begin{prop}(Theorem 4.5. \cite{Mc})\label{s2.3p1}
Let $X$ be a compact coassociative submanifold in $M$. 
Then $\mathcal{M}_{X}$ is a smooth manifold of dimension $\dim(H^{2}_{-}(X))$. 
\end{prop}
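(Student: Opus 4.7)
The plan is to apply Proposition~\ref{s2.1p1} in close analogy with the proof of Proposition~\ref{s2.2p1}, with the single $3$-form $\varphi$ playing the role of $(\Omega^{\rm Im},\omega)$. The first step is to invoke the standard algebraic fact in $G_2$-geometry that, at every point of a coassociative submanifold $X$, the map $v\mapsto \iota^{*}(i_{\tilde v}\varphi)$ induces a bundle isomorphism $NX\simeq \wedge^{2}_{-}$ onto the bundle of anti-self-dual $2$-forms on $X$. Taking the tubular neighbourhood set $U$ of (\ref{s2.1e2}), I would define
\begin{equation*}
F:U\to \wedge^{3},\qquad F(v)=\exp_{v}^{*}\varphi,
\end{equation*}
so that $F^{-1}(0)$ models $\mathcal{M}_X$ near $0_X$. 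Since $\varphi$ is closed and $\iota^{*}\varphi=0$, the linearization at $0$ is
\begin{equation*}
d_{0}F(v)=\iota^{*}L_{\tilde v}\varphi=d\iota^{*}(i_{\tilde v}\varphi)=d\alpha,
\end{equation*}
which under $NX\simeq \wedge^{2}_{-}$ becomes the operator $D_{1}:\wedge^{2}_{-}\to \wedge^{3}$, $\alpha\mapsto d\alpha$. Its kernel is the space $\mathcal{H}^{2}_{-}(X)$ of harmonic anti-self-dual $2$-forms, which is isomorphic to $H^{2}_{-}(X)$ by Hodge theory.

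To fit $D_1$ into the hypothesis of Proposition~\ref{s2.1p1}, I would use the differential complex
\begin{equation*}
0\to \wedge^{2}_{-}\stackrel{D_1=d}{\longrightarrow}\wedge^{3}\stackrel{D_2=d}{\longrightarrow}\wedge^{4}\to 0,
\end{equation*}
which is a complex because $d^{2}=0$. Ellipticity is a short linear algebra check at the symbol level: for $\xi\ne 0$, working in an orthonormal coframe one verifies that $\xi\wedge:\wedge^{2}_{-}\to \wedge^{3}$ is injective, its $3$-dimensional image coincides with $\Ker(\xi\wedge:\wedge^{3}\to \wedge^{4})$, and $\xi\wedge:\wedge^{3}\to \wedge^{4}$ is surjective. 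Hence the symbol sequence is exact, and both $P_1=D_1^{*}D_1$ and $P_2=D_1D_1^{*}+D_2^{*}D_2$ are elliptic.

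The remaining ingredient is the inclusion ${\rm Im}(F)\subset {\rm Im}(D_1)$. Since $\varphi$ is closed, $F(v)$ is a closed $3$-form in the de Rham class of $\iota^{*}\varphi=0$, hence exact. One then uses that every exact $3$-form on a compact oriented Riemannian $4$-manifold lies in $d\wedge^{2}_{-}$: writing $F(v)=d\beta$ with $\beta=\beta_{+}+\beta_{-}$, the Hodge decomposition $\wedge^{2}_{+}=\mathcal{H}^{2}_{+}\oplus d_{+}\wedge^{1}$ from the ASD elliptic complex gives $\beta_{+}=h_{+}+d_{+}\theta$ with $dh_{+}=0$, so that $\alpha_{-}=\beta_{-}-d_{-}\theta\in \wedge^{2}_{-}$ satisfies $d\alpha_{-}=d\beta=F(v)$. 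Proposition~\ref{s2.1p1} then yields that $\mathcal{M}_X$ is smooth at $0_X$ with tangent space $\Ker D_1\simeq H^{2}_{-}(X)$, and the argument applied at each nearby coassociative submanifold gives the global smooth manifold structure of dimension $\dim H^{2}_{-}(X)$. The main technical point I expect to require care is the verification of the image condition, which rests on the interplay between exact $3$-forms and the self-dual/anti-self-dual splitting in dimension~$4$; the rest is a mechanical translation of the special Lagrangian proof.
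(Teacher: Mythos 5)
Your proposal is correct and follows essentially the same route as the paper: the same map $F(v)=\exp_v^*\varphi$, the same identification $NX\simeq\wedge^2_-$, the same complex $0\to\wedge^2_-\xrightarrow{d}\wedge^3\xrightarrow{d}\wedge^4\to 0$, and the same appeal to Proposition~\ref{s2.1p1}. The only cosmetic differences are that you verify ellipticity by a direct symbol computation where the paper uses the Hodge-star isomorphism onto the complex $0\to\wedge^2_-\xrightarrow{d^*}\wedge^1\xrightarrow{d^*}\wedge^0\to 0$, and that you spell out the proof of $d\wedge^2=d\wedge^2_-$, which the paper merely asserts.
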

\begin{proof}
We take the set $U$ as in $(\ref{s2.1e2})$ and define the map $F:U\to \wedge^{3}$ by 
\begin{equation*}\label{s2.3e2}
F(v)=\exp_{v}^{*}\varphi 
\end{equation*}
for $v\in U$, then we can regard $F^{-1}(0)$ as a set of coassociative submanifolds 
in $M$ which is near to $X$. 
To see the first order deformation of $X$, 
we consider the linearization $d_{0}F$ of $F$ at the origin $0\in U$. 
It follows that for $v\in U$ 
\begin{equation*}\label{s2.3e3}
d_{0}F(v)=\iota^{*}L_{\tilde{v}}\varphi=d\iota^{*}(i_{\tilde{v}}\varphi)
\end{equation*}
where $\tilde{v}$ is an extension of $v$ to $M$. 
Let $\wedge^{2}_{-}$ be the set of anti-self dual $2$-forms on $X$. 
Under the identification $\Gamma(NX)\simeq \wedge^{2}_{-}$ given by $v\mapsto i_{v}\varphi$, 
we can consider $d_{0}F$ as the map 
$D_1:\wedge^{2}_{-}\to \wedge^{3}$ given by 
\begin{equation*}\label{s2.3e5}
D_1(\alpha)=d\alpha 
\end{equation*}
for $\alpha\in\wedge^2_{-}$. 
Then it turns out that 
\begin{equation*}
{\rm Ker}(D_{1})=\{\alpha\in\wedge^{2}_{-} \mid d\alpha=0\}=\mathcal{H}^{2}_{-}(X)
\end{equation*}
where $\mathcal{H}^{2}_{-}(X)$ is the set of harmonic anti-self dual $2$-forms on $X$. 
Now we provide a complex as follows 
\begin{equation*}\label{s2.3e6}
0\to \wedge^{2}_{-}\stackrel{D_{1}}{\longrightarrow}
\wedge^{3}\stackrel{D_{2}}{\longrightarrow}
\wedge^{4}\to 0 \tag{$\sharp$}
\end{equation*}
where the operator $D_{2}$ is given by 
\begin{equation*}
D_{2}(\beta)=d\beta
\end{equation*}
for $\beta\in\wedge^{3}$. It is easy to see that  
\begin{eqnarray*}
P_{1}(\alpha)&=&d^{*}d\alpha,\\
P_{2}(\beta)&=&(d(d^{*})_{-}+d^{*}d)\beta
\end{eqnarray*}
where $(d^{*})_{-}$ is the composition $p_{-}\circ d^{*}$ of $d^{*}$ and 
the projection $p:\wedge^{2}\to\wedge^{2}_{-}$.  
The complex (\ref{s2.3e6}) is isomorphic to the elliptic complex 
\begin{equation*}\label{s2.3e8}
0\to \wedge^{2}_{-}\stackrel{d^{*}}{\longrightarrow}
\wedge^{1}\stackrel{d^{*}}{\longrightarrow}
\wedge^{0}\to 0  
\end{equation*}
by the Hodge star operator with respect to the metric $\iota^*g$ on $X$. 
Hence the operators $P_{1}$ and $P_{2}$ are elliptic. 
It follows that the image ${\rm Im}(F)$ is included in ${\rm Im}(D_1)=d\wedge^{2}_{-}$ 
from  $f_{t}^{*}\varphi\in d\wedge^{2}$ and $d\wedge^{2}=d\wedge^{2}_{-}$. 
Proposition~\ref{s2.1p1} implies that $\mathcal{M}_{X}$ is smooth at $0_X$. 
We can show that $\mathcal{M}_{X}$ is smooth at any element by repeating the argument 
for each coassociative submanifold. 
Hence we finish the proof. 
\end{proof}

\subsubsection{Special Legendrian submanifolds in contact Calabi-Yau manifolds}
Let $M$ be a $(2n+1)$-dimensional manifold. 
A pair $(\psi, \eta)$ of two differential forms 
is called a {\it contact Calabi-Yau structure} on $M$ 
if $\eta$ is a contact $1$-form and $\psi$ is a $d$-closed complex valued $n$-form on $M$ 
such that $(\psi, \frac{1}{2}d\eta)$ is an almost transverse Calabi-Yau structure 
with respect to the Reeb foliation (we refer to Section 3.2 for the definition of almost transverse Calabi-Yau structures). 
On a contact Calabi-Yau manifold $(M, \psi, \eta)$, 
a calibrated submanifold $X$ with respect to the calibration $\psi^{\rm Re}$ 
is called a {\it special Legendrian submanifold}. 
Then $X$ is a special Legendrian submanifold if and only if 
$\iota^{*}\psi^{\rm Im}=\iota^{*}\eta=0$. 
Hence special Legendrian deformations are $(\psi^{\rm Im}, \eta)$-deformations and 
the moduli space $\mathcal{M}_{X}$ of special Legendrian deformations is $\mathcal{M}_{X}(\psi^{\rm Im}, \eta)$. 
Tomassini and Vezzoni showed the following result : 
\begin{prop}(Theorem 4.5. \cite{TV})\label{s2.4p1}
Let $X$ be a compact special Legendrian submanifold in $M$. 
Then $\mathcal{M}_{X}$ is a smooth manifold of dimension $\dim(H^{0}(X))$. 
\end{prop}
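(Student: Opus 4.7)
The plan is to follow the general recipe of Proposition~\ref{s2.1p1}, paralleling the special Lagrangian argument of Proposition~\ref{s2.2p1}. Two new complications come from the contact setting: $\eta$ is not closed on $M$, so $\exp_v^*\eta$ carries no preferred cohomology class, and the special Lagrangian-type identity between $\psi^{\mathrm{Im}}$ and the symplectic form only holds on the contact distribution. To handle both I enlarge the target of $F$ by the redundant form $\omega^T := \tfrac{1}{2}d\eta$ and split the normal bundle along the Reeb direction.

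Set $V_2 := \wedge^0(X)\oplus\wedge^2(X)\oplus\wedge^1(X)$ and define
\[
F(v) := \bigl(\ast\exp_v^*\psi^{\mathrm{Im}},\ \exp_v^*\omega^T,\ \exp_v^*\eta\bigr), \qquad F: U \to V_2,
\]
with $U$ as in $(\ref{s2.1e2})$. Since $\exp_v^*\eta=0$ forces $\exp_v^*\omega^T=0$, the zero set $F^{-1}(0)$ is still $\mathcal{M}_X$. The Reeb splitting $NX = \mathbb{R}\xi\oplus NX^T$, together with the nondegeneracy of $d\eta$ on $\ker\eta$, gives $\Gamma(NX)\cong \wedge^0(X)\oplus\wedge^1(X)$ via $v = f\xi + v^T \mapsto (f,\alpha)$ with $\alpha := \iota^*(i_{v^T}d\eta)$. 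Applying Cartan's formula together with $d\psi=0$, $d\omega^T=0$, $i_\xi\psi=0$, $i_\xi d\eta=0$, and McLean's pointwise identity applied to the contact distribution, the linearization takes the form (up to positive constants absorbed in the identifications)
\[
D_1(f,\alpha) = \bigl(d^*\alpha,\ \tfrac{1}{2}d\alpha,\ df+\alpha\bigr).
\]
Setting $D_1(f,\alpha)=0$ forces $\alpha=-df$ and $\Delta_0 f = 0$, so $f$ is constant and $\alpha=0$; hence $\ker D_1 \cong H^0(X)$, which gives the stated dimension.

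Next, define $D_2:V_2\to\wedge^2\oplus\wedge^3$ by $D_2(g,\gamma,\beta) := (d\beta - 2\gamma,\, d\gamma)$; a direct check gives $D_2\circ D_1 = 0$. A principal-symbol computation (decomposing $1$- and $2$-forms along $\xi$ and $\xi^\perp$) shows that $P_1 = D_1^*D_1$ and $P_2 = D_1 D_1^* + D_2^*D_2$ are both elliptic, with principal-symbol eigenvalues in $\{|\xi|^2,\tfrac{1}{4}|\xi|^2\}$. For $\mathrm{Im}(F)\subset\mathrm{Im}(D_1)$, the Hodge decomposition of $V_2$ induced by $P_2$ reduces matters to $F(v)\perp\ker P_2$ and $F(v)\perp\mathrm{Im}(D_2^*)$. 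The first holds because $\int_X \exp_v^*\psi^{\mathrm{Im}} = \int_{X_v}\psi^{\mathrm{Im}} = 0$ by closedness of $\psi$ and homology invariance, which forces orthogonality to the constant sitting in the first slot of $\ker P_2$. The second is immediate from the pullback identities $d\exp_v^*\eta = 2\exp_v^*\omega^T$ and $d\exp_v^*\omega^T=0$, which together give $D_2(F(v))\equiv 0$. Proposition~\ref{s2.1p1} then produces the manifold structure at $0_X$, and repeating the argument at every special Legendrian finishes the proof.

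The hardest step is the principal-symbol computation for $P_2$: the symbol is not a scalar multiple of the identity, and the coefficients $\tfrac{1}{2}$ in $D_1$ (inherited from $\omega^T = \tfrac{1}{2}d\eta$) and $-2$ in $D_2$ (forced by $D_2\circ D_1=0$) must align exactly so that the mixing between the $\xi$- and $\xi^\perp$-components of $\alpha$, $\beta$, $\gamma$ telescopes to an invertible operator rather than leaving a residual kernel.
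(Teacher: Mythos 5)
Your proposal is correct and follows essentially the same route as the paper: you augment $\Phi$ by the redundant form $\omega^T=\tfrac12 d\eta$, use the identification $NX\simeq\wedge^0\oplus\wedge^1$ via the Reeb splitting, obtain the same operators $D_1$, $D_2$ (up to a reordering of the factors of $V_2$ and a harmless factor-of-two renormalization of $\alpha$), and verify $\mathrm{Im}(F)\subset\mathrm{Im}(D_1)$ through the Hodge decomposition before invoking Proposition~\ref{s2.1p1}. The only cosmetic difference is that you check orthogonality to $\Ker P_2$ and $\mathrm{Im}(D_2^*)$ separately (using $[\exp_v^*\psi^{\mathrm{Im}}]=0$ and $D_2\circ F\equiv 0$), whereas the paper checks orthogonality to $\Ker D_1^*=\Ker P_2\oplus\mathrm{Im}(D_2^*)$ directly; these are equivalent.
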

\begin{proof}
We remark the moduli space $\mathcal{M}_{X}$ is $\mathcal{M}_{X}(\psi^{\rm Im}, \eta, \frac{1}{2}d\eta)$ 
since $\iota^{*}\eta=0$ is equal to $\iota^{*}\eta=\iota^{*}(\frac{1}{2}d\eta)=0$. 
We take the set $U$ as in $(\ref{s2.1e2})$ and define the map $F:U\to \wedge^{0}\oplus \wedge^{1}\oplus \wedge^{2}$ by 
\begin{equation*}\label{s2.4e1}
F(v)=(*\exp_{v}^{*}\psi^{\rm Im},\exp_{v}^{*}\eta, \exp_{v}^{*}\frac{1}{2}d\eta)
\end{equation*}
for $v\in U$, then we can regard $F^{-1}(0)$ as a set of special Legendrian submanifolds 
in $M$ which is near $X$. 
It follows that for $v\in U$ 
\begin{eqnarray*}\label{s2.4e2}
d_{0}F(v)&=&(*\iota^{*}L_{\tilde{v}}\psi^{\rm Im}, 
\iota^{*}L_{\tilde{v}}\eta, \frac{1}{2}\iota^{*}L_{\tilde{v}}d\eta)\\
&=&(*d\iota^{*}(i_{\tilde{v}}\psi^{\rm Im}), 
d\iota^{*}(i_{\tilde{v}}\eta+i_{\tilde{v}}d\eta), \frac{1}{2}d\iota^{*}(i_{\tilde{v}}d\eta))\\
&=&(\frac{1}{2}d^{*}\iota^{*}(i_{\tilde{v}}d\eta), 
d\iota^{*}(i_{\tilde{v}}\eta)+\iota^{*}(i_{\tilde{v}}d\eta), \frac{1}{2}d\iota^{*}(i_{\tilde{v}}d\eta))
\end{eqnarray*}
where $\tilde{v}$ is an extension of $v$ to $M$. 
In the last equation, we use that 
$\iota^{*}(i_{\tilde{v}}\psi^{\rm Im})=-\frac{1}{2}*\iota^{*}(i_{\tilde{v}}d\eta)$. 
There exists the identification $\Gamma(NX)\simeq \wedge^{0}\oplus\wedge^{1}$ 
given by $v\mapsto (i_{v}\eta, \frac{1}{2}i_{v}d\eta)$. 
Under the above identification, 
we can consider $d_{0}F$ as the map 
$D_1:\wedge^{0}\oplus\wedge^{1}
\to\wedge^{0}\oplus \wedge^{1}\oplus \wedge^{2}$ given by  
\begin{equation*}\label{s2.4e4}
D_1(f,\alpha)=(d^{*}\alpha, df+2\alpha, d\alpha)
\end{equation*}
for $(f,\alpha)\in\wedge^{0}\oplus\wedge^{1}$. Then it turns out that 
\begin{eqnarray*}
{\rm Ker}(D_{1})&=&\{(f,\alpha)\in\wedge^{0}\oplus\wedge^{1} 
\mid d^{*}\alpha=df+2\alpha=0\} \\
&=&\{(f,-\frac{1}{2}df)\in\wedge^{0}\oplus\wedge^{1} \mid \Delta_{0}f=0 \} \\
&=&\{(f,0)\in\wedge^{0}\oplus\wedge^{1} \mid f\in H^{0}(X) \}
\simeq H^{0}(X). 
\end{eqnarray*}
Now we provide a complex as follows 
\begin{equation*}\label{s2.4e5}
0\to \wedge^{0}\oplus\wedge^{1}\stackrel{D_{1}}{\longrightarrow}
\wedge^{0}\oplus \wedge^{1}\oplus \wedge^{2}\stackrel{D_{2}}{\longrightarrow}
\wedge^{2}\oplus \wedge^{3}\to 0 
\end{equation*}
where the operator $D_{2}$ is given by 
\begin{equation*}
D_{2}(f,\alpha,\beta)=(d\alpha-2\beta, d\beta)
\end{equation*}
for $(f, \alpha, \beta)\in\wedge^{0}\oplus\wedge^{1}\oplus \wedge^{2}$.
Since $D_{1}^{*}(f,\alpha,\beta)=(d^{*}\alpha, df+2\alpha+d^{*}\beta)$ 
and $D_{2}^{*}(\beta,\gamma)=(0,d^{*}\beta, -2\beta+d^{*}\gamma)$, we have 
\begin{eqnarray*}
P_{1}(f,\alpha)&=&(\Delta_{0}f+2d^{*}\alpha, (\Delta_{1}+4)\alpha+2df),\\
P_{2}(f,\alpha, \beta)&=&(\Delta_{0}f+2d^{*}\alpha, 
(\Delta_{1}+4)\alpha+2df, (\Delta_{2}+4)\beta).
\end{eqnarray*}
Hence $P_{1}$ and $P_{2}$ are elliptic. 
The image ${\rm Im}(F)$ of the map $F$ is included in 
\begin{equation*}
\{(d^{*}h,\alpha, \frac{1}{2}d\alpha)\in \wedge^{0}\oplus \wedge^{1}\oplus \wedge^{2} 
\mid h\in\wedge^{0}, \alpha\in \wedge^{1}\}
\end{equation*}
which is perpendicular to the kernel 
\begin{equation*}
\Ker{D_{1}^{*}}=\{(f,-\frac{1}{2}d^{*}\beta, \beta)\in \wedge^{0}\oplus 
\wedge^{1}\oplus \wedge^{2} \mid f\in H^{0}(X)\}
\end{equation*} 
of the operator $D_{1}^{*}$. 
It follows from $\Ker{D_{1}^{*}}=\Ker{P_{2}}\oplus {\rm Im}(D_{2}^{*})$ that 
${\rm Im}(F)\perp\Ker{P_{2}}\oplus {\rm Im}(D_{2}^{*})$. 
Hence we obtain ${\rm Im}(F)\subset {\rm Im}(D_{1})$ by the Hodge decomposition 
$\wedge^{0}\oplus \wedge^{2}=\Ker{P_{2}}\oplus {\rm Im}(D_{1})\oplus {\rm Im}(D_{2}^{*})$. 
It follows from Proposition~\ref{s2.1p1} that $\mathcal{M}_{X}$ is smooth at $0_X$. 
We can show that $\mathcal{M}_{X}$ is smooth by repeating the argument 
for any special Legendrian submanifold. Hence we finish the proof. 
\end{proof}

\subsubsection{Legendrian submanifolds in contact manifolds}
Let $(M,\eta)$ be a $(2n+1)$-dimensional contact manifold 
with a contact $1$-form $\eta$. 
A Legendrian submanifold is defined by a submanifold $\iota:X\hookrightarrow M$ such that $\iota^{*}\eta=0$. 
Hence the space $\mathcal{M}_{X}(\eta)$ is the moduli space $\mathcal{M}_{X}$ of Legendrian deformations of $X$. 
The following result is well known as a consequence of 
the Darboux-Weinstein's neighborhood theorem for Legendrian submanifolds in contact geometry. 
\begin{prop}\label{s2.2.4p1}
Let $X$ be a compact Legendrian submanifold in $M$. 
Then $\mathcal{M}_{X}^{s,\alpha}$ is a smooth manifold. 
The tangent space $T_{0}\mathcal{M}_{X}^{s,\alpha}$ 
is isomorphic to the graph 
$\{(f,df)\in C^{s,\alpha}(\wedge^{0}\oplus\wedge^{1}) \}$ of the exterior derivative $d$. 
\end{prop}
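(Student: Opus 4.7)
The plan is to exploit the Darboux--Weinstein neighborhood theorem for Legendrian submanifolds. First I would invoke this theorem to obtain a contactomorphism $\Psi$ from a tubular neighborhood $\mathcal{U}$ of $X$ in $(M,\eta)$ onto a neighborhood of the zero section of the $1$-jet bundle $J^{1}(X,\mathbb{R})\cong \mathbb{R}\oplus T^{*}X$, equipped with the standard contact $1$-form $\eta_{0}=dz-\lambda$ (where $z$ is the fiber coordinate of the $\mathbb{R}$-factor and $\lambda$ is the tautological $1$-form on $T^{*}X$), such that $\Psi$ sends $X$ to the zero section. The derivative of $\Psi$ along $X$ then provides a canonical bundle isomorphism $NX\cong \mathbb{R}\oplus T^{*}X$, under which a normal section $v$ corresponds to a pair $(f,\alpha)$ with $f\in C^{s,\alpha}(\wedge^{0})$ and $\alpha\in C^{s,\alpha}(\wedge^{1})$.

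Next I would transport the deformation $\exp_{v}:X\to M$ through $\Psi$ into the graph
\[
\sigma_{(f,\alpha)}:X\to J^{1}(X,\mathbb{R}),\qquad x\mapsto (x,f(x),\alpha(x)),
\]
and compute directly
\[
\sigma_{(f,\alpha)}^{*}\eta_{0}=df-\alpha.
\]
Since $\Psi$ is a contactomorphism, the perturbed submanifold $\exp_{v}(X)$ is Legendrian in $M$ if and only if $\sigma_{(f,\alpha)}^{*}\eta_{0}=0$, i.e.\ $\alpha=df$. Consequently, a neighbourhood of $0_{X}$ in $\mathcal{M}_{X}^{s,\alpha}$ is canonically parameterized by the closed linear subspace $\{(f,df)\}\subset C^{s,\alpha}(\wedge^{0}\oplus\wedge^{1})$, which supplies both a smooth chart and the identification of $T_{0_{X}}\mathcal{M}_{X}^{s,\alpha}$ with the graph of the exterior derivative.

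The only non-trivial ingredient is the Darboux--Weinstein neighborhood theorem itself, which is a classical consequence of the contact version of Moser's trick; once it is in hand, everything else reduces to the elementary observation that the Legendrian sections of the $1$-jet bundle are exactly the $1$-jets of smooth functions. In contrast to the examples treated through Proposition~\ref{s2.1p1}, no elliptic theory is needed here: the operator $D_{1}(f,\alpha)=df-\alpha$ that linearizes the Legendrian condition has a non-elliptic symbol, and it is precisely the explicit contact Darboux chart that replaces the Hodge-theoretic machinery used in the other examples.
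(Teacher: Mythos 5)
Your proof is correct, but it takes a genuinely different route from the one written in the paper. The paper does not actually invoke the Darboux--Weinstein theorem in its proof (it only cites it as the reason the result is ``well known''); instead it runs the proposition through its own $\Phi$-deformation machinery: it sets $F(v)=(\exp_v^*\eta,\tfrac12\exp_v^*d\eta)$, identifies $NX\simeq\wedge^0\oplus\wedge^1$ via $v\mapsto(i_v\eta,\tfrac12 i_v d\eta)$, computes $D_1(f,\alpha)=(df+2\alpha,d\alpha)$, builds the complex with $D_2(\alpha,\beta)=(d\alpha-2\beta,d\beta)$ so that $P_2=D_1D_1^*+D_2^*D_2$ is elliptic with trivial kernel, verifies ${\rm Im}(F)\subset{\rm Im}(D_1)$, and applies Proposition~\ref{s2.1p1}; the tangent space then comes out as $\{(f,-\tfrac12 df)\}$, which is isomorphic to the graph of $d$ (the factor $-\tfrac12$ is an artifact of the chosen identification of $NX$, so your $\{(f,df)\}$ and the paper's answer agree up to that rescaling). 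Your jet-bundle argument is more elementary and stronger in one respect: it produces an honest global chart (an affine subspace of a Banach space) rather than an implicit-function-theorem chart, and it needs no elliptic theory --- which matters here because $P_1$ is \emph{not} elliptic for this example, so the paper's method cannot upgrade to the $C^\infty$ moduli space. What the paper's route buys is uniformity: the same Proposition~\ref{s2.1p1} template is reused for $\mathcal{N}_X$, $\mathcal{N}_X^T$, and the special Lagrangian and coassociative examples, and the explicit operators $D_1$, $D_2$ are what get intersected with the $\mathcal{N}_X$ data later in Theorems~\ref{s4.3t1} and~\ref{s5.3t1}. Two small points you should tighten: the normal-exponential parameterization $\exp_v$ and the jet-graph parameterization $\sigma_{(f,\alpha)}$ are two different charts on the same set of nearby submanifolds, so the statement ``$\exp_v(X)$ is Legendrian iff $\alpha=df$'' holds only after composing with the (smooth) change of chart; and one should note that every $C^1$-close submanifold is in fact a graph in the jet-bundle model, and that $(tf,d(tf))$ provides the required Legendrian isotopy back to $0_X$ so that the graph really parameterizes the moduli space of \emph{deformations} as defined in the paper.
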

\begin{proof}
We remark that $\mathcal{M}_{X}=\mathcal{M}_{X}(\eta, \frac{1}{2}d\eta)$ 
since $\iota^{*}\eta=0$ is equal to $\iota^{*}\eta=\iota^{*}\frac{1}{2}d\eta=0$. 
We take the set $U$ as in $(\ref{s2.1e2})$ and define the map $F:U\to \wedge^{1}\oplus\wedge^{2}$ by 
\begin{equation*}
F(v)=(\exp_{v}^{*}\eta, \frac{1}{2}\exp_{v}^{*}d\eta)
\end{equation*}
for $v\in U$. 
It follows that 
\begin{equation*}
d_{0}F(v)=(d\iota^{*}(i_{\tilde{v}}\eta)+\iota^{*}(i_{\tilde{v}}d\eta),\ 
\frac{1}{2}d\iota^{*}(i_{\tilde{v}}d\eta))
\end{equation*}
for $v\in U$ where $\tilde{v}$ is an extension of $v$ to $M$.  
Under the identification $\Gamma(NX)\simeq \wedge^{0}\oplus\wedge^{1}$ 
given by $v\mapsto (i_{v}\eta, \frac{1}{2}i_{v}d\eta)$, we identify $d_{0}F$ with the map 
$D_1:\wedge^{0}\oplus\wedge^{1}\to\wedge^{1}\oplus \wedge^{2}$ defined by  
\begin{equation*}
D_1(f,\alpha)=(df+2\alpha, d\alpha) 
\end{equation*}
for $(f,\alpha)\in\wedge^0\oplus \wedge^1$. Then it turns out that 
\begin{eqnarray*}
{\rm Ker}(D_{1})&=&\{(f,\alpha)\in\wedge^{0}\oplus\wedge^{1} 
\mid df+2\alpha=0\} \\
&=&\{(f,-\frac{1}{2}df)\in\wedge^{0}\oplus\wedge^{1} \}.
\end{eqnarray*}
Now we provide a complex as follows 
\begin{equation*}\label{s3.1e4}
0\to \wedge^{0}\oplus\wedge^{1}\stackrel{D_{1}}{\longrightarrow}
\wedge^{1}\oplus \wedge^{2}\stackrel{D_{2}}{\longrightarrow} 
\wedge^{2}\oplus\wedge^{3}\to 0 
\end{equation*}
where the operator $D_{2}$ is given by 
\begin{equation*}
D_{2}(\alpha,\beta)=(d\alpha-2\beta, d\beta) 
\end{equation*}
for $(\alpha,\beta)\in \wedge^{1}\oplus \wedge^{2}$. 
It is easy to see that  
\begin{eqnarray*}
P_{1}(f,\alpha)&=&(\Delta_{0}f+2d^{*}\alpha, (dd^{*}+2)\alpha+df),\\
P_{2}(\alpha,\beta)&=&((\Delta_{0}+4)\alpha, (\Delta_{2}+4)\beta). 
\end{eqnarray*}
Hence $P_{2}$ is the elliptic operator with $\Ker{P_{2}}=\{0\}\oplus \{0\}$. 
The space ${\rm Im}(F)$ is perpendicular to ${\rm Im}(D^{*}_{2})$ since ${\rm Im}(F)\subset \Ker{(D_{2})}$. 
Hence we obtain ${\rm Im}(F)\subset {\rm Im}(D_{1})$ by the Hodge decomposition 
$\wedge^{0}\oplus \wedge^{1}={\rm Im}(D_{1})\oplus {\rm Im}(D_{2}^{*})$. 
Proposition~\ref{s2.1p1} implies that $\mathcal{M}_{X}^{s,\alpha}$ is smooth at $0_X$ 
with the tangent space ${\rm Ker}(D_{1}^{s,\alpha})$. 
We can show that $\mathcal{M}_{X}^{s,\alpha}$ is smooth by repeating the argument 
for any Legendrian submanifold. Hence we finish the proof. 
\end{proof}

\section{Sasaki-Einstein manifolds}
In this section, we assume that $(M,g)$ is a smooth Riemannian manifold 
of dimension $2n+1$. 

\subsection{Transverse differential forms} 
Let $\mathcal{F}$ be a foliation on $M$ of codimension $2n$ and 
$F$ the vector bundle induced by the foliation $\mathcal{F}$. 
A differential form $\varphi$ on $M$ is called {\it transverse} if 
\begin{equation*}\label{s3.1e1}
i_{v}\varphi=0
\end{equation*}
for any $v\in\Gamma(F)$. 
We denote by $\wedge_{T}^{k}$ the vector space of transverse differential $k$-forms 
on the foliated manifold $(M,\mathcal{F})$. 
A transverse $k$-form can be considered as the section of $\wedge^{k}Q^{*}$ 
where $Q$ is the quotient bundle $TM/F$. 
A differential form $\varphi$ on $M$ is called {\it basic} if 
\begin{equation*}\label{s3.1e2}
i_{v}\varphi=0,\quad  L_{v}\varphi=0
\end{equation*}
for any $v\in\Gamma(F)$. 
Let $\wedge_{B}^{k}$ be the vector space of basic differential $k$-forms on $(M,\mathcal{F})$. 
It is easy to see that for a basic form $\varphi$ the derivative $d\varphi$ is also basic. 
Thus the exterior derivative $d$ induces 
the operator 
\begin{equation*}\label{s3.1e3}
d_{B}=d|_{\wedge_{B}^{k}}:\wedge_{B}^{k}\to\wedge_{B}^{k+1}
\end{equation*}
by the restriction. 
The corresponding complex $(\wedge_{B}^{*},d_{B})$ associates 
the cohomology group $H_{B}^{*}(M)$ which is called the {\it basic de Rham cohomology group}. 
In general, the derivative $d\varphi$ of a transverse form $\varphi$ is not necessarily transverse. 
In fact, a transverse form $\varphi$ is basic if $d\varphi$ is transverse. 
On the space $\wedge_{T}^{k}$, there exists an orthogonal decomposition 
$d\wedge^{k}_{T}=\wedge^{k+1}_{T}\oplus \wedge^{k}_{T}\wedge F^{*}$ with respect to the metric $g$. 
Let $\pi_{T}$ denote the first projection from $\wedge^{k+1}_{T}\oplus \wedge^{k}_{T}\wedge F^{*}$ 
to $\wedge^{k+1}_{T}$. 
We define a map 
\begin{equation*}\label{s3.1e4}
d_{T}:\wedge^{k}_{T}\to \wedge^{k+1}_{T}
\end{equation*} 
by the composition $\pi_{T}\circ d|_{\wedge^{k}_{T}}$ of $\pi_{T}$ and 
the restriction $d|_{\wedge^{k}_{T}}$ of $d$ to $\wedge^{k}_{T}$. 
Then $d_{T}\varphi=d_{B}\varphi$ for a basic form $\varphi$. 

If there exists a complex structure $J$ of $Q$, 
then we have a decomposition 
\begin{equation*}
\wedge_{T}^{k}=\oplus_{p+q=k}\wedge_{T}^{p,q}
\end{equation*}
where $\wedge_{T}^{p,q}$ is the set of transverse $(p,q)$-forms on $(M,\mathcal{F})$. 
Moreover, if $J$ is a transverse complex structure  on $(M,\mathcal{F})$ 
(see the next subsection for the definition), 
then it gives rise to a decomposition 
$\wedge^{k}_{B}\otimes\mathbb{C}=\oplus_{p+q=k}\wedge^{p,q}_{B}$ 
and operators 
\begin{eqnarray*}
\partial_{B}:\wedge_{B}^{p,q}\to\wedge_{B}^{p+1,q}, \\ 
\overline{\partial}_{B}:\wedge_{B}^{p,q}\to\wedge_{B}^{p,q+1}
\end{eqnarray*}
in the same manner as complex geometry. 
We denote by $H_{B}^{p,*}(M)$ the cohomology of 
the complex $(\wedge_{B}^{p,*},\overline{\partial}_{B})$ 
which is called the {\it basic Dolbeault cohomology group}. 
On the space $\wedge_{T}^{p,q}$, we have an orthogonal decomposition 
$d\wedge^{p,q}_{T}=\wedge^{p+1,q}_{T}\oplus \wedge^{p,q+1}_{T}\oplus \wedge^{p,q}_{T}\wedge F^{*}$ 
since $\wedge^{p,q}_{T}$ is locally generated by basic forms. 
We denote by $\pi_{T}^{1,0}$ the first projection from 
$\wedge^{p+1,q}_{T}\oplus \wedge^{p,q+1}_{T}\oplus \wedge^{p,q}_{T}\wedge F^{*}$ 
to $\wedge^{p+1,q}_{T}$. Then we define a map 
\begin{equation*}\label{s3.1e6}
\partial_{T}:\wedge^{p,q}_{T}\to \wedge^{p+1,q}_{T}
\end{equation*} 
by the composition $\pi_{T}^{1,0}\circ d|_{\wedge^{p,q}_{T}}$. 
In the same manner, we can define $\overline{\partial}_{T}:\wedge^{p,q}_{T}\to \wedge^{p,q+1}_{T}$. 
Then $d_{T}=\partial_{T}+\overline{\partial}_{T}$ and 
$\partial_{T}\varphi=\partial_{B}\varphi$ for a basic form $\varphi$.

\subsection{Transverse complex structures}
Let $\mathcal{F}$ be a foliation of codimension $2n$ on $M$. 
Then there exists a system $\{U_{i},f_{i},\gamma_{ij}\}$ 
consisting of an open covering $\{U_{i}\}_{i\in \Lambda}$ of $M$, 
submersions $f_{i}:U_{i}\to \mathbb{C}^{n}$ 
and diffeomorphisms $\gamma_{ij}:f_{i}(U_{i}\cap U_{j})\to f_{j}(U_{i}\cap U_{j})$ 
for $U_{i}\cap U_{j}\neq \phi$ satisfying $f_{j}=\gamma_{ij}\circ f_{i}$ 
such that any leaf of $\mathcal{F}$ is given by each fiber of $f_{i}$. 
We denote by $M^{T}$ the transverse manifold $\sqcup_{i}f_{i}(U_{i})$. 
The foliation $\mathcal{F}$ is a \textit{transverse holomorphic foliation} 
(resp. a \textit{transverse K\"{a}hler foliation}) if 
there exist a system $\{U_{i},f_{i},\gamma_{ij}\}$ and a complex structure $J_{i}$ (resp. K\"{a}hler structure $(g_{i}, J_{i})$) on each $f_{i}(U_{i})$ such that 
$\gamma_{ij}$ is bi-holomorphic (resp. preserving the K\"{a}hler structure). 
Thus any transverse holomorphic foliation $\mathcal{F}$ induces a complex structure $J^{T}=\{J_{i}\}_{i\in \Lambda}$ on $M^{T}$. 

In order to characterize transverse structures on $(M,\mathcal{F})$, 
we consider the quotient bundle $Q=TM/F$ 
where $F$ is the line bundle associated by the foliation $\mathcal{F}$. 
We define an action of $\Gamma(F)$ 
to any section $u$ of $Q$ as follows : 
\begin{equation*}\label{s3.2e1}
L_{v}u=\pi(L_{v}\widetilde{u})
\end{equation*}
for $v\in\Gamma(F)$ where $\pi$ is the quotient map $TM\to Q$ 
and $\widetilde{u}\in\Gamma(TM)$ is a lift of $u$ by $\pi$. 
We remark that $L_{v}u$ is independent of the choice of the lift $\widetilde{u}\in\Gamma(TM)$ of $u$. 
The section $u$ of $\Gamma(Q)$ is called {\it basic} if $L_{v}u=0$ for any  $v\in\Gamma(F)$. 
We denote by $\Gamma_{B}(Q)$ the set of basic sections of $\Gamma(Q)$. 
The vector field $\widetilde{u}$ on $M$ is called  {\it foliated} if 
$L_{v}u\in\Gamma(F)$ for any  $v\in\Gamma(F)$. Let $\Gamma_{F}(TM)$ denote the set of foliated vector fields on $M$. 
Then there exists the exact sequence 
\begin{equation*}\label{s3.2e2}
0\to \Gamma(F)\stackrel{\iota}{\longrightarrow}\Gamma_{F}(TM)\stackrel{\pi}{\longrightarrow} \Gamma_{B}(Q)\to 0
\end{equation*}
where $\iota$ is the natural inclusion. 
In fact, any basic section $u$ of $\Gamma(Q)$ has a lift $\widetilde{u}$ 
by $\pi$ which is a foliated vector field. 
We can also define an action of $\Gamma(F)$ 
to any section $J\in \Gamma({\rm End}(Q))$ as follows : 
\begin{equation*}\label{s3.2e3}
(L_{v}J)(u)=L_{v}(J(u))-J(L_{v}u)
\end{equation*}
for $v\in\Gamma(F)$ and $u\in\Gamma(Q)$. 
A section $J\in \Gamma({\rm End}(Q))$ is called {\it basic} 
if $L_{v}J=0$ for any $v\in\Gamma(F)$. 
If $J$ is a complex structure of $Q$, i.e. $J^{2}=-{\rm id}_{Q}$, 
and basic as a section of ${\rm End}(Q)$, 
then a tensor $N_{J}\in \Gamma(\otimes^{2}Q^{*}\otimes Q)$ can be defined by 
\begin{equation*}\label{s3.2e4}
N_{J}(u,w)=[\widetilde{Ju},\widetilde{Jw}]_{Q}-[\widetilde{u},\widetilde{w}]_{Q}
-J[\widetilde{u},\widetilde{Jw}]_{Q}-J[\widetilde{Ju},\widetilde{w}]_{Q}
\end{equation*}
for $u,w\in\Gamma(Q)$, 
where $[u,w]_{Q}$ denotes $\pi[\widetilde{u},\widetilde{w}]$ 
for each lift $\widetilde{u}$ and $\widetilde{w}$. 
We call that $J$ is {\it integrable} if $N_{J}=0$. 
\begin{defi}\label{s3.2d1}
{\rm 
A complex structure $J$ of $Q$ is 
a \textit{transverse complex structure} on $(M,\mathcal{F})$ 
if $J$ is basic and integrable. 
}
\end{defi}
Any transverse holomorphic foliation $\mathcal{F}$ induces a transverse complex structure. 
\begin{lem}
If $\mathcal{F}$ is a transverse holomorphic foliation, 
then there exists a transverse complex structure $J_{\mathcal{F}}$ on $(M,\mathcal{F})$. 
\end{lem}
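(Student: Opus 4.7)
The plan is to build $J_{\mathcal{F}}$ locally from the complex structures $J_i$ on the transverse pieces $f_i(U_i)\subset\mathbb{C}^n$ and then verify that these local models are compatible, basic, and integrable. Since $f_i:U_i\to f_i(U_i)$ is a submersion with $\Ker df_i = F|_{U_i}$, its differential descends to a fiberwise isomorphism
\[
\overline{df_i}:Q|_{U_i}\xrightarrow{\ \cong\ } f_i^{*}T(f_i(U_i)),
\]
and I would define $J_{\mathcal{F}}|_{U_i} := \overline{df_i}^{-1}\circ f_i^{*}J_i\circ \overline{df_i}$, a section of $\mathrm{End}(Q|_{U_i})$ squaring to $-\id_{Q|_{U_i}}$.

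Next I would check that these local definitions glue. On an overlap $U_i\cap U_j$, the relation $f_j=\gamma_{ij}\circ f_i$ gives $df_j = d\gamma_{ij}\circ df_i$, and bi-holomorphicity of $\gamma_{ij}$, i.e. $d\gamma_{ij}\circ J_i=J_j\circ d\gamma_{ij}$, is exactly what makes $J_{\mathcal{F}}|_{U_i}=J_{\mathcal{F}}|_{U_j}$. This produces a global $J_{\mathcal{F}}\in\Gamma(\mathrm{End}(Q))$ with $J_{\mathcal{F}}^{2}=-\id_Q$.

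To show $J_{\mathcal{F}}$ is basic, I note that $(L_v J_{\mathcal{F}})(u)=L_v(J_{\mathcal{F}}u)-J_{\mathcal{F}}(L_vu)$ is tensorial in $u$, so it suffices to test on local basic sections $u\in\Gamma_B(Q|_{U_i})$. Such a $u$ admits a foliated lift $\widetilde{u}$, and $df_i(\widetilde{u})$ projects to a well-defined vector field $\widehat{u}$ on $f_i(U_i)$. Under $\overline{df_i}$ the section $J_{\mathcal{F}}u$ corresponds to $J_i\widehat{u}$, which is again a vector field on $f_i(U_i)$; hence $J_{\mathcal{F}}u$ is basic as well. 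Then $L_vu=0$ and $L_v(J_{\mathcal{F}}u)=0$ for every $v\in\Gamma(F)$, giving $(L_vJ_{\mathcal{F}})(u)=0$.

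Finally, for integrability I would exploit the same identification. The Nijenhuis tensor is tensorial, so I can evaluate it on basic sections $u,w\in\Gamma_B(Q|_{U_i})$ with foliated lifts $\widetilde{u},\widetilde{w}$; then $\widetilde{J_{\mathcal{F}}u}$ may be chosen as a foliated lift of $J_{\mathcal{F}}u$ and the projectability of these lifts yields $df_i([\widetilde{u},\widetilde{w}])=[\widehat{u},\widehat{w}]$. Consequently,
\[
\overline{df_i}\bigl(N_{J_{\mathcal{F}}}(u,w)\bigr)=N_{J_i}(\widehat{u},\widehat{w}),
\]
and the right-hand side vanishes because $J_i$ is the integrable complex structure on the open set $f_i(U_i)\subset\mathbb{C}^n$. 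The only delicate point I expect is the bookkeeping in this last identity, verifying that brackets of foliated lifts project correctly and that $\widetilde{J_{\mathcal{F}}u}$ may be taken foliated; once that is handled, both basicness and integrability drop out directly from the corresponding properties of $J_i$ on the transverse manifold $M^T$.
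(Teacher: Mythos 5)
Your proposal is correct and follows essentially the same route as the paper: identify (locally) basic sections of $Q$ with vector fields on the transverse pieces $f_i(U_i)$ via $df_i$ and its inverse, transport the complex structures $J_i$ back to $Q$, and deduce basicness and the vanishing of $N_{J_{\mathcal{F}}}$ from the corresponding properties downstairs using foliated lifts. Your explicit check that the local definitions agree on overlaps via the bi-holomorphicity of $\gamma_{ij}$ is a point the paper leaves implicit, but it is the same argument.
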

\begin{proof}
We can extend a vector field $u^{T}$ on $M^{T}$ to a foliated vector field $u_{i}$ 
on each $U_{i}$ such that $df_{i}(u_{i})=u^{T}$ since $U_{i}$ is diffeomorphic to $f_{i}(U_{i})\times V_{i}$ 
where $V_{i}$ is an open subset of $\mathbb{R}$. 
Then $\{\pi(u_{i})\}_{i}$ defines the section of $Q$ and it is basic. 
Hence we obtain the map $\sigma:\Gamma(TM^{T})\to\Gamma_{B}(Q)$ by $\sigma(u^{T})=\{\pi(u_{i})\}_{i}$. 
On the other hand, for any $u\in\Gamma_{B}(Q)$ 
the family $\{df_{i}(u)\}_{i}$ defines the vector field over $M^{T}$ 
where $df_{i}(u)$ is defined by $df_{i}(\widetilde{u})$ for a lift $\widetilde{u}$ of $u$. 
We define the map $df:\Gamma_{B}(Q)\to\Gamma(TM^{T})$ as $df(u)=\{df_{i}(u)\}_{i}$. 
Then $df$ is the inverse map of $\sigma$ and hence $\Gamma_{B}(Q)$ is isomorphic to $\Gamma(TM^{T})$. 
For the complex structure $J^{T}$ on $M^{T}$, 
we can define a section $J_{\mathcal{F}}$ of ${\rm End}(Q)$ as 
\begin{equation*}\label{s3.2e5}
J_{\mathcal{F}}(u)=\sigma (J^{T}(df(u)))
\end{equation*}
for $u\in\Gamma_{B}(Q)$. This section $J_{\mathcal{F}}$ is well-defined 
since any section of $Q$ is locally generated by basic sections. 
Then $J_{\mathcal{F}}$ is a complex structure of $Q$ and basic 
since $J_{\mathcal{F}}(u)=\sigma (J^{T}u^{T})$ is basic for any $u\in \Gamma_{B}(Q)$. 
The tensor $N_{J_{\mathcal{F}}}$ satisfies that 
\begin{equation*}\label{s3.2e6}
N_{J_{\mathcal{F}}}(u,w)=\sigma(N_{J^{T}}(df(u),df(v)))=0
\end{equation*}
for $u,w\in\Gamma_{B}(Q)$. It implies that $N_{J_{\mathcal{F}}}=0$. 
Hence a transversely holomorphic foliation $\mathcal{F}$ 
induces the transverse complex structure $J_{\mathcal{F}}$ on $(M,\mathcal{F})$. 
\end{proof}

The following result is Newlander-Nirenberg's theorem for a transverse complex structure on a foliated manifold : 

\begin{prop}\label{s3.2p1}
Let $J$ be a complex structure of $Q$. 
Then the following conditions are equivalent. 
\begin{enumerate}
\item[(i)] $J$ is a transverse complex structure on $(M,\mathcal{F})$. 
\item[(ii)] $\mathcal{F}$ is a transversely holomorphic foliation with $J_{\mathcal{F}}=J$. 
\item[(iii)] $J$ is basic and satisfies $d(\wedge_{B}^{1,0})\subset \wedge_{B}^{2,0}\oplus \wedge_{B}^{1,1}$. 
\item[(iv)]  $J$ is basic and satisfies $d(\wedge_{B}^{0,1})\subset \wedge_{B}^{1,1}\oplus \wedge_{B}^{0,2}$. 
\end{enumerate}
\end{prop}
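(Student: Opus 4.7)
The plan is to lift the classical Newlander--Nirenberg theorem to the foliated setting by systematically passing between objects on $(M,\mathcal{F})$ and objects on the transverse manifold $M^{T}=\sqcup_i f_i(U_i)$, using the isomorphism $\sigma:\Gamma(TM^{T})\to\Gamma_{B}(Q)$ constructed in the preceding lemma.

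First I would dispose of the equivalence of (i) and (ii). The implication (ii) $\Rightarrow$ (i) is exactly the preceding lemma, which produces $J_{\mathcal{F}}$ and checks that it is basic with $N_{J_{\mathcal{F}}}=0$. For (i) $\Rightarrow$ (ii), let $J$ be basic with $N_{J}=0$. Since $J$ is basic, I use $\sigma$ to push $J$ down to a field of endomorphisms $J^{T}_{i}$ on each chart $f_i(U_i)$ by $J^{T}_{i}(u^{T})=df_i(J(\sigma(u^{T})))$; basic-ness guarantees this is well defined (independent of the choice of foliated lift), and agreement on overlaps forces $\gamma_{ij}$ to be holomorphic. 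The condition $J^{2}=-\mathrm{id}$ transfers, and the identity $N_{J_{\mathcal{F}}}(u,w)=\sigma(N_{J^{T}}(df(u),df(w)))$ from the lemma, read backwards, shows $N_{J^{T}_{i}}=0$ on each chart. Classical Newlander--Nirenberg then supplies a holomorphic atlas on each $f_i(U_i)$ compatible with $J^{T}_{i}$, promoting $\mathcal{F}$ to a transversely holomorphic foliation with $J_{\mathcal{F}}=J$ by construction.

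Next I would prove (i) $\Leftrightarrow$ (iii) (and the proof of (i) $\Leftrightarrow$ (iv) is entirely symmetric, obtained by conjugation since $J$ is real). Here the key algebraic input is the standard identity, valid for any almost complex structure on $Q$, relating the Nijenhuis tensor of $J$ to the $(0,2)$-part of $d$ acting on $(1,0)$-forms: for $\alpha\in \wedge_{B}^{1,0}$ and $u,w\in\Gamma_{B}(Q)$ with foliated lifts $\widetilde{u},\widetilde{w}$,
\begin{equation*}
(d\alpha)^{0,2}(\widetilde{u},\widetilde{w})=-\tfrac{1}{4}\alpha(N_{J}(u,w)),
\end{equation*}
where the right-hand side makes sense because $N_{J}$ is a tensor on $Q$. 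Since basic forms are locally generated by pullbacks from $M^{T}$, this pointwise identity is exactly the classical one on $M^{T}$ transported by $df_i$. Thus $N_{J}=0$ is equivalent to $(d\alpha)^{0,2}=0$ for every basic $(1,0)$-form $\alpha$. The condition that $d\wedge_{B}^{1,0}\subset\wedge_{B}^{2,0}\oplus\wedge_{B}^{1,1}$ is, after noting that $d$ preserves basic forms, exactly the vanishing of the $(0,2)$-component, giving (i) $\Leftrightarrow$ (iii). Complex conjugation, together with the fact that $\overline{\wedge_{B}^{1,0}}=\wedge_{B}^{0,1}$, yields (i) $\Leftrightarrow$ (iv).

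The main obstacle I anticipate is (i) $\Rightarrow$ (ii): one must promote an abstract basic, integrable $J$ on the transverse bundle to a genuine transversely holomorphic atlas. The delicate point is not the pointwise application of Newlander--Nirenberg on each transverse slice, but checking that the resulting holomorphic charts glue through the $\gamma_{ij}$. This is handled by observing that $J$ is globally defined on $Q$ and basic, so the pushforwards $J^{T}_{i}$ and $J^{T}_{j}$ agree under $d\gamma_{ij}$ on the overlap $f_i(U_i\cap U_j)$; the resulting holomorphic structures on $f_i(U_i)$ are then identified by $\gamma_{ij}$, which completes the proof.
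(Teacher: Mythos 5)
Your proposal is correct and follows essentially the same route as the paper: both arguments transport $J$ to an almost complex structure $J^{T}$ on the transverse manifold $M^{T}$ via the isomorphism $\Gamma_{B}(Q)\simeq\Gamma(TM^{T})$, use the identity $N_{J^{T}}=df\circ N_{J}\circ\sigma$ together with the preceding lemma to get (i) $\Leftrightarrow$ (ii), and reduce (iii), (iv) to the classical Newlander--Nirenberg reformulations on $M^{T}$ through $\wedge_{B}^{p,q}\simeq\wedge_{M^{T}}^{p,q}$. Your explicit use of the Nijenhuis/$(0,2)$-part identity for (i) $\Leftrightarrow$ (iii) is only a cosmetic reorganization of the paper's statement that (ii), (iii), (iv) are equivalent by Newlander--Nirenberg on $(M^{T},J^{T})$.
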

\begin{proof}
A basic complex structure $J$ of $Q$ corresponds to an almost complex structure on $M^{T}$, 
and so $\wedge_{B}^{p,q}$ is isomorphic to $\wedge^{p+q}_{M^{T}}$. 
Hence the conditions (ii), (iii) and (iv) are equivalent 
by Newlander-Nirenberg's theorem for the complex manifold $(M^{T},J^{T})$. 
We already checked that the condition (ii) implies (i). 
Hence it suffices to show that the condition (i) implies (ii). 
If $J$ is a transverse complex structure on $(M,\mathcal{F})$, 
then the section $J(\sigma (u^{T}))$ of $Q$ is basic for any $u^{T}\in\Gamma(TM^{T})$. 
Hence we can define the section $J^{T}$ of ${\rm End}(TM^{T})$ as 
\begin{equation*}\label{s3.2e7}
J^{T}(u^{T})=df(J(\sigma (u^{T})))
\end{equation*}
for $u^{T}\in\Gamma(TM^{T})$. 
The section $J^{T}$ is an almost complex structure on $M^{T}$. 
Let $N_{J^{T}}$ be the Nijenhuis tensor of $J^{T}$. 
Then we obtain 
\begin{equation*}\label{s3.2e8}
N_{J^{T}}(u^{T},v^{T})=df(N_{J}(\sigma(u^{T}),\sigma(v^{T})))=0
\end{equation*}
for any $u^{T},v^{T}\in\Gamma(TM^{T})$. 
Hence $J^{T}$ is the complex structure on $M^{T}$. 
Then the foliation $\mathcal{F}$ is transversely holomorphic and $J_{\mathcal{F}}=J$ 
by the definition of $J_{\mathcal{F}}$. 
It completes the proof. 
\end{proof}

\begin{defi}\label{s3.2d2}
{\rm 
A nowhere vanishing complex $n$-form $\psi\in \Gamma (\wedge^{n}T^*M \otimes \mathbb{C})$ 
is called an \textit{almost transverse ${\rm SL}_{n}(\mathbb{C})$ structure} on $(M,\mathcal{F})$ 
if $\psi$ is transverse and  
\begin{equation*}\label{s3.2e9}
Q\otimes\mathbb{C}=\Ker_{\mathbb{C}}\psi/F\oplus \overline{\Ker_{\mathbb{C}}\psi/F}
\end{equation*} 
where $\Ker_{\mathbb{C}}\psi/F$ is the set $\{u\in Q\otimes\mathbb{C} \ | \ i_{u}\psi=0 \}$. 
}
\end{defi}
An almost transverse ${\rm SL}_{n}(\mathbb{C})$ structure $\psi$ 
induces a complex structure $J_{\psi}$ of $Q$ as follows  
\begin{equation*}\label{s3.2e10}
J_{\psi}(u)=
\left\{
  \begin{array}{cc}
      -\sqrt{-1}u &  {\rm for}\ u\in \Ker_{\mathbb{C}}\psi/F, \\
       \sqrt{-1}u &  {\rm for}\ u\in \overline{\Ker_{\mathbb{C}}\psi/F}. \\
  \end{array}
\right.
\end{equation*} 
Then $Q^{0,1}=\Ker_{\mathbb{C}}\psi/F$ and $Q^{1,0}=\overline{\Ker_{\mathbb{C}}\psi/F}$. 
Therefore $\psi$ is a transverse $(n,0)$-form on $(M,\mathcal{F})$. 
Note that the section $J_{\psi}\in {\rm End}(Q)$ is not necessarily 
integrable. 
However we have 

\begin{prop}\label{s3.2p2}
If $\psi$ satisfies $d\psi=A\wedge \psi$ for a complex valued $1$-form $A$, 
then $J_{\psi}$ is basic and integrable. 
\end{prop}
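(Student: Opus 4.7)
The plan is to verify the two conditions of Definition~\ref{s3.2d1} (basic and integrable) for $J_\psi$ by exploiting a single key computation: the hypothesis $d\psi = A\wedge\psi$ implies that $L_V\psi$ is a multiple of $\psi$ whenever $i_V\psi = 0$. This uniform observation will be applied in two different regimes, one with $V\in\Gamma(F)$ and one with $V$ a lift of a section of $Q^{0,1}$.

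\textbf{The key identity.} For any complex vector field $V$ on $M$ with $i_V\psi = 0$, Cartan's formula gives
\begin{equation*}
L_V\psi \;=\; d\,i_V\psi + i_V\,d\psi \;=\; i_V(A\wedge\psi) \;=\; (i_V A)\,\psi - A\wedge i_V\psi \;=\; (i_V A)\,\psi.
\end{equation*}
In particular $i_W L_V\psi = (i_V A)\, i_W\psi$ for any $W$, which vanishes whenever $i_W\psi = 0$ as well. Note that this applies to every $v\in\Gamma(F)$, since $\psi$ is transverse, and to every lift $\tilde u$ of a section $u\in\Gamma(\Ker_{\mathbb{C}}\psi/F) = \Gamma(Q^{0,1})$, since the value of $i_{\tilde u}\psi$ depends only on $\pi(\tilde u) = u$.

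\textbf{$J_\psi$ is basic.} It suffices to show that $L_v$ preserves the subspace $Q^{0,1} = \Ker_{\mathbb{C}}\psi/F$ for every $v\in\Gamma(F)$, since the decomposition $Q\otimes\mathbb{C} = Q^{1,0}\oplus Q^{0,1}$ determines $J_\psi$. Given $u\in\Gamma(Q^{0,1})$ with lift $\tilde u$, we compute
\begin{equation*}
i_{[v,\tilde u]}\psi \;=\; L_v(i_{\tilde u}\psi) - i_{\tilde u} L_v\psi \;=\; 0 - (i_v A)\, i_{\tilde u}\psi \;=\; 0,
\end{equation*}
using the key identity and $i_{\tilde u}\psi = 0$. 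Hence $L_v u = \pi([v,\tilde u])\in Q^{0,1}$, so $L_v J_\psi = 0$.

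\textbf{$J_\psi$ is integrable.} A direct expansion of the Nijenhuis tensor shows that $N_{J_\psi}$ vanishes identically once one knows that $[\Gamma(Q^{0,1}),\Gamma(Q^{0,1})]_Q\subset\Gamma(Q^{0,1})$; indeed, for $u,w\in\Gamma(Q^{0,1})$ with lifts $\tilde u,\tilde w$, the defining formula for $N_{J_\psi}(u,w)$ collapses to $-2([\tilde u,\tilde w]_Q - \sqrt{-1}\,J_\psi[\tilde u,\tilde w]_Q)$, which is zero exactly when $[\tilde u,\tilde w]_Q\in Q^{0,1}$ (the cross-type cases vanish by bilinearity and conjugation). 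To check this closure, apply the key identity with $V=\tilde u$:
\begin{equation*}
i_{[\tilde u,\tilde w]}\psi \;=\; L_{\tilde u}(i_{\tilde w}\psi) - i_{\tilde w}L_{\tilde u}\psi \;=\; 0 - (i_{\tilde u}A)\,i_{\tilde w}\psi \;=\; 0,
\end{equation*}
so $\pi[\tilde u,\tilde w]\in\Ker_{\mathbb{C}}\psi/F = Q^{0,1}$, completing the proof.

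The whole argument is essentially a one-line calculation iterated twice; the only conceptual point requiring care is that $i_V\psi$ is well defined on $Q$ (thanks to the transversality of $\psi$), which is what allows the same identity to handle both the basic-ness and the integrability statements without having to worry about a choice of lift.
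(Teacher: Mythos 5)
Your proof is correct. The basic-ness half is essentially the paper's argument: both reduce to the Cartan-formula computation $L_{v}\psi=i_{v}(A\wedge\psi)=(i_{v}A)\,\psi$ for $v\in\Gamma(F)$ and conclude that $L_{v}$ preserves $\Ker_{\mathbb{C}}\psi/F$, hence $L_{v}J_{\psi}=0$. For integrability the two routes genuinely diverge. The paper invokes its Newlander--Nirenberg-type Proposition~\ref{s3.2p1} and checks the dual, form-level criterion $d(\wedge^{1,0}_{B})\subset\wedge^{2,0}_{B}\oplus\wedge^{1,1}_{B}$ via the identity $d\alpha\wedge\psi=d(\alpha\wedge\psi)+\alpha\wedge A\wedge\psi=0$ for $\alpha\in\wedge^{1,0}_{B}$. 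You instead verify $N_{J_{\psi}}=0$ directly from the definition of integrability by showing that $Q^{0,1}=\Ker_{\mathbb{C}}\psi/F$ is involutive for the bracket $[\cdot,\cdot]_{Q}$, using the same Cartan computation with $V=\tilde{u}$; the reduction of $N_{J_\psi}(u,w)$ to $-2\bigl([\tilde u,\tilde w]_{Q}-\sqrt{-1}\,J_{\psi}[\tilde u,\tilde w]_{Q}\bigr)$ on $Q^{0,1}\times Q^{0,1}$ and the handling of the remaining type components by conjugation are all correct. Your version is more self-contained (it needs neither Proposition~\ref{s3.2p1} nor the type decomposition of basic forms) and makes transparent that one identity drives both halves; the paper's version is shorter on the page because the equivalence of $N_{J}=0$ with the behaviour of $d$ on $(1,0)$-forms has been outsourced to Proposition~\ref{s3.2p1}. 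One small point to make explicit if you write this up: $[\tilde u,\tilde w]_{Q}$ depends a priori on the chosen lifts, but your computation gives $i_{[\tilde u,\tilde w]}\psi=0$ for every choice of lifts with $i_{\tilde u}\psi=i_{\tilde w}\psi=0$ (and transversality of $\psi$ guarantees all lifts satisfy this), so the conclusion is unaffected.
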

\begin{proof}
At first, we show that $J_{\psi}$ is basic, that is, $L_{v}J_{\psi}=0$ for $v\in\Gamma(F)$. 
Let $u$ be a section of $\Ker_{\mathbb{C}}\psi/F$. 
Then $L_{v}u$ is also the section of $\Ker_{\mathbb{C}}\psi/F$ 
since 
\begin{eqnarray*}
\psi(L_{v}u)&=&L_{v}(\psi(u))-(L_{v}\psi)(u)\\
&=&-(i_{v}d\psi)(u) \\
&=&-i_{v}(A\wedge\psi)(u) \\
&=&-(i_{v}A)\psi(u)+(A\wedge i_{v}\psi)(u) \\
&=&0. 
\end{eqnarray*}
It yields that 
\begin{equation*}
(L_{v}J_{\psi})u=L_{v}(J_{\psi}(u))-J_{\psi}(L_{v}u)=L_{v}(-\sqrt{-1}u)-(-\sqrt{-1}L_{v}u)=0
\end{equation*}
for any $u\in \Gamma(\Ker_{\mathbb{C}}\psi/F)$. In the same manner, 
we can prove $(L_{v}J_{\psi})u=0$ for any $u\in \Gamma(\overline{\Ker_{\mathbb{C}}\psi/F})$. 
Thus $L_{v}J_{\psi}=0$ for any $v\in\Gamma(F)$, and hence $J_{\psi}$ is basic.  

Secondary, we see that $J_{\psi}$ is integrable. 
It suffices to show that $d\wedge^{1,0}_{B}\subset \wedge^{1,1}_{B}\oplus\wedge^{2,0}_{B}$ 
by Proposition~\ref{s3.2p1}. 
Let $\alpha$ be an element of $\wedge^{1,0}_{B}$. 
Then $\alpha\wedge\psi=0$ since $\psi$ is the transverse $(n,0)$-form. 
Then the derivative $d\alpha$ does not have the basic $(0,2)$-part. 
In fact, $d\alpha\wedge\psi=d(\alpha\wedge\psi)+\alpha\wedge d\psi=\alpha\wedge A\wedge\psi=0$ 
since $\alpha\wedge\psi=0$. 
Therefore $d\wedge^{1,0}_{B}\subset \wedge^{1,1}_{B}\oplus\wedge^{2,0}_{B}$. 
Hence $J_{\psi}$ is integrable, and we finish the proof. 
\end{proof} 

If a transverse $2$-form $\omega^{T}$ on $M$ satisfies $(\omega^{T})^{n}\neq 0$, 
then we call the form $\omega^{T}$ 
an \textit{almost transverse symplectic structure} on $(M,\mathcal{F})$. 
We can consider the form $\omega^{T}$ as a tensor of $\wedge^{2}Q^{*}$. 
\begin{defi}\label{s3.2d3}
{\rm 
Let $\psi$ be an almost transverse ${\rm SL}(n,\mathbb{C})$ structure and 
$\omega^{T}$ an almost transverse symplectic structure on $(M,\mathcal{F})$. 
The pair $(\psi, \omega^{T})$ is called an \textit{almost transverse Calabi-Yau structure} on $(M,\mathcal{F})$ 
if $\psi$ and $\omega^{T}$ satisfy the following equations 
\begin{eqnarray*}
&&\psi\wedge \omega^{T}=\overline{\psi}\wedge \omega^{T} =0,\\
&&\psi\wedge \overline{\psi}=c_{n}(\omega^{T})^{n},\\
&&g^{T}=\omega^{T}(\cdot,J_{\psi}\cdot)
\end{eqnarray*}
where $c_{n}=\frac{1}{n!}(-1)^{\frac{n(n-1)}{2}}(\frac{2}{\sqrt{-1}})^{n}$. 
}
\end{defi}

If an almost transverse symplectic structure $\omega^{T}$ is $d$-closed on $M$, 
then $\omega^{T}$ is called a \textit{transverse symplectic structure} on $(M,\mathcal{F})$. 
A pair $(\omega^{T},J)$ is called 
a \textit{transverse K\"{a}hler structure} on $(M,\mathcal{F})$ 
if $\omega^{T}(\cdot,J\cdot)$ is a positive definite symmetric $2$-tensor of $Q$, 
and then $\omega^{T}(J\cdot,J\cdot)=\omega^{T}(\cdot,\cdot)$ holds. 
Then we define a fiber metric $g^{T}$ of $Q$ by 
$g^{T}(\cdot,\cdot)=\omega^{T}(\cdot,J\cdot)$ and 
call it a {\it transverse K\"{a}hler metric} on $(M,\mathcal{F})$. 
In \cite{M}, we introduced a {\it transverse Calabi-Yau structure} on $(M,\mathcal{F})$ 
as an almost transverse Calabi-Yau structure $(\psi, \omega^{T})$ such that $\psi$ and $\omega^{T}$ are $d$-closed. 

\subsection{Sasaki structures}
We will give a brief review of 
some elementary results in Sasakian geometry. 
For much of this material, 
we refer to~\cite{BG} and \cite{S}. 

\begin{defi}\label{s3.3d1}
{\rm 
A Riemannian manifold $(M,g)$ is a {\it Sasaki manifold} 
if the metric cone $(C(M),\overline{g})=(\mathbb{R}_{>0}\times M, dr^{2}+r^{2}g)$ 
is a K\"{a}hler manifold with a complex structure $I$. 
}
\end{defi}
We identify the manifold $M$ with the hypersurface $\{r=1\}$ of $C(M)$. 
Let $I$ and $\omega$ denote the complex structure 
and the K\"{a}hler form on the K\"{a}hler manifold $(C(M),\overline{g})$, respectively. 
The vector field $r\frac{\partial}{\partial r}$ is called 
the {\it Euler vector field} on $C(M)$. 
We define a vector field $\xi$ and a $1$-form $\eta$ on $C(M)$ by 
\begin{equation*}\label{s3.3e1}
\xi=I(r\frac{\partial}{\partial r}),\quad \eta(X)=\frac{1}{r^{2}}\overline{g}(\xi, X)
\end{equation*}
for any vector field $X$ on $C(M)$. 
The vector field $\xi$ is a Killing vector field, i.e. $L_{\xi}\overline{g}=0$, and 
$\xi+\sqrt{-1}\,I\xi=\xi-\sqrt{-1}\,r\frac{\partial}{\partial r}$ is 
a holomorphic vector field on $C(M)$. 
It follows from $L_{\xi}\eta=IL_{r\frac{\partial}{\partial r}}\eta=0$ that 
\begin{equation}\label{s3.3e2}
\eta(\xi)=1,\quad i_{\xi}d\eta=0
\end{equation}
where $i_{\xi}$ means the interior product relative to $\xi$. 
The form $\eta$ is expressed as 
\begin{equation*}\label{s3.3e3}
\eta=d^{c}\log r=\sqrt{-1}\,(\overline{\partial}-\partial)\log r
\end{equation*} 
where $d^{c}$ is the composition $-I\circ d$ of the exterior differentiation $d$ and 
the action of the complex structure $-I$ on differential forms. 
We define an action $\lambda$ of $\mathbb{R}_{>0}$ on $C(M)$ by 
\begin{equation*}\label{s3.3e4}
\lambda_{a}(r,x)=(ar,x)
\end{equation*}
for $a\in\mathbb{R}_{>0}$ and $(r,x)\in \mathbb{R}_{>0}\times M=C(M)$. 
If we put $a=e^{t}$ for $t\in\mathbb{R}$, 
then it follows from $L_{r\frac{\partial}{\partial r}}=\frac{d}{dt}\lambda_{e^{t}}^{*}|_{t=0}$ 
that $\{\lambda_{e^{t}}\}_{t\in\mathbb{R}}$ is one parameter group of transformations such that 
$r\frac{\partial}{\partial r}$ is the infinitesimal transformation. 
Then the K\"{a}hler form $\omega$ satisfies $\lambda_{a}^{*}\omega=a^{2}\omega$ for $a\in\mathbb{R}_{>0}$ and 
\begin{equation*}\label{s3.3e5}
L_{r\frac{\partial}{\partial r}}\omega=2\omega.
\end{equation*}
It implies that  
\begin{equation*}\label{s3.3e6}
\omega=\frac{1}{2}d(r^{2}\eta)=\frac{\sqrt{-1}}{2}\partial\overline{\partial}r^{2}.
\end{equation*}
Hence $\frac{1}{2}r^{2}$ is a K\"{a}hler potential on $C(M)$. 

The $1$-form $\eta$ induces the restriction $\eta|_{M}$ on $M\subset C(M)$. 
Since $L_{r\frac{\partial}{\partial r}}\eta=0$, 
the form $\eta$ is the extension of $\eta|_{M}$ to $C(M)$. 
The vector field $\xi$ is tangent to the hypersurface $\{r=c\}$ 
for each positive constant $c$. 
In particular, $\xi$ is considered as the vector field on $M$ 
and satisfies $g(\xi,\xi)=1$ and $L_{\xi}g=0$. 
Hence we shall not distinguish between $(\eta,\xi)$ on $C(M)$ and 
the restriction $(\eta|_{M},\xi|_{M})$ on $M$. 
Then the form $\eta$ is a contact $1$-form on $M$ : 
\begin{equation*}\label{s3.3e7}
\eta\wedge (d\eta)^{n}\neq 0
\end{equation*}
since $\omega$ is non-degenerate. 
The equation (\ref{s3.3e2}) implies that 
\begin{equation}\label{s3.3e8}
\eta(\xi)=1,\quad i_{\xi}d\eta=0
\end{equation}
on $M$. 
For a contact form $\eta$, a vector field $\xi$ on $M$ 
satisfying the equation (\ref{s3.3e8})  
is unique, and called the {\it Reeb vector field}. 
We define the contact subbundle $D\subset TM$ by $D=\ker\eta$. 
Then the tangent bundle $TM$ has the orthogonal decomposition 
\begin{equation*}\label{s3.3e9}
TM=D\oplus \langle \xi\rangle
\end{equation*}
where $\langle \xi\rangle$ is the line bundle generated by $\xi$. 
We define a section $\Psi$ of ${\rm End}(TM)$ by setting 
$\Psi |_{D}=I|_{D}$ and $\Psi |_{\langle \xi\rangle}=0$. 
One can see that 
\begin{eqnarray}
\Psi^{2}=-{\rm id}+\xi\otimes\eta, \label{s3.3e10}\\ 
d\eta(\Psi X,\Psi Y)=d\eta(X,Y) \label{s3.3e11}
\end{eqnarray}
for any $X,Y \in TM$. 
Then the Riemannian metric $g$ satisfies 
\begin{equation}\label{s3.3e12}
g(X,Y)=\frac{1}{2}d\eta(X, \Psi Y)+\eta(X)\eta(Y)
\end{equation}
for any $X,Y \in TM$. 
We denote by $\omega^{T}$ the $2$-form $\frac{1}{2}d\eta$ on $M$. 
Then $\omega^{T}$ is a symplectic structure on $D$ which is compatible with $\Psi$. 

We say a data $(\xi,\eta,\Psi,g)$ a \textit{contact metric structure} on $M$ 
if for a contact form $\eta$ and a Reeb vector field $\xi$, 
a section $\Psi$ of ${\rm End}(TM)$ and a Riemannian metric $g$ satisfy 
the equations (\ref{s3.3e10}), (\ref{s3.3e11}) and (\ref{s3.3e12}). 
Moreover, a contact metric structure $(\xi,\eta,\Psi,g)$ is called 
a \textit{K-contact structure} on $M$ if $\xi$ is a Killing vector field with respect to $g$. 
Then $\Psi$ defines an almost CR structure $(D,\Psi|_{D})$ on $M$. 
As we saw above, any Sasaki manifold $(M,g)$ has 
a K-contact structure $(\xi,\eta,\Psi,g)$ with 
the integrable CR structure $(D,\Psi|_{D}=I|_{D})$ on $M$. 
Conversely, if we have such a structure $(\xi,\eta,\Psi,g)$ on $M$, 
then $(\overline{g},\frac{1}{2}d(r^{2}\eta))$ is a K\"{a}hler structure 
on the cone $C(M)$, hence $(M,g)$ is a Sasaki manifold. 
We call a K-contact structure $(\xi,\eta,\Psi,g)$ with 
the integrable CR structure $(D,\Psi|_{D})$ a \textit{Sasaki structure} on $M$.

\subsection{The Reeb foliation}
Let $(\xi,\eta,\Psi,g)$ be a Sasaki structure on $M$. 
Then the Reeb vector field $\xi$ generates a foliation $\mathcal{F}_{\xi}$ of codimension $2n$ on $M$. 
The foliation $\mathcal{F}_{\xi}$ is called the {\it Reeb foliation}. 
We can consider $\Psi$ as a section of ${\rm End}(Q)$ since $\Psi|_{\langle \xi\rangle}=0$. 
Then $\Psi$ is a transverse complex structure on $(M,\mathcal{F}_{\xi})$ 
by the integrability of the CR structure $\Psi|_{D}$. 
Let $\omega^{T}$ be the $2$-form $\frac{1}{2}d\eta$ on $M$. 
Then the pair $(\Psi,\omega^{T})$ is a transverse K\"{a}hler structure 
with the transverse K\"{a}hler metric $g^{T}(\cdot,\cdot)=\omega^{T}(\cdot,\Psi\cdot)$ 
on $(M,\mathcal{F}_{\xi})$. 

We define ${\rm Ric}^{T}$ as the Ricci tensor of $g^{T}$ 
which is called the {\it transverse Ricci tensor}. 
The transverse Ricci form $\rho^{T}$ is defined 
by $\rho^{T}(\cdot,\cdot)={\rm Ric}^{T}(\cdot,\Psi\cdot)$. 
The form $\rho^{T}$ is a basic $d$-closed $(1,1)$-form on $(M,\mathcal{F}_{\xi})$ 
and defines a $(1,1)$-basic Dolbeault cohomology class $[\rho^{T}]\in H_{B}^{1,1}(M)$ 
as in the K\"{a}hler case. 
The basic class $[\frac{1}{2\pi}\rho^{T}]$ in $H_{B}^{1,1}(M)$ is called 
the {\it basic first Chern class} on $(M,\mathcal{F}_{\xi})$ 
and is denoted by $c_{1}^{B}(M)$ (for short, we write it $c_{1}^{B}$). 
We say the basic first Chern class is positive (resp. negative) 
if $c_{1}^{B}$ (resp. $-c_{1}^{B}$) is represented by a transverse K\"{a}hler form. 
This condition is expressed by $c_{1}^{B}>0$ (resp. $c_{1}^{B}<0$). 
We say that $(g^{T},\omega^{T})$ is 
a {\it transverse K\"{a}hler-Einstein structure} 
with Einstein constant $\kappa$ if 
$(g^{T},\omega^{T})$ is the transverse K\"{a}hler structure satisfying 
${\rm Ric}^{T}=\kappa g^{T}$ which is equivalent to $\rho^{T}=\kappa \omega^{T}$. 
If $M$ admits such a structure, 
then $2\pi c_{1}^{B}=\kappa[\omega^{T}]$, 
so the basic first Chern class has to be positive, zero or negative 
according to the sign of $\kappa$. 

On the cone $C(M)$, a foliation $\mathcal{F}_{\langle \xi,r\frac{\partial}{\partial r}\rangle}$ 
is induced by the vector bundle $\langle \xi,r\frac{\partial}{\partial r}\rangle$ 
generated by $\xi$ and $r\frac{\partial}{\partial r}$. 
Let $\widetilde{\phi}$ be a basic form  on 
$(C(M), \mathcal{F}_{\langle \xi, r\frac{\partial}{\partial r}\rangle})$. 
Then the restriction $\widetilde{\phi}|_{M}$ of $\widetilde{\phi}$ to $M$ is also basic on $(M,\mathcal{F}_{\xi})$. 
Conversely, for any basic form $\phi$ on $(M,\mathcal{F}_{\xi})$, 
the trivial extension $\widetilde{\phi}$ of $\phi$ to $C(M)=\mathbb{R}_{>0}\times M$ is 
a basic form on $(C(M),\mathcal{F}_{\langle \xi,r\frac{\partial}{\partial r}\rangle})$. 
In this paper, we identify a basic form $\phi$ on $(M,\mathcal{F}_{\xi})$
with the extension $\widetilde{\phi}$ on 
$(C(M), \mathcal{F}_{\langle \xi,r\frac{\partial}{\partial r}\rangle})$. 

\subsection{Sasaki-Einstein structures and almost transverse Calabi-Yau structures}
In this section, we assume that $M$ is compact. 
We provide the definition of Sasaki-Einstein manifolds. 
\begin{defi}\label{s3.5d1}
{\rm 
A Sasaki manifold $(M,g)$ is {\it Sasaki-Einstein} 
if the metric $g$ is Einstein. 
}
\end{defi}

Let $(\xi, \eta, \Psi, g)$ be a Sasaki structure on $M$. 
Then the Ricci tensor ${\rm Ric}$ of $g$ has following relations :
\begin{eqnarray*}
&&{\rm Ric}(u,\xi)=2n\eta(u),\ u\in TM \label{s2.5eq1}\\
&&{\rm Ric}(u,v)={\rm Ric}^{T}(u,v)-2g(u,v),\ u,v \in D \label{s2.5eq2}
\end{eqnarray*}
Thus the Einstein constant of a Sasaki-Einstein metric $g$ 
must be $2n$, that is, ${\rm Ric}=2ng$. 
It follows from the above equations that the Einstein condition ${\rm Ric}=2ng$ 
is equal to ${\rm Ric}^{T}=2(n+1)g^{T}$. 
Moreover, the cone metric $\overline{g}$ is Ricci-flat on $C(M)$ if and only if 
$g$ is Einstein with the Einstein constant $2n$ on $M$ (we refer to Lemma 11.1.5 in \cite{BG}). 
Hence we can characterize the Sasaki-Einstein condition as follows 
\begin{prop}\label{s3.5p1}
Let $(M,g)$ be a Sasaki manifold of dimension $2n+1$. 
Then the following conditions are equivalent each other. 
\begin{enumerate}
\item[(i)] $(M,g)$ is a Sasaki-Einstein manifold. 

\item[(ii)] $(C(M),\overline{g})$ is Ricci-flat, that is, ${\rm Ric}_{\overline{g}}=0$. 

\item[(iii)] $g^{T}$ is transverse K\"{a}hler-Einstein with ${\rm Ric}^{T}=2(n+1)g^{T}$. 
$\hfill\Box$
\end{enumerate}
\end{prop}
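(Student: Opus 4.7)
My plan is to prove the three conditions are equivalent by establishing (i) $\Leftrightarrow$ (ii) separately and then handling (i) $\Leftrightarrow$ (iii), using the two Ricci identities displayed just before the proposition, together with the already-cited fact (Lemma 11.1.5 in \cite{BG}) that the cone metric $\overline{g}$ is Ricci-flat on $C(M)$ if and only if $g$ is Einstein on $M$ with Einstein constant $2n$. So (i) $\Leftrightarrow$ (ii) is immediate from this cited relationship, since a Sasaki-Einstein metric must have Einstein constant exactly $2n$ by the identity $\mathrm{Ric}(u,\xi)=2n\eta(u)$ applied to $u=\xi$ (recall that on $M$, $\eta(u)=g(\xi,u)$ and $g(\xi,\xi)=1$). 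Thus the only genuine content is the equivalence (i) $\Leftrightarrow$ (iii).

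For (i) $\Rightarrow$ (iii), I will assume $\mathrm{Ric}=2ng$ and use the second displayed identity $\mathrm{Ric}(u,v)=\mathrm{Ric}^T(u,v)-2g(u,v)$ for $u,v\in D$. Solving gives $\mathrm{Ric}^T(u,v)=(2n+2)g(u,v)=2(n+1)g^T(u,v)$ on $D$; this is enough because $g^T$ is defined on the transverse bundle $Q$ which we identify with $D$ via the orthogonal splitting $TM=D\oplus\langle\xi\rangle$, and both $\mathrm{Ric}^T$ and $g^T$ annihilate $\xi$.

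For the converse (iii) $\Rightarrow$ (i), I will start from $\mathrm{Ric}^T=2(n+1)g^T$ and run the same identity backwards to obtain $\mathrm{Ric}(u,v)=2ng(u,v)$ for $u,v\in D$. To extend the Einstein equation to all of $TM$ I will combine this with the first identity $\mathrm{Ric}(u,\xi)=2n\eta(u)=2ng(\xi,u)$, valid for every $u\in TM$. This determines $\mathrm{Ric}$ on the pairs $(D,D)$, $(D,\langle\xi\rangle)$, $(\langle\xi\rangle,D)$, and $(\langle\xi\rangle,\langle\xi\rangle)$ (the last case being $\mathrm{Ric}(\xi,\xi)=2n=2ng(\xi,\xi)$), and hence $\mathrm{Ric}=2ng$ on all of $TM$ by $g$-orthogonality of the decomposition.

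There is no real obstacle beyond bookkeeping: the two Ricci identities are stated as given, the cone computation is cited, and the rest is linear algebra on the decomposition $TM=D\oplus\langle\xi\rangle$. The one subtlety to mention is the identification of $Q=TM/\langle\xi\rangle$ with $D$ and the corresponding identification of $g^T$ with $g|_D$ so that the transverse and ambient Ricci tensors can be compared on the same space; this is exactly what the second displayed identity is set up to allow.
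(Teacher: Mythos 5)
Your proposal is correct and follows exactly the route the paper takes: the paper omits a formal proof (the statement ends with $\Box$) because the preceding paragraph already records that the two displayed Ricci identities give (i) $\Leftrightarrow$ (iii) and that Lemma 11.1.5 of \cite{BG} gives (i) $\Leftrightarrow$ (ii), which is precisely your argument. The only thing you add is the explicit block-by-block bookkeeping on $TM=D\oplus\langle\xi\rangle$ for the converse direction, which the paper leaves implicit.
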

We remark that Sasaki-Einstein manifolds have finite fundamental groups by Mayer's theorem. 
From now on, we assume that $M$ is simply connected. 

\begin{defi}\label{s3.5d2}
{\rm 
A pair $(\Omega,\omega)\in\Gamma(\wedge^{n+1}T^*C(M) \otimes\mathbb{C}\oplus\wedge^{2}T^*C(M))$ is called 
a \textit{weighted Calabi-Yau structure} on $C(M)$ 
if $\Omega$ is a holomorphic section of $K_{C(M)}$ 
and $\omega$ is a K\"{a}hler form satisfying 
the equation 
\begin{equation}\label{s3.5e2}
\Omega\wedge \overline{\Omega}=c_{n+1}\omega^{n+1}
\end{equation}
and 
\begin{eqnarray*}
&&L_{r\frac{\partial}{\partial r}}\Omega=(n+1)\Omega, \label{s2.5eq8}\\
&&L_{r\frac{\partial}{\partial r}}\omega=2\omega. \label{s2.5eq9}
\end{eqnarray*} 
}
\end{defi}
If there exists a weighted Calabi-Yau structure $(\Omega,\omega)$ on $C(M)$, 
then it is unique up to change 
$\Omega\to e^{\sqrt{-1}\,\theta}\Omega$ of a phase $\theta\in \mathbb{R}$. 

\begin{lem}\label{s3.5l1}
A Riemannian metric $g$ on $M$ is Sasaki-Einstein 
if and only if there exists 
a weighted Calabi-Yau structure $(\Omega,\omega)$ on $C(M)$ 
such that $\overline{g}$ is the K\"{a}hler metric. $\hfill\Box$
\end{lem}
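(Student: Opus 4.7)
The plan is to reduce both implications to Proposition~\ref{s3.5p1}, which identifies Sasaki-Einstein metrics on $M$ with Ricci-flat K\"ahler metrics on the cone $C(M)$.

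For sufficiency, starting from a weighted Calabi-Yau structure $(\Omega,\omega)$ on $C(M)$ with K\"ahler metric $\overline{g}$, the Ricci form can be expressed as $\rho=-\sqrt{-1}\,\partial\overline{\partial}\log(\Omega\wedge\overline{\Omega}/\omega^{n+1})$, which vanishes by the Monge-Amp\`ere equation $\Omega\wedge\overline{\Omega}=c_{n+1}\omega^{n+1}$. Hence $\overline{g}$ is Ricci-flat K\"ahler, and Proposition~\ref{s3.5p1} gives that $g$ is Sasaki-Einstein.

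For necessity, Proposition~\ref{s3.5p1} tells us $(C(M),\overline{g})$ is Ricci-flat K\"ahler. Since $M$ is simply connected, so is $C(M)\simeq\mathbb{R}_{>0}\times M$; Ricci-flatness makes the Chern connection on the canonical bundle $K_{C(M)}$ flat, so parallel transport on the simply connected base produces a nowhere-vanishing parallel (hence holomorphic) section $\Omega_0$ with constant pointwise norm. Thus $\Omega_0\wedge\overline{\Omega_0}$ is a constant positive multiple of $\omega^{n+1}$, and after rescaling $\Omega_0$ by a suitable positive constant I obtain an $\Omega$ satisfying the Monge-Amp\`ere equation.

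It remains to verify the scaling relations. $L_{r\partial/\partial r}\omega=2\omega$ was already recorded in Section~3.3. Since $\xi-\sqrt{-1}\,r\partial/\partial r$ is holomorphic, the flow of $r\partial/\partial r$ is biholomorphic, so $L_{r\partial/\partial r}\Omega=h_1\Omega$ for some holomorphic function $h_1$ on $C(M)$; differentiating the Monge-Amp\`ere equation yields $h_1+\overline{h_1}=2(n+1)$, and since a holomorphic function with constant real part is itself constant, $h_1=(n+1)+\sqrt{-1}\,\tau$ for a real constant $\tau$. The hard part will be ruling out the imaginary phase $\tau$: I would test against the Killing field $\xi$. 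Using $I(r\partial/\partial r)=\xi$, the $(1,0)$ part of $r\partial/\partial r$ is $\frac{1}{2}(r\partial/\partial r-\sqrt{-1}\,\xi)$, and since $\Omega$ is of type $(n+1,0)$ its $(0,1)$ part annihilates it, giving $L_\xi\Omega=\sqrt{-1}\,L_{r\partial/\partial r}\Omega=(-\tau+\sqrt{-1}(n+1))\Omega$. But applying the same Lie-derivative trick to $L_\xi(\Omega\wedge\overline{\Omega})=0$ (valid because $L_\xi\omega=0$) forces $L_\xi\Omega/\Omega$ to be purely imaginary, so $\tau=0$ and hence $L_{r\partial/\partial r}\Omega=(n+1)\Omega$, completing the proof.
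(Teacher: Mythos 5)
Your proposal is correct, but there is nothing in the paper to compare it against line by line: the paper states Lemma~\ref{s3.5l1} with no proof at all, treating it as a standard fact, and only later (in the proof of Proposition~\ref{s3.5p2}) constructs $\Omega$ explicitly as $(\frac{dr}{r}+\sqrt{-1}\eta)\wedge r^{n+1}\psi$ from the transverse data, which is a different, more constructive route to the existence half. Your argument supplies exactly the details the paper omits, and it does so correctly: the sufficiency direction via $\rho=-\sqrt{-1}\,\partial\overline{\partial}\log(\Omega\wedge\overline{\Omega}/\omega^{n+1})$ and Proposition~\ref{s3.5p1} is standard, and in the necessity direction the parallel-transport construction of a holomorphic volume form of constant norm (using that $C(M)\simeq\mathbb{R}_{>0}\times M$ is simply connected, an assumption the paper has indeed put in force just before Definition~\ref{s3.5d2}) is the right mechanism. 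The genuinely nontrivial point, which you correctly isolate, is that the homogeneity $L_{r\frac{\partial}{\partial r}}\Omega=(n+1)\Omega$ is not automatic for the parallel section but must be derived: your chain --- $L_{r\frac{\partial}{\partial r}}\Omega=h_{1}\Omega$ with $h_{1}$ holomorphic because the flow is biholomorphic, $\operatorname{Re}h_{1}=n+1$ from differentiating the Monge-Amp\`ere equation against $L_{r\frac{\partial}{\partial r}}\omega=2\omega$, constancy of $h_{1}$, and finally $\tau=0$ by testing $L_{\xi}\Omega=\sqrt{-1}\,L_{r\frac{\partial}{\partial r}}\Omega$ against $L_{\xi}(\Omega\wedge\overline{\Omega})=0$ --- is sound (note that the $\tau=0$ step is genuinely needed, since a nonzero imaginary part cannot be removed by a constant phase rotation, and it is consistent with the paper's remark that the weighted Calabi-Yau structure is unique only up to phase). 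In short: a correct and complete proof of a statement the paper asserts without argument.
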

We characterize a Sasaki-Einstein manifold by 
a pair of two differential forms on the manifold. 
\begin{prop}\label{s3.5p2}
The Riemannian manifold $(M,g)$ is a Sasakian-Einstein manifold 
if and only if there exist a contact $1$-form $\eta$ and 
a complex valued $n$-form $\psi$ such that 
$(\psi, \frac{1}{2}d\eta)$ is an almost transverse Calabi-Yau structure on $(M, \mathcal{F})$, 
where $\mathcal{F}$ is the Reeb foliation induced by $\eta$, with 
\begin{equation*}\label{s3.5e3}
d\psi=(n+1)\sqrt{-1}\,\eta\wedge \psi.
\end{equation*} 
\end{prop}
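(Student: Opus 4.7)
The strategy is to prove both directions via Lemma~\ref{s3.5l1}, which identifies Sasaki-Einstein metrics on $M$ with weighted Calabi-Yau structures $(\Omega,\omega)$ on $C(M)$. The central observation is that $\frac{dr}{r}+\sqrt{-1}\,\eta = 2\,\partial\log r$ is a $(1,0)$-form dual to $(r\frac{\partial}{\partial r})^{1,0}$, so any $(n+1,0)$-form on $C(M)$ factors uniquely as $\Omega = (\frac{dr}{r}+\sqrt{-1}\,\eta)\wedge\tilde\psi$ with $i_{r\partial/\partial r}\tilde\psi = 0$. This factorisation is the dictionary between $\Omega$ on $C(M)$ and $\psi$ on $M$.

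For the forward direction, I take $(\Omega,\omega)$ from Lemma~\ref{s3.5l1} with $\omega=\frac12 d(r^2\eta)$, and set $\tilde\psi := i_{r\partial/\partial r}\Omega$. Since $L_{r\partial/\partial r}$ commutes with $i_{r\partial/\partial r}$ and $L_{r\partial/\partial r}\Omega = (n+1)\Omega$, one has $L_{r\partial/\partial r}\tilde\psi = (n+1)\tilde\psi$, hence $\tilde\psi = r^{n+1}\psi$ for a unique $n$-form $\psi$ on $M$. The type of $\Omega$ together with $e^i(\xi) = 0$ for the transverse $(1,0)$-coframe forces $i_\xi\psi=0$, so $\psi$ is transverse. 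Expanding $\Omega = r^n dr\wedge\psi + \sqrt{-1}\,r^{n+1}\eta\wedge\psi$ and extracting the $dr$-free part of $d\Omega = 0$ gives $d\psi = (n+1)\sqrt{-1}\,\eta\wedge\psi$. The type decomposition forces $\psi\wedge\omega^T = 0$, and the Monge-Amp\`ere equation $\Omega\wedge\overline\Omega = c_{n+1}\omega^{n+1}$, combined with the expansion $\omega^{n+1} = (n+1)\,r^{2n+1}\,dr\wedge\eta\wedge(\omega^T)^n$ (higher powers of $\omega^T$ vanish by degree on $C(M)$), gives $\psi\wedge\overline\psi = c_n(\omega^T)^n$ after simplifying $(\frac{dr}{r}+\sqrt{-1}\,\eta)\wedge(\frac{dr}{r}-\sqrt{-1}\,\eta) = -2\sqrt{-1}\,\frac{dr}{r}\wedge\eta$. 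The relation $g^T = \omega^T(\cdot, J_\psi\cdot)$ comes from the Hermitian property of $\overline g$ on the contact distribution.

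For the reverse direction, $\eta$ determines the Reeb vector field $\xi$ and the distribution $D = \ker\eta$; Proposition~\ref{s3.2p2} applied to $A = (n+1)\sqrt{-1}\,\eta$ shows that $J_\psi$ is basic and integrable. Extending $J_\psi$ to $\Psi\in{\rm End}(TM)$ by $\Psi(\xi)=0$ and setting $g = g^T + \eta\otimes\eta$ produces a Sasaki structure on $M$. On $C(M)$ I define $\Omega := (\frac{dr}{r}+\sqrt{-1}\,\eta)\wedge r^{n+1}\psi$ and $\omega := \frac12 d(r^2\eta)$. A direct computation using $d\psi = (n+1)\sqrt{-1}\,\eta\wedge\psi$ together with $\psi\wedge\omega^T = 0$ and $(\frac{dr}{r}+\sqrt{-1}\,\eta)\wedge(\frac{dr}{r}+\sqrt{-1}\,\eta)=0$ gives $d\Omega = 0$. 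The integrability of $J_\psi$ promotes to integrability of the almost complex structure $I$ on $C(M)$ determined by $I|_D = J_\psi$ and $I(r\frac{\partial}{\partial r}) = \xi$, under which $\Omega$ is of pure type $(n+1,0)$, hence holomorphic. Reversing the expansion used in the forward direction recovers the Monge-Amp\`ere equation, so $(\Omega,\omega)$ is a weighted Calabi-Yau structure on $C(M)$ and Lemma~\ref{s3.5l1} concludes that $(M,g)$ is Sasaki-Einstein.

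The principal obstacle is the normalisation arithmetic in the Monge-Amp\`ere identity: one must verify that the constant $c_{n+1}$ on $C(M)$, the sign factor from $(\frac{dr}{r}+\sqrt{-1}\,\eta)\wedge(\frac{dr}{r}-\sqrt{-1}\,\eta)$, and the binomial coefficient $n+1$ from the expansion of $\omega^{n+1}$ combine to give exactly the transverse constant $c_n$ on $M$. A secondary technical point is the verification in the reverse direction that the almost complex structure $I$ is genuinely integrable, which reduces to showing that $\overline g$ is $I$-Hermitian and $\omega$ is closed — the former follows from the compatibility relations in Definition~\ref{s3.2d3} and the latter from $\omega = \frac12 d(r^2\eta)$ by construction.
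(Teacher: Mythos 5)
Your proposal is correct and follows essentially the same route as the paper: both directions pass through the weighted Calabi--Yau structure on the cone via the factorisation $\Omega=(\frac{dr}{r}+\sqrt{-1}\,\eta)\wedge r^{n+1}\psi$, with Proposition~\ref{s3.2p2} supplying integrability in the reverse direction and the same expansion of $\omega^{n+1}$ handling the Monge--Amp\`ere normalisation. The only cosmetic difference is that you obtain $d\psi=(n+1)\sqrt{-1}\,\eta\wedge\psi$ by extracting the $dr$-free part of $d\Omega=0$, whereas the paper writes $d\psi'=L_{r\frac{\partial}{\partial r}}\Omega=(n+1)\Omega$; these are the same computation via Cartan's formula.
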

\begin{proof}
If $(M,g)$ is a Sasakian-Einstein manifold, 
then there exits a weighted Calabi-Yau structure $(\Omega,\omega)$ on $C(M)$ 
with the K\"{a}hler metric $\overline{g}$. 
Then the K\"{a}hler form $\omega$ is given by 
\begin{equation*}\label{s3.5e4}
\omega=d(\frac{1}{2}r^{2}\eta)=rdr\wedge\eta+\frac{r^{2}}{2}d\eta. 
\end{equation*}
We define $\psi^{\prime}$ as the $n$-form 
\begin{equation*}\label{s3.5e5}
\psi^{\prime}=i_{r\frac{\partial}{\partial r}}\Omega
\end{equation*}
on $C(M)$. 
Then $\psi^{\prime}$ is a transversely $(n,0)$-form on 
$(C(M),\mathcal{F}_{\langle\xi,r\frac{\partial}{\partial r}\rangle})$ such that 
\begin{equation}\label{s3.5e6}
\Omega=(\frac{dr}{r}+\sqrt{-1}\eta)\wedge\psi^{\prime}
\end{equation}
since $\psi^{\prime}=i_{v}\Omega+i_{\overline{v}}\Omega=i_{v}\Omega$ for 
the holomorphic vector field $v=\frac{1}{2}(r\frac{\partial}{\partial r}-\sqrt{-1}\xi)$. 
The equation (\ref{s3.5e6}) implies that 
\begin{equation}\label{s3.5e8}
d\psi^{\prime}=L_{r\frac{\partial}{\partial r}}\Omega=(n+1)\Omega
=(n+1)(\frac{dr}{r}+\sqrt{-1}\eta)\wedge\psi^{\prime}.
\end{equation}
It is straightforward to 
\begin{eqnarray*}
\Omega\wedge \overline{\Omega}&=&2(-1)^{n}\sqrt{-1}\,r^{-1}dr\wedge\eta
\wedge\psi^{\prime}\wedge\overline{\psi^{\prime}},\\
\omega^{n+1}&=&(n+1)rdr\wedge\eta\wedge(\frac{1}{2}r^{2}d\eta)^{n}. 
\end{eqnarray*}
Hence it follows from the equation (\ref{s3.5e2}) that 
\begin{equation}\label{s3.5e9}
\psi^{\prime}\wedge\overline{\psi^{\prime}}=c_{n}r^{2(n+1)}(\frac{1}{2}d\eta)^{n}.
\end{equation}
Moreover, we obtain 
\begin{equation}\label{s3.5e10}
\psi^{\prime}\wedge d\eta =-2r^{-2}\sqrt{-1}\,i_{\xi}(\Omega\wedge\omega)=0
\end{equation}
since $\Omega\wedge\omega=\frac{r^{2}}{2}(\frac{dr}{r}+\sqrt{-1}\eta)\wedge d\eta\wedge\psi^{\prime}$. 
Let $\psi$ denote the pull-back of $\psi^{\prime}$ to $M$ by the inclusion $i:M\to C(M)$ : 
\begin{equation*}\label{s3.5e11}
\psi=i^{*}\psi^{\prime}.
\end{equation*}
Then $\psi$ is a transversely $(n,0)$-form on 
$(M,\mathcal{F}_{\xi})$ such that $d\psi=(n+1)\sqrt{-1}\eta\wedge\psi$ 
by taking the pull-back of the equation (\ref{s3.5e8}) by $i$. 
Moreover, it follows from the equations (\ref{s3.5e9}) and (\ref{s3.5e10}) that 
$\psi\wedge d\eta=\overline{\psi}\wedge d\eta=0$ and $\psi\wedge \overline{\psi}=c_{n}(\frac{1}{2}d\eta)^{n}$. 
Hence $(\psi,\frac{1}{2}d\eta)$ is an almost transverse Calabi-Yau structures. 

Conversely, we assume that there exist a contact form $\eta$ and a complex valued $n$-form $\psi$ 
such that $(\psi, \frac{1}{2}d\eta)$ is an almost transverse Calabi-Yau structure. 
Then the contact form $\eta$ induces the Reeb vector field $\xi$ and the Reeb foliation $\mathcal{F}_{\xi}$. 
It follows from Proposition~\ref{s3.2p2} that $\psi$ defines the transverse complex structure $J_{\psi}$ on $(M, \mathcal{F}_{\xi})$. 
Then $(\eta, \xi, J_{\psi}, g)$ is a Sasaki structure on $M$ and the metric cone $(C(M), \overline{g})$ has the K\"{a}hler form 
\begin{equation*}\label{s3.5e14}
\omega=d(\frac{1}{2}r^{2}\eta).
\end{equation*}
It is easy to see that $L_{r\frac{\partial}{\partial r}}\omega=2\omega$. 
We define $\psi^{\prime}$ as the $n$-form 
\begin{equation*}\label{s3.5e15}
\psi^{\prime}=r^{n+1}\psi
\end{equation*}
on $C(M)$, where we consider $\psi$ as an $n$-form on $C(M)$ by the trivial extension. 
Then  $\psi^{\prime}$ is a transversely $(n,0)$-form on 
$(C(M),\mathcal{F}_{\langle\xi,r\frac{\partial}{\partial r}\rangle})$ 
such that $i_{\overline{v}}\psi^{\prime}=0$ for 
the holomorphic vector field $v=\frac{1}{2}(r\frac{\partial}{\partial r}-\sqrt{-1}\xi)$. 
We define $\Omega$ as the $(n+1)$-form  
\begin{equation}\label{s3.5e16}
\Omega=(\frac{dr}{r}+\sqrt{-1}\eta)\wedge\psi^{\prime}
\end{equation}
on $C(M)$. 
Then $\Omega$ is a holomorphic $(n+1)$-form satisfying 
\begin{equation*}
L_{r\frac{\partial}{\partial r}}\Omega=di_{r\frac{\partial}{\partial r}}\Omega
=d\psi^{\prime}=(n+1)\Omega
\end{equation*}
since $i_{r\frac{\partial}{\partial r}}\Omega=\psi^{\prime}$ and 
$d\psi^{\prime}=(n+1)(\frac{dr}{r}+\sqrt{-1}\eta)\wedge\psi^{\prime}$. 
The equation (\ref{s3.5e16}) implies that 
\begin{eqnarray*}
\Omega\wedge \overline{\Omega}
&=&2(-1)^{n}\sqrt{-1}\,r^{-1}dr\wedge\eta
\wedge\psi^{\prime}\wedge\overline{\psi^{\prime}}\\
&=&2(-1)^{n}\sqrt{-1}\,r^{2n+1}dr\wedge\eta
\wedge\psi\wedge\overline{\psi}\\
&=&2(-1)^{n}\sqrt{-1}c_{n}r^{2n+1}dr\wedge\eta\wedge(\frac{1}{2}d\eta)^{n}\\
&=&(n+1)c_{n+1}rdr\wedge\eta\wedge(\frac{1}{2}r^{2}d\eta)^{n}\\
&=&c_{n+1}\omega^{n+1}. 
\end{eqnarray*}
Hence $(\Omega, \omega)$ is a weighted Calabi-Yau structure on $C(M)$, and we finish the proof. 
\end{proof}

\section{Deformations of special Legendrian submanifolds}
In this section, we assume that $(M,g)$ is a simply connected and compact Sasaki-Einstein manifold of dimension $(2n+1)$ 
and $(\xi, \eta, \Psi, g)$ is the Sasaki-Einstein structure on $M$. 
Let $(C(M),\overline{g})$ be the metric cone of $(M,g)$. 
We fix an almost transverse Calabi-Yau structure $(\psi, \frac{1}{2}d\eta)$ and 
the corresponding weighted Calabi-Yau structure $(\Omega, \omega)$ on $C(M)$ 
as in Proposition~\ref{s3.5p2}. 

\subsection{Special Legendrian submanifolds}
The holomorphic $(n+1)$-form $\Omega$ induces the calibration $\Omega^{\rm Re}$ on $C(M)$ and 
special Lagrangian submanifolds are defined by calibrated submanifolds. 
We consider such submanifolds of cone type. 
For any submanifold $X$ in $M$, 
the cone $C(X)=\mathbb{R}_{>0}\times X$ is a submanifold in $C(M)$. 
We identify $X$ with the hypersurface $\{1\}\times X$ in $C(X)$. 
Then $X$ can be considered as the link $C(X)\cap M$. 
\begin{defi}\label{s4.1.1d1}
{\rm 
A submanifold $X$ is a {\it special Legendrian submanifold} in $M$ 
if the cone $C(X)$ is a special Lagrangian submanifold in $C(M)$. 
}
\end{defi}

Any special Legendrian submanifold $X$ is a minimal submanifold in $M$, 
that is, the mean curvature vector field $H$ of $X$ vanishes. 
In fact, the mean curvature vector field $\widetilde{H}$ of the cone $C(X)$ satisfies that 
$\widetilde{H}_{(r,x)}=\frac{1}{r^{2}}H_{x}$ at $(r,x)\in \mathbb{R}_{>0}\times X=C(X)$. 
Hence $H=0$ since a special Lagrangian cone $C(X)$ is minimal. 

We also denote by $\eta$ the extension of the contact form $\eta$ to $C(M)$. 
We provide a characterization of special Lagrangian cones in $C(M)$. 
\begin{prop}\label{s4.1.1p2}
Let $\widetilde{X}$ be an $(n+1)$-dimensional closed submanifold in $C(M)$ with 
the inclusion $\tilde{\iota}:\widetilde{X}\hookrightarrow C(M)$. 
The submanifold $\widetilde{X}$ is a special Lagrangian cone if and only if 
$\tilde{\iota}^{*}\Omega^{\rm Im}=\tilde{\iota}^{*}\eta=0$. 
\end{prop}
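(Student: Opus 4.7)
The plan is to prove the two directions by carefully exploiting the relation $\omega=d(\tfrac{1}{2}r^{2}\eta)=rdr\wedge\eta+\tfrac{r^{2}}{2}d\eta$ together with the identity $\xi=I(r\partial/\partial r)$, which ties the contact form $\eta$ to both the K\"ahler form $\omega$ and the Euler vector field.

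For the forward direction, suppose $\widetilde{X}$ is a special Lagrangian cone. Then by definition of special Lagrangian, $\tilde{\iota}^{*}\Omega^{\mathrm{Im}}=0$ and $\tilde{\iota}^{*}\omega=0$; the special condition we must verify is $\tilde{\iota}^{*}\eta=0$. The key computation is $i_{r\partial/\partial r}\omega=r^{2}\eta$, which follows from $\eta(r\partial/\partial r)=0$ and $i_{r\partial/\partial r}d\eta=L_{r\partial/\partial r}\eta=0$. Since $\widetilde{X}$ is a cone, the Euler field $r\partial/\partial r$ is tangent to $\widetilde{X}$, so I can commute the interior product with the pull-back and conclude $\tilde{\iota}^{*}(r^{2}\eta)=i_{r\partial/\partial r}\tilde{\iota}^{*}\omega=0$, which gives $\tilde{\iota}^{*}\eta=0$ on $r>0$.

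For the converse, assume $\tilde{\iota}^{*}\Omega^{\mathrm{Im}}=\tilde{\iota}^{*}\eta=0$. I first verify the Lagrangian condition: since $\tilde{\iota}^{*}\eta=0$, the relation $\omega=rdr\wedge\eta+\tfrac{r^{2}}{2}d\eta$ pulls back to $\tilde{\iota}^{*}\omega=rdr\wedge\tilde{\iota}^{*}\eta+\tfrac{r^{2}}{2}d\tilde{\iota}^{*}\eta=0$, so combined with $\tilde{\iota}^{*}\Omega^{\mathrm{Im}}=0$ this makes $\widetilde{X}$ a special Lagrangian submanifold. Next, I must upgrade this to the cone property. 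Using $\eta=\tfrac{1}{r^{2}}\overline{g}(\xi,\cdot)$, the vanishing $\tilde{\iota}^{*}\eta=0$ means $\xi$ is $\overline{g}$-orthogonal to $T\widetilde{X}$, i.e.\ $\xi\in N\widetilde{X}$. Because $\widetilde{X}$ is Lagrangian, $I$ interchanges $T\widetilde{X}$ and $N\widetilde{X}$, so $I\xi$ is tangent; but $I\xi=-r\partial/\partial r$, hence $r\partial/\partial r$ is tangent to $\widetilde{X}$ at every point. Integrating this vector field shows $\widetilde{X}$ is invariant under the dilations $\lambda_{a}$, so it is a cone.

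The step I expect to require the most care is showing that $r\partial/\partial r$ being tangent actually produces the cone structure; this uses that $\widetilde{X}$ is a closed submanifold of $C(M)=\mathbb{R}_{>0}\times M$ and that the flow of $r\partial/\partial r$ is the complete dilation action $\lambda_{a}$ whose orbits hit $\widetilde{X}$ in full rays. Aside from that, the argument is essentially a book-keeping exercise with $\omega=d(\tfrac{1}{2}r^{2}\eta)$ and the identification $\xi=I(r\partial/\partial r)$.
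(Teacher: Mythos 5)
Your proposal is correct and follows essentially the same route as the paper: in the forward direction you use tangency of the Euler field on a cone together with $i_{r\partial/\partial r}\omega=r^{2}\eta$ and $\tilde{\iota}^{*}\omega=0$, and in the converse you recover $\tilde{\iota}^{*}\omega=0$ from $\omega=d(\tfrac{1}{2}r^{2}\eta)$, deduce tangency of $r\partial/\partial r$ from the Lagrangian condition plus $\tilde{\iota}^{*}\eta=0$, and conclude the cone property by an open-and-closed argument on the dilation orbits using closedness of $\widetilde{X}$. You even spell out two details the paper leaves implicit (the $r^{2}$ factor in $i_{r\partial/\partial r}\omega$ and the fact that $I$ swaps $T\widetilde{X}$ and $N\widetilde{X}$, sending $\xi$ to $-r\partial/\partial r$), so no gaps remain.
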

\begin{proof}
We remark that a special Lagrangian submanifold in $C(M)$ is characterized by 
an $(n+1)$-dimensional submanifold $\widetilde{X}$ in $C(M)$ such that 
$\tilde{\iota}^{*}\Omega^{\rm Im}=0$ and $\tilde{\iota}^{*}\omega=0$. 
If $\widetilde{X}$ is a special Lagrangian cone, 
then the vector field $r\frac{\partial}{\partial r}$ is tangent to $\widetilde{X}$. 
The vector fields $\xi$ and $r\frac{\partial}{\partial r}$ span a symplectic subspace 
of $T_{p}C(M)$ with respect to $\omega_{p}$ at the each point $p\in C(M)$. 
We can obtain $\tilde{\iota}^{*}\eta=0$ 
since $\eta=i_{r\frac{\partial}{\partial r}}\omega$ and $\tilde{\iota}^{*}\omega=0$. 

Conversely, if an $(n+1)$-dimensional submanifold $\widetilde{X}$ satisfies 
$\tilde{\iota}^{*}\Omega^{\rm Im}=\tilde{\iota}^{*}\eta =0$, 
then $\widetilde{X}$ is a special Lagrangian submanifold 
since $\tilde{\iota}^{*}\omega=\frac{1}{2}d(r^{2}\tilde{\iota}^{*}\eta)=0$. 
In order to see that $\widetilde{X}$ is a cone, we consider the set 
\[
I_{p}=\{ a \in \mathbb{R}_{>0}\mid \lambda_{a}p\in \widetilde{X}\}
\]
for each $p\in \widetilde{X}$. 
Then $I_{p}$ is a closed subset of $\mathbb{R}_{>0}$ since $\widetilde{X}$ is closed. 
On the other hand, the vector field $r\frac{\partial}{\partial r}$ 
has to be tangent to $\widetilde{X}$ 
since $\widetilde{X}$ is Lagrangian and $\tilde{\iota}^{*}\eta=0$. 
The vector field $r\frac{\partial}{\partial r}$ is the infinitesimal transformation 
of the action $\lambda$. 
Therefore $I_{p}$ is open, 
and so $I_{p}=\mathbb{R}_{>0}$ for each point $p\in \widetilde{X}$. 
Hence $\widetilde{X}$ is a cone, and it completes the proof. 
\end{proof}

We denote by $\psi^{\rm Re}$ and $\psi^{\rm Im}$ 
the real part and the imaginary part of $\psi$, respectively. 
Then we have a characterization of special Legendrian submanifolds : 

\begin{prop}\label{s4.1.2p1}
An $n$-dimensional submanifold $X$ in $M$ is a special Legendrian submanifold 
if and only if $\iota^{*}\psi^{\rm Im}=\iota^{*}\eta=0$. 
\end{prop}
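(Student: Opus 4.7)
My plan is to reduce the statement to Proposition~\ref{s4.1.1p2}, which characterizes a special Lagrangian cone $\widetilde{X} = C(X) \subset C(M)$ by the two conditions $\tilde{\iota}^{*}\Omega^{\rm Im}=0$ and $\tilde{\iota}^{*}\eta=0$. So I only need to show that these two conditions on $C(X)$ translate exactly to $\iota^{*}\psi^{\rm Im}=0$ and $\iota^{*}\eta=0$ on $X$.

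For the contact condition, I would exploit the product structure $C(X) = \mathbb{R}_{>0} \times X$. At a point $(r,x) \in C(X)$, the tangent space decomposes as $T_{(r,x)}C(X) = T_xX \oplus \mathbb{R}\langle r\tfrac{\partial}{\partial r}\rangle$. Since $\xi = I(r\tfrac{\partial}{\partial r})$ is $\overline{g}$-orthogonal to $r\tfrac{\partial}{\partial r}$, we have $\eta(r\tfrac{\partial}{\partial r}) = 0$, and $\eta$ is invariant under $L_{r\partial/\partial r}$. Thus $\tilde{\iota}^{*}\eta=0$ on $C(X)$ is equivalent to $\iota^{*}\eta=0$ on $X$.

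For the holomorphic volume condition, I would use the identity (\ref{s3.5e6}) from the proof of Proposition~\ref{s3.5p2}, namely
\begin{equation*}
\Omega = \Bigl(\frac{dr}{r} + \sqrt{-1}\,\eta\Bigr)\wedge \psi^{\prime}, \qquad \psi^{\prime} = r^{n+1}\psi,
\end{equation*}
where $\psi$ is identified with its trivial extension to $C(M)$. Assuming $\tilde{\iota}^{*}\eta = 0$ already holds, pulling back to $C(X)$ gives $\tilde{\iota}^{*}\Omega = r^{n}\, dr \wedge \iota^{*}\psi$, so taking imaginary parts,
\begin{equation*}
\tilde{\iota}^{*}\Omega^{\rm Im} = r^{n}\, dr \wedge \iota^{*}\psi^{\rm Im}.
\end{equation*}
Since $dr$ is linearly independent from any form pulled back from $X$, this vanishes on $C(X)$ if and only if $\iota^{*}\psi^{\rm Im}=0$ on $X$.

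Combining both directions, $X$ is special Legendrian iff $C(X)$ is a special Lagrangian cone iff $\tilde{\iota}^{*}\Omega^{\rm Im} = \tilde{\iota}^{*}\eta = 0$ iff $\iota^{*}\psi^{\rm Im} = \iota^{*}\eta = 0$. There is no real obstacle; the only care needed is the bookkeeping between the form $\psi$ on $M$ and its trivial extension to $C(M)$, and noting that the closedness condition $C(X)$ required by Proposition~\ref{s4.1.1p2} is automatic since $X$ is compact (hence $C(X)$ is closed as the cone over a compact submanifold).
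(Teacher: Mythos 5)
Your proposal is correct and follows essentially the same route as the paper: it reduces to Proposition~\ref{s4.1.1p2} and uses the identity $\Omega=(\frac{dr}{r}+\sqrt{-1}\eta)\wedge r^{n+1}\psi$, which the paper expands as $\Omega^{\rm Im}=r^{n}dr\wedge \psi^{\rm Im}+r^{n+1}\eta\wedge\psi^{\rm Re}$, to identify the joint vanishing conditions on $C(X)$ with those on $X$. Your two-step handling (first the $\eta$ condition, then the $\Omega^{\rm Im}$ condition given $\tilde{\iota}^{*}\eta=0$) is just a slightly more explicit bookkeeping of the same equivalence.
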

\begin{proof}
Let $(\Omega,\omega)$ be the Calabi-Yau structure on $C(M)$. 
As in the proof of Proposition~\ref{s3.5p2}, 
$\Omega$ is given by 
\begin{equation*}\label{s4.1e1}
\Omega=(\frac{dr}{r}+\sqrt{-1}\eta)\wedge r^{n+1}\psi
\end{equation*}
on $C(M)$. 
It implies that $\Omega^{\rm Im}=r^{n}dr\wedge \psi^{\rm Im}+r^{n+1}\eta\wedge\psi^{\rm Re}$. 
Thus the equation $\iota^{*}\psi^{\rm Im}=\iota^{*}\eta=0$ is equivalent to 
$\tilde{\iota}^{*}\Omega^{\rm Im}=\tilde{\iota}^{*}\eta=0$ 
where $\tilde{\iota}$ is the embedding $\tilde{\iota}:C(X)\to C(M)$. 
Hence the condition $\iota^{*}\psi^{\rm Im}=\iota^{*}\eta=0$ holds if and only if 
$X$ is a special Legendrian submanifold by Proposition~\ref{s4.1.1p2} 
and the definition of special Legendrian submanifolds. 
\end{proof}

For a real constant $\theta$, the $n$-form $e^{\sqrt{-1}\,\theta}\Omega$ induces 
a calibration $(e^{\sqrt{-1}\,\theta}\Omega)^{\rm Re}$ on $(C(M),\overline{g})$. 
Then calibrated submanifolds are called {\it $\theta$-special Lagrangian submanifolds} for a phase $\theta$. 
\begin{defi}\label{s4.1.1d12}
{\rm 
A submanifold $X$ in $M$ is {\it $\theta$-special Legendrian} if 
the cone $C(X)$ is a $\theta$-special Lagrangian submanifold in $C(M)$ for a phase $\theta$. 
}
\end{defi}
A $\theta$-special Legendrian submanifold is a special Legendrian submanifold 
in the Sasaki-Einstein manifold with the almost transverse Calabi-Yau structure $(e^{\sqrt{-1}\,\theta}\psi, \frac{1}{2}d\eta)$. 
Hence any $\theta$-special Legendrian submanifold is minimal. Moreover the converse is true, that is, 
\begin{prop}\label{s4.1.1p1}
A connected oriented submanifold $X$ in $M$ is minimal Legendrian 
if and only if $X$ is $\theta$-special Legendrian for a phase $\theta$. 
\end{prop}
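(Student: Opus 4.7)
The plan is to reduce the statement to the classical fact that a connected Lagrangian submanifold of a Calabi-Yau manifold is minimal if and only if its Lagrangian angle is constant, applied to the cone $C(X)$ in $C(M)$.

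The ``if'' direction is immediate: if $X$ is $\theta$-special Legendrian, then $C(X)$ is $\theta$-special Lagrangian, hence calibrated and in particular minimal; combined with the relation $\widetilde{H}_{(r,x)}=r^{-2}H_{x}$ noted in Section~4.1, this forces $H=0$, so $X$ is minimal, and $X$ is Legendrian because $\iota^{*}\eta=0$ is built into the definition.

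For the ``only if'' direction, assume $X$ is minimal Legendrian. From $\iota^{*}\eta=0$ we get $\widetilde{\iota}^{*}\omega=\frac{1}{2}\widetilde{\iota}^{*}d(r^{2}\eta)=0$, so the $(n{+}1)$-dimensional cone $C(X)$ is Lagrangian in $C(M)$. At each point of $X$, the tangent space is a Lagrangian subspace of the transverse almost Hermitian space $(D,\omega^{T},\Psi)$, and the almost transverse Calabi-Yau identity $\psi\wedge\overline{\psi}=c_{n}(\omega^{T})^{n}$ combined with $g|_{D}=\omega^{T}(\cdot,\Psi\cdot)$ is the classical pointwise condition forcing $\iota^{*}\psi=e^{\sqrt{-1}\,\theta(x)}\,\mathrm{vol}_{X}$ for a smooth function $\theta:X\to\mathbb{R}/2\pi\mathbb{Z}$. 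Using the formula $\Omega=(\frac{dr}{r}+\sqrt{-1}\,\eta)\wedge r^{n+1}\psi$ from the proof of Proposition~\ref{s3.5p2} together with $\widetilde{\iota}^{*}\eta=0$, one computes
\[
\widetilde{\iota}^{*}\Omega=r^{n}\,dr\wedge\iota^{*}\psi=e^{\sqrt{-1}\,\theta\circ\pi}\,\mathrm{vol}_{C(X)},
\]
where $\pi:C(X)\to X$ is the projection and $\mathrm{vol}_{C(X)}=r^{n}\,dr\wedge\mathrm{vol}_{X}$; thus the Lagrangian angle of $C(X)\subset C(M)$ equals $\theta\circ\pi$.

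Now I invoke the Dazord/Harvey-Lawson identity for a Lagrangian submanifold $L$ of a Calabi-Yau manifold, $d\widetilde{\theta}=i_{\widetilde{H}}\omega|_{L}$, where $\widetilde{\theta}$ is the Lagrangian angle and $\widetilde{H}$ the mean curvature vector. Applied to $L=C(X)$, minimality of $X$ gives $\widetilde{H}=r^{-2}H=0$, so $d(\theta\circ\pi)=0$ on the connected manifold $C(X)$ and $\theta$ is a constant $\theta_{0}$. Hence $(e^{-\sqrt{-1}\theta_{0}}\Omega)|_{C(X)}=\mathrm{vol}_{C(X)}$ is real, so $\widetilde{\iota}^{*}(e^{-\sqrt{-1}\theta_{0}}\Omega)^{\mathrm{Im}}=0$; together with $\widetilde{\iota}^{*}\eta=0$, Proposition~\ref{s4.1.1p2} applied to the rotated weighted Calabi-Yau structure shows $C(X)$ is $(-\theta_{0})$-special Lagrangian, i.e.\ $X$ is $(-\theta_{0})$-special Legendrian. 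The one nontrivial ingredient is the Lagrangian-angle/mean-curvature identity; I would either cite it directly from the calibrated-geometry literature, or, working in the Sasakian setting, derive the analogous identity on $X$ by differentiating $\iota^{*}\psi=e^{\sqrt{-1}\theta}\mathrm{vol}_{X}$, applying $d\psi=(n+1)\sqrt{-1}\,\eta\wedge\psi$, and comparing with the first variation of area along normal deformations in $D$. That bookkeeping is the only place where sign and normalization conventions need genuine care.
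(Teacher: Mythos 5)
Your proof is correct and takes essentially the same route as the paper: both arguments define the Lagrangian angle $\theta$ of the cone $C(X)$ via $*\iota^{*}\psi=e^{\mp\sqrt{-1}\,\theta}$, invoke the angle/mean-curvature identity $d\theta=\tilde{\iota}^{*}(i_{\widetilde{H}}\omega)$ (the paper cites Lemma 2.1 of Thomas--Yau where you cite Dazord/Harvey--Lawson), and deduce that $\theta$ is constant from $H=0$, hence that $X$ is special Legendrian for the rotated structure $(e^{\sqrt{-1}\,\theta}\psi,\frac{1}{2}d\eta)$. The only differences are cosmetic: opposite sign conventions for the phase, and the fact that the paper pushes the identity down to $d\theta=\iota^{*}(i_{H}\omega^{T})$ on $X$ itself, a formula it reuses later in Section 5.4.
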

\begin{proof} 
We only show that a minimal Legendrian submanifold is $\theta$-special Legendrian for a phase $\theta$. 
Let $X$ be a connected oriented Legendrian submanifold in $M$ and $H$ the mean curvature vector of $X$. 
Then there exists a $\mathbb{R}/\mathbb{Z}$-valued function $\theta$ on $X$ such that $*\iota^{*}\psi=e^{-\sqrt{-1}\,\theta}$ 
where $*$ is the Hodge operator with respect to the metric $\iota^*g$ on $X$. 
If we regard $\theta$ as the function on $C(X)$ by the trivial extension, 
then it follows from $\Omega=(\frac{dr}{r}+\sqrt{-1}\eta)\wedge r^{n}\psi$ that $\tilde{*}\tilde{\iota}^{*}\Omega=e^{-\sqrt{-1}\,\theta}$ 
where $\tilde{\iota}$ is the inclusion $\tilde{\iota}:C(X)\hookrightarrow C(M)$ and 
$\tilde{*}$ is the Hodge operator with respect to the metric $\tilde{\iota}^*\overline{g}$ on $C(X)$. 
Hence $d\theta=\tilde{\iota}^{*}(i_{\widetilde{H}}\omega)$ for the mean curvature vector $\widetilde{H}$ of $C(X)$ (Lemma 2.1.\cite{TY}). 
It follows from $d\theta(\frac{\partial}{\partial r})=0$ that $\widetilde{H}$ has no component of $\langle \xi\rangle_{\mathbb{R}}$. 
Since the transverse part of $\omega$ is $\frac{r^2}{2}d\eta$ and $\widetilde{H}=\frac{1}{r^2}H$, 
the equation $\tilde{\iota}^{*}(i_{\widetilde{H}}\omega)=\tilde{\iota}^{*}(i_{\widetilde{H}}\frac{r^2}{2}d\eta)=\iota^{*}(i_{H}\omega^{T})$ holds 
where $\omega^{T}$ is the transverse $2$-form $\frac{1}{2}d\eta$ on $(M, \mathcal{F}_{\xi})$. 
Thus we obtain 
\begin{equation}\label{s4.1e2}
d\theta=\iota^{*}(i_{H}\omega^{T})
\end{equation}
on $M$. Hence $\theta$ is constant if $X$ is minimal. 
Then $X$ is special Legendrian with respect to 
the almost transverse Calabi-Yau structure $(e^{\sqrt{-1}\,\theta}\psi, \frac{1}{2}d\eta)$.  
Therefore $X$ is the $\theta$-special Legendrian submanifold for the phase $\theta$. 
\end{proof}

\subsection{Infinitesimal deformations of special Legendrian submanifolds}
Let $X$ be a compact special Legendrian submanifold in $M$. 
We denote by $\mathcal{M}_{X}$ the moduli space of special Legendrian deformations of $X$. 
The following result is shown by Futaki, Hattori and Yamamoto 
by considering deformations of the special Lagrangian cone in $C(M)$ \cite{FHY}. 
We provide its different proof from the point of view of $\Phi$-deformations. 
\begin{prop}\label{s4.1.2p2}
The infinitesimal deformation space of $X$ is  
isomorphic to the space ${\rm Ker}(\Delta_{0}-2(n+1))$ of $2(n+1)$-eigenfunctions 
of the Laplace operator $\Delta_{0}$. 
\end{prop}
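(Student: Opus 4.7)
The plan is to identify the infinitesimal deformation space with $\ker D_1$, where $D_1$ is the linearization at $0_X$ of the map $F:U\to\wedge^0\oplus\wedge^1\oplus\wedge^2$ defining $\mathcal{M}_X(\psi^{\rm Im},\eta,\tfrac12 d\eta)$. As in the contact Calabi-Yau argument of Proposition~\ref{s2.4p1}, since $\iota^*\eta=0$ automatically gives $\iota^*(d\eta)=0$, we may add $\tfrac12 d\eta$ to the defining system without changing the moduli space, and set
\[
F(v)=\bigl(*\exp_v^*\psi^{\rm Im},\ \exp_v^*\eta,\ \tfrac12\exp_v^*d\eta\bigr).
\]
The normal bundle of a special Legendrian submanifold decomposes as $NX=\langle\xi\rangle\oplus NX^T$, so the contact-geometric identification $v\mapsto (i_v\eta,\tfrac12 i_v d\eta)$ yields an isomorphism $NX\simeq \wedge^0\oplus\wedge^1$ (here the $\wedge^1$-component corresponds to $NX^T$ via the transverse Lagrangian duality $v^T\mapsto i_{v^T}\omega^T$ using that $X$ is Lagrangian in the contact distribution).

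Next I would compute $d_0F$ by Cartan's formula. For the first component the key input is the structural equation $d\psi=(n+1)\sqrt{-1}\,\eta\wedge\psi$, which gives $d\psi^{\rm Im}=(n+1)\eta\wedge\psi^{\rm Re}$; then since $\iota^*\eta=0$ and $\iota^*\psi^{\rm Re}=\mathrm{vol}_X$ (as $X$ is calibrated), one obtains
\[
\iota^*\bigl(i_{\tilde v}d\psi^{\rm Im}\bigr)=(n+1)f\,\mathrm{vol}_X,
\qquad f:=i_v\eta\big|_X.
\]
The remaining piece requires the pointwise identity
\[
\iota^*\bigl(i_{\tilde v}\psi^{\rm Im}\bigr)=-\tfrac12*\iota^*\bigl(i_{\tilde v}d\eta\bigr),
\]
which is the special Lagrangian analogue used in Proposition~\ref{s2.4p1}; it follows from a linear-algebra computation at each point using the almost transverse Calabi-Yau normalization $\psi\wedge\overline{\psi}=c_n(\omega^T)^n$ together with the fact that $\iota^*\psi^{\rm Re}$ is the volume form. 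The other two components are routine Cartan calculations giving $df+2\alpha$ and $d\alpha$, where $\alpha:=\tfrac12\iota^*(i_{\tilde v}d\eta)$.

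Combining these yields
\[
D_1(f,\alpha)=\bigl(d^*\alpha+(n+1)f,\ df+2\alpha,\ d\alpha\bigr).
\]
The kernel is then immediate: the second equation forces $\alpha=-\tfrac12 df$, the third becomes automatic, and substitution into the first gives $\tfrac12\Delta_0 f=(n+1)f$, i.e. $f\in\ker(\Delta_0-2(n+1))$. This provides the isomorphism $\ker D_1\simeq\ker(\Delta_0-2(n+1))$ via $f\mapsto(f,-\tfrac12 df)$.

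The main obstacle is the pointwise identity relating $\iota^*(i_{\tilde v}\psi^{\rm Im})$ to $\iota^*(i_{\tilde v}d\eta)$, since it is the one place where the almost transverse Calabi-Yau normalization must be used rather than just Cartan's formula; all remaining steps are mechanical once this identity, the structural equation for $d\psi$, and the normal-bundle identification are in hand.
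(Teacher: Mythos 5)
Your proposal is correct and takes essentially the same approach as the paper: identify $NX\simeq\wedge^{0}\oplus\wedge^{1}$ via $v\mapsto(i_{v}\eta,\tfrac12 i_{v}d\eta)$, compute the linearization by Cartan's formula using $d\psi=(n+1)\sqrt{-1}\,\eta\wedge\psi$ and the pointwise identity $\iota^{*}(i_{\tilde v}\psi^{\rm Im})=-\tfrac12*\iota^{*}(i_{\tilde v}d\eta)$, and read off the kernel. The only deviation is that you append the redundant component $\tfrac12 d\eta$ (as in Proposition~\ref{s2.4p1}), which adds the condition $d\alpha=0$ to the kernel --- automatically satisfied once $\alpha=-\tfrac12 df$ --- whereas the paper works directly with the two-component map $F(v)=(*\exp_{v}^{*}\psi^{\rm Im},\exp_{v}^{*}\eta)$; the resulting eigenspace $\Ker(\Delta_{0}-2(n+1))$ is the same.
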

\begin{proof}
Proposition~\ref{s4.1.2p1} implies that a special Legendrian deformation is a $(\psi^{\rm Im}, \eta)$-deformation and 
$\mathcal{M}_{X}$ is the moduli space $\mathcal{M}_{X}(\psi^{\rm Im}, \eta)$. 
We take the set $U$ as in $(\ref{s2.1e2})$ and define the map $F:U\to \wedge^{0}\oplus \wedge^{1}$ by 
\begin{equation*}\label{s4.1.2e3}
F(v)=(*\exp_{v}^{*}\psi^{\rm Im},\exp_{v}^{*}\eta)
\end{equation*}
for $v\in U$, then we can regard $F^{-1}(0)$ as a set of special Legendrian submanifolds 
in $M$ which is near $X$ in $C^1$ sense. 
The linearization $d_{0}F$ of $F$ at $0\in U$ is given by 
\begin{eqnarray*}\label{s4.1.2e4}
d_{0}F(v)&=&(*\iota^{*}L_{\tilde{v}}\psi^{\rm Im}, \iota^{*}L_{\tilde{v}}\eta)\\
&=&(*d\iota^{*}(i_{\tilde{v}}\psi^{\rm Im})+(n+1)\iota^{*}(i_{\tilde{v}}\eta\psi^{\rm Re}), 
d\iota^{*}(i_{\tilde{v}}\eta)+\iota^{*}(i_{\tilde{v}}d\eta))\\
&=&(\frac{1}{2}d^{*}\iota^{*}(i_{\tilde{v}}d\eta)+(n+1)\iota^{*}(i_{\tilde{v}}\eta), 
d\iota^{*}(i_{\tilde{v}}\eta)+\iota^{*}(i_{\tilde{v}}d\eta))
\end{eqnarray*}
for $v\in U$ where $\tilde{v}$ is an extension of $v$ to $M$. 
In the last equation, we use that 
$\iota^{*}(i_{\tilde{v}}\psi^{\rm Im})=-\frac{1}{2}*\iota^{*}(i_{\tilde{v}}d\eta)$. 
Under the identification 
\[
\Gamma(NX)\simeq \wedge^{0}\oplus\wedge^{1}
\] 
given by $v\mapsto (i_{v}\eta, \frac{1}{2}i_{v}d\eta)$, we identify $d_{0}F$ with the map 
$d_{0}F:\wedge^{0}\oplus\wedge^{1}\to\wedge^{0}\oplus \wedge^{1}$ defined by  
\begin{equation*}\label{s4.1.2e5}
d_{0}F(f,\alpha)=(d^{*}\alpha+(n+1)f, df+2\alpha)
\end{equation*}
for $(f,\alpha)\in\wedge^{0}\oplus\wedge^{1}$. 
Then it turns out that 
\begin{eqnarray*}
{\rm Ker}(d_{0}F)&=&\{(f,\alpha)\in\wedge^{0}\oplus\wedge^{1} 
\mid d^{*}\alpha+(n+1)f=0, 2\alpha+df=0\} \\
&=&\{(f,-\frac{1}{2}df)\in\wedge^{0}\oplus\wedge^{1} \mid (\Delta_{0}-2(n+1))f=0 \}
\end{eqnarray*}
which is isomorphic to ${\rm Ker}(\Delta_{0}-2(n+1))$, 
and hence it completes the proof. 
\end{proof}
The obstruction space of special Legendrian deformations is the cokernel ${\rm Coker}(d_0F)$ 
as in the proof of Proposition~\ref{s4.1.2p2}. 
However, the space ${\rm Coker}(d_0F)$ is isomorphic to ${\rm Ker}(d_0F)$ 
since $d_{0}F:\wedge^{0}\oplus\wedge^{1}\to\wedge^{0}\oplus \wedge^{1}$ is self dual. 
Hence the obstruction space does not vanish whenever $X$ has non-trivial deformations. 
Thus there may exist some obstruction of special Legendrian deformations.

\subsection{The intersection of two deformation spaces}
Let $\omega^{T}$ be the $2$-form $\frac{1}{2}d\eta$ on $M$. 
We remark that $X$ is a special Legendrian submanifold if and only if 
$\iota^{*}\psi^{\rm Im}=\iota^{*}\eta=\iota^{*}\omega^{T}=0$. 
Let $X$ be a compact special Legendrian submanifold in $M$. 
We consider $(\psi^{\rm Im}, \omega^{T})$-deformations of $X$ and 
denote by $\mathcal{N}_{X}$ the moduli spaces $\mathcal{M}_{X}(\psi^{\rm Im}, \omega^{T})$. 
We fix an integer $s\ge 3$ and a real number $\alpha$ with $0<\alpha<1$ and set $\mathcal{N}_{X}^{s,\alpha}$ 
as the moduli space $\mathcal{M}_{X}^{s,\alpha}(\psi^{\rm Im}, \omega^{T})$ of $(\psi^{\rm Im}, \omega^{T})$-deformations of $C^{s, \alpha}$-class. 
Then we have 
\begin{prop}\label{s4.3p1}
The moduli space $\mathcal{N}_{X}^{s,\alpha}$ is smooth at $0_X$. 
The tangent space $T_{0_X}\mathcal{N}_{X}^{s,\alpha}$ 
is isomorphic to 
$\{(-\frac{1}{n+1}d^{*}\alpha,\alpha)\in C^{s,\alpha}(\wedge^{0}\oplus\wedge^{1}) \mid d\alpha=0 \}$. 
\end{prop}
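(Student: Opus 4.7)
The plan is to apply Proposition~\ref{s2.1p1} to the natural $(\psi^{\rm Im}, \omega^{T})$-deformation map. Define $F:U\to \wedge^{0}\oplus \wedge^{2}$ by
\[
F(v)=(*\exp_{v}^{*}\psi^{\rm Im},\ \exp_{v}^{*}\omega^{T}),
\]
so that $F^{-1}(0)$ models $\mathcal{N}_{X}$ near $0_{X}$. Using the identification $NX\simeq \wedge^{0}\oplus\wedge^{1}$ via $v\mapsto (i_{v}\eta,\frac{1}{2}i_{v}d\eta)$, I would compute $D_{1}=d_{0}F$ by Cartan's formula. Since $d\omega^{T}=0$, the second component reduces to $d\alpha$. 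For the first, the Sasaki-Einstein identity $d\psi=(n+1)\sqrt{-1}\,\eta\wedge\psi$ gives $d\psi^{\rm Im}=(n+1)\eta\wedge\psi^{\rm Re}$, and together with the pointwise relation $\iota^{*}(i_{\tilde{v}}\psi^{\rm Im})=-\frac{1}{2}*\iota^{*}(i_{\tilde{v}}d\eta)$ already exploited in Proposition~\ref{s2.4p1} and with $\iota^{*}\psi^{\rm Re}={\rm vol}_{X}$, I expect
\[
D_{1}(f,\alpha)=(d^{*}\alpha+(n+1)f,\ d\alpha),
\]
whose kernel consists exactly of pairs $(-\frac{1}{n+1}d^{*}\alpha,\alpha)$ with $d\alpha=0$.

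Next, I would build the complex by choosing $D_{2}:\wedge^{0}\oplus\wedge^{2}\to \wedge^{3}$ as $D_{2}(g,\beta)=d\beta$; clearly $D_{2}\circ D_{1}=0$. A short formal-adjoint calculation gives $D_{1}^{*}(g,\beta)=((n+1)g,\ dg+d^{*}\beta)$ and $D_{2}^{*}(\gamma)=(0,d^{*}\gamma)$, so
\[
P_{2}=D_{1}\circ D_{1}^{*}+D_{2}^{*}\circ D_{2}=\bigl((\Delta_{0}+(n+1)^{2})g,\ \Delta_{2}\beta\bigr),
\]
which is manifestly elliptic (block-diagonal with two elliptic blocks). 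Because $(n+1)^{2}>0$ the scalar factor $\Delta_{0}+(n+1)^{2}$ is strictly positive, so ${\rm Ker}\,P_{2}=\{0\}\oplus \mathcal{H}^{2}(X)$, while ${\rm Im}\,D_{2}^{*}=\{0\}\oplus d^{*}\wedge^{3}$.

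It remains to verify ${\rm Im}\,F\subset {\rm Im}\,D_{1}$, which by Hodge decomposition $\wedge^{0}\oplus \wedge^{2}={\rm Ker}\,P_{2}\oplus {\rm Im}\,D_{1}\oplus {\rm Im}\,D_{2}^{*}$ is equivalent to ${\rm Im}\,F$ being orthogonal to ${\rm Ker}\,P_{2}\oplus {\rm Im}\,D_{2}^{*}$. Both summands have trivial $\wedge^{0}$-component, so it suffices to note that the second component $\exp_{v}^{*}\omega^{T}=\frac{1}{2}d(\exp_{v}^{*}\eta)$ is globally exact on $X$ (because $\omega^{T}=\frac{1}{2}d\eta$ is globally exact on $M$), hence orthogonal to both harmonic and co-exact $2$-forms. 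Proposition~\ref{s2.1p1} then yields the smoothness of $\mathcal{N}_{X}^{s,\alpha}$ at $0_{X}$ with tangent space ${\rm Ker}\,D_{1}^{s,\alpha}$, which is exactly the claimed set. The main technical point is the linearization computation, which requires the Sasaki-Einstein structural identity for $d\psi$ together with the Legendrian interior-product identity; the image condition and ellipticity then fall out almost trivially, the former being the reason we only obtain smoothness in the $C^{s,\alpha}$ category rather than also checking $P_{1}$ ellipticity.
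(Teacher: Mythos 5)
Your proposal is correct and follows essentially the same route as the paper: the same map $F$, the same identification $NX\simeq\wedge^{0}\oplus\wedge^{1}$, the same linearization $D_{1}(f,\alpha)=(d^{*}\alpha+(n+1)f,\,d\alpha)$, the same complex with $D_{2}(g,\beta)=d\beta$, the same elliptic $P_{2}=((\Delta_{0}+(n+1)^{2})g,\Delta_{2}\beta)$, and the same exactness argument giving ${\rm Im}\,F\subset\wedge^{0}\oplus d\wedge^{1}\subset{\rm Im}\,D_{1}$. Your closing remark about $P_{1}$ failing to be elliptic (hence only $C^{s,\alpha}$-smoothness) also matches the paper's computation of $P_{1}$.
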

\begin{proof}
We take the set $U$ as in $(\ref{s2.1e2})$ and define the map $G:U\to \wedge^{0}\oplus \wedge^{2}$ by 
\begin{equation*}\label{s4.1.3e2}
G(v)=(*\exp_{v}^{*}\psi^{\rm Im}, \exp_{v}^{*}\omega^{T})
\end{equation*}
for $v\in U$. It follows that for $v\in U$ 
\begin{eqnarray*}\label{s4.1.3e3}
d_{0}G(v)&=&(*(d\iota^{*}(i_{\tilde{v}}\psi^{\rm Im})+
(n+1)\iota^{*}(i_{\tilde{v}}\eta\psi^{\rm Re})-(n+1)\iota^{*}(\eta\wedge i_{\tilde{v}}\psi^{\rm Re}),\  
d\iota^{*}(i_{\tilde{v}}\omega^{T})) \\
&=&(d^{*}\iota^{*}(i_{\tilde{v}}\omega^{T})+
(n+1)\iota^{*}(i_{\tilde{v}}\eta),\ d\iota^{*}(i_{\tilde{v}}\omega^{T}))
\end{eqnarray*}
where $\tilde{v}$ is an extension of $v$ to $M$.  
In the last equation, we use that 
$\iota^{*}(i_{\tilde{v}}\psi^{\rm Im})=-\frac{1}{2}*\iota^{*}(i_{\tilde{v}}d\eta)$ and 
$\iota^{*}\psi^{\rm Re}={\rm vol}(X)$. 
Under the identification $\Gamma(NX)\simeq \wedge^{0}\oplus\wedge^{1}$ given 
by $v\mapsto (i_{v}\eta, i_{v}\omega^{T})$, we identify $d_{0}G$ with the map 
$D_1:\wedge^{0}\oplus\wedge^{1}\to\wedge^{0}\oplus \wedge^{2}$ defined by  
\begin{equation*}\label{s4.1.3e4}
D_1(f,\alpha)=(d^{*}\alpha+(n+1)f, d\alpha) 
\end{equation*}
for $(f,\alpha)\in\wedge^0\oplus \wedge^1$. Then it turns out that 
\begin{eqnarray*}
{\rm Ker}(D_{1})&=&\{(f,\alpha)\in\wedge^{0}\oplus\wedge^{1} 
\mid d^{*}\alpha+(n+1)f=0, d\alpha=0\} \\
&=&\{(-\frac{1}{n+1}d^{*}\alpha,\alpha)\in\wedge^{0}\oplus\wedge^{1} \mid d\alpha=0 \}.
\end{eqnarray*}
Now we provide a complex as follows 
\begin{equation*}\label{s4.1.3e5}
0\to \wedge^{0}\oplus\wedge^{1}\stackrel{D_{1}}{\longrightarrow}
\wedge^{0}\oplus \wedge^{2}\stackrel{D_{2}}{\longrightarrow} 
\wedge^{3}\to 0 
\end{equation*}
where the operator $D_{2}$ is given by 
\begin{equation*}
D_{2}(f,\beta)=d\beta
\end{equation*}
for $(f,\beta)\in\wedge^{0}\oplus \wedge^{2}$. 
It is easy to see that  
\begin{eqnarray*}
P_{1}(f,\alpha)&=&((n+1)^{2}f+(n+1)d^{*}\alpha, \Delta_{1}\alpha+(n+1)df),\\
P_{2}(f,\beta)&=&((\Delta_{0}+(n+1)^{2})f, \Delta_{2}\beta).
\end{eqnarray*}
Hence $P_{2}$ is the elliptic operator. 
It follows from ${\rm Im}(G)\subset \wedge^{0}\oplus d\wedge^{1}$  
that ${\rm Im}(G)$ is perpendicular 
to $\Ker{P_{2}}(=\{0\}\oplus \mathcal{H}^{2}(X))$ and ${\rm Im}(D^{*}_{2})$. 
Hence we obtain ${\rm Im}(G)\subset {\rm Im}(D_{1})$ by the Hodge decomposition 
$\wedge^{0}\oplus \wedge^{2}=\Ker{P_{2}}\oplus {\rm Im}(D_{1})\oplus {\rm Im}(D_{2}^{*})$. 
Proposition~\ref{s2.1p1} implies that $\mathcal{N}_{X}^{s,\alpha}$ is smooth at $0_X$ 
with the tangent space ${\rm Ker}(D_{1}^{s,\alpha})$. 
Hence we finish the proof. 
\end{proof}

Let $\mathcal{L}_{X}$ be the moduli space of Legendrian deformations of $X$ of $C^{\infty}$-class 
and $\mathcal{L}_{X}^{s,\alpha}$ that of $C^{s, \alpha}$-class. 
Then we have the following 
\begin{thm}\label{s4.3t1}
The moduli space $\mathcal{M}_{X}$ is the intersection $\mathcal{N}_{X}\cap\mathcal{L}_{X}$ 
where $\mathcal{N}_{X}^{s,\alpha}$ and $\mathcal{L}_{X}^{s,\alpha}$ are smooth. 
\end{thm}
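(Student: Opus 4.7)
The statement has two parts: the set-theoretic identification $\mathcal{M}_X = \mathcal{N}_X \cap \mathcal{L}_X$, and the smoothness of the two moduli spaces $\mathcal{N}_X^{s,\alpha}$ and $\mathcal{L}_X^{s,\alpha}$ whose intersection is being taken. The plan is to treat these separately and combine.

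For the equality, I would argue as follows. By Proposition~\ref{s4.1.2p1} (or rather its translation into the definition of $\mathcal{M}_X$), a normal family $\{f_t\}$ lies in $\mathcal{M}_X$ exactly when $f_t^{*}\psi^{\mathrm{Im}}=0$ and $f_t^{*}\eta=0$ for all $t$. If this holds, then $f_t^{*}\omega^{T}=\tfrac{1}{2}\,d(f_t^{*}\eta)=0$, so $f_t$ is simultaneously a $(\psi^{\mathrm{Im}},\omega^{T})$-deformation and an $\eta$-deformation, placing it in $\mathcal{N}_X\cap\mathcal{L}_X$. Conversely, a family in $\mathcal{N}_X\cap\mathcal{L}_X$ satisfies $f_t^{*}\psi^{\mathrm{Im}}=0$ (from membership in $\mathcal{N}_X$) and $f_t^{*}\eta=0$ (from membership in $\mathcal{L}_X$), hence lies in $\mathcal{M}_X$. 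This gives the equality, and the same argument works verbatim in the $C^{s,\alpha}$ setting.

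For the smoothness statements, both are already essentially established earlier in the paper. The smoothness of $\mathcal{N}_X^{s,\alpha}$ is exactly Proposition~\ref{s4.3p1}. The smoothness of $\mathcal{L}_X^{s,\alpha}$ is Proposition~\ref{s2.2.4p1} specialized to the Legendrian case (where it is invoked as a consequence of the Darboux--Weinstein neighbourhood theorem for contact manifolds, realized concretely by applying Proposition~\ref{s2.1p1} to the complex $\wedge^0\oplus\wedge^1\to\wedge^1\oplus\wedge^2\to\wedge^2\oplus\wedge^3$ used in Section~2.2.4). So the proof of the theorem itself reduces to citing these two earlier results together with the set-theoretic identification above.

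There is no serious obstacle here; the real work was done when establishing Propositions~\ref{s4.3p1} and~\ref{s2.2.4p1}. The only point that deserves a brief comment is that $\mathcal{M}_X$ is \emph{not} itself claimed to be smooth: the failure of smoothness comes from the fact that the intersection of two smooth moduli spaces need not be smooth, and this is consistent with the earlier remark that the infinitesimal deformation space $\ker(\Delta_0-2(n+1))$ and the obstruction cokernel have equal dimension via the self-duality of the linearized operator, so the expected obstruction genuinely occurs. I would end the proof by flagging this so that the reader does not mistakenly infer smoothness of $\mathcal{M}_X^{s,\alpha}$ from the theorem.
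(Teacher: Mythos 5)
Your proposal is correct and follows essentially the same route as the paper: the set-theoretic identity $\mathcal{M}_{X}(\psi^{\rm Im},\eta)=\mathcal{M}_{X}(\psi^{\rm Im},\eta,\omega^{T})=\mathcal{M}_{X}(\psi^{\rm Im},\omega^{T})\cap\mathcal{M}_{X}(\eta)$ via $f_t^{*}\omega^{T}=\tfrac{1}{2}d(f_t^{*}\eta)=0$, combined with citations of Proposition~\ref{s4.3p1} for $\mathcal{N}_X^{s,\alpha}$ and Proposition~\ref{s2.2.4p1} for $\mathcal{L}_X^{s,\alpha}$. The only cosmetic difference is that the paper notes the smoothness of $\mathcal{N}_X^{s,\alpha}$ at every point of the intersection (by rerunning the argument at each special Legendrian submanifold), whereas you cite it only at $0_X$; your closing remark about $\mathcal{M}_X$ itself not being smooth is accurate and consistent with the paper's discussion of the self-dual obstruction.
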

\begin{proof}
The moduli space $\mathcal{M}_{X}$ is the intersection $\mathcal{N}_{X}\cap \mathcal{L}_{X}$
since $\mathcal{M}_{X}(\psi^{\rm Im}, \eta)=\mathcal{M}_{X}(\psi^{\rm Im}, \eta, \omega^{T})=\mathcal{M}_{X}(\psi^{\rm Im}, \omega^{T})\cap\mathcal{M}_{X}(\eta)$. 
The space $\mathcal{L}_{X}^{s,\alpha}$ is smooth by Proposition~\ref{s2.2.4p1}. 
It follows from Proposition~\ref{s4.3p1} that $\mathcal{N}_{X}^{s,\alpha}$ is smooth at any point of $\mathcal{N}_{X}\cap\mathcal{L}_{X}$. 
Hence it completes the proof. 
\end{proof}

\subsection{Transverse deformations of special Legendrian submanifolds}
We provide a definition of transverse deformations of submanifolds in a general foliated manifold. 
Let $(M',g')$ be a Riemannian manifold and $\mathcal{F}$ a Riemannian foliation on $M'$. 
Then $\mathcal{F}$ induces the vector bundle $F$ on $M'$. 
We regard the quotient bundle $NF=TM'/F$ as the subbundle of $TM'$ which is orthogonal to $F$ with respect to $g'$. 
Let $X$ be a compact submanifold in $M'$. 
A normal deformation $f=\{f_{t}\}_{t\in [0,1]}$ of $X$ is called a \textit{transverse deformation} of $X$ 
if there exists a family $\{h_{t}\}_{t\in [0,1]}$ of diffeomorphisms of $X$ with $h_0={\rm id}_{X}$ and 
$\frac{d}{d t}h_t|_{t=0}=0$ such that $\frac{d}{d t}f_t \circ h_t\in \Gamma(NF|_{X_t})$ for each $t\in [0,1]$. 
We say that $f$ is a {\it transverse $\Phi$-deformation} of $X$ if $f$ is a transverse and $\Phi$-deformation of $X$, 
and we define $\mathcal{M}_{X}^{T}(\Phi)$ as the moduli space of transverse $\Phi$-deformations of $X$. 

We consider a transverse deformation of special Legendrian submanifolds in the Sasaki-Einstein manifold $(M,g)$. 
We assume that $\mathcal{F}$ is the Reeb foliation on $M$ and $X$ is a compact special Legendrian submanifold in $M$. 
It follows from $NF= \ker \eta$ that a transverse $(\psi^{\rm Im}, \omega^{T})$-deformation of $X$ is given by a $(\psi^{\rm Im}, \omega^{T})$-deformation $\{f_{t}\}_{t\in [0,1]}$ 
such that $\frac{d}{d t}f_t \circ h_t\in \Gamma(\ker \eta|_{X_t})$ for diffeomorphisms $\{h_{t}\}_{t\in [0,1]}$ of $X$. 
We denote by $\mathcal{N}_{X}^{T}$ the moduli space $\mathcal{M}_{X}^{T}(\psi^{\rm Im}, \omega^{T})$ 
of transverse $(\psi^{\rm Im}, \omega^{T})$-deformations of $X$. 
Then we obtain 

\begin{thm}\label{s4.1.4t1}
The moduli space $\mathcal{N}_{X}^{T}$ is smooth at $0_X$ 
and the tangent space $T_{0_X}\mathcal{N}_{X}^{T}$ 
is isomorphic to $H^{1}(X)$. 
\end{thm}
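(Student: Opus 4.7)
My plan is to adapt the proof of Proposition~\ref{s4.3p1} to the transverse subbundle. Under the orthogonal decomposition $NX = \langle\xi\rangle \oplus NX^{T}$, the identification $NX \cong \wedge^{0}\oplus\wedge^{1}$ used there restricts to $NX^{T} \cong \wedge^{1}$ via $v \mapsto i_{v}\omega^{T}$, because $NX^{T}\subset D=\ker\eta$ and $\omega^{T}$ is non-degenerate on $D$. I define
\[
F^{T}\colon U^{T}\to \wedge^{0}\oplus\wedge^{2},\qquad F^{T}(v)=(\ast\exp_{v}^{*}\psi^{\rm Im},\ \exp_{v}^{*}\omega^{T}),
\]
on sufficiently small transverse normal fields. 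Setting the Reeb component $f=0$ in the linearization computed in Proposition~\ref{s4.3p1} gives the derivative at $0$
\[
D_{1}^{T}(\alpha)=(d^{*}\alpha,\ d\alpha),
\]
whose kernel is $\mathcal{H}^{1}(X)\cong H^{1}(X)$, matching the tangent space predicted by the theorem.

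Next I place $D_{1}^{T}$ in the complex
\[
0\to \wedge^{1}\xrightarrow{D_{1}^{T}}\wedge^{0}\oplus\wedge^{2}\xrightarrow{D_{2}}\wedge^{3}\to 0,\qquad D_{2}(f,\beta)=d\beta,
\]
with formal adjoints $D_{1}^{T*}(f,\beta)=df+d^{*}\beta$ and $D_{2}^{*}\gamma=(0,d^{*}\gamma)$. A direct calculation yields $P_{1}=D_{1}^{T*}D_{1}^{T}=\Delta_{1}$ and $P_{2}=D_{1}^{T}D_{1}^{T*}+D_{2}^{*}D_{2}=(\Delta_{0},\Delta_{2})$, both elliptic, with $\ker P_{2}=\mathbb{R}\oplus\mathcal{H}^{2}(X)$. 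Proposition~\ref{s2.1p1} then applies once ${\rm Im}(F^{T})\subset {\rm Im}(D_{1}^{T})$ is verified.

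The image condition is the main obstacle. Orthogonality of the second component to $\mathcal{H}^{2}(X)\oplus d^{*}\wedge^{3}$ is immediate because $\omega^{T}=\frac{1}{2}d\eta$ is exact on $M$, so $\exp_{v}^{*}\omega^{T}$ is exact on $X$. The delicate point is orthogonality to the constants $\mathcal{H}^{0}(X)=\mathbb{R}$ in the first factor: in contrast to Proposition~\ref{s4.3p1}, where the positive shift $\Delta_{0}+(n+1)^{2}$ makes the $\wedge^{0}$-kernel trivial, here $\ker\Delta_{0}$ imposes a genuine integral constraint $\int_{X}\exp_{v}^{*}\psi^{\rm Im}=0$ for all small transverse $v$. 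I plan to establish this by combining the identity $d\psi^{\rm Im}=(n+1)\eta\wedge\psi^{\rm Re}$ with $\iota^{*}\eta=0$ and $i_{v}\eta=0$: a Cartan-formula computation shows that the first-order variation of the integral vanishes, and I expect to extend this to all orders by replacing the Riemannian exponential $\exp_{tv}$ with a foliation-adapted parametrization of transverse deformations whose velocity stays in $NX_{t}^{T}$ at every time, so that the same first-order calculation integrates along the isotopy. A fallback is to work with the modified map $\widetilde{F}^{T}\colon U^{T}\to \wedge^{0,\perp}\oplus\wedge^{2}$ obtained by projecting the first factor orthogonally to constants, apply Proposition~\ref{s2.1p1} to the shifted complex to obtain a finite-dimensional smooth slice, and then verify that the scalar mean automatically vanishes on this slice.

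Once the image condition is secured, Proposition~\ref{s2.1p1} yields that $\mathcal{N}_{X}^{T}$ is smooth at $0_{X}$ with tangent space $\ker D_{1}^{T}\cong H^{1}(X)$, completing the proof of Theorem~\ref{s4.1.4t1}.
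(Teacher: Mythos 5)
Your proposal reproduces the paper's proof of Theorem~\ref{s4.1.4t1} step for step: the same map $G^{T}(v)=(*\exp_{v}^{*}\psi^{\rm Im},\exp_{v}^{*}\omega^{T})$ on transverse normal fields, the same identification $NX^{T}\simeq\wedge^{1}$ via $v\mapsto i_{v}\omega^{T}$, the same linearization $D_{1}(\alpha)=(d^{*}\alpha,d\alpha)$ with kernel $\mathcal{H}^{1}(X)$, the same complex with $D_{2}(f,\beta)=d\beta$, and the same $P_{1}=\Delta_{1}$, $P_{2}=(\Delta_{0},\Delta_{2})$ feeding into Proposition~\ref{s2.1p1}. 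The only point of divergence is the image condition. There the paper simply asserts ${\rm Im}(G^{T})\subset d^{*}\!\wedge^{1}\oplus\, d\wedge^{1}$ and moves on; you are right that the second factor is immediate from exactness of $\omega^{T}$ and that the real content is the zero-mean constraint $\int_{X}\exp_{v}^{*}\psi^{\rm Im}=0$, forced by the nontrivial kernel $\mathcal{H}^{0}(X)$ of $\Delta_{0}$ (in contrast to Proposition~\ref{s4.3p1}, where the shift by $(n+1)^{2}$ removes it). Identifying this as the crux is a genuine improvement in precision over the paper's one-line dismissal.

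However, as written your treatment of that crux is a plan rather than a proof, and the specific mechanism you propose does not close the gap. By Stokes, $\int_{X}\exp_{v}^{*}\psi^{\rm Im}=(n+1)\int_{0}^{1}\!\!\int_{X}H_{t}^{*}\bigl((i_{V_{t}}\eta)\,\psi^{\rm Re}-\eta\wedge i_{V_{t}}\psi^{\rm Re}\bigr)\,dt$, where $H_{t}$ is the time-$t$ slice of the isotopy and $V_{t}$ its velocity. Arranging (by a foliation-adapted parametrization) that $V_{t}\in NX_{t}^{T}$ kills only the first term; the second term $H_{t}^{*}(\eta\wedge i_{V_{t}}\psi^{\rm Re})=H_{t}^{*}\eta\wedge H_{t}^{*}(i_{V_{t}}\psi^{\rm Re})$ vanishes only when $H_{t}^{*}\eta=0$, i.e.\ when the slice $X_{t}$ is Legendrian --- which a $(\psi^{\rm Im},\omega^{T})$-deformation does not guarantee for $t>0$. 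This is exactly why your first-order computation works at $t=0$ (where $\iota^{*}\eta=0$) but does not ``integrate along the isotopy.'' Your fallback has the mirror-image gap: after projecting the first factor orthogonally to constants you do obtain a smooth zero set with tangent space $\mathcal{H}^{1}(X)$, but you still must show that on this zero set the constant $*\exp_{v}^{*}\psi^{\rm Im}$ actually vanishes rather than being some nonzero phase, and continuity from $v=0$ alone does not give this. So the one step you flag as delicate is the one step that remains open in your argument (and, it should be said, is also the step the paper leaves unjustified).
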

\begin{proof}
We define $NX^{T}$ by the vector bundle 
\[
NX^T=NX\cap \ker \eta|_{X}
\]
on $X$. 
Let $\mathcal{U}^{T}$ be a sufficiently small neighbourhood of the zero section in $NX^{T}$ and 
$U^T$ the set $\{ v\in \Gamma(NX^{T}) \mid v_{x}\in \mathcal{U}^T, x\in X \}$. 
We define the map $G^{T}:U^{T}\to \wedge^{0}\oplus \wedge^{2}$ by 
\begin{equation*}\label{s4.1.4e1}
G^{T}(v)=(*\exp_{v}^{*}\psi^{\rm Im}, \exp_{v}^{*}\omega^{T}) 
\end{equation*}
for $v\in U^{T}$. 
We remark that if a geodesic is orthogonal to a leaf at one point, then the geodesic is orthogonal to any leaf~\cite{R}. 
It yields that the deformation $\exp_{v}=\{\exp_{tv}\}_{t\in [0,1]}$ satisfies $\frac{d}{d t}\exp_{tv} \in \Gamma(NF|_{\exp_{tv}(X)})$ for each $t$ if $v$ is in $\Gamma(NF)$. 
We can replace $\{\exp_{tv}\}_{t\in [0,1]}$ to a normal deformation $\{\exp_{tv}\circ h'_t\}_{t\in [0,1]}$ by 
a time dependent diffeomorphisms $h'_t$ of $X$ with $h'_0={\rm id}_{X}$ and $\frac{d}{d t}h'_t|_{t=0}=0$. 
Hence $(G^T)^{-1}(0)$ is identified with a neighbourhood of $0_X$ in $\mathcal{N}_{X}^{T}$. 
We have 
\begin{eqnarray*}\label{s4.1.4e2}
d_{0}G^{T}(v)&=&(*(d\iota^{*}(i_{\tilde{v}}\psi^{\rm Im})+
(n+1)\iota^{*}(i_{\tilde{v}}\eta\psi^{\rm Re})-(n+1)\iota^{*}(\eta\wedge i_{\tilde{v}}\psi^{\rm Re})),\  
d\iota^{*}(i_{\tilde{v}}\omega^{T})) \\
&=&(d^{*}\iota^{*}(i_{\tilde{v}}\omega^{T}),\ d\iota^{*}(i_{\tilde{v}}\omega^{T}))
\end{eqnarray*}
for $v\in U^{T}$ where $\tilde{v}$ is an extension of $v$ to $M$.  
In the last equation, we use that 
$\iota^{*}(i_{\tilde{v}}\psi^{\rm Im})=-*\iota^{*}(i_{\tilde{v}}\omega^{T})$ and 
$\iota^{*}(i_{\tilde{v}}\eta)=i_{v}\eta=0$. 
Under the identification 
\[
\Gamma(NX^{T})\simeq \wedge^{1}
\]
given by $v\mapsto i_{v}\omega^{T}$, we identify $d_{0}G^{T}$ with the map 
$D_1:\wedge^{1}\to\wedge^{0}\oplus \wedge^{2}$ defined by  
\begin{equation*}\label{s4.1.4e3}
D_1(\alpha)=(d^{*}\alpha, d\alpha)
\end{equation*}
for $\alpha\in\wedge^{1}$. Then it turns out that 
\begin{equation*}
{\rm Ker}(D_{1})
=\{\alpha\in \wedge^{1} \mid d^{*}\alpha=d\alpha=0\} 
=\mathcal{H}^{1}(X).
\end{equation*}
Now we provide a complex as follows 
\begin{equation*}\label{s4.1.4e4}
0\to \wedge^{1}\stackrel{D_{1}}{\longrightarrow}
\wedge^{0}\oplus \wedge^{2}\stackrel{D_{2}}{\longrightarrow} 
\wedge^{3}\to 0 
\end{equation*}
where the operator $D_{2}$ is given by 
\begin{equation*}
D_{2}(f,\beta)=d\beta
\end{equation*}
for $(f,\beta)\in\wedge^{0}\oplus \wedge^{2}$. 
It is easy to see that  
\begin{eqnarray*}
P_{1}(\alpha)&=&\Delta_{1}\alpha,\\
P_{2}(f,\beta)&=&(\Delta_{0}f, \Delta_{2}\beta).
\end{eqnarray*}
Hence $P_{1}$ and $P_{2}$ are elliptic. 
It follows from ${\rm Im}(G^{T})\subset d^{*}\!\wedge^{1}\oplus d\wedge^{1}$  
that ${\rm Im}(G^{T})$ is perpendicular to 
$\Ker{P_{2}}(=\mathcal{H}^{0}(X)\oplus \mathcal{H}^{2}(X))$ and ${\rm Im}(D^{*}_{2})$. 
Hence we obtain ${\rm Im}(G^{T})\subset {\rm Im}(D_{1})$ by the Hodge decomposition 
$\wedge^{0}\oplus \wedge^{2}=\Ker{P_{2}}\oplus {\rm Im}(D_{1})\oplus {\rm Im}(D_{2}^{*})$. 
Proposition~\ref{s2.1p1} implies that $\mathcal{N}_{X}^{T}$ is smooth at $0_X$ 
with the tangent space ${\rm Ker}(D_{1})=\mathcal{H}^{1}(X)$, 
and hence it completes the proof.  
\end{proof}

\section{Further results}
In this section, we assume that $(M,g)$ is a simply connected and compact Riemannian manifold 
of dimension $2n+1$. 

\subsection{Sasaki manifolds with almost transverse Calabi-Yau structures} 
The metric cone of a Sasaki manifold $(M,g)$ with an almost transverse Calabi-Yau structure is 
not a Calabi-Yau manifold unless $(M,g)$ is Sasaki-Einstein. 
A K\"{a}hler manifold with a non-vanishing holomorphic volume form is called an {\it almost Calabi-Yau manifold}. 
We refer to \cite{J1} for an almost Calabi-Yau manifolds. 
We provide a generalization of Proposition \ref{s3.5p2} as follows 
\begin{prop}\label{s5.1p1}
The Riemannian manifold $(M,g)$ has a contact form $\eta$ and a complex valued $n$-form $\psi$ such that 
$(\psi, \frac{1}{2}d\eta)$ is an almost transverse Calabi-Yau structure on $(M, \mathcal{F})$, 
where $\mathcal{F}$ is the Reeb foliation induced by $\eta$, with $d\psi=\kappa\sqrt{-1}\,\eta\wedge \psi$ for a real constant $\kappa$ 
if and only if the metric cone $(C(M), \overline{g})$ is an almost Calabi-Yau manifold with 
a non-vanishing holomorphic section $\Omega$ of $K_{C(M)}$ and a K\"{a}hler form $\omega$ such that 
\begin{equation}\label{s5.1e1}
\Omega\wedge \overline{\Omega}=r^{2(\kappa -n-1)}c_{n+1}\omega^{n+1}
\end{equation}
and 
\begin{eqnarray*}
&&L_{r\frac{\partial}{\partial r}}\Omega=\kappa\Omega, \label{s2.5eq8}\\
&&L_{r\frac{\partial}{\partial r}}\omega=2\omega. \label{s2.5eq9}
\end{eqnarray*} 
\end{prop}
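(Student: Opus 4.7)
The plan is to mirror the proof of Proposition~\ref{s3.5p2}, replacing the critical weight $n+1$ by the general weight $\kappa$ and tracking the conformal factor $r^{2(\kappa-n-1)}$ that appears in the weighted Monge-Amp\`ere equation. For the forward implication I start from $(\eta,\psi)$ on $M$ satisfying the hypotheses; Proposition~\ref{s3.2p2} applied to $A=\kappa\sqrt{-1}\,\eta$ gives that $J_{\psi}$ is a transverse complex structure, so together with $(\eta,\xi)$ we obtain a Sasaki structure and hence $\omega=\frac{1}{2}d(r^{2}\eta)$ is the K\"{a}hler form on $C(M)$ with $L_{r\frac{\partial}{\partial r}}\omega=2\omega$ automatically. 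The key construction is to set $\psi^{\prime}=r^{\kappa}\psi$, interpreted via the trivial extension of $\psi$ to $C(M)$, and to define
\[
\Omega=\Bigl(\frac{dr}{r}+\sqrt{-1}\,\eta\Bigr)\wedge\psi^{\prime}.
\]
Using $\frac{dr}{r}+\sqrt{-1}\,\eta=2\partial\log r$ shows that $\Omega$ is of type $(n+1,0)$, while $d\psi^{\prime}=\kappa(\frac{dr}{r}+\sqrt{-1}\,\eta)\wedge\psi^{\prime}$ (coming from the hypothesis $d\psi=\kappa\sqrt{-1}\,\eta\wedge\psi$) together with $\psi^{\prime}\wedge d\eta=0$ yields $d\Omega=0$, and both weight conditions $L_{r\frac{\partial}{\partial r}}\Omega=\kappa\Omega$, $L_{r\frac{\partial}{\partial r}}\omega=2\omega$ fall out at once.

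For the reverse direction I would extract the Sasaki data $(\xi,\eta,\Psi,g)$ from the K\"{a}hler structure on $C(M)$ by the standard cone construction, define $\psi^{\prime}=i_{r\frac{\partial}{\partial r}}\Omega$ so that $\Omega=(\frac{dr}{r}+\sqrt{-1}\,\eta)\wedge\psi^{\prime}$ as in the proof of Proposition~\ref{s3.5p2}, and set $\psi=i^{*}\psi^{\prime}$ for the inclusion $i:M\hookrightarrow C(M)$. The homogeneity hypothesis gives $d\psi^{\prime}=L_{r\frac{\partial}{\partial r}}\Omega=\kappa\Omega$, and pulling back to $M=\{r=1\}$ produces $d\psi=\kappa\sqrt{-1}\,\eta\wedge\psi$. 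The remaining almost transverse Calabi-Yau identities $\psi\wedge d\eta=0$ and $\psi\wedge\overline{\psi}=c_{n}(\frac{1}{2}d\eta)^{n}$ are then obtained from $\Omega\wedge\omega=0$ and from the weighted Monge-Amp\`ere equation $(\ref{s5.1e1})$ restricted to the slice $\{r=1\}$.

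The main obstacle is the Monge-Amp\`ere bookkeeping that ties the two pictures together. In both directions one reduces to comparing
\[
\Omega\wedge\overline{\Omega}=2(-1)^{n}\sqrt{-1}\,r^{-1}dr\wedge\eta\wedge\psi^{\prime}\wedge\overline{\psi^{\prime}}
\]
with $\omega^{n+1}=(n+1)r^{2n+1}\,dr\wedge\eta\wedge(\frac{1}{2}d\eta)^{n}$, using $\psi^{\prime}\wedge\overline{\psi^{\prime}}=r^{2\kappa}\,\psi\wedge\overline{\psi}$ from the rescaling by $r^{\kappa}$. The residual power is exactly $r^{2(\kappa-n-1)}$, and the constants line up to $c_{n+1}$, which both confirms the equation $(\ref{s5.1e1})$ and shows that the substitution $\psi^{\prime}=r^{\kappa}\psi$ (respectively $\psi=r^{-\kappa}(i_{r\frac{\partial}{\partial r}}\Omega)|_{M}$) is the unique correct choice forced by the weight. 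Once this identity is pinned down, everything else is a direct transcription of the proof of Proposition~\ref{s3.5p2}.
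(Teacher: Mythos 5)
Your proposal is correct and follows essentially the same route as the paper: both directions hinge on the substitution $\psi^{\prime}=r^{\kappa}\psi$ (respectively $\psi^{\prime}=i_{r\frac{\partial}{\partial r}}\Omega$) and the identity $\Omega=(\frac{dr}{r}+\sqrt{-1}\,\eta)\wedge\psi^{\prime}$, with the Monge--Amp\`ere comparison producing exactly the factor $r^{2(\kappa-n-1)}$. The only cosmetic difference is that you verify holomorphicity of $\Omega$ via its $(n+1,0)$ type together with $d\Omega=0$, whereas the paper asserts it directly; the substance is identical.
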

\begin{proof}
We assume that there exists an almost transverse Calabi-Yau structure $(\psi, \frac{1}{2}d\eta)$ 
with $d\psi=\kappa\sqrt{-1}\,\eta\wedge \psi$. 
Then the K\"{a}hler form $\omega$ is given by $\omega=d(\frac{1}{2}r^{2}\eta)$ and satisfies $L_{r\frac{\partial}{\partial r}}\omega=2\omega$. 
We define $\psi^{\prime}$ as the $n$-form 
\begin{equation*}\label{s5.1e2}
\psi^{\prime}=r^{\kappa}\psi
\end{equation*}
on $C(M)$ and $\Omega$ as the $(n+1)$-form  
\begin{equation*}\label{s5.1e3}
\Omega=(\frac{dr}{r}+\sqrt{-1}\eta)\wedge\psi^{\prime}
\end{equation*}
on $C(M)$. 
Then $\Omega$ is the non-vanishing holomorphic $(n+1)$-form satisfying 
$L_{r\frac{\partial}{\partial r}}\Omega=di_{r\frac{\partial}{\partial r}}\Omega
=d\psi^{\prime}=\kappa\Omega$. The equation (\ref{s3.5e16}) implies that 
\begin{eqnarray*}
\Omega\wedge \overline{\Omega}
&=&2(-1)^{n}\sqrt{-1}\,r^{-1}dr\wedge\eta
\wedge\psi^{\prime}\wedge\overline{\psi^{\prime}}\\
&=&2(-1)^{n}\sqrt{-1}\,r^{2\kappa-1}dr\wedge\eta
\wedge\psi\wedge\overline{\psi}\\
&=&2(-1)^{n}\sqrt{-1}c_{n}r^{2\kappa-1}dr\wedge\eta\wedge(\frac{1}{2}d\eta)^{n}\\
&=&(n+1)c_{n+1}r^{2(\kappa -n-1)}rdr\wedge\eta\wedge(\frac{1}{2}r^{2}d\eta)^{n}\\
&=&r^{2(\kappa -n-1)}c_{n+1}\omega^{n+1}. 
\end{eqnarray*}

Conversely, if there exists a non-vanishing holomorphic section $\Omega$ of $K_{C(M)}$ and a K\"{a}hler form $\omega$ on $C(M)$, 
then the K\"{a}hler form $\omega$ is given by $\omega=d(\frac{1}{2}r^{2}\eta)$ since $L_{r\frac{\partial}{\partial r}}\omega=2\omega$. 
We define $\psi^{\prime}$ as the $n$-form 
\begin{equation*}\label{s5.1e4}
\psi^{\prime}=i_{r\frac{\partial}{\partial r}}\Omega
\end{equation*}
on $C(M)$. 
Then $\psi^{\prime}$ is a transversely $(n,0)$-form on 
$(C(M),\mathcal{F}_{\langle\xi,r\frac{\partial}{\partial r}\rangle})$ such that 
$\Omega=(\frac{dr}{r}+\sqrt{-1}\eta)\wedge\psi^{\prime}$ since $\psi^{\prime}=i_{v}\Omega+i_{\overline{v}}\Omega=i_{v}\Omega$ 
for the holomorphic vector field $v=\frac{1}{2}(r\frac{\partial}{\partial r}-\sqrt{-1}\xi)$. 
The condition $L_{r\frac{\partial}{\partial r}}\Omega=\kappa\Omega$ implies that 
\begin{equation}\label{s5.1e5}
d\psi^{\prime}=L_{r\frac{\partial}{\partial r}}\Omega=\kappa\Omega
=\kappa(\frac{dr}{r}+\sqrt{-1}\eta)\wedge\psi^{\prime}.
\end{equation}
It is straightforward to 
\begin{eqnarray*}
\Omega\wedge \overline{\Omega}&=&2(-1)^{n}\sqrt{-1}\,r^{-1}dr\wedge\eta
\wedge\psi^{\prime}\wedge\overline{\psi^{\prime}},\\
\omega^{n+1}&=&(n+1)rdr\wedge\eta\wedge(\frac{1}{2}r^{2}d\eta)^{n}. 
\end{eqnarray*}
Hence 
\begin{equation}\label{s5.1e6}
\psi^{\prime}\wedge\overline{\psi^{\prime}}=c_{n}r^{2\kappa}(\frac{1}{2}d\eta)^{n}.
\end{equation}
Moreover, we obtain 
\begin{equation}\label{s5.1e7}
\psi^{\prime}\wedge d\eta =-2r^{-2}\sqrt{-1}\,i_{\xi}(\Omega\wedge\omega)=0
\end{equation}
since $\Omega\wedge\omega=\frac{r^{2}}{2}(\frac{dr}{r}+\sqrt{-1}\eta)\wedge d\eta\wedge\psi^{\prime}$. 
We define $\psi$ as the $n$-form 
\begin{equation*}\label{s5.1e8}
\psi=i^{*}\psi^{\prime}
\end{equation*}
on $M$, where $i$ is the inclusion $i:M\to C(M)$. 
Then $\psi$ is a transversely $(n,0)$-form on 
$(M,\mathcal{F}_{\xi})$ such that $d\psi=\kappa\sqrt{-1}\eta\wedge\psi$ by taking the pull-back of the equation (\ref{s5.1e5}) by $i$. 
Moreover, it follows from the equations (\ref{s5.1e6}) and (\ref{s5.1e7}) that 
$\psi\wedge d\eta=\overline{\psi}\wedge d\eta=0$ and $\psi\wedge \overline{\psi}=c_{n}(\frac{1}{2}d\eta)^{n}$ 
Hence $(\psi,\frac{1}{2}d\eta)$ is an almost transverse Calabi-Yau structures, and we finish the proof. 
\end{proof}

\begin{rem}
{\rm 
In Proposition~\ref{s5.1p1}, the holomorphic $(n+1)$-form $\Omega$ induces the calibration $\Omega^{\rm Re}$ with respect to the metric $r^{\kappa^{\prime}}\overline{g}$ 
where $\kappa^{\prime}=2(\frac{\kappa}{n+1}-1)$. 
However, $r^{\kappa^{\prime}}\overline{g}$ is not a K\"{a}hler metric unless $\kappa^{\prime}=0$ which is equal to $\kappa=n+1$. 
}
\end{rem}
Let $(\eta, \xi, \Psi, g)$ be a Sasaki structure on $M$. 
Then we can deform $(\eta, \xi, \Psi, g)$ to another Sasaki structure $(\eta_{a}, \xi_{a}, \Psi_{a}, g_{a})$ 
given by 
\begin{equation*}\label{s5.1e9}
g_{a}=ag+(a^{2}-a)\eta\otimes\eta,\ \eta_{a}=a\eta,\ \xi_{a}=\frac{1}{a}\xi,\ \Psi_{a}=\Psi
\end{equation*}
for $a>0$. These deformations are called the {\it D-homothety transformations}~\cite{Tanno}. 
The D-homothety transformation induces the rescaling $ag^{T}$ of the transverse metric $g^{T}$. 
Indeed, we have $g_{a}^{T}=ag^{T}$. 
\begin{prop}\label{s5.1p2}
Let $\kappa$ be a non-negative constant. 
Then the Riemannian manifold $(M,g)$ has a contact form $\eta$ and 
a complex valued $n$-form $\psi$ such that 
$(\psi, \frac{1}{2}d\eta)$ is an almost transverse Calabi-Yau structure on $(M, \mathcal{F})$, 
where $\mathcal{F}$ is the Reeb foliation induced by $\eta$, with $d\psi=\kappa\sqrt{-1}\,\eta\wedge \psi$ 
if and only if the metric $g$ is Sasakian such that $g^{T}$ is a transverse K\"{a}hler-Einstein metric 
with the Einstein constant $2\kappa$. 
\end{prop}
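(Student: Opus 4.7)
The plan is to use Proposition~\ref{s5.1p1} to transfer the statement to the metric cone $(C(M),\overline{g})$, where the volume identity $\Omega\wedge\overline{\Omega}=r^{2(\kappa-n-1)}c_{n+1}\omega^{n+1}$ can be interpreted as a Ricci-curvature condition, and then to convert that via the standard relation between the cone and transverse Ricci forms on a Sasakian manifold. By that proposition, the hypothesis on $M$ is equivalent to the existence on $C(M)$ of a nowhere-vanishing holomorphic section $\Omega$ of $K_{C(M)}$ and a K\"ahler form $\omega$ satisfying the volume identity together with $L_{r\frac{\partial}{\partial r}}\Omega=\kappa\Omega$ and $L_{r\frac{\partial}{\partial r}}\omega=2\omega$. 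It therefore suffices to show that such a pair $(\Omega,\omega)$ exists on the cone if and only if $g$ is Sasakian with $g^{T}$ transverse K\"ahler-Einstein of constant $2\kappa$.

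For the forward direction, the K\"ahler form $\omega$ with $L_{r\frac{\partial}{\partial r}}\omega=2\omega$ realizes $(C(M),\overline{g})$ as a K\"ahler cone, so $g$ is Sasakian, and the induced transverse K\"ahler structure is $(\omega^{T},J_{\psi})=(\frac{1}{2}d\eta,J_{\psi})$ (integrability of $J_{\psi}$ being supplied by Proposition~\ref{s3.2p2} applied with $A=\kappa\sqrt{-1}\,\eta$). Using $\eta=d^{c}\log r$ and $d\eta=2\sqrt{-1}\,\partial\overline{\partial}\log r$, the volume identity yields
\[
\rho_{C(M)}=\sqrt{-1}\,\partial\overline{\partial}\log\frac{\Omega\wedge\overline{\Omega}}{\omega^{n+1}}=\sqrt{-1}\,\partial\overline{\partial}\log r^{2(\kappa-n-1)}=(\kappa-n-1)d\eta=2(\kappa-n-1)\omega^{T}.
\]
The standard Sasakian identity $\rho^{T}=\rho_{C(M)}+2(n+1)\omega^{T}$ between cone and transverse Ricci forms (which in the Sasaki-Einstein case $\kappa=n+1$ underlies Proposition~\ref{s3.5p1}) then gives $\rho^{T}=2\kappa\,\omega^{T}$, establishing the transverse K\"ahler-Einstein condition with Einstein constant $2\kappa$.

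For the converse, begin with Sasakian $g$ satisfying $\rho^{T}=2\kappa\,\omega^{T}$ and set $\omega=d(\frac{1}{2}r^{2}\eta)$, so that $L_{r\frac{\partial}{\partial r}}\omega=2\omega$ automatically. The Ricci relation rewrites as $\rho_{C(M)}=(\kappa-n-1)d\eta$, which is exact, so $c_{1}(C(M))=0$; since $M$ and hence $C(M)$ is simply connected, $K_{C(M)}$ is holomorphically trivial, and we may pick a nowhere-vanishing holomorphic section $\Omega_{0}$. The function $u:=\log(\Omega_{0}\wedge\overline{\Omega_{0}}/(c_{n+1}r^{2(\kappa-n-1)}\omega^{n+1}))$ satisfies $\sqrt{-1}\,\partial\overline{\partial}u=0$, so by simple connectivity $u=2{\rm Re}(h)$ for a holomorphic $h$; then $\Omega:=e^{-h}\Omega_{0}$ satisfies the required volume identity, and the eigenvalue condition $L_{r\frac{\partial}{\partial r}}\Omega=\kappa\Omega$ is forced (up to an absorbable multiplicative constant) by differentiating that identity and comparing with $L_{r\frac{\partial}{\partial r}}\omega=2\omega$. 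Proposition~\ref{s5.1p1} then converts $(\Omega,\omega)$ back into the desired $(\psi,\eta)$ on $M$. The main obstacle lies in this converse: producing the holomorphic volume form on $C(M)$ with the prescribed $r\frac{\partial}{\partial r}$-weight requires combining the topological triviality of $K_{C(M)}$ (from $c_{1}=0$ and simple connectivity), the pluriharmonic normalization on the simply connected K\"ahler cone, and a consistency check of the $\mathbb{R}_{>0}$-scaling with the volume identity.
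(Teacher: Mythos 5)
Your strategy is genuinely different from the paper's. The paper proves the proposition for $\kappa>0$ by a purely intrinsic rescaling trick: the D-homothety $(\eta_a,\xi_a,\Psi_a,g_a)$ with $a=\kappa/(n+1)$ carries the data to the Sasaki--Einstein case $\kappa=n+1$, where Proposition~\ref{s3.5p2} applies, and the Einstein constant transforms correctly because the transverse Ricci tensor is invariant under $g^T\mapsto ag^T$; the case $\kappa=0$ is then handled separately by the transverse Yau theorem. You instead work on the cone throughout, reading the Einstein condition off the Ricci form via $\rho_{C(M)}=\sqrt{-1}\,\partial\overline{\partial}\log(\Omega\wedge\overline{\Omega}/\omega^{n+1})$ and the relation $\rho^{T}=\rho_{C(M)}+2(n+1)\omega^{T}$. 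Your forward direction is correct and is uniform in $\kappa$, which avoids the paper's case split.

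The converse contains the gap you yourself flag. The step ``$c_{1}(C(M))=0$ and $C(M)$ simply connected, hence $K_{C(M)}$ is holomorphically trivial'' is not a valid general principle on a non-compact complex manifold: vanishing of $c_1$ only places $K_{C(M)}$ in the image of $H^{1}(C(M),\mathcal{O})\to H^{1}(C(M),\mathcal{O}^{*})$, and you have no control of that group on the cone. The standard repair produces the section and the volume normalization in one stroke: since $\rho_{C(M)}=2(\kappa-n-1)\sqrt{-1}\,\partial\overline{\partial}\log r$ has the global potential $2(\kappa-n-1)\log r$, the Hermitian metric $r^{-2(\kappa-n-1)}h_{\omega}$ on $K_{C(M)}$ is flat; simple connectivity then yields a global parallel --- hence holomorphic, nowhere vanishing, constant-norm --- section $\Omega$, and constancy of the norm is exactly the identity $\Omega\wedge\overline{\Omega}=c\,r^{2(\kappa-n-1)}\omega^{n+1}$, making your separate pluriharmonic normalization unnecessary. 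Also, ``the eigenvalue condition is forced up to an absorbable multiplicative constant'' is too quick: differentiating the volume identity along $r\frac{\partial}{\partial r}$ only gives $L_{r\frac{\partial}{\partial r}}\Omega=(\kappa+\sqrt{-1}c)\Omega$ for a real constant $c$ (a holomorphic function with constant real part is constant), and a constant rescaling of $\Omega$ cannot remove $\sqrt{-1}c$; you must also differentiate along $\xi$, using $L_{\xi}r=0$, $L_{\xi}\omega=0$ and $L_{\xi}\Omega=\sqrt{-1}L_{r\frac{\partial}{\partial r}}\Omega$, to conclude $c=0$. With these two repairs the argument closes; note that the paper itself relies on the unproved Lemma~\ref{s3.5l1} for the existence of the weighted Calabi--Yau structure on the Ricci-flat cone, so your route, once patched, is if anything the more self-contained one.
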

\begin{proof}
The case of $\kappa=0$ follows from the transverse Yau's Theorem~\cite{EK}. 
Hence it suffices to show the case of $\kappa>0$. 
If $(\psi, \frac{1}{2}d\eta)$ is an almost transverse Calabi-Yau structure 
such that $d\psi=\kappa\sqrt{-1}\,\eta\wedge \psi$. 
We take $a=\frac{\kappa}{n+1}$ and define $\psi_{a}$ and $\eta_{a}$ as $a^{\frac{n}{2}}\psi$ and $a\eta$, respectively. 
Then $(\psi_{a}, \frac{1}{2}d\eta_{a})$ is an almost transverse Calabi-Yau structure 
with respect to the metric $g_{a}=ag$ such that $d\psi_{a}=(n+1)\sqrt{-1}\,\eta_{a}\wedge \psi_{a}$. 
It follows from Proposition~\ref{s3.5p2} that $g_a$ is a Sasaki-Einstein metric. 
Moreover, Proposition~\ref{s3.5p2} implies that ${\rm Ric}_{g_{a}}^{T}=2(n+1)g^{T}_{a}$. 
Then the transverse metric $g^{T}$ of $g$ is transverse K\"{a}hler-Einstein with the Einstein constant $2\kappa$ 
since the Ricci tensor is invariant of the rescaling of the metric and 
${\rm Ric}_{g}^{T}={\rm Ric}_{g_{a}}^{T}=2(n+1)g_{a}^{T}=2\kappa g^{T}$. 

Conversely, we assume that $g$ is a Sasaki metric with ${\rm Ric}^{T}=2\kappa g^{T}$. 
Then the D-homothety transformation $(\eta_{a}, \xi_{a}, \Psi_{a}, g_{a})$ is a Sasaki-Einstein structure for $a=\frac{\kappa}{n+1}$ 
since ${\rm Ric}_{g_{a}}^{T}={\rm Ric}_{g}^{T}=2\kappa g^{T}=2(n+1)g_{a}^{T}$. 
It follows from Proposition~\ref{s3.5p2} that 
there exists an almost transverse ${\rm SL}_{n}(\mathbb{C})$ structure $\psi_{0}$ 
on $(M,\mathcal{F}_{\xi_{a}})$ such that $d\psi_{0}=(n+1)\sqrt{-1}\,\eta_{a}\wedge \psi_{0}$. 
We define $\psi$ as $\psi=a^{-\frac{n}{2}}\psi_{0}$. 
Then $(\psi, \frac{1}{2}d\eta)$ is an almost transverse Calabi-Yau structure  
on $(M,\mathcal{F}_{\xi})$ such that $d\psi=\kappa\sqrt{-1}\,\eta\wedge \psi$, and hence we finish the proof. 
\end{proof}

\begin{rem}
{\rm 
In the proof of Proposition \ref{s5.1p2}, the pair $(\psi, \frac{1}{2}d\eta)$ induces 
the almost transverse Calabi-Yau structure $(\psi_{a}, \frac{1}{2}d\eta_{a})$ on $M$. 
On the cone $C(M)$, the Calabi-Yau structure $(\Omega_{a}, \omega_{a})$ is defined by 
$\Omega_a=(\frac{dr_a}{r_a}+\sqrt{-1}\eta_a)\wedge r^{n+1}_a\psi_a$ and $\omega_{a}=\frac{1}{2}d(r_a^2\eta_a)$ where $r_a$ is the function $r^a$. 
Let $(\Omega, \omega)$ be the almost Calabi-Yau structure corresponding to $(\psi, \frac{1}{2}d\eta)$. 
Then the relations $\Omega_{a}=a^{\frac{n}{2}+1}\Omega$ and $\omega_{a}=\frac{a}{2}d(r^{2a}\eta)$ hold. 
}
\end{rem}
A Sasaki manifold $(M, g)$ is called an {\it $\eta$-Sasaki-Einstein manifold} 
if there exists a constant $\lambda$ such that ${\rm Ric}_{g}=\lambda g + (2n-2-\lambda)\eta\otimes \eta$. 
The condition of $\eta$-Sasaki-Einstein for the constant $\lambda$ is equivalent that 
$g^{T}$ is a transverse K\"{a}hler-Einstein metric with the Einstein constant $\lambda+2$. 
Hence it follows from Proposition~\ref{s5.1p2} that $(M,g)$ has an almost transverse Calabi-Yau structure 
$(\psi, \frac{1}{2}d\eta)$ with $d\psi=\kappa\sqrt{-1}\,\eta\wedge \psi$ for $\kappa\ge 0$ 
if and only if the metric $g$ is $\eta$-Sasaki-Einstein for the constant $\lambda=2\kappa -2$. 

\subsection{The automorphism group ${\rm Aut}(\eta,\psi)$}
Let $(M,g)$ be a Sasaki manifold with a Sasaki structure $(\eta, \xi, \Psi, g)$. 
We assume that there exists a complex valued $n$-form $\psi$ 
such that $(\psi, \frac{1}{2}d\eta)$ is an almost transverse Calabi-Yau structure
on $(M,\mathcal{F}_{\xi})$ with $d\psi=\kappa\sqrt{-1}\,\eta\wedge \psi$ for a real constant $\kappa$. 
In this section, we consider the group 
\begin{equation*}\label{s3.7e1}
{\rm Aut}(\eta,\psi)=\{f\in {\rm Diff}(M)\mid f^{*}\eta=\eta,\ f^{*}\psi=\psi \}
\end{equation*}
of automorphisms preserving $(\eta,\psi)$. 
We also define ${\rm Aut}(\eta,[\psi])$ as the group of diffeomorphisms 
preserving $\eta$ and the conformal class $[\psi]$ : 
\begin{equation*}\label{s3.7e2}
{\rm Aut}(\eta,[\psi])=\{f\in {\rm Diff}(M)
\mid f^{*}\eta=\eta,\ f^{*}\psi=h\psi,\ h\in \wedge^{0}\otimes \mathbb{C} \}. 
\end{equation*}
Then we have 
\begin{lem}\label{s3.7l1}
${\rm Aut}(\eta,[\psi])=
\{f\in {\rm Diff}(M)\mid f^{*}\eta=\eta,\ f^{*}\psi=e^{\sqrt{-1}\, \theta}\psi,\ \theta\in H^0(M) \}$
\end{lem}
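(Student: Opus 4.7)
The inclusion $\supseteq$ is immediate from the definition, so I focus on $\subseteq$. Let $f\in{\rm Aut}(\eta,[\psi])$ with $f^{*}\psi=h\psi$ for some $h\in\wedge^{0}\otimes\mathbb{C}$. My plan is to show, in three steps, that $h$ is a constant of unit modulus.

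First, I would apply $f^{*}$ to the normalization identity $\psi\wedge\overline{\psi}=c_{n}(\tfrac{1}{2}d\eta)^{n}$ supplied by the almost transverse Calabi-Yau condition, using $f^{*}d\eta=d(f^{*}\eta)=d\eta$. The right-hand side is then preserved while the left becomes $|h|^{2}\psi\wedge\overline{\psi}$; since $\psi\wedge\overline{\psi}$ is nowhere zero, this forces $|h|\equiv 1$.

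Second, applying $f^{*}$ to $d\psi=\kappa\sqrt{-1}\,\eta\wedge\psi$ and using $f^{*}\eta=\eta$, $f^{*}\psi=h\psi$ yields $dh\wedge\psi+h\,d\psi=h\kappa\sqrt{-1}\,\eta\wedge\psi$, which simplifies to $dh\wedge\psi=0$. To exploit this, I decompose $dh$ using the pointwise splitting $T^{*}M\otimes\mathbb{C}=\langle\eta\rangle\oplus\wedge_{T}^{1,0}\oplus\wedge_{T}^{0,1}$ determined by the Reeb direction and the transverse complex structure $J_{\psi}$ (which exists by Proposition~\ref{s3.2p2}). The $(1,0)$-part contribution $\partial_{T}h\wedge\psi$ lies in the zero bundle $\wedge_{T}^{n+1,0}$ and vanishes automatically by dimension; the remaining pieces $(\xi h)\eta\wedge\psi$ (containing $\eta$ as a factor) and $\overline{\partial}_{T}h\wedge\psi$ (purely transverse of type $(n,1)$) sit in complementary summands, so each must vanish separately. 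Since $\eta\wedge\psi$ is nowhere zero, the first forces $\xi h=0$; since wedging with the nowhere-zero transverse $(n,0)$-form $\psi$ is a fiberwise isomorphism $\wedge_{T}^{0,1}\to\wedge_{T}^{n,1}$, the second forces $\overline{\partial}_{T}h=0$.

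Third, I would combine these vanishings with $h\overline{h}=1$. Applying $\overline{\partial}_{T}$ gives $\overline{h}\,\overline{\partial}_{T}h+h\,\overline{\partial}_{T}\overline{h}=0$, and substituting $\overline{\partial}_{T}h=0$ yields $\overline{\partial}_{T}\overline{h}=0$, hence $\partial_{T}h=0$ by conjugation. Together with $\xi h=0$, this gives $dh=0$, so $h$ is a constant on the connected manifold $M$ with $|h|=1$; equivalently $h=e^{\sqrt{-1}\theta}$ for a real constant $\theta\in H^{0}(M)$. I expect the main obstacle to be the type-decomposition step: one must verify carefully that the three components of $dh\wedge\psi$ live in distinct summands and that wedging with the nowhere-vanishing transverse $(n,0)$-form $\psi$ is fiberwise injective on $\wedge_{T}^{0,1}$, both of which require $\psi$ to be pointwise nonzero as a transverse top holomorphic form.
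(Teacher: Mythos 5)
Your proof is correct and follows the same overall route as the paper: pull back the normalization $\psi\wedge\overline{\psi}=c_{n}(\tfrac{1}{2}d\eta)^{n}$ to get $|h|=1$, pull back the structure equation $d\psi=\kappa\sqrt{-1}\,\eta\wedge\psi$ to get $dh\wedge\psi=0$, and decompose this against the splitting determined by $\eta$ and $J_{\psi}$ to conclude $\xi h=0$ and $\overline{\partial}_{T}h=0$. The one place you genuinely diverge is the final step: the paper asserts directly that $\overline{\partial}_{T}h=d_{\xi}h=0$ forces $h$ to be constant (which implicitly rests on the maximum modulus principle for basic transversely holomorphic functions on the compact connected $M$) and only afterwards derives $|h|=1$; you instead establish $|h|=1$ first and then use the Leibniz rule on $h\overline{h}=1$ to get $\overline{\partial}_{T}\overline{h}=0$, hence $\partial_{T}h=0$ and $dh=0$ outright. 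Your variant is purely pointwise (no compactness or maximum principle needed for this step) and actually supplies a detail the paper leaves unexplained, so it is a slightly more self-contained finish to the same argument.
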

\begin{proof}
If $f^{*}\psi=h\psi$ for a function $h$, 
then $d(f^{*}\psi)=dh\wedge\psi+hd\psi=dh\wedge\psi+\sqrt{-1}\,h\eta\wedge \psi$ 
and $d(f^{*}\psi)=f^{*}(d\psi)=\eta\wedge f^{*}\psi$. 
Hence $\overline{\partial}_{T}h=d_{\xi}h=0$ and so $h$ is constant. 
Moreover, the norm $\| h\|$ of $h$ is $1$ by taking 
the pull-back of $\psi\wedge\overline{\psi}=c_{n}(\frac{1}{2}d\eta)^{n}$ by $f$. 
Hence $h=e^{\sqrt{-1}\, \theta}$ for a real constant $\theta$. 
\end{proof} 
From now on, we assume that $M$ is connected. 
Then we can consider ${\rm Aut}(\eta,[\psi])$ as the group of phase changes 
$(\eta, \psi)\to (\eta, e^{\sqrt{-1}\, \theta}\psi)$ for $\theta\in \mathbb{R}$. 
Let ${\rm Aut}(\eta, \xi, \Psi, g)$ denote the group of automorphisms preserving $(\eta, \xi, \Psi, g)$.  
Then we obtain 

\begin{prop}\label{s3.7p1} 
${\rm Aut}(\eta, \xi, \Psi, g)={\rm Aut}(\eta,[\psi])$
\end{prop}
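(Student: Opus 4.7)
The plan is to establish the two inclusions separately. The inclusion $\mathrm{Aut}(\eta,\xi,\Psi,g)\subseteq\mathrm{Aut}(\eta,[\psi])$ rests on the uniqueness (up to constant phase) of the transverse $(n,0)$-form compatible with a given transverse K\"ahler structure, while the reverse inclusion recovers $\xi$, $\Psi$, and $g$ purely from $\eta$ and the conformal class $[\psi]$ using the structural identities of Section~3.3.

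For $\mathrm{Aut}(\eta,\xi,\Psi,g)\subseteq\mathrm{Aut}(\eta,[\psi])$, take $f$ preserving $(\eta,\xi,\Psi,g)$. First I would show that $f^{*}\psi$ is again a transverse $(n,0)$-form: transversality follows from $f_{*}\xi=\xi$, which implies that the characteristic line bundle $F$ is $f_{*}$-invariant, so $i_{v}(f^{*}\psi)=f^{*}(i_{f_{*}v}\psi)=0$ for $v\in\Gamma(F)$; the $(n,0)$-type is preserved because $f$ commutes with $\Psi$ on $Q$ and hence preserves the decomposition $\wedge_{T}^{n}\otimes\mathbb{C}=\oplus_{p+q=n}\wedge_{T}^{p,q}$. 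Since $\psi$ is nowhere vanishing, the transverse $(n,0)$-forms form a complex line bundle trivialised by $\psi$, so $f^{*}\psi=h\psi$ for some complex function $h$. Lemma~3.7.1 (whose proof only uses $f^{*}\eta=\eta$ together with $f^{*}\psi=h\psi$) then forces $h=e^{\sqrt{-1}\,\theta}$ for a constant $\theta$, so $f\in\mathrm{Aut}(\eta,[\psi])$.

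For $\mathrm{Aut}(\eta,[\psi])\subseteq\mathrm{Aut}(\eta,\xi,\Psi,g)$, take $f$ with $f^{*}\eta=\eta$ and $f^{*}\psi=e^{\sqrt{-1}\,\theta}\psi$. Then $f^{*}d\eta=d\eta$, so $f$ preserves $\omega^{T}=\tfrac{1}{2}d\eta$. The Reeb field is uniquely determined by the equations $\eta(\xi)=1$ and $i_{\xi}d\eta=0$, both of which are preserved by $f$, so $f_{*}\xi=\xi$. Next, $\Psi|_{D}$ equals $J_{\psi}|_{D}$, and $J_{\psi}$ is characterised by the splitting $Q\otimes\mathbb{C}=\mathrm{Ker}_{\mathbb{C}}\psi/F\oplus\overline{\mathrm{Ker}_{\mathbb{C}}\psi/F}$; since $f^{*}\psi$ and $\psi$ differ by a unimodular constant, their kernels coincide, so $f_{*}$ preserves this splitting and therefore commutes with $J_{\psi}=\Psi$ on $Q$. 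Together with $f_{*}\xi=\xi$ and $\Psi\xi=0$, this yields $f^{*}\Psi=\Psi$. Finally, the identity
\[
g(X,Y)=\tfrac{1}{2}d\eta(X,\Psi Y)+\eta(X)\eta(Y)
\]
of equation (3.3.12) expresses $g$ entirely in terms of $\eta$, $d\eta$, and $\Psi$, so $f^{*}g=g$.

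The main obstacle is the first inclusion, specifically verifying that $f^{*}\psi$ remains a transverse $(n,0)$-form so that Lemma~3.7.1 can be applied; the second inclusion is then a routine chain of algebraic consequences of the characterisations already in place.
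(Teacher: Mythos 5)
Your proof is correct and follows essentially the same route as the paper: the forward inclusion uses that $f$ commutes with $\Psi$ to preserve the line of transverse $(n,0)$-forms, giving $f^{*}\psi=h\psi$, and the reverse inclusion recovers $\Psi$ from $\Ker_{\mathbb{C}}\psi$ and then $g$ from $\eta$, $d\eta$ and $\Psi$ via (\ref{s3.3e12}). The only cosmetic difference is that the paper first reduces to ${\rm Aut}(\eta,\Psi)$ before comparing with ${\rm Aut}(\eta,[\psi])$, whereas you prove the two inclusions directly.
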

\begin{proof}
We remark that ${\rm Aut}(\eta, \xi, \Psi, g)={\rm Aut}(\eta, \Psi)$. 
In fact, if $f^{*}\eta=\eta$ and $f_{*}\circ \Psi=\Psi\circ f_{*}$, then $f_{*}\xi=\xi$ and $f^{*}g_{D}=g_{D}$. 
It implies that $f^{*}g=g$ since the metric $g$ is given by $g=g_{D}+\eta\otimes\eta$. 
Hence we will prove ${\rm Aut}(\eta, \Psi)={\rm Aut}(\eta,[\psi])$. 
If $f^{*}\eta=\eta$ and $f^{*}\circ\Psi=\Psi\circ f^{*}$, 
then $f$ preserves the vector bundle $D=\Ker\eta$. 
Moreover $f$ also preserves $D^{0,1}$ and $D^{1,0}$ 
since $D^{0,1}$ and $D^{1,0}$ are eigenvalue spaces of $\Psi$. 
Hence $f$ maps any transverse $(n,0)$-form to a transverse $(n,0)$-form by the pull back $f^{*}$. 
In particular, $f^{*}\psi=h\psi$ for a function $h$. Hence $f\in {\rm Aut}(\eta,[\psi])$. 
Conversely, if $f^{*}\eta=\eta$ and $f^{*}\psi=h\psi$, 
then $f$ preserves the vector bundle $D=\Ker\eta$ and $\Ker \psi$. 
Hence $f$ also preserves $D^{0,1}$ and $D^{1,0}$. 
It implies that $f_{*}\circ \Psi=\Psi\circ f_{*}$ 
by the definition of $\Psi$, and hence we finish the proof. 
\end{proof}

It immediately follows that the group ${\rm Aut}(\eta, \psi)$ is the subgroup of ${\rm Aut}(\eta, \xi, \Psi, g)$. 
We define $\mathfrak{aut}(\eta, \psi)$ by 
\begin{equation*}\label{s3.7e3}
\mathfrak{aut}(\eta, \psi)=
\{v\in \Gamma(TM) \mid L_{v}\eta=0,\ L_{v}\psi=0 \}. 
\end{equation*} 
Let $\mathfrak{aut}(\eta, \xi, \Psi, g)$ denote the Lie algebra 
of ${\rm Aut}(\eta, \xi, \Psi, g)$. 
Then we obtain the following relation between $\mathfrak{aut}(\eta,\psi)$ 
and $\mathfrak{aut}(\eta, \xi, \Psi, g)$.  

\begin{prop}\label{s3.7p2}
If $\kappa\neq 0$, then there exists the decomposition 
\begin{equation*}\label{s3.7e4}
\mathfrak{aut}(\eta,\psi)\oplus \langle\xi\rangle_{\mathbb{R}}=\mathfrak{aut}(\eta, \xi, \Psi, g)
\end{equation*} 
where $\langle\xi\rangle_{\mathbb{R}}$ is the $\mathbb{R}$-vector space generated by $\xi$. 
\end{prop}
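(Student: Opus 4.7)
The plan is to exhibit an explicit splitting of $\mathfrak{aut}(\eta,\xi,\Psi,g)$ by using $\xi$ as the ``phase direction'' and collapsing any element of $\mathfrak{aut}(\eta,\xi,\Psi,g)$ onto $\mathfrak{aut}(\eta,\psi)$ by subtracting a suitable multiple of $\xi$. The key preliminary computation I would do first is to show that
\begin{equation*}
L_{\xi}\psi=\kappa\sqrt{-1}\,\psi.
\end{equation*}
This is straightforward from Cartan's formula: since $\psi$ is transverse, $i_{\xi}\psi=0$, so $L_{\xi}\psi=i_{\xi}d\psi=i_{\xi}(\kappa\sqrt{-1}\,\eta\wedge\psi)=\kappa\sqrt{-1}\,\psi$, using $\eta(\xi)=1$. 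Together with $L_{\xi}\eta=0$, this says $\xi$ preserves $\eta$ and rescales $\psi$ by the imaginary constant $\kappa\sqrt{-1}$, hence by Lemma~\ref{s3.7l1} and Proposition~\ref{s3.7p1} we have $\xi\in\mathfrak{aut}(\eta,\xi,\Psi,g)$.

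Next I would analyze a general $v\in\mathfrak{aut}(\eta,\xi,\Psi,g)$. By Proposition~\ref{s3.7p1}, the associated one-parameter subgroup $\{f_t\}$ lies in ${\rm Aut}(\eta,[\psi])$, and by Lemma~\ref{s3.7l1} we have $f_t^{*}\eta=\eta$ and $f_t^{*}\psi=e^{\sqrt{-1}\,\theta(t)}\psi$ for a smooth real-valued function $\theta(t)$ with $\theta(0)=0$. Differentiating at $t=0$ gives
\begin{equation*}
L_v\eta=0,\qquad L_v\psi=\sqrt{-1}\,c\,\psi
\end{equation*}
with $c=\theta'(0)\in\mathbb{R}$. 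This is the structural fact that makes the splitting work: every infinitesimal automorphism of $(\eta,\xi,\Psi,g)$ acts on $\psi$ by an imaginary constant.

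Given this, I would define $w:=v-\frac{c}{\kappa}\xi$, which is meaningful because $\kappa\neq 0$. Using $L_{\xi}\eta=0$ and the computation above,
\begin{equation*}
L_w\eta=L_v\eta-\tfrac{c}{\kappa}L_{\xi}\eta=0,\qquad L_w\psi=\sqrt{-1}\,c\,\psi-\tfrac{c}{\kappa}\cdot\kappa\sqrt{-1}\,\psi=0,
\end{equation*}
so $w\in\mathfrak{aut}(\eta,\psi)$ and $v=w+\tfrac{c}{\kappa}\xi$, giving the sum decomposition. Finally I would verify the intersection is trivial: if $t\xi\in\mathfrak{aut}(\eta,\psi)$, then $0=L_{t\xi}\psi=t\kappa\sqrt{-1}\,\psi$, forcing $t=0$ since $\kappa\neq 0$ and $\psi$ is nowhere vanishing. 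This yields the direct sum decomposition.

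There is no real obstacle; the proof is essentially a direct computation once one recognizes that $L_{\xi}\psi=\kappa\sqrt{-1}\,\psi$ and that elements of $\mathfrak{aut}(\eta,\xi,\Psi,g)$ act on $\psi$ by an imaginary scalar. The only subtle point is extracting that scalar action from the group-level description in Lemma~\ref{s3.7l1}, which requires noting that the phase $\theta$ depends smoothly on $t$ so that differentiation is legitimate; the non-vanishing of $\kappa$ is what ensures $\xi$ is a genuinely new direction and that division by $\kappa$ is permitted.
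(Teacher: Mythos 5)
Your proposal is correct and follows essentially the same route as the paper: both characterize $\mathfrak{aut}(\eta,\xi,\Psi,g)$ as the vector fields with $L_{v}\eta=0$ and $L_{v}\psi=\sqrt{-1}\,c\,\psi$ via Proposition~\ref{s3.7p1} and Lemma~\ref{s3.7l1}, use $L_{\xi}\psi=\sqrt{-1}\,\kappa\psi$ to place $\xi$ transversally to $\mathfrak{aut}(\eta,\psi)$, and split off $\frac{c}{\kappa}\xi$ from a general element. Your explicit Cartan-formula computation of $L_{\xi}\psi$ is a detail the paper merely asserts, but it does not change the argument.
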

\begin{proof}
Proposition~\ref{s3.7p1} implies that $\mathfrak{aut}(\eta, \xi, \Psi, g)=
\{v\in \Gamma(TM) \mid L_{v}\eta=0,\ L_{v}\psi=\sqrt{-1}\, c\psi, c\in\mathbb{R} \}$. 
The Reeb vector field $\xi$ satisfies $L_{\xi}\eta=0$ and $L_{\xi}\psi=\sqrt{-1}\kappa\psi$. 
Hence the vector space $\langle\xi\rangle_{\mathbb{R}}$ is a subspace of $\mathfrak{aut}(\eta, \xi, \Psi, g)$ 
and has the trivial intersection with $\mathfrak{aut}(\eta, \psi)$. 
If we take an element $v\in \mathfrak{aut}(\eta, \xi, \Psi, g)$, 
then there exists a real constant $c$ such that $L_{v}\psi=\sqrt{-1}c\psi$. 
The vector field $v-\frac{c}{\kappa}\xi$ is the element of $\mathfrak{aut}(\eta, \psi)$ 
since $L_{v-\frac{c}{\kappa}\xi}\psi=0$. 
Thus $v$ is in $\mathfrak{aut}(\eta, \psi)\oplus\langle\xi\rangle_{\mathbb{R}}$. 
Hence $\mathfrak{aut}(\eta, \xi, \Psi, g)$ coincides with 
$\mathfrak{aut}(\eta, \psi)\oplus\langle\xi\rangle_{\mathbb{R}}$, 
and we finish the proof. 
\end{proof}

We have the identification 
\begin{equation}\label{s3.7e6}
\Gamma(TM)\simeq \wedge^{0}\oplus\wedge_{T}^{1}
\end{equation}
given by $v\mapsto (i_{v}\eta, i_{v}\omega^{T})$ where $\omega^{T}=\frac{1}{2}d\eta$. 

\begin{prop}\label{s3.7p3}
Under the identification (\ref{s3.7e6}), 
the Lie algebra $\mathfrak{aut}(\eta, \psi)$ is given by 
$\{(f,-\frac{1}{2}df)\in \wedge^{0}_{B}\oplus\wedge^{1}_{B} \mid \Delta_{B}f=4\kappa f \}$ 
which is isomorphic to the eigenspace $\Ker (\Delta_{B}-4\kappa)$ of 
the basic Laplacian $\Delta_{B}$ on $\wedge^{0}_{B}$. 
\end{prop}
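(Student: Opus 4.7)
The plan is to exploit the decomposition $TM = \langle\xi\rangle_{\mathbb{R}} \oplus D$ to reduce the conditions $L_v\eta = 0$ and $L_v\psi = 0$ to a Hamiltonian/K\"{a}hler-Einstein problem on the transverse quotient, where Proposition~\ref{s5.1p2} supplies the transverse Einstein constant $2\kappa$. Writing $f = i_v\eta$ and $\alpha = i_v\omega^T$, Cartan's formula gives $L_v\eta = df + 2\alpha$, so $L_v\eta = 0$ is equivalent to $\alpha = -\frac{1}{2}df$. Since $\xi$ is characterized by $\eta(\xi)=1$ and $i_\xi d\eta = 0$, any vector field preserving $\eta$ commutes with $\xi$; consequently $\xi(f) = \eta([\xi,v]) = 0$, so $f$ (and hence $\alpha$) is basic.

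Next, decompose $v = f\xi + v_D$ with $v_D \in D$ foliated and $i_{v_D}\omega^T = -\frac{1}{2}df$. Using $i_\xi\psi = 0$ and $L_\xi\psi = \kappa\sqrt{-1}\,\psi$ one finds $L_{f\xi}\psi = \kappa\sqrt{-1}\,f\psi$, so $L_v\psi = 0$ becomes $L_{v_D}\psi = -\kappa\sqrt{-1}\,f\psi$. Setting $\beta = i_{v_D}\psi$ and expanding Cartan's formula together with $d\psi = \kappa\sqrt{-1}\,\eta\wedge\psi$ and $i_{v_D}\eta = 0$, one obtains
\[
d\beta - \kappa\sqrt{-1}\,\eta\wedge\beta = -\kappa\sqrt{-1}\,f\psi.
\]
Since $\beta$ is transverse with $L_\xi\beta = \kappa\sqrt{-1}\,\beta$, the standard decomposition $d\beta = d_T\beta + \eta\wedge L_\xi\beta$ collapses this equation to $d_T\beta = -\kappa\sqrt{-1}\,f\psi$. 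Separating by transverse Dolbeault type (noting $\beta \in \wedge^{n-1,0}_T$ and $\psi \in \wedge^{n,0}_T$) then yields $\partial_T\beta = -\kappa\sqrt{-1}\,f\psi$ and $\bar\partial_T\beta = 0$.

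The vanishing $\bar\partial_T(i_{v_D}\psi) = 0$ is precisely the condition that $v_D$ projects to a holomorphic vector field on the local transverse K\"{a}hler manifold $M^T$; combined with $L_{v_D}\omega^T = d(-\frac{1}{2}df) = 0$, the induced vector field is a holomorphic Killing vector field with Hamiltonian potential $-\frac{1}{2}f$. By Proposition~\ref{s5.1p2} the transverse metric $g^T$ is K\"{a}hler-Einstein with Einstein constant $2\kappa$; the Matsushima-Lichnerowicz identity applied on $M^T$ then gives $\Delta_B(-\frac{1}{2}f) = 2(2\kappa)(-\frac{1}{2}f)$, i.e., $\Delta_B f = 4\kappa f$. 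The converse direction runs the same computations in reverse: a basic eigenfunction $f$ with $\Delta_B f = 4\kappa f$ defines, via Matsushima-Lichnerowicz, a transversely holomorphic Killing vector field whose horizontal lift $v_D$, together with $f\xi$, yields an element $v \in \mathfrak{aut}(\eta,\psi)$.

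The main obstacle is invoking the Matsushima-Lichnerowicz correspondence (between eigenfunctions of $\Delta_B$ with eigenvalue $4\kappa$ and transversely holomorphic Killing vector fields) in the transverse setting, and in both directions. A secondary point of care is that $\psi$ is not itself basic when $\kappa \neq 0$, since $L_\xi\psi = \kappa\sqrt{-1}\,\psi$; thus the Dolbeault decomposition must be applied at the level of transverse forms rather than basic ones, using the operators $\partial_T$ and $\bar\partial_T$ from Section 3.1.
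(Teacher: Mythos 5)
Your reduction of $L_v\eta=L_v\psi=0$ to the system $\alpha=-\tfrac12 df$ (with $f$, $\alpha$ basic), $\overline{\partial}_T\beta=0$ and $\partial_T\beta=-\kappa\sqrt{-1}\,f\psi$ for $\beta=i_{v_D}\psi$ is correct, and the splitting $v=f\xi+v_D$ is a cleaner, more geometric packaging of what the paper does with the operators $*_T$ and $*_{\mathbb C}$ (your two Dolbeault equations are the paper's $\overline{\partial}_T*_{\mathbb C}\partial_Bf=0$ and $\partial_B^*\theta^{1,0}+\kappa f=0$). The gap is in how you then extract the eigenvalue equation. In the forward direction you keep only $\overline{\partial}_T\beta=0$ and $L_{v_D}\omega^T=0$ and feed them into Matsushima--Lichnerowicz; but the Hamiltonian potential of a holomorphic Killing field is determined only up to an additive constant, so that identity yields $\Delta_Bh-4\kappa h=\mathrm{const}$ and cannot certify that $-\tfrac12 f$ with $f=i_v\eta$ is the correctly normalized potential. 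Concretely, $v=\xi$ gives $v_D=0$, which is trivially transversely holomorphic and Killing with ``potential'' the constant $-\tfrac12$, yet $\Delta_B(1)=0\neq4\kappa$ when $\kappa\neq0$; your final step, applied verbatim to this $v$, proves a false statement. What pins down the constant (and rules out $\xi$) is precisely the equation you derived and then discarded: $\partial_T\beta=-\kappa\sqrt{-1}\,f\psi$ is, under the identification $\beta=*_{\mathbb C}\alpha^{1,0}$, exactly $\partial_B^*\partial_Bf=2\kappa f$, i.e.\ $\Delta_Bf=4\kappa f$ on the nose. So the forward direction should be read off from that equation, as the paper does; the gap is real but repairable with ingredients you already have in hand.

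The second, more substantive point is the converse: that $\Delta_Bf=4\kappa f$ \emph{forces} the holomorphy condition $\overline{\partial}_T\beta=0$. This is where essentially all of the paper's work lies, and you invoke it as ``Matsushima--Lichnerowicz in the transverse setting, in both directions.'' Since $M^T$ is only a disjoint union of local quotients, the classical compact-manifold statement does not apply verbatim; the paper proves the implication by identifying $\wedge_B^{1,0}$ with $\wedge_B^{1,n}\otimes K_D^{-1}$, applying the basic Kodaira--Akizuki--Nakano identity $\Box^{\varpi}_{\partial}=\Box^{\varpi}_{\overline{\partial}}+2\kappa$ together with basic Hodge theory, and translating $(\nabla^{0,1}_{\varpi})^*\partial_Bf=0$ back into $\overline{\partial}_T\overline{*}_{\mathbb C}\partial_Bf=0$. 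If you wish to quote a transverse Matsushima--Lichnerowicz theorem you need a reference valid for basic forms on a compact foliated manifold (El Kacimi-Alaoui's transverse elliptic theory, or the Futaki--Ono--Wang result the paper cites in the subsequent remark); otherwise this step must be proved, and it is the heart of the proposition. A minor further point: Proposition~\ref{s5.1p2}, which you use to obtain the transverse Einstein constant $2\kappa$, is stated only for $\kappa\ge0$, whereas the present proposition allows arbitrary real $\kappa$.
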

\begin{proof}
We introduce two operators $*_{T}$ and $*_{\mathbb{C}}$. 
We define an operator 
\begin{equation*}\label{s3.7e8}
*_{T}:\wedge_{T}^{p}\to\wedge_{T}^{2n-p}
\end{equation*}
by the formula 
\begin{equation*}\label{s3.7e9}
*\alpha=(*_{T}\alpha)\wedge \eta
\end{equation*}
for $\alpha\in\wedge_{T}^{p}$ where $*$ is an ordinary Hodge star operator 
with respect to the Riemannian metric $g$ on $M$. 
We can consider $*_{T}$ as an operator 
$\wedge_{T}^{p}\otimes \mathbb{C}\to\wedge_{T}^{2n-p}\otimes \mathbb{C}$ 
by the linearly extension. 
Let ${\rm vol}_{T}$ denote the transverse volume form with respect to the metric $g^{T}$. 
Then $i_{v}{\rm vol}_{T}=*_{T}v^{\sharp}$ for any $v\in\Gamma(Q)$ 
where $v^{\sharp}$ is the transverse $1$-form defined by $v^{\sharp}(w)=g^{T}(v,w)$ for $w\in \Gamma(TM)$. 
The equations ${\rm vol}_{T}=(\omega^{T})^{n}$ and $(Jv)^{\sharp}=i_{v}(\omega^{T})$ implies $i_{Jv}(\omega^{T})^{n}=*_{T}i_{v}\omega^{T}$. 
On the other hand, 
it follows from $i_{Jv}\psi=\sqrt{-1}i_{v}\psi$ that $i_{Jv}(\psi\wedge\overline{\psi})
=\sqrt{-1}(i_{v}\psi\wedge\overline{\psi}-(-1)^{n}\psi\wedge \overline{i_{v}\psi})$. 
It implies that 
\begin{equation}\label{s3.7e12}
\sqrt{-1}c_{n}^{-1}(i_{v}\psi\wedge\overline{\psi}-(-1)^{n}\psi\wedge \overline{i_{v}\psi})
=*_{T}i_{v}\omega^{T}.
\end{equation}
We remark that the equation (\ref{s3.7e12}) holds for any element $v$ of $TM$.  
We define $\overline{*}_{T}$ as $\overline{*}_{T}(\alpha)=\overline{*_{T}\alpha}$ 
for any $\alpha\in\wedge_{T}^{p}\otimes \mathbb{C}$. 
It induces the map $\overline{*}_{T}:\wedge_{T}^{p,q}\to\wedge_{T}^{n-p,n-q}$. 
By taking the $(n-1,n)$-part of the equation (\ref{s3.7e12}), 
we obtain 
\begin{equation}\label{s3.7e13}
\sqrt{-1}c_{n}^{-1}i_{v}\psi\wedge\overline{\psi}
=\overline{*}_{T}(i_{v}\omega^{T})^{1,0}
\end{equation}
for any $v\in TM$. 

We introduce an operator 
\begin{equation*}\label{s3.7e14}
*_{\mathbb{C}}:\wedge_{T}^{p,0}\to\wedge_{T}^{n-p,0}
\end{equation*}
given by the formula 
\begin{equation}\label{s3.7e15}
\overline{*}_{T}\alpha=\sqrt{-1}c_{n}^{-1}(*_{\mathbb{C}}\alpha)\wedge \overline{\psi}
\end{equation}
for $\alpha\in\wedge_{T}^{p,0}$. 
By taking the exterior derivative of the equation (\ref{s3.7e15}), 
we obtain that 
\begin{equation}\label{s3.7e16}
d\overline{*}_{T}\alpha=\sqrt{-1}c_{n}^{-1}(d-\sqrt{-1}\kappa\eta\wedge)(*_{\mathbb{C}}\alpha)\wedge \overline{\psi}
\end{equation}
for $\alpha\in\wedge_{T}^{p,0}$. 
If $\alpha$ is basic, 
then the left hand side of the equation (\ref{s3.7e16}) is the basic $(n-p+1,n)$-form 
$\partial_{B}\overline{*}_{B}\alpha\in\wedge_{B}^{n-p+1,n}$. 
Hence we obtain that 
\begin{eqnarray}
&&(d_{\xi}-\sqrt{-1}\kappa\eta\wedge)*_{\mathbb{C}}\alpha=0, \label{s3.7e17}\\
&&\partial_{T}(*_{\mathbb{C}}\alpha)\wedge \overline{\psi}
=-\sqrt{-1}c_{n}\partial_{B}\overline{*}_{B}\alpha \label{s3.7e17.5}
\end{eqnarray}
for $\alpha\in\wedge_{B}^{p,0}$. 

We start to compute $L_{v}\psi=0$ and $L_{v}\eta=0$. 
By using the operator $*_{\mathbb{C}}$, the equation (\ref{s3.7e13}) is written by $i_{v}\psi=*_{\mathbb{C}}(i_{v}\omega^{T})^{1,0}$
for any $v\in TM$. 
It turns out that 
\begin{eqnarray*}
L_{v}\eta&=&di_{v}\eta+i_{v}d\eta=df+2\theta\\
L_{v}\psi&=&di_{v}\psi+i_{v}d\psi \\
 &=&(d-\sqrt{-1}\kappa\eta\wedge)i_{v}\psi+\sqrt{-1}\kappa i_{v}\eta\psi\\
 &=&(d-\sqrt{-1}\kappa\eta\wedge)*_{\mathbb{C}}\theta^{1,0}
 +\sqrt{-1}\kappa f\psi \\
 &=&(\partial_{T}*_{\mathbb{C}}\theta^{1,0}+\sqrt{-1}\kappa f\psi)
 +\overline{\partial}_{T}*_{\mathbb{C}}\theta^{1,0}
 +(d_{\xi}-\sqrt{-1}\kappa\eta\wedge)*_{\mathbb{C}}\theta^{1,0}
\end{eqnarray*}
where $f=i_{v}\eta$ and $\theta=i_{v}\omega^{T}$. 

If $v$ satisfies $L_{v}\psi=0$ and $L_{v}\eta=0$, 
then $f$ and $\theta$ are basic since $\theta=-\frac{1}{2}df$ is transverse. 
It follows from the equation (\ref{s3.7e17.5}) that $(d_{\xi}-\sqrt{-1}\kappa\eta\wedge)*_{\mathbb{C}}\theta^{1,0}=0$. 
We have 
\begin{eqnarray*}
(\partial_{T}*_{\mathbb{C}}\theta^{1,0}+\sqrt{-1}\kappa f\psi)\wedge \overline{\psi} 
 &=&\partial_{T}*_{\mathbb{C}}\theta^{1,0}\wedge \overline{\psi}
 +\sqrt{-1}\kappa f\psi\wedge\overline{\psi} \\
 &=&-\sqrt{-1}c_{n}\partial_{B}\overline{*}_{B}\theta^{1,0}
 +\sqrt{-1}c_{n}\kappa f\overline{*}_{B}1 \\
 &=&\sqrt{-1}c_{n}\overline{*}_{B}(\partial_{B}^{*}\theta^{1,0}+\kappa f). 
\end{eqnarray*}
It implies that $\partial_{T}*_{\mathbb{C}}\theta^{1,0}+\sqrt{-1}\kappa f\psi
=*_{\mathbb{C}}(\partial_{B}^{*}\theta^{1,0}+\kappa f)$ by the definition of $*_{\mathbb{C}}$. 
Hence we obtain 
\begin{equation*}
L_{v}\psi=-\frac{1}{2}*_{\mathbb{C}}(\partial_{B}^{*}\partial_{B}f-2\kappa f)
-\frac{1}{2}\overline{\partial}_{T}*_{\mathbb{C}}\partial_{B}f
\end{equation*}
since $\theta^{1,0}=-\frac{1}{2}\partial_{B}f$. 
Thus $v$ satisfies $L_{v}\eta=0$ and $L_{v}\psi=0$ if and only if 
the corresponding $(f,\theta)$ satisfies  
\begin{eqnarray}
&&\theta=-\frac{1}{2}df \notag\\
&&\Box_{B}f=2\kappa f \label{s3.7e19.1}\\
&&\overline{\partial}_{T}*_{\mathbb{C}}\partial_{B}f=0 \label{s3.7e19.2}
\end{eqnarray}
where $\Box_{B}$ is the basic complex Laplace operator 
$\partial_{B}^{*}\partial_{B}+\partial_{B}\partial_{B}^{*}
:\wedge_{B}^{p,q}\to\wedge_{B}^{p,q}$. 

In order to see that the equation (\ref{s3.7e19.2}) is induced by (\ref{s3.7e19.1}), automatically, 
we identify $\wedge_{B}^{1,0}$ 
with $\wedge_{B}^{1,n}\otimes K_{D}^{-1}$ 
where $K_{D}$ is the basic canonical bundle $\wedge_{B}^{n,0}$. 
Let $\varpi$ be the basic canonical connection of $K_{D}$ and $\nabla_{\varpi}$ the covariant derivative on $K_{D}^{-1}$. 
The basic $2n$-form $(\omega^T)^{n}$ defines the metric of $K_{D}^{-1}$. 
Then we consider the Laplace operator 
$\Box^{\varpi}_{\partial}:\wedge_{B}^{p,q}\otimes K_{D}^{-1}\to
\wedge_{B}^{p,q}\otimes K_{D}^{-1}$ 
given by $\Box^{\varpi}_{\partial}=\nabla_{\varpi}^{1,0}(\nabla_{\varpi}^{1,0})^{*}+
(\nabla_{\varpi}^{1,0})^{*}\nabla_{\varpi}^{1,0}$ 
where $\nabla_{\varpi}^{1,0}$ means the basic $(1,0)$-part of $\nabla_{\varpi}$ 
and $(\nabla_{\varpi}^{1,0})^{*}$ is the dual operator of $\nabla_{\varpi}^{1,0}$. 
Then we see that 
\begin{equation*}
\Box^{\varpi}_{\partial}=\Box_{B}
\end{equation*}
since $\nabla_{\varpi}^{1,0}$ coincides with the operator $\partial_{B}$ 
under the identification $\wedge_{B}^{1,0}\simeq \wedge_{B}^{1,n}\otimes K_{D}^{-1}$. 
As the same manner, we define the operator 
$\Box^{\varpi}_{\overline{\partial}}:\wedge_{B}^{p,q}\otimes K_{D}^{-1}\to
\wedge_{B}^{p,q}\otimes K_{D}^{-1}$ 
given by $\Box^{\varpi}_{\overline{\partial}}=\nabla_{\varpi}^{0,1}(\nabla_{\varpi}^{0,1})^{*}+
(\nabla_{\varpi}^{0,1})^{*}\nabla_{\varpi}^{0,1}$ 
where $\nabla_{\varpi}^{0,1}$ means the $(0,1)$-part of $\nabla_{\varpi}$. 
Then we obtain 
\begin{equation*}
\Box^{\varpi}_{\partial}=\Box^{\varpi}_{\overline{\partial}}+2\kappa
\end{equation*}
by the Kodaira-Akizuki-Nakano identity on 
the basic vector bundle $\wedge_{B}^{p,q}\otimes K_{D}^{-1}$. 
If we assume $\Box_{B}f=2\kappa f$, 
then $\Box_{B}\partial_{B} f=2\kappa\partial_{B} f$. 
By considering $\partial_{B} f$ as a section of $\wedge_{B}^{1,n}\otimes K_{D}^{-1}$, 
we have  
\begin{equation*}
\Box^{\varpi}_{\overline{\partial}}\partial_{B} f
=(\Box^{\varpi}_{\partial}-2\kappa)\partial_{B} f=(\Box_{B}-2\kappa)\partial_{B} f=0.
\end{equation*}
It implies that 
\begin{equation}\label{s3.7e20}
(\nabla_{\varpi}^{0,1})^{*}\partial_{B} f=0.
\end{equation}
Let $U_{\alpha}$ be a local coordinate of $M$ and 
$\psi_{\alpha}$ a basic and transversely holomorphic local frame of $K_{D}$ over $U_{\alpha}$. 
Hence $\nabla_{\varpi}^{0,1}\psi_{\alpha}=0$. 
Then there exists a function $h_{\alpha}$ such that $
\psi=e^{h_{\alpha}}\psi_{\alpha}$ and $\overline{\partial}_{T}h_{\alpha}=0$. 
By considering $\partial_{B}f$ as 
$\partial_{B} f\wedge \overline{\psi}_{\alpha}
\otimes \overline{\psi_{\alpha}^{-1}}=\partial_{B} f\wedge \overline{\psi}
\otimes e^{-\overline{h}_{\alpha}}\overline{\psi_{\alpha}^{-1}}$, we obtain  
\begin{eqnarray*}
\nabla_{\varpi}^{0,1}\overline{*}_{B}\partial_{B}f
 &=&\nabla_{\varpi}^{0,1}\overline{*}_{T}(\partial_{B}f
 \wedge \overline{\psi}\otimes e^{-\overline{h}_{\alpha}}\overline{\psi_{\alpha}^{-1}}) \\
 &=&\overline{\partial}_{T}\overline{*}_{T}(\partial_{B} f
 \wedge \overline{\psi})\otimes e^{-h_{\alpha}}\psi_{\alpha}^{-1}
 +\overline{*}_{T}(\partial_{B} f \wedge \overline{\psi})\otimes 
 \nabla_{\varpi}^{0,1}(e^{-h_{\alpha}}\psi_{\alpha}^{-1}) \\
 &=&\sqrt{-1}\, 2^{n}c_{n}^{-1}( \overline{\partial}_{T}\overline{*}_{\mathbb{C}}\partial_{B}f )
 \otimes e^{-h_{\alpha}}\psi_{\alpha}^{-1}
\end{eqnarray*}
where the last equation is induced by 
$*_{\mathbb{C}}\alpha=-\sqrt{-1}\, 2^{-n}c_{n}\overline{*}_{T}(\alpha\wedge \overline{\psi})$ 
for $\alpha\in\wedge_{T}^{p,0}$. 
Thus the equation (\ref{s3.7e20}) implies $\overline{\partial}_{T}\overline{*}_{\mathbb{C}}\partial_{B}f=0$. 
Hence (\ref{s3.7e19.1}) implies (\ref{s3.7e19.2}). 

Under the identification (\ref{s3.7e6}), an element $v\in \mathfrak{aut}(\eta, \psi)$ corresponds to 
$(f,\theta)$ such that $\theta=-\frac{1}{2}df$ and $\Box_{B}f=2\kappa f$ :
\begin{eqnarray*}
\mathfrak{aut}(\eta, \psi)
 &=&\{(f,\theta)\in \wedge^{0}\oplus\wedge^{1}_{T} 
 \mid \theta=-\frac{1}{2}df,\ \Box_{B}f=2\kappa f \} \\
 &=&\{(f,-\frac{1}{2}df)\in \wedge^{0}_{B}\oplus\wedge^{1}_{B}
 \mid \Delta_{B}f=4\kappa f \},
\end{eqnarray*}
and hence it completes the proof. 
\end{proof}
\begin{rem}
{\rm 
Futaki, Ono and Wang introduced a Hamiltonian holomorphic vector field on Sasaki manifolds 
in order to consider an obstruction to the existence of transverse K\"{a}hler-Einstein metric~\cite{FOW}. 
They showed that the complex vector space of normalized Hamiltonian holomorphic vector fields is isomorphic to $\Ker (\Box_{B}-2(n+1))$ on a Sasaki-Einstein manifold $M$. 
Hence $\mathfrak{aut}(\eta, \psi)$ is isomorphic to 
the real part of the space of normalized Hamiltonian holomorphic vector fields on $M$. 
}
\end{rem}

Proposition~\ref{s3.7p2} and Proposition~\ref{s3.7p3} yield the following  
\begin{cor}
\begin{enumerate}
\item[(i)] If $\kappa>0$, 
then $\mathfrak{aut}(\eta,\psi)\oplus \langle\xi\rangle_{\mathbb{R}}=\mathfrak{aut}(\eta, \xi, \Psi, g)$. 
\item[(ii)] If $\kappa<0$, 
then $\mathfrak{aut}(\eta,\psi)=\{0\}$ and $\mathfrak{aut}(\eta, \xi, \Psi, g)=\langle\xi\rangle_{\mathbb{R}}$.
\item[(iii)] If $\kappa=0$, 
then $\mathfrak{aut}(\eta,\psi)=\langle\xi\rangle_{\mathbb{R}}$. $\hfill\Box$
\end{enumerate}
\end{cor}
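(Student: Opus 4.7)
The plan is to combine Proposition~\ref{s3.7p2} and Proposition~\ref{s3.7p3}, treating each of the three cases separately.

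Part (i) is immediate: Proposition~\ref{s3.7p2} already gives the decomposition $\mathfrak{aut}(\eta,\psi)\oplus\langle\xi\rangle_{\mathbb{R}}=\mathfrak{aut}(\eta, \xi, \Psi, g)$ whenever $\kappa \neq 0$, so specializing to $\kappa>0$ finishes the case.

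For part (ii), I would first show that $\mathfrak{aut}(\eta,\psi) = \{0\}$. By Proposition~\ref{s3.7p3}, this Lie algebra is isomorphic to $\Ker(\Delta_{B}-4\kappa)$, the $4\kappa$-eigenspace of the basic Laplacian on $\wedge^{0}_{B}$. Since $\Delta_{B}$ is a non-negative self-adjoint operator on basic functions, its spectrum is contained in $[0,\infty)$; hence for $\kappa < 0$ the eigenspace $\Ker(\Delta_{B}-4\kappa)$ must be trivial. Substituting $\mathfrak{aut}(\eta,\psi) = \{0\}$ into the decomposition of Proposition~\ref{s3.7p2} then yields $\mathfrak{aut}(\eta, \xi, \Psi, g) = \langle\xi\rangle_{\mathbb{R}}$.

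Part (iii) is the case $\kappa = 0$, where Proposition~\ref{s3.7p2} does not apply, so I would rely solely on Proposition~\ref{s3.7p3}. In this case $\mathfrak{aut}(\eta,\psi) \cong \Ker\Delta_{B}$, and on the compact connected manifold $M$ the basic harmonic functions are exactly the constants. Under the identification $v \mapsto (i_v\eta, i_v\omega^{T})$ of (\ref{s3.7e6}), a constant $f$ corresponds to the pair $(f, 0)$, which is in turn the image of $f\xi$ since $i_\xi \eta = 1$ and $i_\xi d\eta = 0$. Therefore $\mathfrak{aut}(\eta,\psi) = \langle\xi\rangle_{\mathbb{R}}$.

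I do not expect any real obstacle here: the corollary is a routine consequence of the two preceding propositions together with two elementary facts about non-negative elliptic operators on compact connected manifolds (non-negativity of the spectrum and the description of the kernel).
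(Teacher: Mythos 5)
Your proposal is correct and follows exactly the route the paper intends: the corollary is stated with no written proof beyond the remark that Propositions~\ref{s3.7p2} and~\ref{s3.7p3} yield it, and your argument supplies precisely the routine details (non-negativity of the basic Laplacian for $\kappa<0$, constancy of basic harmonic functions and the identification of $(f,0)$ with $f\xi$ for $\kappa=0$). No gaps.
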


\begin{rem}
{\rm 
In the case $\kappa\le 0$, it is known that $\mathfrak{aut}(\eta, \xi, \Psi, g)=\langle\xi\rangle_{\mathbb{R}}$ in Theorem 8.1.14~\cite{BG}. 
Hence we can see that $\mathfrak{aut}(\eta,\psi)=\langle\xi\rangle_{\mathbb{R}}=\mathfrak{aut}(\eta, \xi, \Psi, g)$ if $\kappa=0$. 
}
\end{rem}

\subsection{Special Legendrian submanifolds in Sasaki manifolds with almost transverse Calabi-Yau structures}
Let $(M, g)$ be a Sasaki manifold with a Sasaki structure $(\xi, \eta, \Psi, g)$. 
We assume that there exists a complex valued $n$-form $\psi$ 
such that $(\psi, \frac{1}{2}d\eta)$ is an almost transverse Calabi-Yau structure on $(M, \mathcal{F}_{\xi})$ 
with $d\psi=\sqrt{-1}\,\kappa\eta\wedge\psi$ for a real number $\kappa$. 
Then $(\psi, \frac{1}{2}d\eta)$ induces an almost Calabi-Yau structure $(\Omega, \omega)$ 
on the metric cone $(C(M), \overline{g})$ as in Proposition~\ref{s5.1p1}. 
An $(n+1)$-dimensional submanifold $\widetilde{X}$ in $C(M)$ is called {\it a special Lagrangian submanifold} 
if $\tilde{\iota}^{*}\Omega^{\rm Re}=\tilde{\iota}^{*}\omega=0$ where $\tilde{\iota}$ is the embedding $\tilde{\iota}:\widetilde{X}\hookrightarrow C(M)$. 
We consider such submanifolds of cone type. 
\begin{defi}\label{s5.3d1}
{\rm 
A submanifold $X$ in $M$ is {\it special Legendrian} 
if the cone $C(X)$ is a special Lagrangian submanifold in $C(M)$. 
}
\end{defi}
Then we obtain 
\begin{prop}\label{s5.3p1}
Any special Legendrian submanifold is a minimal submanifold in $(M,g)$. 
\end{prop}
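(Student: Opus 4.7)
The plan is to transfer the problem to the metric cone $C(M)$ via Proposition~\ref{s5.1p1}, which provides an almost Calabi--Yau structure $(\Omega,\omega)$ on $C(M)$ with $\Omega\wedge\overline{\Omega}=r^{2(\kappa-n-1)}c_{n+1}\omega^{n+1}$. Since $\overline{g}$ is a calibrated metric only in the genuine Calabi--Yau case $\kappa=n+1$, the form $\Omega^{\rm Re}$ does not directly calibrate $\overline{g}$; as noted in the remark following Proposition~\ref{s5.1p1}, it calibrates the conformally rescaled metric $\tilde{g}=r^{\kappa'}\overline{g}$ with $\kappa'=2\bigl(\tfrac{\kappa}{n+1}-1\bigr)$. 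By definition, $C(X)$ is a special Lagrangian cone in $(C(M),\Omega,\omega)$, hence a calibrated submanifold with respect to $\tilde{g}$, so its mean curvature vector $\tilde{H}$ in $(C(M),\tilde{g})$ vanishes.

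The next step is to deduce minimality with respect to $\overline{g}$ itself. The conformal change formula for the mean curvature vector of an $(n+1)$-dimensional submanifold, applied to $\tilde{g}=e^{2\phi}\overline{g}$ with $\phi=\tfrac{\kappa'}{2}\log r$, gives
\[
\tilde{H}=e^{-2\phi}\bigl(\overline{H}-(n+1)(\overline{\nabla}\phi)^{\perp}\bigr),
\]
where $(\cdot)^{\perp}$ denotes the projection onto the $\overline{g}$-normal bundle of $C(X)$ and $\overline{H}$ is the mean curvature of $C(X)$ in $(C(M),\overline{g})$. A direct computation using $\overline{g}(\partial_r,\partial_r)=1$ and $\overline{g}(\partial_r,TM)=0$ yields $\overline{\nabla}\phi=\tfrac{\kappa'}{2r}\partial_r$. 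The decisive observation is that $C(X)=\mathbb{R}_{>0}\times X$ is a cone, so the radial vector $\partial_r$ is tangent to $C(X)$, whence $(\overline{\nabla}\phi)^{\perp}=0$. The conformal correction term therefore drops out, and $\tilde{H}=0$ forces $\overline{H}=0$; that is, $C(X)$ is also minimal with respect to the cone metric $\overline{g}$.

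Finally, the standard identity for the mean curvatures of a cone and its link (already used in Section~4 for the Sasaki--Einstein case), $\overline{H}_{(r,x)}=\tfrac{1}{r^2}H_x$, immediately gives $H=0$ and hence that $X$ is minimal in $(M,g)$. The only substantive step is the conformal correction; the potentially obstructive normal-projection term vanishes for free from the cone structure, which is where the argument genuinely turns, while everything else is either a quotation of Proposition~\ref{s5.1p1} and the Harvey--Lawson calibration inequality or a bookkeeping computation on cones.
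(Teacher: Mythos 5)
Your proof is correct and follows essentially the same route as the paper: pass to the cone, observe that $\Omega^{\rm Re}$ calibrates $C(X)$ for the conformally rescaled metric $r^{\kappa'}\overline{g}$, deduce minimality for $\overline{g}$, and descend to the link. The only difference is that you make explicit the conformal change formula and the vanishing of the normal projection of $\overline{\nabla}\phi$ (because $\partial_r$ is tangent to the cone), a point the paper compresses into the unproved identity $\widetilde{H}'=r^{-\kappa'}\widetilde{H}$; your version is if anything the more careful one.
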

\begin{proof}
Let $X$ be a special Legendrian submanifold. 
Then the special Lagrangian cone $C(X)$ is a calibrated submanifold with respect to 
the metric $r^{\kappa^{\prime}}\overline{g}$ where $\kappa^{\prime}=2(\frac{\kappa}{n+1}-1)$. 
In fact, the holomorphic $(n+1)$-form $\Omega$ induces the calibration $\Omega^{\rm Re}$ with respect to the metric $r^{\kappa^{\prime}}\overline{g}$. 
Then special Lagrangian submanifolds are given by calibrated submanifolds. 
Therefore $C(X)$ is minimal with respect to $r^{\kappa^{\prime}}\overline{g}$ and the mean curvature vector field $\widetilde{H}^{\prime}$ vanishes. 
Let $\widetilde{H}$ be the mean curvature vector field of $C(X)$ with respect to $\overline{g}$. 
Then $\widetilde{H}$ also vanishes since $\widetilde{H}^{\prime}=r^{-\kappa^{\prime}}\widetilde{H}$. 
Hence $C(X)$ is minimal with respect to $\overline{g}$, and $X$ is a minimal submanifold with respect to $g$. 
\end{proof}

We provide a characterization of special Legendrian submanifolds : 

\begin{prop}\label{s5.3p2}
An $n$-dimensional submanifold $X$ in $M$ is a special Legendrian submanifold 
if and only if $\iota^{*}\psi^{\rm Im}=\iota^{*}\eta=0$. 
\end{prop}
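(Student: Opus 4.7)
My plan is to mimic the proof of Proposition~\ref{s4.1.2p1} almost verbatim, but with the generalized relation between $\Omega$ and $\psi$ coming from Proposition~\ref{s5.1p1} instead of the Sasaki-Einstein one. The only conceptual point to check along the way is that Proposition~\ref{s4.1.1p2} (a cone $\widetilde{X}$ is special Lagrangian iff $\tilde{\iota}^{*}\Omega^{\rm Im}=\tilde{\iota}^{*}\eta=0$) still holds in this more general almost Calabi-Yau setting, but its proof only uses the identities $\omega=\frac{1}{2}d(r^{2}\eta)$ and $\eta=i_{r\partial/\partial r}\omega$ (together with the definition of special Lagrangian via $\tilde{\iota}^{*}\Omega^{\rm Im}=\tilde{\iota}^{*}\omega=0$), all of which remain valid here. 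So I may quote Proposition~\ref{s4.1.1p2} directly.

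First, from the construction in the proof of Proposition~\ref{s5.1p1}, the holomorphic $(n+1)$-form on $C(M)$ is
\begin{equation*}
\Omega=\Bigl(\frac{dr}{r}+\sqrt{-1}\,\eta\Bigr)\wedge r^{\kappa}\psi .
\end{equation*}
Taking the imaginary part yields
\begin{equation*}
\Omega^{\rm Im}=r^{\kappa-1}\,dr\wedge\psi^{\rm Im}+r^{\kappa}\,\eta\wedge\psi^{\rm Re},
\end{equation*}
where, as usual, $\psi$ and $\eta$ are extended trivially in the $r$-direction.

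Next I compare the two conditions under the embeddings $\iota:X\hookrightarrow M$ and $\tilde{\iota}:C(X)\hookrightarrow C(M)$, with $C(X)=\mathbb{R}_{>0}\times X$. For the implication ``$\iota^{*}\psi^{\rm Im}=\iota^{*}\eta=0\Longrightarrow \tilde{\iota}^{*}\Omega^{\rm Im}=\tilde{\iota}^{*}\eta=0$'': since $\eta$ is the trivial extension, $\tilde{\iota}^{*}\eta=\iota^{*}\eta=0$, and then
\begin{equation*}
\tilde{\iota}^{*}\Omega^{\rm Im}
=r^{\kappa-1}\,dr\wedge\iota^{*}\psi^{\rm Im}+r^{\kappa}\,\tilde{\iota}^{*}\eta\wedge\iota^{*}\psi^{\rm Re}=0.
\end{equation*}
Hence by Proposition~\ref{s4.1.1p2}, $C(X)$ is a special Lagrangian cone, so $X$ is special Legendrian by Definition~\ref{s5.3d1}.

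Conversely, suppose $X$ is special Legendrian, so that $C(X)$ is a special Lagrangian cone in $C(M)$. By Proposition~\ref{s4.1.1p2} we have $\tilde{\iota}^{*}\Omega^{\rm Im}=0$ and $\tilde{\iota}^{*}\eta=0$; the latter immediately gives $\iota^{*}\eta=0$. Using $\tilde{\iota}^{*}\eta=0$ in the formula for $\Omega^{\rm Im}$ reduces the vanishing of $\tilde{\iota}^{*}\Omega^{\rm Im}$ to $r^{\kappa-1}\,dr\wedge\iota^{*}\psi^{\rm Im}=0$ on $C(X)$, which forces $\iota^{*}\psi^{\rm Im}=0$ (since $dr$ is nowhere tangent to the level $\{r=1\}=X$). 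This completes the equivalence.

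The step I expect to be the only delicate one is the verification that Proposition~\ref{s4.1.1p2} extends to this almost Calabi-Yau setting; but as noted above, that proof used only the universal identities $\omega=\frac{1}{2}d(r^{2}\eta)$, $L_{r\partial/\partial r}\omega=2\omega$, and $\eta=r^{-2}i_{r\partial/\partial r}\omega$, so no new argument is needed. All remaining computations are direct and parallel the Sasaki-Einstein proof of Proposition~\ref{s4.1.2p1}.
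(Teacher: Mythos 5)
Your proof is correct and follows essentially the same route as the paper: the paper likewise reduces to the cone via the generalization of Proposition~\ref{s4.1.1p2} and then uses the formula $\Omega=(\frac{dr}{r}+\sqrt{-1}\eta)\wedge r^{\kappa}\psi$ to translate $\tilde{\iota}^{*}\Omega^{\rm Im}=\tilde{\iota}^{*}\eta=0$ into $\iota^{*}\psi^{\rm Im}=\iota^{*}\eta=0$; you merely write out the two directions more explicitly. (One cosmetic slip: mid-argument you write $\eta=i_{r\partial/\partial r}\omega$, which should be $\eta=r^{-2}i_{r\partial/\partial r}\omega$ as you correctly state at the end, but this does not affect the argument.)
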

\begin{proof}
By repeating the argument of the proof of Proposition~\ref{s4.1.1p2}, 
we can show that the embedding $\tilde{\iota}:C(X)\hookrightarrow C(M)$ is special Lagrangian if and only if $\tilde{\iota}^*\Omega^{\rm Re}=\tilde{\iota}^*\eta=0$. 
The condition $\tilde{\iota}^*\Omega^{\rm Re}=\tilde{\iota}^*\eta=0$ is equivalent to $\iota^{*}\psi^{\rm Im}=\iota^{*}\eta=0$ 
since $\Omega=(\frac{dr}{r}+\sqrt{-1}\eta)\wedge r^{\kappa}\psi$. 
Hence $X$ is a special Legendrian submanifold if and only if $\iota^{*}\psi^{\rm Im}=\iota^{*}\eta=0$. 
\end{proof}

Let $X$ be a compact connected special Legendrian submanifold and 
$\mathcal{M}_{X}$ the moduli space of special Legendrian deformations of $X$. 

\begin{thm}\label{s5.3p3}
The infinitesimal deformation space of $X$ is 
isomorphic to the eigenspace ${\rm Ker}(\Delta_{0}-2\kappa)$ of $\Delta_{0}$ with eigenvalue $2\kappa$. 
If $\kappa=0$, then $X$ is rigid and $\mathcal{M}_{X}$ is a $1$-dimensional manifold. 
If $\kappa<0$, then $X$ does not have any non-trivial deformation and $\mathcal{M}_{X}=\{0_X\}$. 
\end{thm}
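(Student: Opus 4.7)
The plan is to imitate the proof of Proposition~\ref{s4.1.2p2} with $n+1$ replaced by $\kappa$, and then read off the three claims from the resulting linear equation.

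First I would set up $F:U\to \wedge^{0}\oplus \wedge^{1}$ by $F(v)=(*\exp_{v}^{*}\psi^{\rm Im},\exp_{v}^{*}\eta)$, so that by Proposition~\ref{s5.3p2} the zero set $F^{-1}(0)$ models $\mathcal{M}_{X}$ near $0_{X}$. Using $d\psi^{\rm Im}=\kappa\,\eta\wedge \psi^{\rm Re}$ (the only place the weight enters), $\iota^{*}\psi^{\rm Re}=\mathrm{vol}_{X}$ and $\iota^{*}(i_{\tilde v}\psi^{\rm Im})=-\tfrac{1}{2}*\iota^{*}(i_{\tilde v}d\eta)$, the same calculation as in Proposition~\ref{s4.1.2p2} yields, under the identification $NX\simeq \wedge^{0}\oplus\wedge^{1}$ given by $v\mapsto (i_{v}\eta,\tfrac{1}{2}i_{v}d\eta)$,
\[
d_{0}F(f,\alpha)=(d^{*}\alpha+\kappa f,\ df+2\alpha).
\]
Its kernel consists of pairs $(f,-\tfrac{1}{2}df)$ with $(\Delta_{0}-2\kappa)f=0$, which is canonically isomorphic to $\Ker(\Delta_{0}-2\kappa)$. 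This proves the first assertion of the theorem.

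For $\kappa<0$, nonnegativity of $\Delta_{0}$ gives $\Ker(\Delta_{0}-2\kappa)=0$. To upgrade this linear vanishing to $\mathcal{M}_{X}=\{0_{X}\}$ I would work inside the smooth Legendrian slice provided by Proposition~\ref{s2.2.4p1}: Legendrian deformations of $X$ are locally parameterised by a single function $f\in C^{s,\alpha}(X)$ via $(f,-\tfrac{1}{2}df)$, and the remaining equation $\iota^{*}\psi^{\rm Im}=0$ reduces to a nonlinear second-order PDE in $f$ whose linearisation at $f=0$ is $-\tfrac{1}{2}(\Delta_{0}-2\kappa)$. Since this is a strictly positive self-adjoint elliptic operator, the implicit function theorem forces $f\equiv 0$ near $0_{X}$; running the same argument along any path in $\mathcal{M}_{X}$ then yields $\mathcal{M}_{X}=\{0_{X}\}$ globally.

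For $\kappa=0$, $\Ker\Delta_{0}$ is the line of constants on the connected manifold $X$, so the infinitesimal deformation space is one-dimensional. The Reeb vector field satisfies $L_{\xi}\eta=0$ and $L_{\xi}\psi=i_{\xi}d\psi=0$, so $\xi\in\mathfrak{aut}(\eta,\psi)$ and its flow $\{\phi_{t}\}$ lies in $\mathrm{Aut}(\eta,\psi)$; the orbit $\{\phi_{t}(X)\}$ is a honest one-parameter family of special Legendrian submanifolds whose initial velocity $\xi|_{X}$ corresponds to the constant $1\in\Ker\Delta_{0}$. To show that nothing else occurs, I would restrict the Legendrian parameterisation to the codimension-one slice $\{\,f:\int_{X} f\,\mathrm{vol}_{X}=0\,\}$ transverse to the Reeb direction; on this slice $-\tfrac{1}{2}\Delta_{0}$ is invertible, so the implicit function theorem leaves only the trivial solution. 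Thus every nearby special Legendrian submanifold coincides with $\phi_{t}(X)$ for some small $t$, showing that $\mathcal{M}_{X}$ is a $1$-dimensional manifold and that $X$ is rigid. The hard part is precisely this $\kappa=0$ case: self-adjointness of $d_{0}F$ makes the obstruction space also one-dimensional, so an abstract Kuranishi argument alone cannot exclude obstructions, and the crucial ingredient is that the Reeb flow realises the single infinitesimal direction by genuine deformations and thereby simultaneously exhausts the kernel and cancels the potential obstruction.
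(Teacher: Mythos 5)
Your computation of $d_{0}F$ and the identification of the infinitesimal deformation space with $\Ker(\Delta_{0}-2\kappa)$ is exactly the paper's argument, so the first assertion is handled identically. Where you diverge is in the $\kappa\le 0$ cases. For $\kappa=0$ the paper does not run an implicit function theorem argument directly: it observes that $d\psi=0$ makes $(\psi,\eta)$ a contact Calabi-Yau structure, invokes the Tomassini--Vezzoni smoothness result (Proposition~\ref{s2.4p1}) to get that $\mathcal{M}_{X}$ is a smooth $1$-manifold of dimension $\dim H^{0}(X)=1$, and then identifies the deformation direction with $\langle\xi\rangle_{\mathbb{R}}=\mathfrak{aut}(\eta,\psi)$ to conclude rigidity. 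Your route --- exhibiting the Reeb orbit as a genuine family and killing everything else by the implicit function theorem on the mean-zero slice --- is more self-contained and makes explicit why the self-adjointness of $d_{0}F$ is not an obstruction here; but note two points you should spell out: (a) to apply the implicit function theorem on the slice you need the nonlinear map to land in the mean-zero functions, which follows from $[\exp_{v}^{*}\psi^{\rm Im}]=[\iota^{*}\psi^{\rm Im}]=0$ using $d\psi^{\rm Im}=0$ when $\kappa=0$; and (b) ``only the trivial solution on the mean-zero slice'' does not by itself place every nearby solution on the Reeb orbit --- you also need that the full solution set is a $1$-manifold tangent to the constants (which the same implicit function theorem gives), so that it must coincide with the Reeb orbit. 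For $\kappa<0$ the paper is terse (it only records $\Ker(\Delta_{0}-2\kappa)=0$), and your elaboration via the Legendrian slice and the invertibility of $\Delta_{0}-2\kappa$ is a correct and welcome filling-in of that step.
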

\begin{proof}
Proposition~\ref{s5.3p2} implies that $\mathcal{M}_{X}$ is 
the moduli space $\mathcal{M}_{X}(\psi^{\rm Im},\eta)$ of $(\psi^{\rm Im},\eta)$-deformations of $X$. 
We take the set $U$ as in $(\ref{s2.1e2})$ and define the map $F:U\to \wedge^{0}\oplus \wedge^{1}$ by 
\begin{equation*}\label{s5.3e1}
F(v)=(*\exp_{v}^{*}\psi^{\rm Im},\exp_{v}^{*}\eta)
\end{equation*}
for $v\in U$. 
We can regard $F^{-1}(0)$ as a set of special Legendrian submanifolds 
in $M$ which is close to $X$ in $C^1$ sense. . 
Then we have 
\begin{eqnarray*}\label{s5.3e2}
d_{0}F(v)&=&(*\iota^{*}L_{\tilde{v}}\psi^{\rm Im}, \iota^{*}L_{\tilde{v}}\eta)\\
&=&(*d\iota^{*}(i_{\tilde{v}}\psi^{\rm Im})+\kappa\iota^{*}(i_{\tilde{v}}\eta\psi^{\rm Re}), 
d\iota^{*}(i_{\tilde{v}}\eta)+\iota^{*}(i_{\tilde{v}}d\eta))\\
&=&(\frac{1}{2}d^{*}\iota^{*}(i_{\tilde{v}}d\eta)+\kappa\iota^{*}(i_{\tilde{v}}\eta), 
d\iota^{*}(i_{\tilde{v}}\eta)+\iota^{*}(i_{\tilde{v}}d\eta))
\end{eqnarray*}
for $v\in U$ where $\tilde{v}$ is an extension of $v$ to $M$. 
In the last equation, we use that 
$\iota^{*}(i_{\tilde{v}}\psi^{\rm Im})=-\frac{1}{2}*\iota^{*}(i_{\tilde{v}}d\eta)$. 
Under the identification 
\[
\Gamma(NX)\simeq \wedge^{0}\oplus\wedge^{1}
\]
given by $v\mapsto (i_{v}\eta, \frac{1}{2}i_{v}d\eta)$, 
we can consider $d_{0}F$ as the map 
$d_{0}F:\wedge^{0}\oplus\wedge^{1}
\to\wedge^{0}\oplus \wedge^{1}$ defined by  
\begin{equation*}\label{s5.3e4}
d_{0}F(f,\alpha)=(d^{*}\alpha+\kappa f, df+2\alpha)
\end{equation*}
for $(f,\alpha)\in\wedge^{0}\oplus\wedge^{1}$. 
Then it turns out that 
\begin{eqnarray*}
{\rm Ker}(d_{0}F)&=&\{(f,\alpha)\in\wedge^{0}\oplus\wedge^{1} 
\mid d^{*}\alpha+\kappa f=0, 2\alpha+df=0\} \\
&=&\{(f,-\frac{1}{2}df)\in\wedge^{0}\oplus\wedge^{1} \mid (\Delta_{0}-2\kappa)f=0 \} 
\end{eqnarray*}
and hence the infinitesimal deformation space of $X$ is  
isomorphic to the space ${\rm Ker}(\Delta_{0}-2\kappa)$. 
In the case of $\kappa=0$, we already proved that the moduli space $\mathcal{M}_{X}$ 
is a $1$-dimensional smooth manifold in Proposition~\ref{s2.4e1}. 
Then the $1$-dimensional deformation space is given by 
$\langle\xi\rangle_{\mathbb{R}}=\mathfrak{aut}(\eta,\psi)$. Thus $X$ is rigid. 
If $\kappa<0$, then ${\rm Ker}(\Delta_{0}-2\kappa)=\{0\}$ and $X$ has only the trivial deformation. 
Hence we finish the proof. 
\end{proof}
Let $\omega^{T}$ be the transverse $2$-form $\frac{1}{2}d\eta$ on $M$. 
We denote by $\mathcal{N}_{X}$ the moduli space $\mathcal{M}_{X}(\psi^{\rm Im}, \omega^{T})$ of 
$(\psi^{\rm Im}, \omega^{T})$-deformations of $X$. 
We fix an integer $s\ge 3$ and a real number $\alpha$ with $0<\alpha<1$ and set $\mathcal{N}_{X}^{s,\alpha}$ 
as the moduli space $\mathcal{M}_{X}^{s,\alpha}(\psi^{\rm Im}, \omega^{T})$ of $(\psi^{\rm Im}, \omega^{T})$-deformations of $C^{s, \alpha}$-class. 

\begin{prop}\label{s5.3p4}
The moduli space $\mathcal{N}_{X}^{s,\alpha}$ is smooth at $0_X$. 
If $\kappa\neq 0$, then the tangent space $T_{0_X}\mathcal{N}_{X}^{s,\alpha}$ 
is isomorphic to 
$\{(-\frac{1}{\kappa}d^{*}\alpha,\alpha)\in C^{s,\alpha}(\wedge^{0}\oplus\wedge^{1}) \mid d\alpha=0 \}$. 
If $\kappa= 0$, then $T_{0_X}\mathcal{N}_{X}^{s,\alpha}$ 
is isomorphic to $C^{s,\alpha}(\wedge^{0})\oplus H^{1}(X)$. 
\end{prop}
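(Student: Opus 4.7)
The plan is to follow the same scheme as in Proposition~\ref{s4.3p1}, replacing the Sasaki--Einstein weight $(n+1)$ by the general weight $\kappa$, and to apply Proposition~\ref{s2.1p1}. The key adjustment is that the constant $\kappa$ now appears as a coefficient in the linearized operator, and the case $\kappa=0$ will require a separate treatment because $P_1$ ceases to be elliptic.

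First, I take the set $U$ as in (\ref{s2.1e2}) and define $G:U\to\wedge^0\oplus\wedge^2$ by $G(v)=(*\exp_v^*\psi^{\rm Im},\,\exp_v^*\omega^T)$, so that $G^{-1}(0)$ is identified with $\mathcal{N}_X$ near $0_X$. Using $d\psi=\sqrt{-1}\,\kappa\,\eta\wedge\psi$ and $\iota^*(i_{\tilde v}\psi^{\rm Im})=-\tfrac12*\iota^*(i_{\tilde v}d\eta)$ together with $\iota^*\psi^{\rm Re}={\rm vol}(X)$, a computation analogous to the one in Proposition~\ref{s4.3p1} yields, under the identification $NX\simeq\wedge^0\oplus\wedge^1$, $v\mapsto(i_v\eta,i_v\omega^T)$, the linearization
\begin{equation*}
D_1(f,\alpha)=(d^*\alpha+\kappa f,\,d\alpha).
\end{equation*}
Its kernel is $\{(-\tfrac{1}{\kappa}d^*\alpha,\alpha)\mid d\alpha=0\}$ when $\kappa\neq 0$, and $\wedge^0\oplus\mathcal{H}^1(X)$ when $\kappa=0$; in the latter case elliptic regularity for harmonic forms identifies the $C^{s,\alpha}$-kernel with $C^{s,\alpha}(\wedge^0)\oplus H^1(X)$.

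Next, I form the complex
\begin{equation*}
0\to\wedge^0\oplus\wedge^1\stackrel{D_1}{\longrightarrow}\wedge^0\oplus\wedge^2\stackrel{D_2}{\longrightarrow}\wedge^3\to 0,\qquad D_2(f,\beta)=d\beta.
\end{equation*}
A direct calculation gives $D_1^*(g,\beta)=(\kappa g,\,dg+d^*\beta)$ and $D_2^*(\gamma)=(0,d^*\gamma)$, whence
\begin{equation*}
P_2(f,\beta)=((\Delta_0+\kappa^2)f,\,\Delta_2\beta),
\end{equation*}
which is elliptic regardless of the value of $\kappa$. Hence Proposition~\ref{s2.1p1} applies once I verify ${\rm Im}(G)\subset{\rm Im}(D_1)$.

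For the inclusion ${\rm Im}(G)\subset{\rm Im}(D_1)$, note first that the second component $\exp_v^*\omega^T=\tfrac12 d(\exp_v^*\eta)$ always lies in $d\wedge^1$. When $\kappa\neq 0$, the operator $D_1$ is already surjective onto $\wedge^0\oplus d\wedge^1$, since given $(h,d\beta)$ one may take $\alpha=\beta$ and $f=(h-d^*\beta)/\kappa$; thus ${\rm Im}(G)\subset\wedge^0\oplus d\wedge^1\subset{\rm Im}(D_1)$. When $\kappa=0$ the form $\psi$ is $d$-closed, so $f_t^*\psi^{\rm Im}$ is cohomologous to $\iota^*\psi^{\rm Im}=0$, hence exact as a top-degree form on $X$; its Hodge dual $*f_t^*\psi^{\rm Im}$ therefore has zero mean and lies in $d^*\wedge^1$. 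Consequently ${\rm Im}(G)\subset d^*\wedge^1\oplus d\wedge^1$, which coincides with ${\rm Im}(D_1)$ in the $\kappa=0$ case by the Hodge decomposition.

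Applying Proposition~\ref{s2.1p1} now gives that $\mathcal{N}_X^{s,\alpha}$ is smooth at $0_X$ with tangent space ${\rm Ker}(D_1^{s,\alpha})$, which matches the claimed descriptions in both cases. The main delicate point in this plan is the case $\kappa=0$: there $P_1=(0,\Delta_1)$ fails to be elliptic, so one cannot upgrade the tangent space to smooth sections via the stronger clause of Proposition~\ref{s2.1p1}; instead one must work with the $C^{s,\alpha}$-statement and rely on the cohomological argument above (closedness of $\psi$ and the homotopy invariance of $[f_t^*\psi^{\rm Im}]$) to keep ${\rm Im}(G)$ inside ${\rm Im}(D_1)$.
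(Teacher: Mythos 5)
Your proposal follows the paper's proof essentially step for step: the same map $G$, the same identification $NX\simeq\wedge^{0}\oplus\wedge^{1}$ and linearization $D_{1}(f,\alpha)=(d^{*}\alpha+\kappa f,d\alpha)$, the same complex with $D_{2}(f,\beta)=d\beta$ and the same elliptic operator $P_{2}(f,\beta)=((\Delta_{0}+\kappa^{2})f,\Delta_{2}\beta)$, concluded via Proposition~\ref{s2.1p1}. The one place you genuinely diverge is the verification of ${\rm Im}(G)\subset{\rm Im}(D_{1})$, and there your version is actually the more careful one: the paper argues that ${\rm Im}(G)\subset\wedge^{0}\oplus d\wedge^{1}$ is perpendicular to $\Ker P_{2}\,(=\{0\}\oplus\mathcal{H}^{2}(X))$, but that identification of $\Ker P_{2}$ only holds for $\kappa\neq0$; when $\kappa=0$ one has $\Ker P_{2}=\mathcal{H}^{0}(X)\oplus\mathcal{H}^{2}(X)$, and $\wedge^{0}\oplus d\wedge^{1}$ is not orthogonal to the $\mathcal{H}^{0}$ factor. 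Your observation that for $\kappa=0$ the form $\psi$ is closed, so $f_{t}^{*}\psi^{\rm Im}$ is exact and $*f_{t}^{*}\psi^{\rm Im}$ has zero mean, is exactly the extra ingredient needed to place ${\rm Im}(G)$ inside $d^{*}\wedge^{1}\oplus d\wedge^{1}={\rm Im}(D_{1})$ in that case, and your direct surjectivity check for $\kappa\neq0$ is an equivalent substitute for the paper's Hodge-decomposition argument. One small correction to your closing remark: $P_{1}(f,\alpha)=(\kappa^{2}f+\kappa d^{*}\alpha,\ \Delta_{1}\alpha+\kappa df)$ fails to be elliptic for \emph{every} $\kappa$ (its principal symbol annihilates $(f,0)$), not only for $\kappa=0$; this is harmless, since the proposition is stated only for the $C^{s,\alpha}$ moduli space and neither you nor the paper invokes the stronger clause of Proposition~\ref{s2.1p1}.
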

\begin{proof}
We take the set $U$ as in $(\ref{s2.1e2})$ and define the map $G:U\to \wedge^{0}\oplus \wedge^{2}$ by 
\begin{equation*}
G(v)=(*\exp_{v}^{*}\psi^{\rm Im}, \exp_{v}^{*}\omega^{T})
\end{equation*}
for $v\in U$. It follows that 
\begin{eqnarray*}
d_{0}G(v)&=&(*(d\iota^{*}(i_{\tilde{v}}\psi^{\rm Im})+
(n+1)\iota^{*}(i_{\tilde{v}}\eta\psi^{\rm Re})-\kappa\iota^{*}(\eta\wedge i_{\tilde{v}}\psi^{\rm Re}),\  
d\iota^{*}(i_{\tilde{v}}\omega^{T})) \\
&=&(d^{*}\iota^{*}(i_{\tilde{v}}\omega^{T})+
\kappa\iota^{*}(i_{\tilde{v}}\eta),\ d\iota^{*}(i_{\tilde{v}}\omega^{T}))
\end{eqnarray*}
for $v\in U$ where $\tilde{v}$ is an extension of $v$ to $M$.  
In the last equation, we use that 
$\iota^{*}(i_{\tilde{v}}\psi^{\rm Im})=-\frac{1}{2}*\iota^{*}(i_{\tilde{v}}d\eta)$ and 
$\iota^{*}\psi^{\rm Re}={\rm vol}(X)$. 
Under the identification $\Gamma(NX)\simeq \wedge^{0}\oplus\wedge^{1}$ 
given by $v\mapsto (i_{v}\eta, i_{v}\omega^{T})$, we identify $d_{0}G$ with the map 
$D_1:\wedge^{0}\oplus\wedge^{1}
\to\wedge^{0}\oplus \wedge^{2}$ defined by  
\begin{equation*}
D_1(f,\alpha)=(d^{*}\alpha+\kappa f, d\alpha) 
\end{equation*}
for $(f,\alpha)\in\wedge^0\oplus \wedge^1$. 
Now we provide a complex as follows 
\begin{equation*}
0\to \wedge^{0}\oplus\wedge^{1}\stackrel{D_{1}}{\longrightarrow}
\wedge^{0}\oplus \wedge^{2}\stackrel{D_{2}}{\longrightarrow} 
\wedge^{3}\to 0 
\end{equation*}
where the operator $D_{2}$ is given by 
\begin{equation*}
D_{2}(f,\beta)=d\beta
\end{equation*}
for $(f,\beta)\in\wedge^0\oplus \wedge^2$. It is easy to see that  
\begin{eqnarray*}
P_{1}(f,\alpha)&=&(\kappa^{2}f+\kappa d^{*}\alpha, \Delta_{1}\alpha+\kappa df),\\
P_{2}(f,\beta)&=&((\Delta_{0}+\kappa^{2})f, \Delta_{2}\beta).
\end{eqnarray*}
Hence $P_{2}$ is the elliptic operator. 
It follows from ${\rm Im}(G)\subset \wedge^{0}\oplus d\wedge^{1}$  
that ${\rm Im}(G)$ is perpendicular 
to $\Ker{P_{2}}(=\{0\}\oplus \mathcal{H}^{2}(X))$ and ${\rm Im}(D^{*}_{2})$. 
Hence we obtain ${\rm Im}(G)\subset {\rm Im}(D_{1})$ by the Hodge decomposition 
$\wedge^{0}\oplus \wedge^{2}=\Ker{P_{2}}\oplus {\rm Im}(D_{1})\oplus {\rm Im}(D_{2}^{*})$. 
Proposition~\ref{s2.1p1} implies that $\mathcal{N}_{X}^{s,\alpha}$ is smooth at $0_X$ 
with the tangent space ${\rm Ker}(D_{1}^{s,\alpha})$. 
If $\kappa\neq 0$, then it turns out that 
\begin{eqnarray*}
{\rm Ker}(D_{1})&=&\{(f,\alpha)\in\wedge^{0}\oplus\wedge^{1} 
\mid d^{*}\alpha+\kappa f=0, d\alpha=0\} \\
&=&\{(-\frac{1}{\kappa}d^{*}\alpha,\alpha)\in\wedge^{0}\oplus\wedge^{1} \mid d\alpha=0 \}. 
\end{eqnarray*}
If $\kappa=0$, then 
\begin{eqnarray*}
{\rm Ker}(D_{1})&=&\{(f,\alpha)\in\wedge^{0}\oplus\wedge^{1} 
\mid d^{*}\alpha=d\alpha=0\} \\
&=&\{(f,\alpha)\in\wedge^{0}\oplus\wedge^{1} \mid \Delta_{1}\alpha=0 \} \\
&=&\wedge^{0}\oplus\mathcal{H}^{1}(X). 
\end{eqnarray*}
Hence we finish the proof. 
\end{proof}

Let $\mathcal{L}_{X}$ be the moduli space of Legendrian deformations of $X$ of $C^{\infty}$-class 
and $\mathcal{L}_{X}^{s,\alpha}$ that of $C^{s, \alpha}$-class. 
The following is a generalization of Theorem \ref{s4.3t1}. 
\begin{thm}\label{s5.3t1}
The moduli space $\mathcal{M}_{X}$ is 
the intersection $\mathcal{N}_{X}\cap\mathcal{L}_{X}$ 
where $\mathcal{N}_{X}^{s,\alpha}$ and $\mathcal{L}_{X}^{s,\alpha}$ are smooth. 
\end{thm}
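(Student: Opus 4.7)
The plan is to reduce Theorem~\ref{s5.3t1} to the combination of Proposition~\ref{s5.3p4} (smoothness of $\mathcal{N}_X^{s,\alpha}$) and Proposition~\ref{s2.2.4p1} (smoothness of the Legendrian moduli $\mathcal{L}_X^{s,\alpha}$), by mimicking the argument already used for Theorem~\ref{s4.3t1}. The crucial point is that the $\kappa=n+1$ hypothesis played no role in that earlier argument: only the set-theoretic identity between three moduli spaces and the availability of the two smoothness statements were used.

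First I would establish the set-theoretic identity $\mathcal{M}_X=\mathcal{N}_X\cap\mathcal{L}_X$. By Proposition~\ref{s5.3p2}, $\mathcal{M}_X=\mathcal{M}_X(\psi^{\rm Im},\eta)$. Since $\omega^{T}=\frac{1}{2}d\eta$, the condition $f_t^*\eta=0$ automatically implies $f_t^*\omega^T=0$, so $\mathcal{M}_X(\psi^{\rm Im},\eta)=\mathcal{M}_X(\psi^{\rm Im},\eta,\omega^T)$. Splitting the defining system into the $(\psi^{\rm Im},\omega^T)$-part and the $\eta$-part, this equals $\mathcal{M}_X(\psi^{\rm Im},\omega^T)\cap\mathcal{M}_X(\eta)=\mathcal{N}_X\cap\mathcal{L}_X$, as needed.

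Next I would invoke the smoothness inputs. Proposition~\ref{s2.2.4p1} gives smoothness of $\mathcal{L}_X^{s,\alpha}$ at every Legendrian submanifold, since that statement is purely contact-geometric and does not use the almost transverse Calabi-Yau structure at all. Proposition~\ref{s5.3p4} gives smoothness of $\mathcal{N}_X^{s,\alpha}$ at $0_X$; the same argument applied at any special Legendrian submanifold $X'\in\mathcal{N}_X\cap\mathcal{L}_X$ (which is in particular a $(\psi^{\rm Im},\omega^T)$-zero locus, so the hypotheses of Proposition~\ref{s5.3p4} apply with $X$ replaced by $X'$) yields smoothness of $\mathcal{N}_X^{s,\alpha}$ at every point of $\mathcal{N}_X\cap\mathcal{L}_X$. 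Combining these three facts closes the proof.

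I do not expect any real obstacle: the only subtlety is to verify that Proposition~\ref{s5.3p4} indeed goes through for arbitrary $\kappa\in\mathbb{R}$, so that $P_2(f,\beta)=((\Delta_0+\kappa^2)f,\Delta_2\beta)$ remains elliptic and the image condition ${\rm Im}(G)\subset {\rm Im}(D_1)$ still follows from the Hodge decomposition; these are already built into the statement of Proposition~\ref{s5.3p4}. Hence the proof is a direct transcription of the argument for Theorem~\ref{s4.3t1}, with the Sasaki-Einstein input replaced by the more general Proposition~\ref{s5.1p1}.
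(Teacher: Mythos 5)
Your proposal is correct and follows essentially the same route as the paper: the set-theoretic identity $\mathcal{M}_{X}=\mathcal{M}_{X}(\psi^{\rm Im},\eta,\omega^{T})=\mathcal{M}_{X}(\psi^{\rm Im},\omega^{T})\cap\mathcal{M}_{X}(\eta)$, smoothness of $\mathcal{L}_{X}^{s,\alpha}$ from Proposition~\ref{s2.2.4p1}, and smoothness of $\mathcal{N}_{X}^{s,\alpha}$ at each point of $\mathcal{N}_{X}\cap\mathcal{L}_{X}$ from Proposition~\ref{s5.3p4}. Your added remark that the $\kappa=n+1$ hypothesis is never used and that Proposition~\ref{s5.3p4} applies at any special Legendrian submanifold is exactly the (implicit) content of the paper's one-line argument.
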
 
\begin{proof}
The moduli space $\mathcal{M}_{X}$ is the intersection $\mathcal{N}_{X}\cap \mathcal{L}_{X}$
since $\mathcal{M}_{X}(\psi^{\rm Im}, \eta, \omega^{T})=\mathcal{M}_{X}(\psi^{\rm Im}, \omega^{T})\cap\mathcal{M}_{X}(\eta)$. 
The space $\mathcal{L}_{X}^{s,\alpha}$ is smooth by Proposition~\ref{s2.2.4p1}. 
It follows from Proposition~\ref{s5.3p4} that $\mathcal{N}_{X}^{s,\alpha}$ is smooth at $\mathcal{N}_{X}\cap \mathcal{L}_{X}$. 
Hence it completes the proof. 
\end{proof}

Let $\mathcal{N}_{X}^{T}$ denote the moduli space $\mathcal{M}_{X}^{T}(\psi^{\rm Im}, \omega^{T})$ of 
transverse $(\psi^{\rm Im}, \omega^{T})$-deformations of $X$ of $C^{\infty}$-class. 
Then we obtain 
\begin{thm}\label{s5.3t2}
The moduli space $\mathcal{N}_{X}^{T}$ is smooth at $0_X$ and 
the tangent space $T_{0_X}\mathcal{N}_{X}^{T}$ is isomorphic to $H^{1}(X)$. 
\end{thm}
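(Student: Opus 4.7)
The plan is to mimic the proof of Theorem~\ref{s4.1.4t1} almost verbatim, observing that the transversality condition $i_v\eta=0$ kills precisely those terms in the linearization which would otherwise depend on $\kappa$. Concretely, I would take a small neighbourhood $\mathcal{U}^T$ of the zero section in $NX^T$ and the corresponding set $U^T=\{v\in C^{\infty}(NX^T)\mid v_x\in\mathcal{U}^T\}$, and define
\begin{equation*}
G^T:U^T\longrightarrow \wedge^{0}\oplus\wedge^{2},\qquad G^T(v)=(*\exp_v^*\psi^{\rm Im},\ \exp_v^*\omega^T).
\end{equation*}
Then $(G^T)^{-1}(0)$ is identified with the moduli space $\mathcal{N}_X^T$ near $0_X$.

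Next, I would compute the linearization $d_0G^T$ via Cartan's formula. Using $d\psi=\kappa\sqrt{-1}\,\eta\wedge\psi$, one has
\begin{equation*}
\iota^*L_{\tilde v}\psi^{\rm Im}=d\iota^*(i_{\tilde v}\psi^{\rm Im})+\kappa\iota^*(i_{\tilde v}\eta\,\psi^{\rm Re})-\kappa\iota^*(\eta\wedge i_{\tilde v}\psi^{\rm Re}).
\end{equation*}
The crucial point is that the $\kappa$-dependent terms vanish identically on $U^T$: the first because $i_{\tilde v}\eta=i_v\eta=0$ for $v\in NX^T$, and the second because it has a factor of $\eta$ which pulls back to zero on the Legendrian $X$. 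Together with the standard identities $\iota^*(i_{\tilde v}\psi^{\rm Im})=-*\iota^*(i_{\tilde v}\omega^T)$ and $\iota^*L_{\tilde v}\omega^T=d\iota^*(i_{\tilde v}\omega^T)$, one obtains
\begin{equation*}
d_0G^T(v)=\bigl(d^*\iota^*(i_{\tilde v}\omega^T),\ d\iota^*(i_{\tilde v}\omega^T)\bigr).
\end{equation*}
Under the identification $NX^T\simeq\wedge^1$ given by $v\mapsto i_v\omega^T$, this becomes $D_1(\alpha)=(d^*\alpha,d\alpha)$, whose kernel is $\mathcal{H}^1(X)\cong H^1(X)$ by Hodge theory.

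Then I would install the two-step complex
\begin{equation*}
0\to \wedge^{1}\stackrel{D_{1}}{\longrightarrow}\wedge^{0}\oplus\wedge^{2}\stackrel{D_{2}}{\longrightarrow}\wedge^{3}\to 0,\qquad D_2(f,\beta)=d\beta,
\end{equation*}
and verify $D_2\circ D_1=0$. A direct computation gives $P_1=\Delta_1$ and $P_2=(\Delta_0,\Delta_2)$, both elliptic. To apply Proposition~\ref{s2.1p1} I must check that $\mathrm{Im}(G^T)\subset\mathrm{Im}(D_1)$. Since $(f_t)^*\psi^{\rm Im}$ and $(f_t)^*\omega^T$ are exact on the cohomological level (the forms pull back to zero at $t=0$ and the pulled-back cohomology classes are constant along the isotopy), we have $\mathrm{Im}(G^T)\subset d^*\wedge^1\oplus d\wedge^1$, hence orthogonal both to $\ker P_2=\mathcal{H}^0(X)\oplus\mathcal{H}^2(X)$ and to $\mathrm{Im}(D_2^*)=\{0\}\oplus d^*\wedge^3$; the Hodge decomposition $\wedge^0\oplus\wedge^2=\ker P_2\oplus\mathrm{Im}(D_1)\oplus\mathrm{Im}(D_2^*)$ then yields the required inclusion. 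Proposition~\ref{s2.1p1} delivers smoothness at $0_X$ with tangent space $\ker(D_1)\cong H^1(X)$.

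The main conceptual point, and the only place where the generalization from Theorem~\ref{s4.1.4t1} (Sasaki-Einstein, $\kappa=n+1$) to arbitrary $\kappa$ might have looked delicate, is the cancellation of the $\kappa$-terms in $d_0G^T$; this is immediate once one remembers that $v\in NX^T$ means $i_v\eta=0$, so no obstacle actually arises. Everything else is a faithful rerun of the Sasaki-Einstein argument.
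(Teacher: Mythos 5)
Your proposal is correct and follows essentially the same route as the paper's own proof: the same map $G^T$, the same cancellation of the $\kappa$-terms via $i_v\eta=0$ and $\iota^*\eta=0$, the same identification $NX^T\simeq\wedge^1$ yielding $D_1(\alpha)=(d^*\alpha,d\alpha)$, the same complex with $D_2(f,\beta)=d\beta$, and the same Hodge-theoretic verification of ${\rm Im}(G^T)\subset{\rm Im}(D_1)$ before invoking Proposition~\ref{s2.1p1}. No gaps.
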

\begin{proof}
We define $NX^{T}$ by the vector bundle $NX^T=NX\cap \ker \eta|_{X}$ on $X$. 
Let $\mathcal{U}^{T}$ be a sufficiently small neighbourhood of the zero section in $NX^{T}$ and 
$U^T$ the set $\{ v\in \Gamma(NX^{T}) \mid v_{x}\in \mathcal{U}^T, x\in X \}$. 
We define the map $G^T:U^{T}\to \wedge^{0}\oplus \wedge^{2}$ by 
\begin{equation*}
G^{T}(v)=(*\exp_{v}^{*}\psi^{\rm Im}, \exp_{v}^{*}\omega^{T}) 
\end{equation*}
for $v\in U^T$. Then we have 
\begin{eqnarray*}
d_{0}G^{T}(v)&=&(*(d\iota^{*}(i_{\tilde{v}}\psi^{\rm Im})+
\kappa\iota^{*}(i_{\tilde{v}}\eta\psi^{\rm Re})-\kappa\iota^{*}(\eta\wedge i_{\tilde{v}}\psi^{\rm Re})),\  
d\iota^{*}(i_{\tilde{v}}\omega^{T})) \\
&=&(d^{*}\iota^{*}(i_{\tilde{v}}\omega^{T}),\ d\iota^{*}(i_{\tilde{v}}\omega^{T}))
\end{eqnarray*}
for $v\in U^{T}$ where $\tilde{v}$ is an extension of $v$ to $M$.  
In the last equation, we use that 
$\iota^{*}(i_{\tilde{v}}\psi^{\rm Im})=-*\iota^{*}(i_{\tilde{v}}\omega^{T})$ and 
$\iota^{*}(i_{\tilde{v}}\eta)=i_{v}\eta=0$. 
Under the identification 
\[
\Gamma(NX^{T})\simeq \wedge^{1}
\]
given by $v\mapsto i_{v}\omega^{T}$, we can consider $d_{0}G^{T}$ as the map 
$D_1:\wedge^{1}\to\wedge^{0}\oplus \wedge^{2}$ given by  
\begin{equation*}
D_1(\alpha)=(d^{*}\alpha, d\alpha) 
\end{equation*}
for $\alpha\in \wedge^{1}$. Then it turns out that 
\begin{equation*}
{\rm Ker}(D_{1})
=\{\alpha\in \wedge^{1} \mid d^{*}\alpha=d\alpha=0\} 
=\mathcal{H}^{1}(X).
\end{equation*}
Now we provide a complex as follows 
\begin{equation*}
0\to \wedge^{1}\stackrel{D_{1}}{\longrightarrow}
\wedge^{0}\oplus \wedge^{2}\stackrel{D_{2}}{\longrightarrow} 
\wedge^{3}\to 0 
\end{equation*}
where the operator $D_{2}$ is given by 
\begin{equation*}
D_{2}(f,\beta)=d\beta
\end{equation*}
for $(f,\beta)\in\wedge^0\oplus \wedge^2$. It is easy to see that  
\begin{eqnarray*}
P_{1}(\alpha)&=&\Delta_{1}\alpha,\\
P_{2}(f,\beta)&=&(\Delta_{0}f, \Delta_{2}\beta).
\end{eqnarray*}
Hence $P_{1}$ and $P_{2}$ are elliptic. 
It follows from ${\rm Im}(G^{T})\subset d^{*}\!\wedge^{1}\oplus d\wedge^{1}$  
that ${\rm Im}(G^{T})$ is perpendicular to 
$\Ker{P_{2}}(=\mathcal{H}^{0}(X)\oplus \mathcal{H}^{2}(X))$ and ${\rm Im}(D^{*}_{2})$. 
Hence we obtain ${\rm Im}(G^{T})\subset {\rm Im}(D_{1})$ by the Hodge decomposition 
$\wedge^{0}\oplus \wedge^{2}=\Ker{P_{2}}\oplus {\rm Im}(D_{1})\oplus {\rm Im}(D_{2}^{*})$. 
Proposition~\ref{s2.1p1} implies that $\mathcal{N}_{X}^{T}$ is smooth at $0_X$ 
with the tangent space ${\rm Ker}(D_{1})=\mathcal{H}^{1}(X)$, 
and hence it completes the proof.  
\end{proof}

\subsection{Minimal Legendrian deformations of special Legendrian submanifolds}
We assume that there exists an almost transverse Calabi-Yau structure $(\psi, \frac{1}{2}d\eta)$ on $M$ 
with $d\psi=\sqrt{-1}\,\kappa\eta\wedge\psi$ for a real number $\kappa$. 
Let $(\Omega, \omega)$ be the corresponding almost Calabi-Yau structure on $C(M)$. 
For a real constant $\theta$, the phase change $\Omega\to e^{\sqrt{-1}\,\theta}\Omega$ induces 
the almost Calabi-Yau structure $(e^{\sqrt{-1}\,\theta}\Omega, \omega)$ on $C(M)$. 
An $(n+1)$-dimensional submanifold $\widetilde{X}$ in $C(M)$ is called 
{\it a $\theta$-special Lagrangian submanifold} for a phase $\theta$ 
if $\tilde{\iota}^{*}(e^{\sqrt{-1}\,\theta}\Omega)^{\rm Im}=\tilde{\iota}^{*}\omega=0$. 
Hence any $\theta$-special Lagrangian submanifold is a special Lagrangian submanifold 
with respect to the almost Calabi-Yau structure $(e^{\sqrt{-1}\,\theta}\Omega, \omega)$. 
\begin{defi}\label{s5.4d1}
{\rm 
A submanifold $X$ in $M$ is {\it $\theta$-special Legendrian} 
if the cone $C(X)$ is a $\theta$-special Lagrangian submanifold in $C(M)$ for a phase $\theta$. 
}
\end{defi}
Any $\theta$-special Legendrian submanifold is a special Legendrian submanifold 
in the Sasaki manifold $(M, g)$ with the almost transverse Calabi-Yau structure $(e^{\sqrt{-1}\,\theta}\psi, \frac{1}{2}d\eta)$. 
Hence any $\theta$-special Legendrian submanifold is minimal. Moreover the converse is true in the case $\kappa>0$. 
In order to see it, we consider the relation between a mean curvature vector field and a phase of $(\psi, \frac{1}{2}d\eta)$. 
From now on, we assume that $\kappa>0$. 
Let $X$ be a connected oriented Legendrian submanifold in $M$ and $H$ the mean curvature vector field of $X$ with respect to $g$. 
Then there exists a $\mathbb{R}/\mathbb{Z}$-valued function $\theta$ on $X$ such that $*\iota^{*}\psi=e^{-\sqrt{-1}\,\theta}$ 
where $*$ is the Hodge operator with respect to the metric $\iota^*g$ on $X$. 
\begin{lem}\label{s5.4l1}
Let $\omega^{T}$ be a 2-form $\frac{1}{2}d\eta$. 
Then $d\theta=\iota^{*}(i_{H}\omega^{T})$. 
\end{lem}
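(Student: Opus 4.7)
The plan is to reduce the identity to a computation on the metric cone $(C(M),\overline{g})$ with its almost Calabi-Yau structure $(\Omega,\omega)$ from Proposition~\ref{s5.1p1}, following the strategy of Proposition~\ref{s4.1.1p1} adapted to the almost Calabi-Yau setting. I would first extend $\theta$ trivially (independent of $r$) to $C(X)=\mathbb{R}_{>0}\times X$ and extend $H$ horizontally. Using $\Omega=(\frac{dr}{r}+\sqrt{-1}\,\eta)\wedge r^{\kappa}\psi$ together with $\tilde{\iota}^{*}\eta=0$ on $C(X)$ (a consequence of $X$ being Legendrian plus $\eta(\partial/\partial r)=0$, since $\xi\perp r\partial/\partial r$ with respect to $\overline{g}$), I compute
\begin{equation*}
\tilde{\iota}^{*}\Omega \;=\; r^{\kappa-1}\,dr\wedge\tilde{\iota}^{*}\psi \;=\; r^{\kappa-1-n}e^{-\sqrt{-1}\,\theta}\,{\rm vol}_{C(X)},
\end{equation*}
where ${\rm vol}_{C(X)}=r^{n}\,dr\wedge\pi_{X}^{*}{\rm vol}_{X}$ is the volume form of the induced metric $\tilde{\iota}^{*}\overline{g}=dr^{2}+r^{2}\iota^{*}g$; equivalently, $\tilde{*}\tilde{\iota}^{*}\Omega = r^{\kappa-1-n}e^{-\sqrt{-1}\,\theta}$.

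Next, I would invoke a Lagrangian angle formula for the Lagrangian $C(X)\subset(C(M),\overline{g})$. In the Calabi-Yau case $\kappa=n+1$ this is Lemma~2.1 of \cite{TY}, as used in the proof of Proposition~\ref{s4.1.1p1}, and it gives $d\theta=\tilde{\iota}^{*}(i_{\widetilde{H}}\omega)$ with $\widetilde{H}$ the mean curvature of $C(X)$ in $\overline{g}$. For a general almost Calabi-Yau, the analogous identity carries a correction of the form $\tfrac{1}{2}\tilde{\iota}^{*}(d^{c}\log\rho)$, where $\rho=|\Omega|^{2}_{\overline{g}}$ is the conformal factor. Here the normalization $\Omega\wedge\overline{\Omega}=r^{2(\kappa-n-1)}c_{n+1}\omega^{n+1}$ forces $\log\rho=2(\kappa-n-1)\log r+{\rm const}$, so $d^{c}\log\rho = 2(\kappa-n-1)\,d^{c}\log r = 2(\kappa-n-1)\eta$; this pulls back to zero on $C(X)$, the correction vanishes, and $d\theta=\tilde{\iota}^{*}(i_{\widetilde{H}}\omega)$ holds on $C(X)$.

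Finally, I would translate this identity back down to $M$. Using $\widetilde{H}_{(r,x)}=r^{-2}H_{x}$ (as in the proof of Proposition~\ref{s5.3p1}) together with $\omega=r\,dr\wedge\eta+\frac{r^{2}}{2}d\eta$, a direct calculation gives $i_{\widetilde{H}}\omega = r^{-2}(r\,dr(H)\,\eta - r\,\eta(H)\,dr) + \tfrac{1}{2}i_{H}d\eta$. The first two terms vanish: $dr(H)=0$ since $H$ is horizontal, and $\eta(H)=g(\xi,H)=0$ because for $u,v\in TX$ with $X$ Legendrian the Sasaki identity $\nabla^{M}_{u}\xi=\Psi u$ gives $g({\rm II}(u,v),\xi)=-g(v,\Psi u)$, which vanishes by the Legendrian condition $\omega^{T}|_{TX\times TX}=0$. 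Hence $i_{\widetilde{H}}\omega = i_{H}\omega^{T}$, and restricting $d\theta=\tilde{\iota}^{*}(i_{\widetilde{H}}\omega)$ from $C(X)$ to the slice $\{1\}\times X\cong X$ yields $d\theta=\iota^{*}(i_{H}\omega^{T})$. The main technical obstacle is the second step—the almost Calabi-Yau analog of the Thomas-Yau Lagrangian angle formula—which can be handled either by citing a general statement from the almost Calabi-Yau literature (e.g.\ \cite{J1}) or by verifying directly that in our setting the would-be correction $\tilde{\iota}^{*}(d^{c}\log\rho)$ is proportional to $\tilde{\iota}^{*}\eta$ and therefore vanishes on cones over Legendrian submanifolds.
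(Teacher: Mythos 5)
Your argument is essentially correct, but it is a genuinely different route from the one the paper takes. The paper's proof of this lemma never touches the almost Calabi--Yau Lagrangian angle formula: it applies the D-homothety transformation with $a=\frac{\kappa}{n+1}$ (using $\kappa>0$, which is in force in this subsection) to pass to the rescaled structure $(\psi_a,\frac{1}{2}d\eta_a)$ with $d\psi_a=(n+1)\sqrt{-1}\,\eta_a\wedge\psi_a$ and metric $g_a=ag$; this is the genuine Sasaki--Einstein/Calabi--Yau situation, where the identity $d\theta=\iota^*(i_{H_a}\omega_a^T)$ is already established in the proof of Proposition~\ref{s4.1.1p1} via Lemma 2.1 of \cite{TY}. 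The lemma then follows from the elementary scaling relations $H_a=\frac{1}{a}H$ and $\omega_a^T=a\,\omega^T$, whose factors of $a$ cancel. Your route instead stays on the cone with the almost Calabi--Yau structure of Proposition~\ref{s5.1p1} and invokes the generalized Thomas--Yau formula with correction term $\frac{1}{2}\tilde{\iota}^*(d^c\log\rho)$; your observation that $d^c\log\rho=2(\kappa-n-1)\,d^c\log r=2(\kappa-n-1)\eta$ pulls back to zero on a cone over a Legendrian is correct, and your descent computation $i_{\widetilde{H}}\omega=i_H\omega^T$ matches the one in Proposition~\ref{s4.1.1p1}. What each approach buys: yours is conceptually transparent and does not require $\kappa>0$, but it hinges on the almost Calabi--Yau angle formula, which you would have to either prove or locate precisely in the literature (it is not Lemma 2.1 of \cite{TY} as stated, and \cite{J1} would need a specific reference); the paper's D-homothety trick avoids this entirely by reducing to a case already proved in the paper, at the cost of the restriction $\kappa>0$ and a small bookkeeping of how $\theta$, $H$ and $\omega^T$ scale. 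One point you should make explicit if you keep your route is that the phase $\theta$ extracted from $\tilde{*}\tilde{\iota}^*\Omega=r^{\kappa-1-n}e^{-\sqrt{-1}\,\theta}$ is only the argument of a quantity of non-unit modulus, which is exactly why the correction term enters; as written, this is where the burden of proof sits.
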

\begin{proof}
We take $a=\frac{\kappa}{n+1}$ and define $\psi_{a}$ and $\eta_{a}$ as $a^{\frac{n}{2}}\psi$ and $a\eta$, respectively. 
Then $(\psi_{a}, \frac{1}{2}d\eta_{a})$ is an almost transverse Calabi-Yau structure 
with respect to the metric $g_{a}=ag$ such that $d\psi_{a}=(n+1)\sqrt{-1}\,\eta_{a}\wedge \psi_{a}$ 
and $*_{g_a}\iota^{*}\psi_a=e^{-\sqrt{-1}\,\theta}$ where $*_{g_a}$ is the Hodge operator with respect to $\iota^*g_a$ on $X$. 
Let $H_a$ be the mean curvature vector field of $X$ with respect to $g_{a}$. 
Then it follows from the equation (\ref{s4.1e2}) in Proposition~\ref{s3.5p2} that $d\theta=\iota^{*}(i_{H_a}\omega_a^{T})$ where $\omega_a^{T}=\frac{1}{2}d\eta_a$. 
Hence the equation $H_a=\frac{1}{a}H$ implies that $d\theta=\iota^{*}(i_{\frac{1}{a}H}a\omega^{T})=\iota^{*}(i_{H}\omega^{T})$. 
\end{proof}

\begin{prop}\label{s5.4p1}
Let $X$ be a connected oriented $n$-dimensional submanifold in $M$. 
Then the following conditions are equivalent. 
\begin{enumerate}
\item[(i)] $X$ is minimal Legendrian. 

\item[(ii)] $X$ is $\theta$-special Legendrian for a phase $\theta$. 

\item[(iii)] $d*\iota^{*}\psi^{\rm Im}=\iota^{*}\eta=0$. 
\end{enumerate}
\end{prop}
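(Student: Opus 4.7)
The plan is to prove the equivalences cyclically as $(i)\Rightarrow(ii)\Rightarrow(iii)\Rightarrow(i)$. The unifying tool is the phase function $\theta\colon X\to\mathbb{R}/2\pi\mathbb{Z}$ associated to a Legendrian $X$ by $*\iota^{*}\psi=e^{-\sqrt{-1}\,\theta}$, together with the identity $d\theta=\iota^{*}(i_{H}\omega^{T})$ of Lemma~\ref{s5.4l1}. Since $\iota^{*}\psi=e^{-\sqrt{-1}\,\theta}\,{\rm vol}(X)$ on any Legendrian, we have $*\iota^{*}\psi^{\rm Im}=-\sin\theta$, so condition (iii) is equivalent to the Legendrian condition $\iota^{*}\eta=0$ together with the constancy of $\sin\theta$.

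For $(i)\Rightarrow(ii)$ I would simply reuse the argument of Proposition~\ref{s4.1.1p1} with $n+1$ replaced by $\kappa$: if $H=0$ then Lemma~\ref{s5.4l1} gives $d\theta=0$, so by connectedness $\theta$ equals some constant $\theta_{0}$, making $\iota^{*}(e^{\sqrt{-1}\,\theta_{0}}\psi)={\rm vol}(X)$ real and exhibiting $X$ as $\theta_{0}$-special Legendrian. For $(ii)\Rightarrow(iii)$, assume $X$ is $\theta_{0}$-special Legendrian. Then $\iota^{*}(e^{\sqrt{-1}\,\theta_{0}}\psi)=e^{\sqrt{-1}\,(\theta_{0}-\theta)}\,{\rm vol}(X)$ is real, so $e^{\sqrt{-1}\,(\theta_{0}-\theta)}\in\{\pm 1\}$ pointwise; continuity and connectedness of $X$ force $\theta\equiv\theta_{0}$ or $\theta\equiv\theta_{0}+\pi$, and in either case $\sin\theta$ is constant, yielding $d*\iota^{*}\psi^{\rm Im}=0$.

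The substantive direction is $(iii)\Rightarrow(i)$. From $0=d(*\iota^{*}\psi^{\rm Im})=-\cos\theta\,d\theta$ I first need to pass from $\sin\theta$ being constant to $\theta$ being constant. If $\sin\theta\equiv c$ then $e^{\sqrt{-1}\,\theta}$ takes values in the preimage $\sin^{-1}(c)\subset S^{1}$, which contains at most two points; continuity of $e^{\sqrt{-1}\,\theta}$ together with connectedness of $X$ then forces it to be globally constant, so $d\theta=0$. Lemma~\ref{s5.4l1} now delivers $\iota^{*}(i_{H}\omega^{T})=0$.

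To upgrade $\iota^{*}(i_{H}\omega^{T})=0$ to $H=0$, I would invoke the standard Sasaki identity $\nabla_{v}\xi=-\Psi v$: for an orthonormal frame $e_{1},\dots,e_{n}$ of $TX$, writing $H=\sum_{i}(\nabla_{e_{i}}e_{i})^{\perp}$ and using $\iota^{*}\eta=0$, one obtains $\eta(H)=\sum_{i}\eta(\nabla_{e_{i}}e_{i})=-\sum_{i}g(\Psi e_{i},e_{i})=0$. Hence $H\in NX^{T}=NX\cap\Ker(\eta)|_{X}$, and under the isomorphism $NX\simeq\wedge^{0}\oplus\wedge^{1}$ given by $v\mapsto(i_{v}\eta,i_{v}\omega^{T})$ that is used repeatedly in Sections 4 and 5, such an $H$ corresponds to $(0,\iota^{*}(i_{H}\omega^{T}))=(0,0)$, so $H=0$. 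The main obstacle is thus concentrated in $(iii)\Rightarrow(i)$, via the two sub-steps: (a) the topological deduction that constancy of $\sin\theta$ forces constancy of $\theta$, and (b) the use of the Sasaki connection to recover the full mean curvature vector from the ``transverse'' datum $\iota^{*}(i_{H}\omega^{T})$ through the identity $\eta(H)=0$.
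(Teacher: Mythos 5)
Your proof is correct, but it closes the cycle of implications differently from the paper, and the difference is worth noting. The paper proves $(ii)\Rightarrow(i)\Rightarrow(iii)\Rightarrow(ii)$: the implication $(ii)\Rightarrow(i)$ is outsourced to Proposition~\ref{s5.3p1} (a special Lagrangian cone is calibrated for the conformally rescaled metric $r^{\kappa'}\overline{g}$, hence minimal, hence the link is minimal), and the delicate leg is $(iii)\Rightarrow(ii)$, which only requires the observation that constancy of $\mathrm{Im}(e^{-\sqrt{-1}\,\theta})$ forces $e^{-\sqrt{-1}\,\theta}$ itself to be constant on a connected $X$ --- exactly the two-point-preimage argument you spell out, which the paper states more tersely. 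You instead prove $(i)\Rightarrow(ii)\Rightarrow(iii)\Rightarrow(i)$, and your closing leg $(iii)\Rightarrow(i)$ demands strictly more: after deducing $d\theta=0$ and hence $\iota^{*}(i_{H}\omega^{T})=0$ from Lemma~\ref{s5.4l1}, you must still recover $H=0$, which you do by establishing $\eta(H)=0$ from the Sasaki identity $\nabla\xi=\mp\Psi$ (only the skew-adjointness $g(\Psi e_i,e_i)=0$ is actually needed, so the sign convention is immaterial) and then invoking the injectivity of $v\mapsto(i_{v}\eta,\iota^{*}(i_{v}\omega^{T}))$ on $NX$. This is all correct --- $\eta(H)=0$ is the link-level analogue of the paper's remark in Proposition~\ref{s4.1.1p1} that $\widetilde{H}$ has no $\xi$-component --- but it is avoidable: once $\theta$ is constant you already know $X$ is $\theta$-special Legendrian, and minimality then follows from the calibration argument rather than from a hands-on computation of $H$. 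Your route buys a self-contained intrinsic proof on $M$ that never passes back through the cone; the paper's route is shorter because it lets Proposition~\ref{s5.3p1} absorb the entire burden of producing $H=0$.
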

\begin{proof}
By Proposition~\ref{s5.3p1}, (ii) implies (i). 
We may assume that $X$ is Legendrian and there exists a $\mathbb{R}/\mathbb{Z}$-valued function $\theta$ on $X$ such that $*\iota^{*}\psi=e^{-\sqrt{-1}\,\theta}$. 
If the condition (i) holds, then it follows from Lemma~\ref{s5.4l1} that $\theta$ is constant and $d*\iota^{*}\psi^{\rm Im}=d(*\iota^{*}\psi)^{\rm Im}=0$. 
Hence (iii) holds. 
If we assume the condition (iii), then the imaginary part of $e^{-\sqrt{-1}\,\theta}=*\iota^{*}\psi$ is constant, and $e^{-\sqrt{-1}\,\theta}$ is also constant. 
Hence $*\iota^{*}(e^{\sqrt{-1}\,\theta}\psi)=e^{\sqrt{-1}\,\theta}*\iota^{*}\psi=1$ and $\iota^{*}(e^{\sqrt{-1}\,\theta}\psi)^{\rm Im}=0$. 
It yields that $X$ is a special Legendrian submanifold in the Sasaki manifold with the almost transverse Calabi-Yau structure $(e^{\sqrt{-1}\,\theta}\psi, \frac{1}{2}d\eta)$. 
Therefore the condition (ii) holds. Hence we finish the proof. 
\end{proof}

Let $X$ be a compact connected special Legendrian submanifold in $M$. 
We consider minimal Legendrian deformations of $X$. 
\begin{prop}\label{s5.4p2}
Then the infinitesimal deformation space of $X$ as a minimal submanifold is  
isomorphic to ${\rm Ker}(\Delta_{0}-2\kappa)\oplus \mathbb{R}$. 
\end{prop}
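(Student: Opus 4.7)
The plan is to use the characterization from Proposition~\ref{s5.4p1}(ii): any minimal Legendrian deformation $\{f_{t}\}$ of $X$ is a family of $\theta(t)$-special Legendrian embeddings for some smooth function $\theta:[0,1]\to\mathbb{R}$ with $\theta(0)=0$. The infinitesimal data is therefore a pair $(v,\dot{\theta})\in\Gamma(NX)\oplus\mathbb{R}$ with $v=\frac{d}{dt}f_{t}|_{t=0}$ and $\dot{\theta}=\frac{d}{dt}\theta(t)|_{t=0}$, required to satisfy the first-order versions of $f_{t}^{*}\eta=0$ and $f_{t}^{*}(e^{\sqrt{-1}\theta(t)}\psi)^{\rm Im}=0$.

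Under the identification $NX\simeq\wedge^{0}\oplus\wedge^{1}$, $v\mapsto(f,\alpha)=(i_{v}\eta,\tfrac{1}{2}i_{v}d\eta)$, the linearization of $f_{t}^{*}\eta=0$ gives $df+2\alpha=0$, so $\alpha=-\tfrac{1}{2}df$. For the phase-twisted condition, the derivative of $f_{t}^{*}(e^{\sqrt{-1}\theta(t)}\psi)^{\rm Im}$ at $t=0$ equals $\iota^{*}L_{\tilde{v}}\psi^{\rm Im}+\dot{\theta}\,\iota^{*}\psi^{\rm Re}$. Applying the Hodge star $*$ and using $*\iota^{*}\psi^{\rm Re}=1$ (which holds since $X$ is special Legendrian with phase $0$) together with the identity $*\iota^{*}L_{\tilde{v}}\psi^{\rm Im}=d^{*}\alpha+\kappa f$ from the proof of Theorem~\ref{s5.3p3}, this becomes $d^{*}\alpha+\kappa f+\dot{\theta}=0$.

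Substituting $\alpha=-\tfrac{1}{2}df$ yields the single equation $(\Delta_{0}-2\kappa)f=2\dot{\theta}$. Since $\dot{\theta}$ is a real constant, this forces $(\Delta_{0}-2\kappa)f$ to be constant on $X$. I would then identify the space $\{f\in C^{\infty}(X):(\Delta_{0}-2\kappa)f\in\mathbb{R}\}$ with $\Ker(\Delta_{0}-2\kappa)\oplus\mathbb{R}$ as follows: given $f$ with $(\Delta_{0}-2\kappa)f=c\in\mathbb{R}$, the constant function $f_{0}=-c/(2\kappa)$ satisfies $(\Delta_{0}-2\kappa)f_{0}=c$, so $f-f_{0}\in\Ker(\Delta_{0}-2\kappa)$. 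This decomposition is unique because $\kappa>0$ implies $(\Delta_{0}-2\kappa)\mathbf{1}=-2\kappa\neq 0$, so no nonzero constant lies in $\Ker(\Delta_{0}-2\kappa)$. Since $\dot{\theta}=\tfrac{1}{2}(\Delta_{0}-2\kappa)f$ is then determined by $f$, the pair $(f,\dot{\theta})$ is parameterized by the factors $(g,c_{0})\in\Ker(\Delta_{0}-2\kappa)\oplus\mathbb{R}$ via $f=g+c_{0}$, $\dot{\theta}=-\kappa c_{0}$, which is the desired isomorphism.

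The main obstacle is justifying that every minimal Legendrian deformation admits a well-defined real-valued phase $\theta(t)$ rather than merely an $\mathbb{R}/\mathbb{Z}$-valued one. Since $X$ is connected and minimality forces the phase on each $X_{t}$ to be constant, continuity in $t$ combined with the initial condition $\theta(0)=0$ selects the unique real lift. Beyond this technicality, the argument is a direct linearization that recycles the computations carried out in Theorem~\ref{s5.3p3}.
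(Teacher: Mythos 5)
Your proposal is correct and is essentially the paper's argument: the paper instead linearizes the equivalent condition (iii) of Proposition~\ref{s5.4p1}, namely $d*\iota^{*}\psi^{\rm Im}=\iota^{*}\eta=0$, via the map $F(v)=(d*\exp_{v}^{*}\psi^{\rm Im},\exp_{v}^{*}\eta)$ and obtains $\Ker(d_{0}F)=\{(f,-\tfrac{1}{2}df)\mid d(\Delta_{0}-2\kappa)f=0\}$, which on the connected manifold $X$ is exactly your condition that $(\Delta_{0}-2\kappa)f=2\dot{\theta}$ be constant. The splitting into $\Ker(\Delta_{0}-2\kappa)\oplus\mathbb{R}$ is justified identically in both arguments (no nonzero constant is a $2\kappa$-eigenfunction since $\kappa>0$); your phrasing via condition (ii) merely makes explicit that the extra $\mathbb{R}$-factor is the infinitesimal phase rotation $\dot{\theta}$.
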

\begin{proof}
We take the set $U$ as in $(\ref{s2.1e2})$ and define the map $F:U\to \wedge^{1}\oplus \wedge^{1}$ by 
\begin{equation*}\label{s5.4e1}
F(v)=(d*\exp_{v}^{*}\psi^{\rm Im},\exp_{v}^{*}\eta)
\end{equation*}
for $v\in U$. 
Then Proposition~\ref{s5.4p1} implies that $F^{-1}(0)$ is regarded as a set of minimal Legendrian submanifolds 
in $M$ which is close to $X$. 
It follows that 
\begin{eqnarray*}
d_{0}F(v)&=&(d*\iota^{*}L_{\tilde{v}}\psi^{\rm Im}, \iota^{*}L_{\tilde{v}}\eta)\\
&=&(d(*d\iota^{*}(i_{\tilde{v}}\psi^{\rm Im})+\kappa *\iota^{*}(i_{\tilde{v}}\eta\psi^{\rm Re})), 
d\iota^{*}(i_{\tilde{v}}\eta)+\iota^{*}(i_{\tilde{v}}d\eta))\\
&=&(d(d^{*}\iota^{*}(\frac{1}{2}i_{\tilde{v}}d\eta)+\kappa \iota^{*}(i_{\tilde{v}}\eta)), 
d\iota^{*}(i_{\tilde{v}}\eta)+\iota^{*}(i_{\tilde{v}}d\eta))
\end{eqnarray*}
for $v\in U$ where $\tilde{v}$ is an extension of $v$ to $M$. 
Under the identification $\Gamma(NX)\simeq \wedge^{0}\oplus\wedge^{1}$ given by $v\mapsto (i_{v}\eta, \frac{1}{2}i_{v}d\eta)$, 
we identify $d_{0}F$ with the map 
$d_{0}F:\wedge^{0}\oplus\wedge^{1}
\to\wedge^{1}\oplus \wedge^{1}$ defined by 
\begin{equation*}\label{s5.4e5}
d_{0}F(f,\alpha)=(d(d^{*}\alpha+\kappa f), df+2\alpha)
\end{equation*}
for $(f,\alpha)\in\wedge^{0}\oplus\wedge^{1}$. 
Then it turns out that 
\begin{eqnarray*}
{\rm Ker}(d_{0}F)&=&\{(f,\alpha)\in\wedge^{0}\oplus\wedge^{1} 
\mid d(d^{*}\alpha+\kappa f)=0, 2\alpha+df=0\} \\
&=&\{(f,-\frac{1}{2}df)\in\wedge^{0}\oplus\wedge^{1} \mid d(\Delta_{0}-2\kappa )f=0 \}.
\end{eqnarray*}
Since ${\rm Ker}(d(\Delta_{0}-2\kappa ))={\rm Ker}(\Delta_{0}-2\kappa )\oplus{\rm Ker}(d|_{\wedge^{0}})$ for $\kappa\neq 0$, 
the space ${\rm Ker}(d_{0}F)$ is isomorphic to the space ${\rm Ker}(\Delta_{0}-2\kappa)\oplus\mathbb{R}$. 
Hence it completes the proof. 
\end{proof}
Proposition~\ref{s5.4p2} was shown by Ohnita from the point of view of 
minimal Legendrian submanifolds in $\eta$-Sasaki-Einstein manifolds~\cite{O1}. 
In the same paper, he also provided some examples of rigid minimal Legendrian submanifolds 
in the standard $7$-sphere $S^7$ and the real Steifel manifold $V_2(\mathbb{R}^5)$ 
and a non-rigid minimal Legendrian submanifold in $S^7$ (he said that any deformation of $X$ was trivial instead that $X$ was rigid). 
We can consider another rigidity condition for special Legendrian submanifolds. 
We recall that a special Legendrian submanifold $X$ is \textit{rigid} if 
any special Legendrian deformation of $X$ is induced by the automorphism group ${\rm Aut}(\eta, \psi)$ of diffeomorphisms of $M$ preserving $\eta$ and $\psi$. 
Any element $f$ of ${\rm Aut}(\eta, \psi)$ induces the diffeomorphism $\widetilde{f}$ of $C(M)$ by $\widetilde{f}(r, x)=f(x)$ for $(r, x)\in \mathbb{R}_{>0}\times M=C(M)$. 
Then $\widetilde{f}$ is the element of the group ${\rm Aut}(\Omega, \omega, r)$ of diffeomorphisms of $C(M)$ preserving $\Omega$, $\omega$ and $r$. 
Conversely, ${\rm Aut}(\Omega, \omega, r)$ induces ${\rm Aut}(\eta, \psi)$ by the restriction to the hypersurface $M=\{r=1\}$. 
Under the correspondence, a special Legendrian submanifold $X$ is rigid if and only if 
any deformations of the special Lagrangian cone $C(X)$ is induced by the group ${\rm Aut}(\Omega, \omega, r)$. 
Hence there exist two rigidity conditions for special Legendrian submanifolds. We show that these conditions are equivalent : 
\begin{thm}\label{s5.4t1}
Let $X$ be a compact connected special Legendrian submanifold. 
Then $X$ is \textit{rigid} as a special Legendrian submanifold if and only if 
it is rigid as a minimal Legendrian submanifold. 
\end{thm}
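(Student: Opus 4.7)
\emph{Proof plan.} The plan is to reduce the equivalence to a phase-matching argument based on the Reeb flow $\lambda_{\xi,s}$, which by Proposition~\ref{s3.7p1} lies in ${\rm Aut}(\eta,g)={\rm Aut}(\eta,[\psi])$ but not in ${\rm Aut}(\eta,\psi)$: from $L_{\xi}\psi=\sqrt{-1}\kappa\psi$ one gets $\lambda_{\xi,s}^{*}\psi=e^{\sqrt{-1}\kappa s}\psi$. The Reeb flow therefore rotates the phase of special Legendrian data, sending a $\theta$-special Legendrian submanifold to a $(\theta-\kappa s)$-special Legendrian one. This is precisely the extra one-dimensional freedom distinguishing Proposition~\ref{s5.4p2} from Theorem~\ref{s5.3p3}, and the whole proof reduces to cancelling this phase via $\lambda_{\xi,s}$.

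For ``SL-rigid $\Rightarrow$ ML-rigid'', I would start with a minimal Legendrian deformation $\{X_{t}\}$ of $X$. By Proposition~\ref{s5.4p1} each $X_{t}$ is $\theta_{t}$-special Legendrian, and the identity $*\iota_{t}^{*}\psi=e^{-\sqrt{-1}\theta_{t}}$ shows that $\theta_{t}$ is a smooth real-valued function of $t$ with $\theta_{0}=0$. Setting $Y_{t}:=\lambda_{\xi,\theta_{t}/\kappa}(X_{t})$ and using $(\lambda_{\xi,s})_{*}\psi=e^{-\sqrt{-1}\kappa s}\psi$, one checks that $Y_{t}$ is special Legendrian with respect to the original $(\psi,\frac{1}{2}d\eta)$. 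The SL-rigidity hypothesis then yields $Y_{t}=\sigma_{t}(X)$ for some $\sigma_{t}\in{\rm Aut}(\eta,\psi)$, whence $X_{t}=\lambda_{\xi,-\theta_{t}/\kappa}\circ\sigma_{t}(X)$ is induced by an element of ${\rm Aut}(\eta,g)$.

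For the converse, I would take a special Legendrian deformation $\{X_{t}\}$, which is also minimal Legendrian by Proposition~\ref{s5.3p1}, and use ML-rigidity to write $X_{t}=\phi_{t}(X)$ for some $\phi_{t}\in{\rm Aut}(\eta,g)$, chosen continuously in $t$ with $\phi_{0}={\rm id}$ via the slice argument indicated after Proposition~\ref{s2.1p1}. Lemma~\ref{s3.7l1} gives $\phi_{t}^{*}\psi=e^{\sqrt{-1}\alpha_{t}}\psi$ for some $\alpha_{t}\in\mathbb{R}$. Writing $\tilde{\phi}_{t}:=\phi_{t}|_{X}\colon X\to X_{t}$, which is an orientation-preserving isometry, and using $\iota^{*}\psi={\rm vol}(X)$ (valid on the special Legendrian $X$) together with the analogous identity on $X_{t}$, the chain
\[
{\rm vol}(X)=\tilde{\phi}_{t}^{*}{\rm vol}(X_{t})=\tilde{\phi}_{t}^{*}\iota_{t}^{*}\psi=\iota^{*}\phi_{t}^{*}\psi=e^{\sqrt{-1}\alpha_{t}}{\rm vol}(X)
\]
forces $e^{\sqrt{-1}\alpha_{t}}=1$, and continuity with $\alpha_{0}=0$ gives $\alpha_{t}\equiv 0$, so $\phi_{t}\in{\rm Aut}(\eta,\psi)$ and $X$ is SL-rigid. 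The main technical point to be justified carefully is the continuous selection of $\phi_{t}$ from the identity in this second direction; once that is arranged, everything else is essentially formal once the phase-rotation role of $\lambda_{\xi,s}$ is recognised.
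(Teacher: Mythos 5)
Your argument is essentially correct, but it takes a genuinely different route from the paper. The paper's proof never touches finite deformations: it juxtaposes Theorem~\ref{s5.3p3} and Proposition~\ref{s5.4p2} (the infinitesimal special Legendrian deformation space is ${\rm Ker}(\Delta_{0}-2\kappa)$, the minimal Legendrian one is ${\rm Ker}(\Delta_{0}-2\kappa)\oplus\mathbb{R}$ with the extra line spanned by $\xi|_{X}$) with the Lie algebra splitting $\mathfrak{aut}(\eta,g)=\mathfrak{aut}(\eta,\psi)\oplus\langle\xi\rangle_{\mathbb{R}}$ of Proposition~\ref{s3.7p2}, and concludes that the two rigidity conditions differ by the same one-dimensional summand on the deformation side and on the automorphism side. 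You instead work with the deformations themselves: the Reeb flow realizes the phase rotation $\psi\mapsto e^{\sqrt{-1}\kappa s}\psi$, so it carries a $\theta_{t}$-special Legendrian family to a genuine special Legendrian one and back, and the volume-form chain upgrades an element of ${\rm Aut}(\eta,[\psi])$ sending $X$ to $X_{t}$ into an element of ${\rm Aut}(\eta,\psi)$. The paper's route is shorter and recycles computations already done, but it is really an infinitesimal statement; yours matches the literal definition of rigidity as a statement about actual deformations and makes the geometric mechanism (the Reeb flow rotating the phase at rate $\kappa$) explicit. Two points you should still pin down: (a) the rotated family $Y_{t}=\lambda_{\xi,\theta_{t}/\kappa}(X_{t})$ need not be a normal deformation, so it must be reparametrized --- harmless, since $\xi|_{X_{t}}\in\Gamma(NX_{t})$ for Legendrian $X_{t}$; and (b) the identity ${\rm vol}(X)=\tilde{\phi}_{t}^{*}{\rm vol}(X_{t})$ needs $\tilde{\phi}_{t}$ to preserve the calibrated orientations, which is exactly where the continuous selection of $\phi_{t}$ from $\phi_{0}={\rm id}$ (the issue you flag) is used; without it you only get $\phi_{t}^{*}\psi=\pm\psi$. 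Both arguments, yours and the paper's, require $\kappa\neq 0$.
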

\begin{proof} 
It follows from Proposition~\ref{s5.3p3} and Proposition~\ref{s5.4p2} that the infinitesimal deformation space of $X$ as a minimal Legendrian submanifold is 
the sum of that of $X$ as a special Legendrian submanifold and the $1$-dimensional vector space generated by the Reeb vector field $\xi$ on $X$. 
Since the group ${\rm Aut}(\eta, g)$ is the automorphism group ${\rm Aut}(\eta, \xi, \Psi, g)$ of the Sasaki manifold (cf. Proposition 8.1.1 \cite{BG}), 
Proposition~\ref{s3.7p2} implies that $\mathfrak{aut}(\eta, g)=\mathfrak{aut}(\eta, \psi)\oplus \langle\xi\rangle_{\mathbb{R}}$. 
Hence $X$ is rigid as a minimal Legendrian submanifold if and only if $X$ is also rigid as a special Legendrian submanifold. 
It completes the proof. 
\end{proof}

\vspace{0.5\baselineskip}
\noindent
\textbf{Acknowledgements}. 
The author would like to thank the referee for his useful comments. 
This work was partially supported by GCOE `Fostering top leaders in mathematics', 
Kyoto University and by Grant-in-Aid for Young Scientists (B) $\sharp$21740051 from JSPS. 

\begin{center}

\end{center}

\begin{flushright}
\begin{tabular}{l}
\textsc{Takayuki Moriyama}\\
Department of Mathematics\\
Mie University\\
Mie 514-8507, Japan\\
E-mail: takayuki@edu.mie-u.ac.jp
\end{tabular}
\end{flushright}


\begin{thebibliography}{11}

\bibitem{BG}
C.P. Boyer and K. Galicki, 
\textit{Sasakian Geometry}, 
Oxford Mathematical Monographs, 
Oxford University Press, Oxford, (2008). 





\bibitem{EK}
A. El Kacimi-Alaoui,
\textit{Op\'{e}rateurs transversalement elliptiques 
sur un feuilletage riemannien et applications}, 
Compositio Math. \textbf{73} (1990), no. 1, 57--106. 

\bibitem{FHY}
A. Futaki, K. Hattori and H. Yamamoto, 
\textit{Self-similar solutions to the mean curvature 
flows on Riemannian cone manifolds and special Lagrangians on toric Calabi-Yau cones},
Osaka J. Math. \textbf{51} (2014), no. 4, 1053--1079.

\bibitem{FOW}
A. Futaki, H. Ono and G. Wang,
\textit{Transverse K\"{a}hler geometry of 
Sasaki manifolds and toric Sasaki-Einstein manifolds}, 
J. Differential Geom. \textbf{83} (2009), no. 3, 585--635.




\bibitem{HL}
R. Harvey and H. B. Lawson,
\textit{Calibrated geometries}, 
Acta Math. \textbf{148} (1982), 47--157. 

\bibitem{H}
M. Haskins,
\textit{The geometric complexity of special Lagrangian $T^2$-cones}, 
Invent. Math. \textbf{157} (2004),  no. 1, 11--70. 





\bibitem{J3}
D. D. Joyce,
\textit{Special Lagrangian submanifolds with 
isolated canonical singularities. I. Regularity}, 
Ann. Global Anal. Geom. \textbf{25} (2004), no. 3, 201--251. 

\bibitem{J2}
D. D. Joyce,
\textit{Special Lagrangian submanifolds with isolated conical singularities. II. Moduli spaces.}, 
Ann. Global Anal. Geom. \textbf{25} (2004), no. 4, 301--352. 

\bibitem{J1}
D. D. Joyce,
\textit{Riemannian holonomy groups and calibrated geometry}. 
Oxford Graduate Texts in Mathematics, 12. 
Oxford University Press, (2007). 






\bibitem{Mc}
R. C. McLean,
\textit{Deformation of calibrated submanifolds},
Comm. Anal. Geom. \textbf{6} (1998), 705--749.

\bibitem{M} 
T. Moriyama, 
\textit{The moduli space of transverse Calabi-Yau structures 
on foliated manifolds}, 
Osaka J. Math. \textbf{48} (2011), no. 2, 383--413. 

\bibitem{Mo} 
C. B. Morrey, 
\textit{Second-order elliptic systems of differential equations}, 
Contributions to the theory of partial differential equations, 101--159. 
Annals of Mathematics Studies, no. 33. Princeton University Press, Princeton, N. J., 1954. 



\bibitem{O2}
Y. Ohnita, 
\textit{Stability and rigidity of special Lagrangian cones over certain minimal Legendrian orbits}, 
Osaka J. Math. \textbf{44} (2007), no. 2, 305--334. 


\bibitem{O1}
Y. Ohnita, 
\textit{On deformation of 3-dimensional certain minimal Legendrian submanifolds}, 
Proceedings of The Thirteenth International Workshop on Diff. Geom., \textbf{13}, (2009), 71--87.



\bibitem{R}
B. Reinhart,
\textit{Foliated manifolds with bundle-like metrics}, 
Ann. of Math.(2) \textbf{69} (1959), 119--132.


\bibitem{S}
J. Sparks,
\textit{Sasaki-Einstein manifolds}, 
Surveys in differential geometry. Volume XVI. 
Geometry of special holonomy and related topics, (2012), 265--324.


\bibitem{Tanno}
S. Tanno, 
\textit{The topology of contact Riemannian manifolds}, 
Illinois. J. Math. \textbf{12} (1968), 700--717.


\bibitem{TY}
R. P. Thomas and S.-T. Yau, 
\textit{Special Lagrangians, stable bundles and mean curvature flow}, 
Comm. Anal. Geom. \textbf{10} (2002), no. 5, 1075--1113.


\bibitem{TV}
A. Tomassini and L. Vezzoni, 
\textit{Contact Calabi-Yau manifolds and special Legendrian submanifolds}, 
Osaka. J. Math. \textbf{45} (2008), no. 1, 127--147.

\end{thebibliography}
\end{document}